\documentclass[11pt]{amsart}

 \usepackage{amsfonts,graphics,amsmath,amsthm,amsfonts,amscd,amssymb,amsmath,latexsym,multicol,
 mathrsfs}
\usepackage{epsfig,url}
\usepackage{flafter}
\usepackage{fancyhdr}
\usepackage{hyperref}
\hypersetup{colorlinks=true, linkcolor=black}
%%%%%%%%%%%

%%%%%%%%%%%%
\addtolength{\oddsidemargin}{-0.3in}
\addtolength{\evensidemargin}{-0.3in}
\addtolength{\textwidth}{0.6in}

\addtolength{\topmargin}{-0.4in}
\addtolength{\textheight}{0.7in}

%%%%%%%%%%%%

 \usepackage[matrix, arrow]{xy}

\DeclareMathOperator{\Div}{Div}

\DeclareMathOperator{\lct}{lct}

\DeclareMathOperator{\Pic}{Pic}

\DeclareMathOperator{\Supp}{Supp}

\DeclareMathOperator{\vol}{vol}

 %\newcounter{thm}[theorem]
 % all theorems are numbered by the subsection counter

 \numberwithin{equation}{subsection}
 \numberwithin{footnote}{subsection}

 \newtheorem{lem}[subsection]{Lemma}
 \newtheorem{prop}[subsection]{Proposition}
 \newtheorem{thm}[subsection]{Theorem}
 \newtheorem{conj}[subsection]{Conjecture}

{
\theoremstyle{upright}

 \newtheorem{defn}[subsection]{Definition}
 \newtheorem{exa}[subsection]{Example}
 
 \newtheorem{rem}[subsection]{Remark}

}

 \newcommand{\N}{\mathbb N}
 \newcommand{\PP}{\mathbb P}
 
 \newcommand{\Q}{\mathbb Q}
 \newcommand{\R}{\mathbb R}
 \newcommand{\Z}{\mathbb Z}
  \newcommand{\C}{\mathbb C}
 \newcommand{\bir}{\dashrightarrow}
 \newcommand{\rddown}[1]{\left\lfloor{#1}\right\rfloor} % round-down

%%%%%%%%%%%%%%%%%%%%%%%%%%%%%%%%%%%%%%%%%%%%%%%%%%%%%

\title{\large B\MakeLowercase{oundedness of} F\MakeLowercase{ano type fibrations}}
\thanks{2010 MSC:
14J45, % Fano varieties
14J32,  % Calabi-Yau manifolds
14J10,  % moduli, classification
14E30, %Minimal model program (Mori theory, extremal rays)
14J17, % singularities
14C20, % Linear systems, divisors, etc
14E05. %Rational and birational maps
}
\author{\large C\MakeLowercase{aucher} B\MakeLowercase{irkar}}
\date{\today}
\begin{document}
\maketitle

\begin{abstract}
In this paper, we prove various results on boundedness and singularities of Fano fibrations and of Fano type fibrations. A Fano fibration is a projective morphism $X\to Z$ of algebraic varieties with connected fibres 
such that $X$ is Fano over $Z$, that is, $X$ has ``good" singularities and $-K_X$ is ample over $Z$. 
A Fano type fibration is similarly defined where $X$ is assumed to be close to being Fano over $Z$.
This class includes many central ingredients of birational geometry such as Fano varieties, Mori fibre spaces, 
flipping and divisorial contractions, crepant models, germs of singularities, etc. We develop the theory in 
the more general framework of log Calabi-Yau fibrations.\\ 

Dans cet article, nous prouvons divers r\'esultats sur les limites et les singularit\'es de
fibrations de Fano et les fibrations de type Fano. Une fibration de Fano est un morphisme projectif $X\to Z$ de vari\'et\'es alg\'ebriques \`a fibres connexes tel que $X$ est Fano sur $Z$, c'est-\`a-dire que $X$ a de  ``bonnes" singularit\'es et $-K_X$ est ample sur $Z$. Une fibration de type Fano est d\'efinie de façon similaire quand $X$ est suppos\'e \^etre proche d'\^etre Fano sur $Z$. Cette classe comprend de nombreux ingr\'edients centraux de g\'eom\'etrie birationnelle tels que les vari\'et\'es de fano, les espaces de fibres Mori, le flip et les contractions divisorielles, les mod\`eles r\'ep\'etiteurs, les germes de singularit\'es, etc. Nous d\'eveloppons la th\'eorie dans le cadre plus g\'en\'eral des log-fibrations de Calabi-Yau.

\end{abstract}

\setcounter{tocdepth}{1} \tableofcontents

%%%%%%%%%%%%%%%%%%%%%%%%
%%%%%%%%%%%%%%%%%%%%%%%%%%

\section{\bf Introduction}

We work over a fixed algebraically closed field $k$ of characteristic zero unless stated otherwise. 
Varieties are assumed to be irreducible.

According to the minimal model  program (including the abundance conjecture),  
every variety $W$ is expected to be birational to a projective variety $X$ 
with good singularities such that either 
\begin{itemize}
\item $X$ is canonically polarised (i.e. $K_X$ is ample), or

\item  $X$ admits a Mori-Fano fibration $X\to Z$ (i.e. $K_X$ is anti-ample over $Z$), or 

\item $X$ admits a Calabi-Yau fibration $X\to Z$ (i.e. $K_X$ is numerically trivial over $Z$).
\end{itemize} 
This reduces the birational classification 
of algebraic varieties to classifying such  $X$. From the point of view of moduli theory,  
it makes perfect sense to focus on such $X$, as they have a better chance of having a 
reasonable moduli theory, due to the special geometric structures they carry. For this 
and other reasons, Fano and Calabi-Yau varieties and their fibrations are central to birational geometry. 
They are also of great importance in many other parts of mathematics such as arithmetic geometry, differential 
geometry, mirror symmetry, and mathematical physics.

Boundedness properties of canonically polarised varieties and Fano varieties have been extensively studied  
in the literature leading to recent advances \cite{HMX-moduli, B-compl, B-BAB}. 
With the above philosophy of the minimal model program in mind, 
there is a natural urge to extend such studies to Fano and Calabi-Yau fibrations. 
Such fibrations also frequently appear in inductive arguments. 

In this paper, a \emph{Fano fibration} means a projective morphism $X\to Z$ of algebraic varieties with connected fibres 
such that $X$ is Fano over $Z$, that is, $X$ has klt singularities and $-K_X$ is ample over $Z$ (for definition of klt and lc singularities, see \ref{ss-pairs}). 
The following are general guiding questions which are the focus of this paper:

\medskip

{\it (1) Under what conditions do Fano fibrations form bounded families?} 

\smallskip

{\it (2) How do singularities behave on the total space and base of a Fano fibration?}  
\smallskip

{\it (3) When do bounded (klt or lc) complements exist for Fano fibrations?}

\medskip

More precise formulations of these questions include many important and hard problems. 
When $Z$ is just a point, these questions were studied in \cite{B-BAB, B-compl}. 
In this paper we are mostly interested in the case $\dim Z>0$ which poses new challenges 
that do not appear in the case $\dim Z=0$.

One of our main results concerns Question (1). Recall that the BAB conjecture states that 
Fano varieties of given dimension $d$ and with $\varepsilon$-lc singularities form a bounded family, 
where $\varepsilon>0$. We would like to extend this result to the relative setting. 
Consider Fano fibrations $f\colon X\to Z$ where $Z$ 
is projective and varies in a bounded family. What kind of conditions should be imposed to ensure that 
the $X$ form a bounded family? Let us assume that $X$ has dimension $d$ and has $\varepsilon$-lc singularities, 
with $\varepsilon>0$. It turns out that this is not enough even when $Z=\PP^1$ and $X$ is a smooth 
surface, see Example \ref{exa-ruled-surfaces}. To get boundedness, we need more subtle 
conditions. A special case of our main results says that if we take a very ample divisor 
$A$ on $Z$ with bounded $A^{\dim Z}$ so that $f^*A-K_X$ is nef, then we can ensure $X$ varies 
in a bounded family. For more general statements see Theorems \ref{t-bnd-cy-fib}, \ref{t-log-bnd-cy-fib}, 
\ref{t-log-bnd-cy-gen-fib}.

In order to prove Theorems \ref{t-bnd-cy-fib} and \ref{t-log-bnd-cy-fib}, 
we will need to treat variants of Questions (2) and (3).  
Indeed, we find bounded natural numbers $m,n$ and produce a boundary divisor $\Lambda$ so that 
$n(K_X+\Lambda)\sim mf^*A$ (see Theorem \ref{t-bnd-comp-lc-global-cy-fib}). 
This is a kind of bounded klt complement as in Question (3), so 
we are led deep into the theory of complements. To construct this complement, in turn we need 
to control singularities which is related to Question (2) (see 
Theorems \ref{t-sing-FT-fib-totalspace}, \ref{t-sh-conj-bnd-base}). 

In our definition of Fano fibrations, we also allow $f$ to be birational. 
In this case, we get boundedness of crepant models which appears frequently in applications.  
This is explained in more details below.

The results of this paper are motivated by the classification theory of algebraic varieties.  
Theorem \ref{t-bnd-cy-fib} can be considered as the first step towards the classification 
of Fano fibrations (see the beginning of Section 2). On the other hand, such a boundedness statement 
is often required to perfom inductive proofs and to deduce boundedness properties from 
birational models. In fact, Theorem \ref{t-bnd-cy-fib} and other results of this paper 
have found many important applications. Here we recall a partial list:
\begin{itemize}
\item boundedness of polarised varieties \cite{B-pol-var},
\item boundedness and volumes of generalised pairs which are in turn 
applied to questions on varieties of intermediate Kodaira dimension and stable minimal models 
\cite{B-bnd-gen-pairs}, 
\item boundedness of elliptic Calabi-Yau varieties \cite{BDS} (also see \cite{DiCerbo-Svaldi}),
\item boundedness of rationally connected 3-folds $X$ with $-K_X$ nef \cite{BDS, CDHJS}.
\end{itemize}

In the rest of this introduction, we state our main results. We will use  
the language of log Calabi-Yau fibrations. In the next section, we state more general results 
in the context of generalised pairs. 
     
\bigskip

{\textbf{\sffamily{Log Calabi-Yau and Fano type fibrations.}}}
Now we introduce the notion which unifies many central ingredients of birational geometry. 
A \emph{log Calabi-Yau fibration} consists of a pair $(X,B)$ with log canonical singularities and 
a contraction $f\colon X\to Z$ (i.e. a surjective projective morphism 
with connected fibres) such that $K_X+B\sim_\R 0$ relatively over $Z$. 
We usually denote the fibration by $(X,B)\to Z$. 
Note that we allow the two extreme cases: when $f$ is birational and when $f$ is constant.
When $f$ is birational, such a fibration is a crepant model of $(Z,f_*B)$ (see below).
When $f$ is constant, that is, when $Z$ is a point, we just say $(X,B)$ is a \emph{log Calabi-Yau pair}.  
In general, if $F$ is a general fibre of $f$ and if we let $K_F+B_F=(K_X+B)|_F$, then $K_F+B_F\sim_\R 0$, 
hence $(F,B_F)$ is a log Calabi-Yau pair justifying the terminology. 

The class of log Calabi-Yau fibrations includes all log Fano and log Calabi-Yau varieties 
 and much more. For example, if $X$ is a variety which is Fano over a base $Z$, then 
we can easily find $B$ so that $(X,B)\to Z$ is a log Calabi-Yau fibration. This includes all Mori fibre spaces. 
Since we allow birational contractions, it also includes all divisorial and flipping contractions. 
Another interesting example of log Calabi-Yau fibrations $(X,B)\to Z$ is when  
$X\to Z$ is the identity morphism; the set of such fibrations simply coincides with the set of 
pairs with log canonical singularities.
On the other hand, a surface with a minimal elliptic fibration over a curve is another instance of 
a log Calabi-Yau fibration.

A log Calabi-Yau fibration $(X,B)\to Z$ is of \emph{Fano type} if $X$ is of Fano type over $Z$, 
that is, if $-(K_X+C)$ is ample over $Z$ and $(X,C)$ is klt for some boundary $C$. When 
$(X,B)$ is klt, this is equivalent to saying that $-K_X$ is big over $Z$.

 We introduce some notation, somewhat similar to \cite{Jiang}, 
to simplify the statements of our results below. 

\begin{defn}\label{d-FT-fib}
Let $d,r$ be natural numbers and $\varepsilon$ be a positive real number. 
A \emph{$(d,r,\varepsilon)$-Fano type (log Calabi-Yau) fibration} consists of a 
pair $(X,B)$ and a contraction $f\colon X\to Z$ such that we have the following:
\begin{itemize}
\item $(X,B)$ is a projective $\varepsilon$-lc pair of dimension $d$,

\item $K_X+B\sim_\R f^* L$ for some $\R$-divisor $L$, 

\item $-K_X$ is big over  $Z$, i.e. $X$ is of Fano type over $Z$,

\item $A$ is a very ample divisor on $Z$ with $A^{\dim Z}\le r$, and 

\item $A-L$ is ample.
\end{itemize}
\end{defn}

That is, a $(d,r,\varepsilon)$-Fano type fibration is a projective log Calabi-Yau fibration which is of Fano type and with certain 
geometric and numerical data bounded by the numbers $d,r,\varepsilon$. The condition $A^{\dim Z}\le r$ 
means that $Z$ belongs to a bounded family of varieties. Ampleness of $A-L$ means that 
the ``degree" of $K_X+B$ is in some sense bounded (this degree is measured with respect to $A$).
When $Z$ is a point, the last two conditions in the definition are vacuous: in this case the fibration is 
simply a Fano type $\varepsilon$-lc  Calabi-Yau pair of dimension $d$.

We are now ready to state some of the main results of this paper. To keep the introduction 
as simple as possible, we have moved further results and remarks to Section 2.\\

{\textbf{\sffamily{Boundedness of Fano type fibrations.}}}
Our first result concerns the boundedness of Fano type fibrations as defined above. 
This is a relative version 
of the so-called BAB conjecture \cite[Theorem 1.1]{B-BAB} which is about boundedness of Fano varieties 
in the global setting. 

\begin{thm}\label{t-bnd-cy-fib}
Let $d,r$ be natural numbers and $\varepsilon$ be a positive real number. Consider the set of all
$(d,r,\varepsilon)$-Fano type fibrations $(X,B)\to Z$ as in \ref{d-FT-fib}. 
Then the $X$ form a bounded family.
\end{thm}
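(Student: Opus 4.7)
The plan is to reduce to BAB applied fibrewise, combined with boundedness of the base and bounded $\R$-complements to control how fibres vary. The first and central move is to use Theorem~\ref{t-bnd-comp-lc-global-cy-fib}: it furnishes natural numbers $m,n$, depending only on $d,r,\varepsilon$, and an effective boundary $\Lambda\ge B$ with $(X,\Lambda)$ lc such that
\[
n(K_X+\Lambda)\sim m\,f^{*}A.
\]
This converts the relative equivalence $K_X+B\sim_{\R}f^{*}L$ into a global linear equivalence with bounded index, absorbing the ample ``gap'' $\tfrac{m}{n}A-L$ on $Z$ into $\Lambda-B$. In particular $-K_X\sim_{\Q}\Lambda-\tfrac{m}{n}f^{*}A$, so $-K_X$ is $\Q$-linearly equivalent, up to the pullback of an ample divisor from the bounded base, to an effective divisor with controlled coefficients.

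Next I would use Theorems~\ref{t-sing-FT-fib-totalspace} and~\ref{t-sh-conj-bnd-base} to arrange that $(X,\Lambda)$ is $\varepsilon'$-lc for some $\varepsilon'>0$ depending only on $(d,r,\varepsilon)$, and that the induced generalised pair on $Z$ has $\delta$-lc singularities. A general fibre $F$ of $f$ then becomes a $(d-\dim Z)$-dimensional $\varepsilon$-lc weak Fano variety with $(F,B_F)$ log Calabi-Yau, so by the BAB conjecture~\cite[Theorem 1.1]{B-BAB} the fibres $F$ are bounded; the base $Z$ is bounded because $A^{\dim Z}\le r$ and $A$ is very ample. The goal is to upgrade these two separate boundedness statements to boundedness of $X$ by producing a very ample divisor $H$ on $X$ with $H^{d}$ bounded in terms of $d,r,\varepsilon$. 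I would take $H$ as a general member of $|N(-K_X)+N\ell f^{*}A|$ for bounded integers $N,\ell$: ampleness follows from $-K_X$ being big (in fact Fano type) over $Z$ together with $f^{*}A$ being nef and sufficiently positive on $Z$, while the key identity $-K_X+\tfrac{m}{n}f^{*}A\sim_{\Q}\Lambda$ from Step~1 guarantees that these multiples stay within a controlled range of effectivity. To bound $H^{d}$ one expands
\[
H^{d}=\sum_{k=0}^{\dim Z}\binom{d}{k}N^{d}\ell^{k}(f^{*}A)^{k}\cdot(-K_X)^{d-k},
\]
noting that terms with $k>\dim Z$ vanish; the top term $(f^{*}A)^{\dim Z}\cdot(-K_X)^{d-\dim Z}$ equals $A^{\dim Z}\cdot(-K_F)^{\dim F}$, bounded by $r$ and BAB, and the remaining mixed intersections are controlled by slicing $Z$ with general members of $|A|$, which induces a Fano type fibration of smaller relative base dimension and permits induction.

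With $H$ very ample of bounded degree and $(X,\Lambda)$ of bounded singularity and bounded coefficients, a log Matsusaka type argument (in the spirit of~\cite{HMX-moduli,B-BAB}) finally places $X$ in a bounded family. The principal obstacle is the third step: constructing $H$ and simultaneously bounding every mixed intersection number $(f^{*}A)^{k}\cdot(-K_X)^{d-k}$. A single application of BAB handles only the generic fibre, so one must descend through hyperplane sections of $Z$ cut out by $|A|$, checking at each stage that the $\varepsilon$-lc Fano-type-fibration hypotheses are preserved and that the bounded complement from Step~1 restricts correctly. Without the uniform link between $-K_X$ and $f^{*}A$ supplied by Theorem~\ref{t-bnd-comp-lc-global-cy-fib}, these mixed intersections would be out of reach, which is why the construction of bounded complements is the pivotal technical input.
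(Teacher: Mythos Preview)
Your proposal contains a genuine gap at the step where you construct the very ample divisor $H$. You claim that for bounded $N,\ell$ the divisor $N(-K_X)+N\ell f^*A$ is ample because $-K_X$ is big over $Z$ and $f^*A$ is the pullback of an ample divisor. This is false in general. Take the birational case of the theorem (crepant models, explicitly highlighted in the introduction): if $X\to Z$ is a crepant extraction with exceptional curve $C$ on which $K_X\cdot C=0$, then $-K_X\cdot C=0$ and $f^*A\cdot C=0$, so no combination $N(-K_X)+N\ell f^*A$ is ample. The same obstruction occurs whenever $X\to Z$ factors through a small contraction whose contracted curves are $K_X$-trivial. Consequently your Matsusaka-type endgame never starts: you have no ample divisor of bounded degree on $X$.

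What you are really outlining, up through the mixed intersection numbers, is essentially the paper's route to \emph{birational} boundedness (Propositions \ref{l-bnd-cy-effective-bir}--\ref{l-bnd-cy-bir-bnd}): indeed $\vol(lf^*A-K_X)$ is bounded by the slicing argument you describe. The hard part is the passage from birational boundedness to boundedness, and the paper's mechanism for this is quite different from yours. After the bounded klt complement (which does play the role you identify), the key new input is Proposition \ref{p-cy-fib-bnd-Neron-Severi}: one replaces $X$ by an isomorphism-in-codimension-one model $X'$ carrying a log bounded reduced divisor $\Sigma'\supseteq\Supp B'$ whose components \emph{generate} $N^1(X'/Z)$. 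Only then can one manufacture a genuinely ample divisor: pick any ample $H$ on $X$, write its transform on $X'$ as $\equiv R'/Z$ with $\Supp R'\subseteq\Sigma'$, and use $R$ (the transform of $R'$ back on $X$) to perturb $K_X+B$ into an ample $K_X+\Theta$ with $\Supp\Theta\subseteq\Sigma$ log bounded. The conclusion then follows from \cite[Theorem 1.6]{HMX2}, not from a Matsusaka argument. Your proposal never produces anything playing the role of $\Sigma'$, and without it there is no way to separate curves that both $-K_X$ and $f^*A$ kill.

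A minor point: in your Step 2 you do not need Theorems \ref{t-sing-FT-fib-totalspace} or \ref{t-sh-conj-bnd-base} to get $(X,\Lambda)$ $\varepsilon'$-lc; Theorem \ref{t-bnd-comp-lc-global-cy-fib} already gives $(X,\Lambda)$ klt with $n(K_X+\Lambda)$ Cartier, hence $(X,\Lambda)$ is $\frac{1}{n}$-lc automatically.
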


The theorem also holds in the 
more general setting of generalised pairs, see \ref{t-log-bnd-cy-gen-fib}.  

Jiang \cite[Theorem 1.4]{Jiang} considers a setting similar to that of the theorem and proves birational boundedness 
of $X$ modulo several conjectures. We use some of his arguments to get the birational boundedness 
 but we need to do a lot more work to get boundedness.

The boundedness statement of Theorem \ref{t-bnd-cy-fib} does not say anything about boundedness of 
$\Supp B$. This is because in general we have no control over $\Supp B$, e.g.  
when $X=\PP^2$ and $Z$ is a point, $\Supp B$ can contain arbitrary 
hypersufaces. However, if the coefficients of $B$ are bounded away from zero, then indeed 
$\Supp B$ would also be bounded. More generally we have: 
 
\begin{thm}\label{t-log-bnd-cy-fib}
Let $d,r$ be natural numbers and $\varepsilon,\delta$ be positive real numbers. Consider the set of all
$(d,r,\varepsilon)$-Fano type fibrations $(X,B)\to Z$ as in \ref{d-FT-fib} and $\R$-divisors   
$0\le \Delta\le B$ where the non-zero coefficients of $\Delta$ are $\ge \delta$.
Then the set of such $(X,\Delta)$ is log bounded.
\end{thm}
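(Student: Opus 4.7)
Since Theorem \ref{t-bnd-cy-fib} already supplies boundedness of the total spaces $X$, the plan is to upgrade this to log boundedness of $(X,\Delta)$ by producing a uniform upper bound on the degree $H^{d-1}\cdot\Supp\Delta$, where $H$ is a very ample divisor on $X$ chosen from the bounded family with $H^d$ and $H^{d-1}\cdot K_X$ uniformly bounded. The hypothesis that the non-zero coefficients of $\Delta$ are $\ge\delta$ and $\Delta\le B$ gives $\delta\,\Supp\Delta\le\Delta\le B$, so it is enough to bound $H^{d-1}\cdot B$. For this I would exploit the ampleness of $A-L$: from $K_X+B\sim_\R f^*L$, the divisor $f^*A-(K_X+B)\sim_\R f^*(A-L)$ is nef on $X$, and pairing with the nef class $H^{d-1}$ yields
$$ H^{d-1}\cdot B\;\le\;H^{d-1}\cdot f^*A\;-\;H^{d-1}\cdot K_X. $$
As $H^{d-1}\cdot K_X$ is already bounded, the task reduces to a uniform bound on $H^{d-1}\cdot f^*A$.

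\noindent\textbf{Using the complement theorem.} To bound $H^{d-1}\cdot f^*A$, I would invoke Theorem \ref{t-bnd-comp-lc-global-cy-fib}, which produces bounded natural numbers $m,n$ depending only on $(d,r,\varepsilon)$ together with a boundary $\Lambda$ on $X$ such that $(X,\Lambda)$ is lc, $n(K_X+\Lambda)\sim mf^*A$, and the coefficients of $\Lambda$ lie in the finite set $\{0,\tfrac1n,\dots,1\}$. Then $m\,H^{d-1}\cdot f^*A=n\,H^{d-1}\cdot(K_X+\Lambda)$, which further reduces matters to bounding $H^{d-1}\cdot\Lambda$ uniformly. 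The non-zero coefficients of $\Lambda$ are bounded below by $1/n$, so $(X,\Lambda)$ fits the scope of the very statement being proved, with the extra feature that $K_X+\Lambda\equiv\tfrac mn f^*A$ is nef. Using this nefness together with the boundedness of $X$ and of $K_X$ in the family one extracts the required degree bound on $\Lambda$, which feeds back through the chain of reductions to give the sought bound on $H^{d-1}\cdot\Delta$.

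\noindent\textbf{Main obstacle.} The hardest step is the last one: converting the raw output of Theorem \ref{t-bnd-comp-lc-global-cy-fib}, which controls the coefficients of $\Lambda$ but gives no a priori bound on $\Supp\Lambda$, into an honest log-boundedness statement without circular reasoning. My expectation is that this must be handled in tandem with the construction of the bounded complement itself, leveraging the singularity-control results (Theorems \ref{t-sing-FT-fib-totalspace} and \ref{t-sh-conj-bnd-base}) that underpin the existence of $\Lambda$ and ensure uniform boundedness of its support across the family. The bulk of the effort lies in making $m$, $n$, the coefficient set of $\Lambda$, and the support of $\Lambda$ simultaneously bounded, uniformly in $(d,r,\varepsilon,\delta)$, so that the closing loop from $\Lambda$ back to $\Delta$ is legitimate.
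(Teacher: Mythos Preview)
Your reduction to bounding $H^{d-1}\cdot f^*A$ is natural, and you are right that the circularity you flag is the crux of the matter. But the proposal does not actually break the loop: replacing $B$ by the complement $\Lambda$ gives another $(d,r',\varepsilon')$-Fano type fibration with $K_X+\Lambda\sim_\Q \tfrac{m}{n}f^*A$, and bounding $H^{d-1}\cdot\Lambda$ is equivalent to bounding $H^{d-1}\cdot f^*A$. Nefness of $K_X+\Lambda$ alone does not help: already on a fixed bounded $X$ (say $\PP^1\times\PP^1$ with $H=\mathcal O(1,1)$), nef divisors $D$ with $D^d$ bounded can have $H^{d-1}\cdot D$ arbitrarily large, so ``boundedness of $X$ and of $K_X$'' does not squeeze out a degree bound on $\Lambda$. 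Something beyond intersection-number bookkeeping is needed.

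The paper avoids this circle by a different mechanism. After passing to the complement so that the coefficients of $B$ lie in a fixed finite set and $K_X+B$ is nef, it proves a structural result (Proposition~\ref{p-cy-fib-bnd-Neron-Severi}): there is a small modification $X\dashrightarrow X'$ and a \emph{bounded} reduced divisor $\Sigma'$ with $\Supp B'\subseteq\Sigma'$ whose components generate $N^1(X'/Z)$. This is established by induction on the relative Picard number and on dimension, using the special case where $-(K_X+\Delta)$ is ample over $Z$ (Lemma~\ref{l-bnd-cy-fib-ample-case}), Mori fibre space reductions, and the log birational boundedness of Proposition~\ref{l-bnd-cy-bir-bnd}. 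The payoff is that one can choose an $\R$-divisor $R$ supported in $\Supp B$ with $R$ ample on $X$; then $\Theta=B+R$ has $K_X+\Theta$ ample, $\Supp\Theta=\Supp B$ log birationally bounded, and coefficients $\ge\delta/2$. Now \cite[Theorem~1.6]{HMX2} applies and gives log boundedness of $(X,\Theta)$ directly---without ever needing $X$ bounded beforehand or a bound on $H^{d-1}\cdot f^*A$. The missing idea in your approach is precisely this: produce generators of $N^1(X/Z)$ inside a bounded support so as to make $K_X+\Theta$ \emph{ample} (not merely nef) while keeping control of $\Supp\Theta$, and then invoke the HMX boundedness criterion rather than a raw degree estimate.
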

\ 

%%%%%%%%%%%
{\textbf{\sffamily{Boundedness of crepant models.}}}
It is interesting to look at the special cases of Theorem \ref{t-bnd-cy-fib}  when $f$ is birational 
and when it is constant. In the latter case, the theorem is equivalent to the 
BAB conjecture \cite[Theorem 1.1 and Corollary 1.4]{B-BAB},  but 
in the former case, which says something about \emph{crepant models}, a lot work is needed to 
derive it from the BAB. 

Given a pair $(Z,B_Z)$ and a birational contraction $\phi \colon X\to Z$, we can write  
$$
K_X+B=\phi^*(K_Z+B_Z)
$$ 
for some uniquely determined $B$. We say $(X,B)$ is a 
crepant model of $(Z,B_Z)$ if $B\ge 0$. The birational case of Theorem \ref{t-bnd-cy-fib} 
then essentially says that if $Z$ belongs to a bounded family, if the ``degree" of $B_Z$ is bounded 
with respect to some very ample divisor, and if $(Z,B_Z)$ is $\varepsilon$-lc, then   
the underlying varieties of all the crepant models of such pairs form a bounded family; 
this is quite non-trivial even in the case $Z=\PP^3$ (actually it is already challenging for $Z=\PP^2$ 
if we do not use BAB). 
Special cases of 
boundedness of crepant models have appeared in the literature assuming that $\Supp B_Z$ is bounded, see 
\cite[Lemma 10.5]{Mc-Pr}, \cite[Propositions 2.5, 2.9]{HX}, \cite[Proposition 4.8]{CDHJS}. The key 
point here is that we remove such assumptions on the support of $B_Z$.  

Note that the $\varepsilon$-lc condition and 
boundedness of  ``degree" of $B_Z$ are both necessary. Indeed if we replace $\varepsilon$-lc by 
lc, then the crepant models will not be bounded, e.g. 
considering $(Z=\PP^2,B_Z)$ where $B_Z$ is the union of three lines intersecting transversally and   
successively blowing up intersection points in the boundary gives an infinite sequence of 
crepant models with no bound on their Picard numbers. On the other hand, if 
$B_Z$ can have arbitrary degree, then we can easily choose it so that  
$(Z=\PP^2,B_Z)$ is $\frac{1}{2}$-lc having a crepant model of arbitrarily large Picard number.\\

%%%%%%%%%%%
{\textbf{\sffamily{Singularities on Fano type fibrations.}}}
Understanding singularities on log Calabi-Yau fibrations 
is very important as it naturally appears in inductive arguments. 
The next statement gives a lower bound for lc thresholds on Fano type fibrations. 
In particular, it implies \cite[Conjecture 1.13]{Jiang} as a special case. 

\begin{thm}\label{t-sing-FT-fib-totalspace}
Let $d,r$ be natural numbers and $\varepsilon$ be a positive real number. 
 Then there is a positive real number $t$ depending only on $d,r,\varepsilon$ satisfying the following. 
Let $(X,B)\to Z$ be any $(d,r,\varepsilon)$-Fano type fibration as in \ref{d-FT-fib}. If 
$P\ge 0$ is any $\R$-Cartier divisor on $X$ such that either  
\begin{itemize}
\item $f^*A+B-P$ is pseudo-effective, or 

\item $f^*A-K_X-P$ is pseudo-effective, 
\end{itemize}
then  
$(X,B+t P)$ is klt.
\end{thm}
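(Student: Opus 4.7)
The plan is to reduce the statement to a numerical one on a bounded family, bound the intersection of $P$ with a fixed very ample divisor, and then extract a uniform lc threshold from that bound together with the $\varepsilon$-lc hypothesis.

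\textbf{Setup.} By Theorem \ref{t-bnd-cy-fib}, the total spaces $X$ belong to a bounded family $\mathcal{P}=\mathcal{P}(d,r,\varepsilon)$. Choose, uniformly over $\mathcal{P}$, a very ample divisor $H$ on $X$ with $H^d\le C_0$ for some $C_0=C_0(d,r,\varepsilon)$; in particular $D\cdot H^{d-1}\ge 1$ for every prime divisor $D$ on $X$. Since $X$ lies in a bounded family of $\varepsilon$-lc varieties, we may also assume $(-K_X)\cdot H^{d-1}\le C_1$ for some uniform $C_1$.

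\textbf{Bounding $P\cdot H^{d-1}$.} In case (ii), pseudo-effectivity of $f^*A-K_X-P$ gives
$$
P\cdot H^{d-1}\;\le\;(f^*A-K_X)\cdot H^{d-1}.
$$
Here $(-K_X)\cdot H^{d-1}$ is already controlled, while $f^*A\cdot H^{d-1}$ can be bounded via the bounded complements result Theorem \ref{t-bnd-comp-lc-global-cy-fib}: there exist uniform naturals $n,m$ and a boundary $\Lambda$ with $(X,\Lambda)$ log bounded and $n(K_X+\Lambda)\sim m f^*A$, so
$$
f^*A\cdot H^{d-1}\;=\;\tfrac{n}{m}(K_X+\Lambda)\cdot H^{d-1}
$$
is uniformly bounded from log boundedness of $(X,\Lambda)$. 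In case (i) one has $P\cdot H^{d-1}\le(f^*A+B)\cdot H^{d-1}$; the relation $K_X+B\sim_\R f^*L$ together with ampleness of $A-L$ shows that $f^*A-K_X-B\sim_\R f^*(A-L)$ is nef, hence $B\cdot H^{d-1}\le(f^*A-K_X)\cdot H^{d-1}$, reducing case (i) to the bound obtained in case (ii). Altogether $P\cdot H^{d-1}\le C$ for a uniform $C=C(d,r,\varepsilon)$.

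\textbf{From degree bound to threshold.} For any prime divisor $D$ on $X$ the effectivity of $P-(\mult_D P)D$ gives
$$
\mult_D P\;\le\;\frac{P\cdot H^{d-1}}{D\cdot H^{d-1}}\;\le\;C,
$$
so by the $\varepsilon$-lc hypothesis, choosing $t<\varepsilon/(2C)$ keeps the coefficients of $B+tP$ strictly below $1$ along every prime divisor on $X$. To upgrade this to klt at every valuation over $X$, one refines $\mathcal{P}$ to a log bounded family equipped with a uniform log resolution $\pi\colon X'\to X$: the $\pi$-exceptional divisors $E$ then have $a(E,X,0)$ bounded below, and $\mult_E\pi^*P$ is controlled by $P\cdot H^{d-1}$ together with the bounded combinatorics of $\pi$. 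Shrinking $t$ by a further uniform factor secures positive discrepancy at every such $E$ as well, yielding the desired $t=t(d,r,\varepsilon)$.

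\textbf{Main obstacle.} The delicate step is the uniform bound on $f^*A\cdot H^{d-1}$, which rests on the bounded complements of Theorem \ref{t-bnd-comp-lc-global-cy-fib} and hence ultimately on the $\varepsilon$-lc hypothesis; the examples in the introduction show that this cannot be weakened to lc. The subsequent passage from a degree bound on $P$ to a uniform lc threshold, though routine in spirit, still requires the uniform log resolution inside $\mathcal{P}$ in order to handle non-divisorial valuations, and is the other point at which the bounded family structure is essential.
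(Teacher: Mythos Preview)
Your overall plan—bound the degree of $P$ against a very ample $H$ on a bounded family, then extract a uniform lc threshold—is exactly what the paper does, and your reduction of case (i) to case (ii) matches the paper's argument. The paper obtains the bound on $f^*A$ more directly: it applies Theorem~\ref{t-log-bnd-cy-fib} to the pair $(X,B+\tfrac12 D)$ with $D\in|f^*A|$ general, so that $(X,D)$ is log bounded and one can pick $H$ with $H+K_X-D$ ample; from this $H-B$ is ample and $H-P$ is big. Your route through Theorem~\ref{t-bnd-comp-lc-global-cy-fib} reaches the same numerical conclusion but with more machinery.

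The genuine gap is your final step. A ``uniform log resolution'' $\pi\colon X'\to X$ taken from the bounded family resolves $(X,B)$, but it is \emph{not} a log resolution of $(X,B+tP)$, because $P$ is an arbitrary effective divisor of bounded degree and may be singular anywhere. Bounding the coefficients of $B'+t\pi^*P$ at the finitely many $\pi$-exceptional divisors (and at the strict transforms of components of $B$ and $P$) does not prove klt-ness: the pair $(X',B'+t\pi^*P)$ need not be log smooth, so there may be further valuations, centred at singular points of $\pi^*P$ or its intersections with $B'$, where the log discrepancy drops below zero. The passage from a degree bound on $P$ to a uniform lower bound on $\lct(X,B,P)$ is not routine; it is precisely the content of Theorem~\ref{t-bnd-lct} (\cite[Theorem~1.8]{B-BAB}), one of the main results of that paper. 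The paper closes the argument by invoking Theorem~\ref{t-bnd-lct} directly once $H-B$ and $H-P$ are shown to be pseudo-effective with $H^d$ bounded, and you should do the same.
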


In particular, the theorem can be applied to any 
$$
0\le P\sim_\R f^*A+B
$$ 
or any 
$$
0\le P\sim_\R f^*A-K_X
$$ 
assuming $P$ is $\R$-Cartier. To get a feeling for what the theorem says consider the case 
when $Z$ is a curve; in this case the theorem implies that the multiplicities of each fibre of $f$ 
are bounded from above (compare with the main result of \cite{Mori-Prokhorov-del-pezzo} 
for del Pezzo fibrations over curves): 
indeed, for any closed point $z\in Z$ we can find $0\le Q\sim A$ so that 
$z$ is a component of $Q$; then applying the theorem to $P:=f^*Q$ implies that the multiplicities of 
the fibre of $f$ over $z$ are bounded.

One can derive the theorem from \ref{t-log-bnd-cy-fib} and the results of \cite{B-BAB}. 
However, in practice the theorem is proved together along with \ref{t-log-bnd-cy-fib} in an intertwining inductive process.

On the other hand, a fundamental problem on singularities is a conjecture of 
Shokurov \cite[Conjecture 1.2]{B-sing-fano-fib} (a special case of which is due to M$^{\rm c}$Kernan) 
which roughly says that the singularities on 
the base of a Fano type fibration are controlled by those on the total space.
We will prove this conjecture under some boundedness assumptions on the 
base (\ref{t-sh-conj-bnd-base}). 
This result is of independent interest but also closely related to the other results of this paper. 

First we recall adjuction for fibrations, also known as canonical bundle formula. If $(X,B)$ is an lc pair
and $f\colon X\to Z$ is a contraction with $K_X+B\sim_\R 0/Z$, then 
by \cite{kaw-subadjuntion, ambro-adj} we can define a 
discriminant divisor $B_Z$ and a moduli divisor $M_Z$ so that we have 
$$
K_X+B\sim_\R f^*(K_Z+B_Z+M_Z).
$$
This is a generalisation of the Kodaira canonical bundle formula. 
Let $D$ be a prime divisor on $Z$. Let $t$ be the lc threshold of $f^*D$ with respect to $(X,B)$ 
over the generic point of $D$. 
We then put the coefficient of  $D$ in $B_Z$ to be $1-t$. 
Having defined $B_Z$, we can find $M_Z$ giving 
$$
K_{X}+B\sim_\R f^*(K_Z+B_Z+M_Z)
$$
where $M_Z$ is determined up to $\R$-linear equivalence. 
We call $B_Z$ the \emph{discriminant divisor of adjunction} for $(X,B)$ over $Z$. 

For any birational contraction $Z'\to Z$, from a normal variety there is 
a birational contraction $X'\to X$ from a normal variety so that the induced map $X'\bir Z'$ is a morphism.  
Let $K_{X'}+B'$ be the 
pullback of $K_{X}+B$. We can similarly define $B_{Z'},M_{Z'}$ for $(X',B')$ over $Z'$. In this way we 
get the \emph{discriminant b-divisor ${\bf{B}}_Z$ of adjunction} for $(X,B)$ over $Z$. 

The conjecture of Shokurov then can be stated as:

\begin{conj}\label{conj-sh-sing-fib}
Let $d$ be a natural number and $\varepsilon$ be a positive real number. 
 Then there is a positive real number $\delta$ depending only on $d,\varepsilon$ satisfying the following. 
 Assume that $(X,B)$ is a pair and $f\colon X\to Z$ is a contraction such that
\begin{itemize}
\item $(X,B)$ is $\varepsilon$-lc of dimension $d$,

\item $K_X+B\sim_\R 0/Z$, and  

\item $-K_X$ is big over  $Z$.
\end{itemize}
Then the discriminant b-divisor ${\bf B}_Z$ has coefficients in $(-\infty, 1-\delta]$.
\end{conj}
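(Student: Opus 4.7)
The plan is to show that, uniformly in the birational contraction $Z'\to Z$ from a normal variety and the prime divisor $D\subset Z'$, the lc threshold $t$ of $f'^{*}D$ with respect to the induced pair $(X',B')$ over the generic point of $D$ admits a positive lower bound depending only on $d$ and $\varepsilon$. Passing to a log resolution of $(X',B'+f'^{*}D)$ gives
\[
t=\min_{E}\frac{a(E;X',B')}{\mult_{E}f'^{*}D},
\]
where $E$ ranges over prime divisors whose centre in $Z'$ dominates $D$. The $\varepsilon$-lc hypothesis makes the numerators $\ge\varepsilon$, so the problem is equivalent to a uniform upper bound on $\mult_{E}f'^{*}D$ in terms of $d,\varepsilon$ alone; equivalently, a bound on the multiplicities of the components of the general fibre over $\eta_{D}$ and of their preimages on a log resolution.

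Next I would reduce to $\dim Z=1$ by a standard hyperplane cut. Fix a general closed point $z\in D$ and intersect $Z'$ with $\dim Z-1$ general hyperplane sections through $z$; this produces a smooth curve germ $C\subset Z'$ meeting $D$ transversally at $z$. Base changing the fibration to $C$ gives $f_{C}\colon X_{C}\to C$ with $(X_{C},B_{C})$ still $\varepsilon$-lc, $-K_{X_{C}}$ big over $C$, and $K_{X_{C}}+B_{C}\sim_{\R}0/C$. By generality and semicontinuity of lc thresholds, $t$ equals the lc threshold of $f_{C}^{*}z$ with respect to $(X_{C},B_{C})$, and the multiplicities to be bounded match under this reduction, so we may replace the original data by $(X_{C},B_{C})\to C$ and assume $Z=C$ is a smooth curve and $D=z$ is a closed point.

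If the data could be fit into the framework of Definition \ref{d-FT-fib}, i.e.\ if we could compactify $C$ to a smooth projective curve $\bar C$ and exhibit on $\bar C$ a very ample divisor $A$ of degree bounded in terms of $d,\varepsilon$ with $A-L$ ample (where $K_{X}+B\sim_{\R}f^{*}L$), then Theorem \ref{t-sing-FT-fib-totalspace} applied to $P=f^{*}Q$ for any $0\le Q\sim A$ containing $z$ in its support would yield a uniform $t_{0}=t_{0}(d,\varepsilon)>0$ with $(X,B+t_{0}P)$ klt, forcing $\mult_{E}f^{*}z\le 1/t_{0}$ for every divisor $E$ over $X$ and finishing the proof. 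The main obstacle is precisely this last step: since $Z$ is not assumed bounded and carries no distinguished polarisation, after compactifying $C$ there is no a priori control on its genus nor on the relative degree of $L$. This is why the paper proves only the bounded-base version Theorem \ref{t-sh-conj-bnd-base} and leaves the full conjecture open; a complete proof would need to couple Theorem \ref{t-sing-FT-fib-totalspace} with an unbounded-base input, most plausibly an effective theory of complements producing a bounded integer $m$ and a boundary $\Lambda\ge B$ with $m(K_{X}+\Lambda)\sim 0/Z$, after which ACC for log canonical thresholds \cite{HMX-moduli} would pin the discriminants of adjunction uniformly away from $1$.
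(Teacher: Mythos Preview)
The statement you were asked to prove is Conjecture~\ref{conj-sh-sing-fib}, which the paper explicitly leaves open; only the bounded-base variant, Theorem~\ref{t-sh-conj-bnd-base}, is established. Your write-up is therefore not a proof and you say so yourself: in the last paragraph you isolate the genuine obstruction (no control on the base $Z$, hence no way to invoke Theorem~\ref{t-sing-FT-fib-totalspace}) and stop. So there is nothing to compare against a proof in the paper, because the paper contains none.

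Two comments on the sketch you do give. First, the reduction to $\dim Z=1$ is looser than it looks. You cut by hyperplanes in $Z'$, not in $Z$, and then form $X_C=X'\times_{Z'}C$; but $(X',B')$ is only a sub-pair, so it is not automatic that the base-changed object carries an honest $\varepsilon$-lc pair with $-K$ big over $C$, nor that the lc threshold of $f'^{*}D$ over $\eta_D$ equals the lc threshold of $f_C^{*}z$ (restriction can a priori raise the threshold; you need a Bertini/inversion-of-adjunction argument in both directions). The paper's proof of the bounded-base case (Theorems~\ref{t-sh-conj-bnd-base} and \ref{t-sh-conj-bnd-base-gen-fib}) handles this differently: it cuts by a general member of $|A|$ on $Z$ itself to reduce the case where the centre of $D$ on $Z$ is positive-dimensional, and treats the zero-dimensional-centre case directly via Theorem~\ref{t-sing-gen-FT-fib-totalspace} without ever reducing to a curve base.

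Second, the escape route you propose at the end does not close the gap as stated. A relative complement $m(K_X+\Lambda)\sim 0/Z$ with $(X,\Lambda)$ merely lc allows the discriminant of $(X,\Lambda)$ to have coefficients equal to $1$, so ACC gives nothing; you would need $(X,\Lambda)$ \emph{klt}, which is exactly what Theorem~\ref{t-bnd-comp-lc-global-cy-fib} supplies in the bounded-base setting and what is unavailable in general. In other words, your proposed substitute for boundedness of the base is, in effect, the same missing ingredient in a different guise.
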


The next result says that Shokurov conjecture holds in the setting of $(d,r,\varepsilon)$-Fano type fibrations.
The strength of this result is in the fact that (similar to some of the other results above, e.g. \ref{t-bnd-cy-fib}) we are 
not assuming any boundedness condition on support of $B$ along the general fibres of $f$. 
In contrast the main result of \cite{B-sing-fano-fib} proves Shokurov conjecture when the general fibres of $f$ together with the support of $B$ restricted to these fibres belong to a bounded family. The strength of 
\cite{B-sing-fano-fib} is in the fact that the base is not assumed projective and so there is no global condition on the numerical class of $K_X+B$.

\begin{thm}\label{t-sh-conj-bnd-base}
Let $d,r$ be natural numbers and $\varepsilon$ be a positive real number. 
 Then there is a positive real number $\delta$ depending only on $d,r,\varepsilon$ satisfying the following. 
Let $(X,B)\to Z$ be any  $(d,r,\varepsilon)$-Fano type fibration as in \ref{d-FT-fib}. 
Then  the discriminant b-divisor ${\bf B}_Z$ has coefficients in $(-\infty, 1-\delta]$.
\end{thm}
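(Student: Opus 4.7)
I aim for a uniform $\delta = \delta(d,r,\varepsilon) > 0$ such that, for every prime divisor $D$ on every birational contraction $Z' \to Z$, the coefficient of $D$ in $B_{Z'}$ is at most $1-\delta$. Taking a common resolution $g\colon X' \to X$ yielding a morphism $f'\colon X' \to Z'$ and writing $K_{X'}+B' = g^*(K_X+B)$, the assertion amounts to
$$ t_D \; := \; \lct_D\bigl(X', B';\, f'^*D\bigr) \; = \; \inf_{E} \frac{a(E, X, B)}{e_E} \; \ge \; \delta $$
generically over $D$, where $E$ runs over divisors over $X$ whose valuation restricts to an integer multiple $e_E$ of the order of vanishing along $D$. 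Since $(X, B)$ is $\varepsilon$-lc we have $a(E, X, B) \ge \varepsilon$, and the substance of the theorem is to control the ratios $e_E/a(E, X, B)$ uniformly in $E$ and $D$.

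The easier step treats the case that $D$ is already a prime divisor on $Z$, and follows directly from Theorem \ref{t-sing-FT-fib-totalspace}. Using the very ampleness of $A$, I choose $Q \in |A|$ passing through a general point $z \in D$ so that $D$ appears in $Q$ with multiplicity one at $z$; then $f^*Q$ and $f^*D$ agree in a neighborhood of the fibre over $z$. Setting $P := f^*Q$, we obtain $f^*A + B - P \sim_{\R} B \ge 0$, and Theorem \ref{t-sing-FT-fib-totalspace} supplies a universal $t_0 = t_0(d, r, \varepsilon) > 0$ with $(X, B + t_0 f^*Q)$ klt. Localising at the generic point of $D$ forces $t_D \ge t_0$.

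The main obstacle is the case where $D$ is exceptional over $Z$, with centre of codimension at least two. The naive reduction---replacing $(X, B) \to Z$ by $(X', B') \to Z'$---fails, because $Z'$ need not support a very ample divisor of bounded top self-intersection, so the class of $(d, r, \varepsilon)$-Fano type fibrations is not preserved by the base change. To bypass this I would invoke Theorem \ref{t-bnd-cy-fib} to place $X$ in a bounded family, and then apply the bounded complement machinery of Theorem \ref{t-bnd-comp-lc-global-cy-fib} to produce $\Lambda \ge B$ with $n(K_X + \Lambda) \sim m f^*A$ for bounded $m, n$. Descending via the canonical bundle formula yields a generalised pair $(Z, \Lambda_Z, M_{Z, \Lambda})$ of bounded complexity on the base, and boundedness techniques for singularities of generalised pairs in bounded families should then deliver the uniform lower bound on $t_D$.

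As emphasised in the introduction, this theorem is not proved in isolation: it is established by an intertwined induction together with Theorems \ref{t-log-bnd-cy-fib} and \ref{t-sing-FT-fib-totalspace}, the bounded complement being used to control the discriminant while the discriminant control feeds back into the construction of the complement and into the boundedness of $X$. The principal difficulty, accordingly, is not a single technical step but the orchestration of this induction so that no circularity arises among its ingredients.
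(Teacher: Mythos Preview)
Your case split is not the one the paper uses, and your ``easier case'' contains a genuine gap. You assume that for a prime divisor $D$ on $Z$ you can pick $Q\in|A|$ with $Q=D$ locally near a general point $z\in D$, so that $f^*Q$ and $f^*D$ agree there. This requires $|A-D|\neq\emptyset$, which fails as soon as $D$ has large degree relative to $A$ (e.g.\ $Z=\PP^2$, $A$ a line, $D$ a conic). So the direct appeal to Theorem~\ref{t-sing-FT-fib-totalspace} does not work for arbitrary divisors on $Z$ when $\dim Z\ge 2$.

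The paper splits instead by the dimension of the \emph{centre} of $D$ on $Z$, and the roles are essentially reversed from what you propose. When the centre is a closed point $z$, one picks $N\in|A|$ through $z$; then on any model $Z'\to Z$ carrying $D$ one has $\psi^*N\ge D$ (since $D$ maps to $z$), so the global lc threshold of $f^*N$ --- bounded below by Theorem~\ref{t-sing-FT-fib-totalspace} --- is a lower bound for $t_D$. This is exactly your mechanism, but it applies to the closed-point case, which covers in particular all deep exceptional divisors. When the centre of $D$ is positive-dimensional, the paper cuts by a general $H\in|A|$: the restriction $(G,B_G)\to H$ is a $(d-1,2^{d-1}r,\varepsilon)$-Fano type fibration, and one compares $t_D$ with the lc threshold of a component $C$ of $D\cap H'$ for the restricted fibration, then applies the theorem in dimension $d-1$. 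Your proposed route through Theorems~\ref{t-bnd-cy-fib} and~\ref{t-bnd-comp-lc-global-cy-fib} and ``boundedness techniques for singularities of generalised pairs'' is not needed and is not what is done.

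Finally, the inductive structure is simpler than you suggest: by the time Theorem~\ref{t-sh-conj-bnd-base} is proved, Theorems~\ref{t-log-bnd-cy-fib} and~\ref{t-sing-FT-fib-totalspace} are already available in all dimensions. The only induction is Theorem~\ref{t-sh-conj-bnd-base-gen-fib} invoking itself in dimension $d-1$ via the hyperplane cut.
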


This can be viewed as a relative version of Ambro's conjecture \cite[Theorem 1.6]{B-BAB} which is closely related 
to the BAB conjecture.\\

{\textbf{\sffamily{Boundedness of klt relative-global complements.}}}
One of the key tools used in this paper is the theory of complements. 
The complements we consider are relative but are controlled globally. Moreover, they have klt singularities. 
In general, klt complements are much harder to construct than lc complements.

\begin{thm}\label{t-bnd-comp-lc-global-cy-fib}
Let $d,r$ be natural numbers, $\varepsilon$ be a positive real number,  and 
$\mathfrak{R}\subset [0,1]$ be a finite set of rational numbers. Then there exist natural numbers 
$n,m$ depending only on $d,r,\varepsilon,\mathfrak{R}$ satisfying the following. 
Assume that $(X,B)\to Z$ is a $(d,r,\varepsilon)$-Fano type fibration and that 
\begin{itemize}
\item  we have  $0\le \Delta\le B$ and the coefficients of $\Delta$ are in $\mathfrak{R}$, and 

\item $-(K_X+\Delta)$ is big over $Z$. 
\end{itemize}
Then there is a $\Q$-divisor $\Lambda\ge \Delta$ such that  
\begin{itemize}
\item $(X,\Lambda)$ is klt, and 

\item $n(K_X+\Lambda)\sim mf^*A$.
\end{itemize}
\end{thm}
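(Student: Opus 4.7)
The strategy is to combine the boundedness theorems \ref{t-bnd-cy-fib} and \ref{t-log-bnd-cy-fib} with the singularity bound of Theorem \ref{t-sh-conj-bnd-base} to construct $\Lambda$ essentially as $\Delta$ plus a small pullback from $Z$. First, since the nonzero coefficients of $\Delta$ lie in the finite rational set $\mathfrak{R}$, hence are bounded below, and $\Delta\le B$, the pair $(X,\Delta)$ is $\varepsilon$-lc and, by Theorems \ref{t-bnd-cy-fib} and \ref{t-log-bnd-cy-fib}, log bounded. In particular, there exists $n_0=n_0(d,r,\varepsilon,\mathfrak{R})$ such that $n_0K_X$ is Cartier and $n_0\Delta$ is integral. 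Moreover, Theorem \ref{t-sh-conj-bnd-base} provides $\delta=\delta(d,r,\varepsilon)>0$ such that for every prime divisor $D_Z$ on $Z$, the lc threshold of $f^*D_Z$ with respect to $(X,B)$ over the generic point of $D_Z$ is at least $\delta$; since $\Delta\le B$, the same holds for $(X,\Delta)$.

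Next I fix bounded natural numbers $n,m$---with $n$ divisible by $n_0$ and $\tfrac{1}{n}<\delta$, and $m$ chosen so that $mA-nL$ is very ample on $Z$ (possible since $A-L$ is ample and $(Z,A)$ belongs to a bounded family, where $K_X+B\sim_\R f^*L$). The integral Cartier class $mf^*A-n(K_X+\Delta)$ is $\R$-linearly equivalent to $f^*(mA-nL)+n(B-\Delta)$, hence big and pseudo-effective. Using effective base point freeness on the log bounded family, one produces an effective integral divisor $D\sim mf^*A-n(K_X+\Delta)$ whose horizontal component can be taken general and whose vertical component is concentrated over a Bertini-generic $H\in|mA-nL|$. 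Setting $\Lambda:=\Delta+\tfrac{1}{n}D$ yields a $\Q$-divisor with $\Lambda\ge\Delta$, and
\[
n(K_X+\Lambda)=n(K_X+\Delta)+D\sim mf^*A,
\]
as required.

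The principal obstacle is verifying that $(X,\Lambda)$ is klt. Here the generic-lct lower bound $\delta$ of Theorem \ref{t-sh-conj-bnd-base}, applied over each prime divisor $D_Z\subset\Supp H$, combined with the $\varepsilon$-lc condition on $(X,\Delta)$ and the strict inequality $\tfrac{1}{n}<\delta$, forces the coefficient $\tfrac{1}{n}D$ to stay below every log-canonical ceiling of $(X,\Delta)$. What requires the bulk of the work is choosing $D$ carefully---via Bertini on $Z$ plus general position in the linear system on $X$---so that the multiplicity of $D$ along every lc place of $(X,\Delta)$ is strictly less than $n$ times the corresponding log discrepancy. The log boundedness of $(X,\Delta)$ is essential for this to hold uniformly across the family; a perturbation argument exchanging horizontal and vertical contributions in $D$ may well be needed to realize the bound simultaneously at all centers, and this is the step I expect to be the most delicate.
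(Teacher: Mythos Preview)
Your proposal has two genuine problems.

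First, in the paper's logical architecture the argument is circular. Theorems \ref{t-bnd-cy-fib}, \ref{t-log-bnd-cy-fib}, \ref{t-sing-FT-fib-totalspace}, and \ref{t-bnd-comp-lc-global-cy-fib} are proved together by simultaneous induction on $d$: the proof of \ref{t-log-bnd-cy-fib} in dimension $d$ (Lemma \ref{l-log-bnd-cy-fib-induction}) explicitly invokes \ref{t-bnd-comp-lc-global-cy-fib} in dimension $d$, and \ref{t-sh-conj-bnd-base} is deduced only afterwards from \ref{t-sing-gen-FT-fib-totalspace}, which in turn rests on \ref{t-sing-FT-fib-totalspace} and hence again on \ref{t-log-bnd-cy-fib}. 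So you cannot assume \ref{t-log-bnd-cy-fib} or \ref{t-sh-conj-bnd-base} in dimension $d$ as inputs.

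Second, even if one granted those results as black boxes, your klt verification does not go through. Theorem \ref{t-sh-conj-bnd-base} bounds the discriminant b-divisor: it controls the lc threshold of $f'^*D_{Z'}$ over the \emph{generic point} of each prime divisor $D_{Z'}$ on any birational model $Z'$. This says nothing about what happens over closed points of $Z$ or along higher-codimension degenerations of $f$, which is exactly where a general member of $|mf^*A-n(K_X+\Delta)|$ can acquire high multiplicity. Your appeal to ``Bertini on $Z$'' and a ``perturbation argument exchanging horizontal and vertical contributions'' does not address this; a general $H\in|mA-nL|$ (leaving aside that $L$ is only an $\R$-divisor) still meets the discriminant locus, and over those points the generic-lct bound $\delta$ gives no information.

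The paper's route is entirely different and avoids both issues. It works by induction on $d$, assuming \ref{t-log-bnd-cy-fib}, \ref{t-sing-FT-fib-totalspace}, \ref{t-bnd-comp-lc-global-cy-fib} only in dimension $d-1$. One first produces, for each closed point $z\in Z$, a complement $\Lambda_z\ge\Delta$ with $n(K_X+\Lambda_z)\sim mf^*A$ that is lc over $z$ (Proposition \ref{l-bnd-cy-bnd-klt-compl-1}, using Theorem \ref{t-bnd-comp-lc-global}), then upgrades this to klt over $z$ by restricting to a general $G\in|f^*A|$ and invoking the theorem in dimension $d-1$ together with a Kawamata--Viehweg surjectivity argument (Proposition \ref{l-bnd-cy-bnd-klt-compl-2}). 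Finally one patches: a general member of the fixed linear system $|mf^*A-n(K_X+\Delta)|$ is at least as good as each $\Lambda_z$ over $z$, and finitely many $z$'s cover $Z$ (Lemma \ref{l-bnd-cy-bnd-klt-compl-induction}).
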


For example, we can apply the theorem under the assumptions of \ref{t-bnd-cy-fib} by taking 
$\Delta=0$.\\

%%%%%%%%%%%%%%%%%%%%

{\textbf{\sffamily{Acknowledgements.}}
This work was mainly done at Cambridge University with support of a grant of the Leverhulme Trust. 
Part of it was done while visiting the 
University of Tokyo in March 2018 which was arranged by Yujiro Kawamata and Yoshinori Gongyo, 
and I would like to thank them for their hospitality. It was revised at Tsinghua University.
Thanks to Christopher Hacon for answering our 
questions regarding moduli part of adjunction.  Thanks to Yifei Chen for numerous valuable comments.
Thanks to Jingjun Han, Roberto Svaldi, and Yanning Xu for their useful comments on an earlier version of this paper, and thanks to the referee for helpful comments and suggestions.

\bigskip

%%%%%%%%%%%%%%%%%%%%
%%%%%%%%%%%%%%%%%%%%
\section{\bf Further results and remarks}

In this section, we state further results and remarks.\\

{\textbf{\sffamily{A framework for classification of Fano fibrations.}}
Here we illustrate how the results above can be used towards classification of 
Fano fibrations such as Mori fibre spaces  
in the context of birational classification of algebraic varieties.
Suppose that we are given a normal projective variety $Z$ with a very ample divisor $A$ on it. The aim is 
to somehow classify a given set of Fano fibrations over $Z$. We naturally want to fix or bound 
certain invariants. Let $d$ be a natural number and $\varepsilon<1$ be a positive real number. 
Assume $\mathcal{P}$ is a set of contractions $f\colon X\to Z$ such that 
\begin{itemize}
\item $X$ is projective of dimension $d$ with $\varepsilon$-lc singularities, and 

\item $-K_X$ is ample over $Z$. 
\end{itemize}
For each non-negative integer $l$, let $\mathcal{P}_l$ be the set of all $X \to Z$ in $\mathcal{P}$ 
such that 
$$
l=\min\{a\in \Z^{\ge 0} \mid \mbox{$af^*A-K_X$ is ample}\}.
$$ 
For example, $X\to Z\in \mathcal{P}_0$ means that $-K_X$ is ample, hence $X$ is globally a Fano variety; 
 $X\to Z\in \mathcal{P}_1$ means that $-K_X$ is not ample but $f^*A-K_X$ is ample. Thus we have a disjoint union 
$$
\mathcal{P}=\bigcup_{l\in \Z^{\ge 0}} \mathcal{P}_l.
$$
For each $X\to Z$ in $\mathcal{P}_l$, we can choose a general 
$$
0\le B\sim_\Q lf^*A-K_X
$$ 
so that  
$(X,B)\to Z$ is a 
$$
\mbox{$(d,((l+1)A)^{\dim Z},{\varepsilon})$-Fano type fibration}. 
$$
Thus the set of such  
$X$ forms a bounded family, by Theorem \ref{t-bnd-cy-fib}. That is, we can write $\mathcal{P}$ as a disjoint union of 
bounded sets. The next step is to study each set $\mathcal{P}_l$ more closely to get a finer classification.

\begin{exa}\label{exa-ruled-surfaces}\emph{
Let us look at the simplest non-trivial case of surfaces, that is, consider the set $\mathcal{P}$ of 
Mori fibre spaces $X\to Z=\PP^1$ where $d=\dim X=2$ and $X$ is smooth.  In this case it is well-known that 
$\mathcal{P}$ coincides with the sequence of Hirzebruch surfaces, that is, $\PP^1$-bundles $f_i\colon X_i\to Z$ having a section 
$E_i$ satisfying $E_i^2=-i$, for $i=0,1,\dots$. Applying the divisorial 
adjunction formula gives $K_{X_i}\cdot E_i=i-2$. 
Letting $A$ be a point on $Z$ and using the fact that the Picard group of $X_i$ is generated by $E_i$ and 
a fibre of $f_i$, it is easy to check that $X_0$ and $X_1$ are Fano, and $(i-1)f_i^*A-K_{X_i}$ is ample
but $(i-2)f_i^*A-K_{X_i}$ is not ample for $i\ge 2$. Therefore,  under the notation introduced above, we have 
$$
\mathcal{P}_0=\{X_0\to Z,X_1\to Z\}, ~~\mbox{and}~~ \mathcal{P}_l=\{X_{l+1}\to Z\}~~ \mbox{for}~~ l\ge 1.
$$
Of course we have used the classification of ruled surfaces over $\PP^1$ but the point we want to make is that 
conversely studying $\mathcal{P}$ and each subset $\mathcal{P}_l$ closely will naturally lead us  
to the classification of ruled surfaces over $\PP^1$.}\
\end{exa}

{\textbf{\sffamily{Boundedness of generalised Fano type fibrations.}}}
We will prove the results stated above in the context of generalised pairs. This is important for further 
applications of these results. 
For the basics of generalised pairs, see \cite{BZh} and \cite{B-compl} and the preliminaries below. 
A \emph{generalised log Calabi-Yau fibration} consists of a generalised pair $(X,B+M)$ 
with generalised lc singularities and a contraction $X\to Z$ such that 
$$
K_X+B+M\sim_\R 0/Z.
$$ 
We define generalised Fano type fibrations  similar to \ref{d-FT-fib}. 

\begin{defn}\label{d-gen-FT-fib}
Let $d,r$ be natural numbers and $\varepsilon$ be a positive real number. 
A generalised $(d,r,\varepsilon)$-Fano type (log Calabi-Yau) fibration consists of a projective generalised pair 
$(X,B+M)$ with data $X'\to X$ and $M'$ and a contraction $f\colon X\to Z$ such that we have: 
\begin{itemize}
\item $(X,B+M)$ is generalised $\varepsilon$-lc of dimension $d$,

\item $K_X+B+M\sim_\R f^* L$ for some $\R$-divisor $L$,

\item $-K_X$ is big over  $Z$, i.e. $X$ is of Fano type over $Z$,

\item $A$ is a very ample divisor on $Z$ with $A^{\dim Z}\le r$, and 

\item $A-L$ is ample.   
\end{itemize}
\end{defn}
We usually write $(X,B+M)\to Z$ to denote the fibration.
Theorem \ref{t-log-bnd-cy-fib} can be extended to the case of generalised pairs, that is:

\begin{thm}\label{t-log-bnd-cy-gen-fib}
Let $d,r$ be natural numbers and $\varepsilon,\tau$ be positive real numbers. Consider the set of all
generalised $(d,r,\varepsilon)$-Fano type fibrations $(X,B+M)\to Z$ such that we have  $0\le \Delta\le B$ and the non-zero coefficients of $\Delta$ are $\ge \tau$.
Then the set of such $(X,\Delta)$ is log bounded.
\end{thm}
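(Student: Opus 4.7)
The plan is to split the proof into two parts: first, boundedness of the underlying variety $X$, and second, a bound on the log-degree of $\Delta$ with respect to a very ample divisor on $X$.

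For the first part, I would establish that the underlying varieties $X$ form a bounded family. This is the $\Delta=0$ case of the statement, and is the generalised-pair analogue of Theorem \ref{t-bnd-cy-fib}. It is natural to prove it by the same inductive scheme that the paper uses for \ref{t-bnd-cy-fib}, with the generalised versions of the complement theorem \ref{t-bnd-comp-lc-global-cy-fib}, the singularities result \ref{t-sing-FT-fib-totalspace}, and the Shokurov-type bound \ref{t-sh-conj-bnd-base} substituted throughout. The output is a bounded family $\mathcal{X}\to T$; after replacing a relative polarisation by a bounded multiple, each $X$ carries a very ample divisor $H$ with $H^d$ bounded and with $H\pm K_X$ nef.

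For the second part, I would bound the intersection number $H^{d-1}\cdot\Delta$. Since $A-L$ is ample on $Z$, the divisor $f^*A-(K_X+B+M)\sim_\R f^*(A-L)$ is nef on $X$, so
\[
H^{d-1}\cdot(B+M)\ \le\ H^{d-1}\cdot(f^*A-K_X).
\]
The term $H^{d-1}\cdot M$ is $\ge 0$: pulling back to the birational model $\pi\colon X'\to X$ with $M=\pi_*M'$ and $M'$ nef, the projection formula gives $H^{d-1}\cdot M=(\pi^*H)^{d-1}\cdot M'\ge 0$, since $\pi^*H$ is also nef. Hence
\[
H^{d-1}\cdot\Delta\ \le\ H^{d-1}\cdot B\ \le\ H^{d-1}\cdot(f^*A-K_X).
\]
Now $H^{d-1}\cdot K_X$ is bounded by the first part, and $H^{d-1}\cdot f^*A$ is bounded because the morphism $f$ itself lies in a bounded family: the graph of $f$ is a closed subscheme of $X\times Z$, and both $X$ and $Z$ (the latter via $A^{\dim Z}\le r$) are bounded. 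Combining the resulting bound on $H^{d-1}\cdot\Delta$ with the hypothesis that the non-zero coefficients of $\Delta$ are $\ge\tau$ bounds both the number of irreducible components of $\Delta$ and their $H$-degrees, and a standard Hilbert-scheme argument then produces a log bounded family parametrising the $(X,\Delta)$.

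The main obstacle is the first part. Extending the proof of \ref{t-bnd-cy-fib} to the generalised setting requires maintaining control over the nef part $M$ throughout the induction and its interaction with the canonical bundle formula on $f\colon X\to Z$, which is where the bulk of the paper's technical work lies; the second part is, by comparison, a relatively clean intersection-theoretic bookkeeping computation once the underlying boundedness of $X$ is in hand.
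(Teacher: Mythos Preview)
Your approach is far more laborious than the paper's and contains a genuine gap in the second part.

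The paper's proof is only a few lines: after replacing $\Delta$ by $\frac{1}{2}\Delta$ so that $-(K_X+\Delta)$ is big over $Z$, it applies Lemma~\ref{l-from-gen-fib-to-usual-fib}, which produces an ordinary boundary $\Theta\ge\Delta$ such that $(X,\Theta)\to Z$ is a $(d,r,\frac{\varepsilon}{2})$-Fano type fibration in the usual sense. Then Theorem~\ref{t-log-bnd-cy-fib} applies directly to $(X,\Theta)\to Z$ with the sub-divisor $\Delta\le\Theta$, giving log boundedness of $(X,\Delta)$. The whole point of Lemma~\ref{l-from-gen-fib-to-usual-fib} is precisely to absorb the nef part $M'$ into an honest boundary so that one never has to rerun the induction for generalised pairs. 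You propose instead to redevelop the entire inductive machine (effective birationality, volumes, complements, N\'eron--Severi generators) in the generalised setting; in particular you would need a generalised version of Theorem~\ref{t-bnd-comp-lc-global-cy-fib}, which the paper neither states nor proves.

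Separately, your second part has a circular step. You claim $H^{d-1}\cdot f^*A$ is bounded because ``the graph of $f$ is a closed subscheme of $X\times Z$ and both $X$ and $Z$ are bounded.'' But boundedness of the ambient product does not bound closed subschemes; one needs a bound on the Hilbert polynomial of the graph with respect to, say, $p_1^*H+p_2^*A$, and the degree of $\Gamma_f\cong X$ with respect to this line bundle is exactly $(H+f^*A)^d$, which already contains the term $H^{d-1}\cdot f^*A$ you are trying to control. In the paper's treatment of the ordinary case (proof of Lemma~\ref{l-log-bnd-cy-fib-induction}) this issue is handled by first arranging, via the complement theorem, that $K_X+B$ is $\Q$-linearly a bounded multiple of $f^*A$, so that bounding $G'^{d-1}\cdot(K_{X'}+B')$ automatically bounds $G'^{d-1}\cdot f'^*A$. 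Without an analogous reduction your degree bound does not go through.
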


Note that $M'$ is assumed to be nef globally in \ref{d-gen-FT-fib}. If we only assume that $M'$ is nef over $Z$, then the theorem does not hold. Indeed, if $X\to Z=\PP^1$ is a ruled surface as in Example \ref{exa-ruled-surfaces} 
and if we let $X'=X$ and $M'=-K_X$, then $K_X+M=0$ and all the assumptions of the theorem are satisfied (except global nefness of $M'$) with $(d,r,\varepsilon)=(2,1,1)$ and with $\Delta=0$ but such $X$ are not bounded.\\ 

{\textbf{\sffamily{Singularities on generalised Fano type fibrations.}}}
Theorem \ref{t-sing-FT-fib-totalspace} also holds for generalised pairs, that is: 

\begin{thm}\label{t-sing-gen-FT-fib-totalspace}
Let $d,r$ be natural numbers and $\varepsilon$ be a positive real number. 
 Then there is a positive real number $t$ depending only on $d,r,\varepsilon$ satisfying the following. 
Let $(X,B+M)\to Z$ be any generalised $(d,r,\varepsilon)$-Fano type fibration as in \ref{d-gen-FT-fib}. If 
$P\ge 0$ is any $\R$-Cartier divisor on $X$ such that either  
\begin{itemize}
\item $f^*A+B+M-P$ is pseudo-effective, or 

\item $f^*A-K_X-P$ is pseudo-effective, 
\end{itemize}
then  
$(X,B+t P+M)$ is generalised klt.
\end{thm}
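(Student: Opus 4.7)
The strategy is to deduce Theorem \ref{t-sing-gen-FT-fib-totalspace} from Theorem \ref{t-log-bnd-cy-gen-fib} combined with a generalised pair analogue of Theorem \ref{t-bnd-comp-lc-global-cy-fib}, in the spirit of the remark following Theorem \ref{t-sing-FT-fib-totalspace} for the non-generalised case.

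First I would apply Theorem \ref{t-log-bnd-cy-gen-fib} with $\Delta=0$, concluding that the underlying variety $X$ of any generalised $(d,r,\varepsilon)$-Fano type fibration varies in a bounded family. Fix a very ample divisor $H$ on $X$ coming from this family so that $H^d$ and $|K_X\cdot H^{d-1}|$ are uniformly bounded in terms of $d,r,\varepsilon$. A standard lower bound for generalised log canonical thresholds on bounded families of generalised $\varepsilon$-glc pairs (the generalised pair version of the estimate underlying \cite{B-BAB}) reduces the theorem to producing a uniform upper bound on $P\cdot H^{d-1}$ in terms of $d,r,\varepsilon$.

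To bound $P\cdot H^{d-1}$, use that $H$ ample makes $H^{d-1}$ pair non-negatively with any pseudo-effective class. In the two cases of the hypothesis this yields
\[
P\cdot H^{d-1}\le (f^*A+B+M)\cdot H^{d-1}\quad\text{or}\quad P\cdot H^{d-1}\le (f^*A-K_X)\cdot H^{d-1}.
\]
Using $K_X+B+M\sim_\R f^*L$ together with ampleness of $A-L$ (so that $2f^*A-f^*(A+L)$ is nef), both right-hand sides reduce to a bound on $f^*A\cdot H^{d-1}$ plus quantities already bounded by boundedness of $X$. To control $f^*A\cdot H^{d-1}$ I would invoke a generalised pair version of Theorem \ref{t-bnd-comp-lc-global-cy-fib}: for bounded positive integers $n,m$, there is $\Lambda\ge 0$ with $(X,\Lambda+M)$ generalised klt and $n(K_X+\Lambda+M)\sim mf^*A$. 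Combined with boundedness of $X$ and coefficients of $\Lambda$ lying in $\tfrac{1}{n}\Z\cap[0,1]$, this forces $(K_X+\Lambda+M)\cdot H^{d-1}$, and hence $f^*A\cdot H^{d-1}$, to be uniformly bounded, closing the chain.

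The main obstacle is the generalised pair analogue of Theorem \ref{t-bnd-comp-lc-global-cy-fib} used in the last step: since the version stated in the introduction is only for ordinary pairs, its generalised counterpart must be established, and in the paper this is almost certainly done by an intertwined inductive argument together with Theorem \ref{t-log-bnd-cy-gen-fib} itself, relying on the theory of complements for generalised pairs from \cite{B-compl}. A secondary subtlety is adapting the ``bounded degree implies bounded-below generalised lct'' principle to the setting with a nontrivial nef part $M'$, which requires working on a common log resolution of $X$ dominating the model carrying $M'$ and controlling intersection numbers uniformly across the family.
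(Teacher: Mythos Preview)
Your proposal has a genuine gap, and it also misjudges what the paper actually does.

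The two ingredients you lean on — a generalised-pair version of Theorem \ref{t-bnd-comp-lc-global-cy-fib} (klt complements) and a generalised-pair version of the bounded lct principle from \cite{B-BAB} — are neither proved in this paper nor standard results you can quote. You flag this yourself as ``the main obstacle'' and predict that the paper handles it by an intertwined induction on generalised complements; that prediction is wrong. The paper proves no generalised klt complement theorem and no generalised analogue of Theorem \ref{t-bnd-lct}. So as written your argument does not close.

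The paper's actual route is much shorter and avoids both missing ingredients. One first reduces to the case where $f^*A-K_X-P$ is pseudo-effective (exactly as in your setup). Then, on a log resolution $\phi\colon X'\to X$ carrying $M'$, one writes $\phi^*(B+M+f^*A)\sim_\R G'+H'$ with $H'$ ample and, for small $\alpha>0$, perturbs to an \emph{ordinary} boundary
\[
\Delta'=B'-\alpha\phi^*B-\alpha E'+\alpha G'+R',\qquad 0\le R'\sim_\R \alpha H'+(1-\alpha)M',
\]
so that $(X,\Delta)\to Z$ is an ordinary $(d,r,\tfrac{\varepsilon}{2})$-Fano type fibration (this is the same mechanism as Lemma \ref{l-from-gen-fib-to-usual-fib}). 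Now the already-proved ordinary Theorem \ref{t-sing-FT-fib-totalspace} gives a fixed $t$ with $(X,\Delta+2tP)$ klt, i.e.\ the coefficients of $\Delta'+2tP'$ are $\le 1$. Letting $\alpha\to 0$ (noting $t$ is independent of $\alpha$) forces the coefficients of $B'+2tP'$ to be $\le 1$, hence those of $B'+tP'$ are $<1$, which is exactly generalised klt for $(X,B+tP+M)$.

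In short: rather than building generalised complements and a generalised lct bound, the paper absorbs $M'$ into an ordinary boundary and invokes the ordinary case directly. If you want to salvage your outline, the honest fix is to replace your appeal to generalised complements/lct by this perturbation-and-limit trick.
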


In particular, the theorem can be applied to any 
$$
0\le P\sim_\R f^*A+B+M
$$ 
or any 
$$
0\le P\sim_\R f^*A-K_X
$$ 
assuming $P$ is $\R$-Cartier. 

Adjunction for fibrations also makes sense for generalised pairs. That is, if 
$$
(X,B+M)\to Z
$$ 
is a 
generalised log Calabi-Yau fibration, then we can define a discriminant divisor $B_Z$ and a moduli divisor 
$M_Z$ giving 
$$
K_X+B+M\sim_\R f^*(K_Z+B_Z+M_Z).
$$
Moreover, for any birational contraction $Z'\to Z$ from a normal variety we can define the discriminant divisor $B_{Z'}$ 
whose pushdown to $Z$ is just $B_Z$. In this way we get the discriminant b-divisor  ${\bf B}_Z$.
See \ref{fib-adj-setup} for more details.

Now we state a generalised version of Shokurov's conjecture \ref{conj-sh-sing-fib}.

\begin{conj}\label{conj-sh-sing-gen-fib}
Let $d$ be a natural number and $\varepsilon$ be a positive real number. 
 Then there is a positive real number $\delta$ depending only on $d,\varepsilon$ satisfying the following. 
 Assume that $(X,B+M)$ is a generalised pair with data $X'\to X\overset{f}\to Z$ and $M'$ 
where $f$ is a contraction such that
\begin{itemize}
\item $(X,B+M)$ is generalised $\varepsilon$-lc of dimension $d$,

\item $K_X+B+M\sim_\R 0/Z$, and  

\item $-K_X$ is big over $Z$.
\end{itemize}
Then the discriminant b-divisor ${\bf B}_Z$ has coefficients in $(-\infty, 1-\delta]$.
\end{conj}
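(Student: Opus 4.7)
The statement is local on $Z$: fixing a birational model $Z'\to Z$ and a prime divisor $D\subset Z'$, the coefficient of $D$ in ${\bf B}_{Z'}$ equals $1-t_D$, where $t_D$ is the generalised lc threshold over the generic point $\eta_D$ of the pullback of $f^*D$ with respect to the crepant transform of $(X,B+M)$ on a model over $Z'$. So the task is to show a uniform bound $t_D\ge \delta=\delta(d,\varepsilon)>0$. My plan is to reduce the general conjecture to the already established bounded-base version Theorem \ref{t-sh-conj-bnd-base}.

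Concretely, I would proceed as follows. Passing to a higher birational model I may assume $D\subset Z$. Since $t_D$ is insensitive to what happens away from $\eta_D$, I shrink $Z$ to a Zariski open neighbourhood of $\eta_D$ and then compactify it, via Nagata–Hironaka, to a normal projective $\bar Z$ on which $D$ extends to a prime divisor $\bar D$. After taking closures and performing a crepant dlt modification, I obtain a projective generalised pair $(\bar X,\bar B+\bar M)\to \bar Z$ which is generalised log Calabi-Yau over $\bar Z$ and remains generalised $\varepsilon$-lc and Fano type over $\bar Z$ on a neighbourhood of $f^{-1}(\eta_D)$. I then try to choose a very ample $A$ on $\bar Z$ with $A^{\dim \bar Z}\le r=r(d,\varepsilon)$ and $A-L$ ample, where $K_{\bar X}+\bar B+\bar M\sim_\R f^*L$, so that the compactified fibration belongs to the class \ref{d-gen-FT-fib}; applying Theorem \ref{t-sh-conj-bnd-base} then supplies the desired $\delta$.

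The main obstacle is this last step: after a naive compactification there is no a priori control on the numerical class of $L$, so the constant $r$ will depend on the particular fibration. The idea to circumvent this is to exploit the locality in $\eta_D$, namely that all data away from $\eta_D$ can be reshaped without affecting $t_D$. I would use a relative MMP over $\bar Z$ away from $\eta_D$, together with Theorem \ref{t-bnd-comp-lc-global-cy-fib} applied to produce bounded relative-global complements, in order to modify $\bar B$ and $\bar M$ and to pick $A$ so that $A-L$ becomes ample with $A^{\dim \bar Z}$ controlled purely by $d,\varepsilon$. Verifying that these modifications preserve the generalised $\varepsilon$-lc condition and the Fano type structure, uniformly in the input fibration, is the crux of the problem.

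As a fallback, one could attempt induction on $d=\dim X$, with the case $d=1$ trivial. For the inductive step, choose a dlt modification of $(X,B+M+t_D f^*D)$ and a divisor $E$ with generalised log discrepancy $0$ mapping onto $\bar D$. Generalised adjunction equips $E$ with a $(d-1)$-dimensional generalised log Calabi-Yau fibration over a subvariety of $\bar Z$, and the ampleness of $-K_X$ over $Z$ transfers through the construction to a Fano type structure on $E$ over its base. The inductive hypothesis applied to this lower-dimensional fibration, combined again with Theorem \ref{t-bnd-comp-lc-global-cy-fib} to control the induced boundary on $E$, should force $t_D$ to be bounded from below by a positive constant depending only on $d$ and $\varepsilon$. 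The delicate point here is that the adjunction output on $E$ is only a generalised pair, and the hypotheses of the conjecture (especially $-K_E$ big over the induced base with uniform constants) must be verified carefully before the induction closes.
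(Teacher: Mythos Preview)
The statement you are attempting to prove is a \emph{conjecture} in the paper (Conjecture~\ref{conj-sh-sing-gen-fib}), not a theorem; the paper does not prove it in this generality. What the paper does prove is the bounded-base variant, Theorem~\ref{t-sh-conj-bnd-base-gen-fib}, where one assumes in addition that $Z$ carries a very ample $A$ with $A^{\dim Z}\le r$ and $A-L$ ample. The full conjecture without such a bound on the base remains open.

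Your main approach---compactify $Z$ and then invoke the bounded-base theorem---runs into a genuine obstruction which you yourself flag but do not resolve: after compactification there is no mechanism to choose $A$ with $A^{\dim Z}$ bounded purely in terms of $d,\varepsilon$. The tools you propose to ``reshape'' the data, in particular Theorem~\ref{t-bnd-comp-lc-global-cy-fib}, are themselves stated only for $(d,r,\varepsilon)$-Fano type fibrations and hence presuppose exactly the bounded-base hypothesis you are trying to manufacture; invoking them here is circular. Moreover, modifying $\bar B,\bar M$ over the boundary of the compactification does nothing to change the variety $\bar Z$ or lower $A^{\dim \bar Z}$, so you cannot hope to force $r$ to depend only on $d,\varepsilon$ this way.

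Your fallback induction has the same defect. Passing to an lc place $E$ over $X$ via generalised adjunction produces a lower-dimensional generalised pair, but the induced fibration $E\to\text{(image)}$ need not have $-K_E$ big over its base, and even if it did, the base of that fibration is again not a priori bounded. So neither the inductive hypothesis (which is the full conjecture in dimension $d-1$) nor the bounded-base theorem can be applied without additional input. This is precisely the gap between Theorem~\ref{t-sh-conj-bnd-base-gen-fib} and the conjecture: dropping the boundedness of $Z$ is the hard part, and nothing in your outline bridges it.
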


Note that, in particular, we are assuming that $M'$ is nef over $Z$ as this is part of the definition of a generalised pair. 
The next result says that \ref{conj-sh-sing-gen-fib} holds in the setting of generalised Fano type fibrations.

\begin{thm}\label{t-sh-conj-bnd-base-gen-fib}
Let $d,r$ be natural numbers and $\varepsilon$ be a positive real number. 
 Then there is a positive real number $\delta$ depending only on $d,r,\varepsilon$ satisfying the following. 
Let $(X,B+M)\to Z$ be any generalised $(d,r,\varepsilon)$-Fano type fibration as in \ref{d-gen-FT-fib}. 
Then  the discriminant b-divisor ${\bf B}_Z$ has coefficients in $(-\infty, 1-\delta]$.
\end{thm}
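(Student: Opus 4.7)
The plan is to convert the discriminant bound into a generalised lc threshold bound and apply Theorem \ref{t-sing-gen-FT-fib-totalspace}. Fix a divisorial valuation $v$ over $Z$ and take a birational modification $Z'\to Z$ extracting $v$ as a prime divisor $D$, together with a compatible birational model $X'\to X$ so that $X'\to Z'$ is a morphism. Writing $(X',B'+M')$ for the crepant pullback, $(X',B'+M')$ is generalised $\varepsilon$-lc and the coefficient of $D$ in $B_{Z'}$ equals $1-t_D$, where
\[
t_D \;=\; \inf_{E}\frac{a_E(X',B'+M')}{v_E(f'^*D)},
\]
the infimum being over prime divisors $E$ over $X'$ with $v_E(f'^*D)>0$ centred over $\eta_D$. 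Since $a_E\ge\varepsilon$, it suffices to show $t_D\ge\delta$ uniformly to obtain the theorem.

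Assuming first that $D$ is a prime divisor on $Z$ itself, I would choose an effective $\R$-Cartier divisor $Q$ on $Z$ with $\mathrm{mult}_D Q\ge 1$ together with $\lambda>0$ such that $A-\lambda Q$ is pseudo-effective, and set $P=\lambda f^*Q$. Then $f^*A+B+M-P = f^*(A-\lambda Q)+B+M$ is pseudo-effective, so Theorem \ref{t-sing-gen-FT-fib-totalspace} yields that $(X,B+tP+M)$ is generalised klt for some $t=t(d,r,\varepsilon)>0$. For every $E$ with centre mapping to $D$, the complement $Q-\mathrm{mult}_D Q\cdot D$ avoids $\eta_D$, so $v_E(f^*Q)=\mathrm{mult}_D Q\cdot v_E(f^*D)$; generalised klt-ness then forces $v_E(f^*D)<a_E/(t\lambda\,\mathrm{mult}_D Q)$, and hence $t_D\ge t\lambda\,\mathrm{mult}_D Q$. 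The general case---$D$ on a higher model or of large degree with respect to $A$---would be reduced to the above via birational modifications of $Z$, tracking the bounded-family data and adjusting $r$ and $\varepsilon$ carefully.

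The main obstacle is producing $(Q,\lambda)$ with $\lambda\cdot\mathrm{mult}_D Q$ bounded below by a positive constant depending only on $(d,r,\varepsilon)$. For $D$ of large degree with respect to $A$, the pseudo-effective threshold $\sup\{\lambda:A-\lambda D\text{ pseudo-effective}\}$ can be arbitrarily small, so the argument has to combine the above with Theorem \ref{t-log-bnd-cy-gen-fib}: log boundedness of $(X,\Delta)$ for $\Delta$ a small multiple of the vertical support of $B$ forces $f\colon X\to Z$ to vary in a bounded family of morphisms, within which fibre multiplicities are constructible and uniformly bounded; the remaining vertical components not appearing in the support of $B$ are handled by a further application of Theorem \ref{t-sing-gen-FT-fib-totalspace}. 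I expect the full proof to be an intertwined induction combining \ref{t-sing-gen-FT-fib-totalspace}, \ref{t-log-bnd-cy-gen-fib}, and the klt complements theorem \ref{t-bnd-comp-lc-global-cy-fib}, following the pattern noted in the introduction for the analogous non-generalised results.
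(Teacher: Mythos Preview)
Your approach is correct in spirit for exactly one case but misses the key structural dichotomy that makes the proof go through.

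When the centre of $D$ on $Z$ is a \emph{closed point} $z$, your argument is essentially what the paper does: take $N\in|A|$ passing through $z$; then $f^*A+B+M-f^*N\sim B+M$ is pseudo-effective, so Theorem \ref{t-sing-gen-FT-fib-totalspace} bounds the generalised lc threshold of $f^*N$ from below by some $\delta$. Since $N$ is Cartier and passes through $z$, on any higher model $Z'\to Z$ one has $\psi^*N\ge D$, hence the threshold of $f'^*D$ over the generic point of $D$ is at least $\delta$. Note that here $\lambda=1$ and $\mathrm{mult}_D(\psi^*N)\ge 1$ come for free---there is no obstacle in this case.

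The genuine gap is for $D$ with \emph{positive-dimensional centre} on $Z$. Here your proposed fixes do not work. Passing to a birational model $Z'$ of $Z$ destroys the $(d,r,\varepsilon)$-Fano type fibration structure: you have no control over a very ample divisor on $Z'$ of bounded top self-intersection, so the hypotheses of Theorem \ref{t-sing-gen-FT-fib-totalspace} are lost. And invoking log boundedness of the morphism via Theorem \ref{t-log-bnd-cy-gen-fib} does not bound discriminant coefficients for divisors over $Z$, only data on $X$ itself. The paper's device is instead to \emph{cut by a general hyperplane and induct on dimension}: take general $H\in|A|$, set $G=f^*H$; then $(G,B_G+M_G)\to H$ is a generalised $(d-1,2^{d-1}r,\varepsilon)$-Fano type fibration, and a component $C$ of $D\cap H'$ lies over $H$. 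By induction in dimension $d-1$ the discriminant coefficient of $C$ is $\le 1-\delta$. Finally, a direct comparison---choosing a component $S$ on $X'$ computing the threshold of $f'^*D$ and a component $T$ of $S\cap G'$ mapping onto $C$---shows that the threshold for $D$ is at least the threshold for $C$, giving the same bound. This hyperplane-section induction is the missing idea; once you have it, no intertwining with \ref{t-log-bnd-cy-gen-fib} or \ref{t-bnd-comp-lc-global-cy-fib} is needed for this theorem.
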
\

{\textbf{\sffamily{Plan of the paper.}}}
We will prove  Theorems \ref{t-bnd-cy-fib}, \ref{t-log-bnd-cy-fib}, 
\ref{t-log-bnd-cy-gen-fib}, \ref{t-sing-FT-fib-totalspace},  \ref{t-bnd-comp-lc-global-cy-fib} in Section 4, and 
Theorems \ref{t-sh-conj-bnd-base}, \ref{t-sing-gen-FT-fib-totalspace}, and
\ref{t-sh-conj-bnd-base-gen-fib} in Section 5.

\bigskip

%%%%%%%%%%%%%%%%%%%%
%%%%%%%%%%%%%%%%%%%%
\section{\bf Preliminaries}

All the varieties in this paper are quasi-projective over a fixed algebraically closed field $k$ of characteristic zero
unless stated otherwise. 

\subsection{Numbers}

Let $\mathfrak{R}$ be a subset of $[0,1]$. Following [\ref{PSh-II}, 3.2] we define 
$$
\Phi(\mathfrak{R})=\left\{1-\frac{r}{m} \mid r\in \mathfrak{R},~ m\in \N\right\}
$$
to be the set of \emph{hyperstandard multiplicities} associated to $\mathfrak{R}$.

\subsection{Contractions}

By a \emph{contraction} we mean a projective morphism $f\colon X\to Y$ of varieties 
such that $f_*\mathcal{O}_X=\mathcal{O}_Y$ ($f$ is not necessarily birational). In particular, 
$f$ is surjective and has connected fibres.

\subsection{Divisors}\label{ss-divisors}

Let $X$ be a variety. If $D$ is a prime divisor on birational models of $X$ whose centre on $X$ is non-empty, 
then we say $D$ is a prime divisor \emph{over} $X$. 

Now let $X$ be normal and $M$ be an $\R$-divisor on $X$. The coefficient of a prime divisor $D$ in $M$ is denoted $\mu_DM$. More generally, if $M$ is $\R$-Cartier and if $D$ is a prime divisor over $X$, $\mu_DM$ means the coefficient of $D$ in the pullback of $M$ to any resolution of $X$ on which $D$ is a divisor.

Again let $X$ be normal and $M$ be an $\R$-divisor on $X$.
We let 
$$
|M|_\R=\{N \ge 0\mid N\sim_\R M\}.
$$
Recall that $N\sim_\R M$ means that 
$N-M=\sum r_i\Div(\alpha_i)$ for certain real numbers $r_i$ and rational functions $\alpha_i$. 
When all the $r_i$ can be chosen to be rational numbers, then we write $N\sim_\Q M$.
We define $|M|_\Q$ similarly by replacing $\sim_\R$ with $\sim_\Q$. 

Assume $\rho\colon X\bir Y/Z$ is a rational map of normal varieties projective over a base variety $Z$. 
For an $\R$-Cartier divisor $L$ on $Y$ we define the pullback $\rho^*L$ as follows. Take a 
common resolution $\phi\colon W\to X$ and $\psi\colon W\to Y$. Then let $\rho^*L:=\phi_*\psi^*L$. 
It is easy to see that this does not depend on the choice of the common resolution as 
any two such resolutions are dominated by a third one.

\begin{lem}\label{l-semi-ample-div}
Assume $Y\to X$ is a contraction of normal projective varieties, $C$ is a nef $\R$-divisor on 
$Y$ and $A$ is the pullback of an ample $\R$-divisor on $X$. If $C$ is semi-ample over $X$, then 
$C+aA$ is semi-ample (globally) for any real number $a>0$.
\end{lem}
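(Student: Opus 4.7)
The plan is to use the relative ample model to reduce the problem to showing ampleness of a single divisor on an intermediate variety, and then to handle the small-$a$ case using the nefness of $C$.

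First, since $C$ is nef and semi-ample over $X$, I would invoke the existence of the relative ample model: there exists a contraction $g\colon Y\to T$ over $X$ together with an $\R$-Cartier divisor $H$ on $T$ that is ample over $X$ such that $C\sim_\R g^*H$. Writing $\pi\colon Y\to X$ for the given contraction and factoring $\pi=h\circ g$ with $h\colon T\to X$, and writing $A=\pi^*A_X$ for some ample $\R$-divisor $A_X$ on $X$, we get
$$
C+aA\sim_\R g^*(H+ah^*A_X).
$$
It therefore suffices to prove that $H+ah^*A_X$ is ample on $T$ for every real $a>0$, since then $C+aA$ is $\R$-linearly equivalent to the pullback of an ample $\R$-divisor, hence semi-ample.

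Next I would observe that $H$ itself is nef on $T$. Indeed, given any curve $\Gamma\subset T$, surjectivity of $g$ produces a curve $\Gamma'\subset Y$ with $g(\Gamma')=\Gamma$ (take an irreducible component of $g^{-1}(\Gamma)$ dominating $\Gamma$ and cut down by hyperplanes), and then $0\le C\cdot \Gamma'=\deg(g|_{\Gamma'})\,H\cdot \Gamma$ forces $H\cdot \Gamma\ge 0$. Since $H$ is $h$-ample and $A_X$ is ample on $X$, the standard relative-ampleness criterion produces some $a_0>0$ with $H+a_0h^*A_X$ ample on $T$.

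Finally, I would split into two cases for arbitrary $a>0$. If $a\ge a_0$, then
$$
H+ah^*A_X=(H+a_0h^*A_X)+(a-a_0)h^*A_X
$$
is the sum of an ample and a nef $\R$-divisor, hence ample. If $0<a<a_0$, then
$$
H+ah^*A_X=\frac{a}{a_0}(H+a_0h^*A_X)+\left(1-\frac{a}{a_0}\right)H
$$
is a positive combination of the ample $\R$-divisor $H+a_0h^*A_X$ and the nef $\R$-divisor $H$, so again ample. Pulling back by $g$ then gives semi-ampleness of $C+aA$. The main subtlety is the small-$a$ regime: ampleness of $H+ah^*A_X$ there is not automatic from the relative positivity alone but crucially uses that the nefness of $C$ descends to nefness of $H$; the rest is the $\R$-coefficient bookkeeping inherent in the existence of the relative ample model.
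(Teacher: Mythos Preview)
Your proof is correct and follows essentially the same strategy as the paper: pass to the relative ample model of $C$ over $X$, then use that the resulting divisor is nef together with a convex combination argument to upgrade relative ampleness to global ampleness. The only cosmetic differences are that the paper replaces $Y$ by the intermediate variety (your $T$) rather than keeping track of pullbacks, and it avoids your case split by simply choosing $b>a$ from the start so that only the convex-combination case $C+aA=(1-\tfrac{a}{b})C+\tfrac{a}{b}(C+bA)$ is needed.
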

\begin{proof}
Since $C$ is semi-ample over $X$, it defines a contraction $\phi \colon Y\to Z/X$ to a normal projective variety. Replacing $Y$ 
with $Z$ and replacing $C,A$ with $\phi_*C,\phi_*A$, respectively, we can assume $C$ is ample over $X$. Pick $a>0$. 
Now $C+bA$ is ample for some $b\gg a$ because 
$A$ is the pullback of an ample divisor on $X$. Since $C$ is globally nef, 
$$
C+tbA=(1-t)C+t(C+bA)
$$ 
is ample for any $t\in (0,1]$. In particular, taking $t=\frac{a}{b}$ we see that $C+aA$ is ample.
\end{proof}

\subsection{Linear systems}

Let $X$ be a normal projective variety and $M$ be an integral Weil divisor on $X$. 
A \emph{sub-linear system} $L\subseteq |M|$ is given by some linear subspace $V\subseteq \PP(H^0(M))$, that is, 
$$
L=\{\Div(\alpha)+M \mid \alpha \in V\}.
$$
The general members of $L$, by definition, are those $\Div(\alpha)+M$ 
where $\alpha$ is in some given non-empty open subset $W\subseteq V$ (open in the Zariski topology). 
Being a general member then depends on the choice of $W$ but we usually shrink it if necessary without notice.

\begin{lem}\label{l-gen-element-lin-system}
Let $X$ be a normal projective variety, $M$ be an integral Weil divisor on $X$, and $L\subseteq |M|$ be a sub-linear system. 
Assume that $x\in X$ is a smooth closed point and that some member of $L$ is smooth at $x$. 
Then a general member of $L$ is smooth at $x$.
\end{lem}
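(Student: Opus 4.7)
The plan is to reduce the smoothness condition to a non-vanishing condition for a single linear map of vector spaces, and then conclude by the openness of non-vanishing.

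First, I would localise. Since $x$ is a smooth closed point, $M$ is Cartier in some affine neighbourhood $U$ of $x$, so there is a rational function $f$ on $X$ with $M|_U=\Div(f|_U)$. Let $\widetilde V\subseteq H^0(X,\mathcal O_X(M))$ denote the affine cone over $V$. After fixing a basis of $\widetilde V$ and shrinking $U$ once for the whole basis, the product $\alpha f$ is a regular function on $U$ for every $\alpha\in\widetilde V$, and it is a local equation at $x$ for the divisor $D_\alpha:=\Div(\alpha)+M$.

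The central observation is that $D_\alpha$ is smooth at $x$ iff $\alpha f\notin\mathfrak m_x^2$. Indeed, if $\alpha f$ is a unit at $x$ then $x\notin D_\alpha$; if $\alpha f\in\mathfrak m_x\setminus\mathfrak m_x^2$ then, since $x$ is a smooth point of $X$, the local equation has non-vanishing differential and so $D_\alpha$ is smooth at $x$; and if $0\ne\alpha f\in\mathfrak m_x^2$ then $D_\alpha$ has multiplicity at least two at $x$ and so is singular there. I would then introduce the $k$-linear map
\[
\psi\colon\widetilde V\longrightarrow\mathcal O_{X,x}/\mathfrak m_x^2,\qquad \alpha\longmapsto\alpha f\ \mathrm{mod}\ \mathfrak m_x^2,
\]
so that $D_\alpha$ is smooth at $x$ precisely when $\psi(\alpha)\ne 0$.

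Since $\psi$ is linear, its zero locus $\ker\psi\subseteq\widetilde V$ is a linear subspace, and because the condition $\psi(\alpha)\ne 0$ is stable under scaling, $\ker\psi$ descends to a closed linear subvariety $V_0\subseteq V$. The hypothesis that some member of $L$ is smooth at $x$ translates (covering both the case $x\in D_{\alpha_0}$ and the case $x\notin D_{\alpha_0}$) into $\psi\ne 0$, hence $V_0\subsetneq V$, so $W:=V\setminus V_0$ is a non-empty Zariski open subset along which all members $D_\alpha$ are smooth at $x$.

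There is no serious obstacle here; the only mild point to watch is that the criterion $\psi(\alpha)\ne 0$ must correctly handle the vacuous case $x\notin D_\alpha$, which it does since $\alpha f$ being a unit at $x$ already implies $\psi(\alpha)\ne 0$.
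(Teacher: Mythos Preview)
Your proof is correct and essentially the same as the paper's: both reduce smoothness at $x$ to the condition that the local equation of the divisor does not lie in $\mathfrak{m}_x^2$, observe that the sections failing this form a linear subspace of the affine cone over $V$, and conclude from the hypothesis that this subspace is proper. The only cosmetic difference is that the paper first replaces $M$ by a linearly equivalent divisor whose support avoids $x$ (so that sections of $H^0(M)$ are directly regular at $x$), whereas you trivialise via a rational function $f$; the resulting linear map into $\mathcal{O}_{X,x}/\mathfrak{m}_x^2$ is the same.
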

\begin{proof}
Since $M$ is Cartier at $x$, we can move $M$ hence assume $x$ is not contained in $\Supp M$.
Then considering $H^0(M)$ as a subset of the function field $K$ of $X$, each element of $H^0(M)$ is regular at $x$. Now there is a linear subspace $V'\subset H^0(M)$ so that $\PP(V')=V$. 
There is a $k$-linear map $\theta\colon V'\to \mathcal{O}_{X,x}$ to the local ring at $x$. Let $m_x$ be the maximal ideal of $\mathcal{O}_{X,x}$. Then $\theta^{-1}m_x^2$ is a linear subspace of $V'$.
Moreover, the divisor given by an element $0\neq \alpha\in V'$ is smooth at $x$ iff multiplicity of the divisor at $x$ is $\le 1$ iff $\alpha \notin \theta^{-1}m_x^2$ (note that multiplicity $0$ is also possible because the divisor may not pass through $x$). 
 
By assumption, there is $D\in  L$ such that $D$ is smooth at $x$. So there is $0\neq \alpha\in V'$ whose divisor is smooth at $x$. Thus  $V'\neq \theta^{-1}m_x^2$, so the general members of $V$ give divisors that are smooth at $x$ because $\PP(\theta^{-1}m_x^2)\subsetneq V$. 
\end{proof}

\begin{lem}\label{l-gen-element-v.ample-div-passing-x,y}
Let $X$ be a normal projective variety and $A$ be a very ample divisor on $X$. 
For each pair of closed points $x,y\in X$, let $L_{x,y}$ be the sub-linear system of $|2A|$ 
consisting of the members that pass through both $x,y$. Then there is a 
non-empty open subset $U\subseteq X$ such that for any pair of closed points $x,y\in U$, a general 
member of $L_{x,y}$ is smooth at both $x,y$.
\end{lem}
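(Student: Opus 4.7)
The plan is to take $U$ to be the smooth locus $X^{\mathrm{sm}}$ of $X$, which is a nonempty open subset. For closed points $x,y\in U$, the sub-linear system $L_{x,y}\subseteq |2A|$ has codimension at most two. My first step will be to apply Lemma \ref{l-gen-element-lin-system} twice, once at $x$ and once at $y$: both are smooth points of $X$ and $2A$ is Cartier there, so if I can exhibit one member of $L_{x,y}$ smooth at $x$ and one (possibly different) member of $L_{x,y}$ smooth at $y$, then the general-member conclusion follows, because smoothness at a given point is an open condition on the sub-linear system.

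To produce these two members in the case $x\ne y$, I will use the decomposition $2A\sim A+A$. Since $A$ is very ample, $|A|$ defines a closed embedding $X\hookrightarrow \PP^N$. Pick $D_1\in |A|$ with $x\in D_1$ and $D_1$ smooth at $x$: this amounts to choosing a hyperplane through $x$ that does not contain the tangent space $T_xX$, which exists because $x\in X^{\mathrm{sm}}$ forces $T_xX\subsetneq T_x\PP^N$. Next, pick $D_2\in |A|$ with $y\in D_2$ and $x\notin D_2$, which is possible because a very ample linear system separates points. Then $D_1+D_2\in L_{x,y}$, and in a neighbourhood of $x$ the divisor $D_1+D_2$ coincides with $D_1$ (since $x\notin\Supp D_2$), so $D_1+D_2$ is smooth at $x$. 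Exchanging the roles of $x$ and $y$ produces a member of $L_{x,y}$ smooth at $y$. The degenerate case $x=y$ is the same, even easier: $L_{x,y}=L_x$, and I take $D_1\in |A|$ smooth at $x$ and passing through $x$, together with any $D_2\in |A|$ avoiding $x$.

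I do not foresee a substantial obstacle here; the argument is a standard Bertini-type construction for pairs of points. The only inputs are the separation properties coming from very ampleness of $A$ (separation of points and of tangent directions) and Lemma \ref{l-gen-element-lin-system}, which together handle the passage from ``some member smooth'' to ``general member smooth''. The uniform open set $U=X^{\mathrm{sm}}$ is the same for every pair $(x,y)$, as required by the statement.
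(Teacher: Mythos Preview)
Your proof is correct and follows essentially the same strategy as the paper: produce members of $L_{x,y}$ as sums $D+E$ with $D,E\in|A|$, then apply Lemma \ref{l-gen-element-lin-system}. The differences are minor but worth noting. The paper first invokes Bertini on $|A|$ and a counting argument (hyperplanes $H_z$ in the dual space) to find an open set $U$ on which a \emph{general} member of $|A|$ through $x$ is smooth at $x$, and then picks one such member avoiding $y$; this yields a single $D+E$ smooth at both points. You instead use the embedding directly: a hyperplane through $x$ missing the tangent space gives $D_1$ smooth at $x$, and separation of points gives $D_2$ through $y$ avoiding $x$; you then apply Lemma \ref{l-gen-element-lin-system} at $x$ and at $y$ separately and intersect the two open conditions inside the irreducible linear system $L_{x,y}$. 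Your route is slightly more elementary (no Bertini step) and yields the larger open set $U=X^{\rm sm}$, at the cost of needing the observation that two nonempty opens in a projective space meet.
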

\begin{proof}
A general member of $|A|$ is the element given by a section in some non-empty open subset 
$W$ of $\PP(H^0(A))$. Perhaps after shrinking $W$, we can assume that the restriction of these 
general members to the smooth locus of $X$ are smooth. 

We claim that there is a finite set $\Pi$  of closed points of $X$ (depending on $W$) such 
that for each closed point $x\in X\setminus \Pi$ 
 we can find a general member of $|A|$ passing through $x$:  indeed since $A$ is very ample, 
the set $H_z$ of elements of $\PP(H^0(A))$ vanishing at a given closed point $z$ is a hyperplane, 
and for distinct points $z,z'$ we have $H_z\neq H_{z'}$; but there are at most finitely many $z$ 
with $W\cap H_z=\emptyset$ because the complement of $W$ in $\PP(H^0(A))$ is a proper closed set; 
hence for any closed point $x$ other than those finite set we can 
find an element of $W$ vanishing at $x$ which proves the claim.  
Thus there is a non-empty open subset $U$ of the smooth locus of $X$ 
such that  for each closed point $x\in U$ we can find a 
member of $|A|$ passing through $x$ which is smooth at $x$.

Now pick a closed point $x\in U$ and let $L_x$ be the sub-linear system of $|A|$ consisting of 
members passing through $x$. By the above arguments some member of $L_x$ is smooth at $x$, 
hence a general member of 
$L_x$ is also smooth at $x$, by Lemma \ref{l-gen-element-lin-system}. 
In particular, for any other closed point $y\in U$, we can pick a member of  
$L_x$ smooth at $x$ but not containing $y$.

Now let $x,y\in U$ be a pair of distinct closed points and let $L_{x,y}$ be the sub-linear system 
of $|2A|$ consisting of members passing through $x,y$. 
By the previous paragraph, there exists a member $D$ (resp. $E$) of $|A|$ which passes through and 
smooth at $x$ (resp. $y$) but not containing $y$ (resp. $x$). Then $D+E$ is a member of $L_{x,y}$ 
passing through $x,y$ and smooth at both $x,y$. Therefore, a general member  of $L_{x,y}$ 
is smooth at both $x,y$, by Lemma \ref{l-gen-element-lin-system}.
\end{proof}

\subsection{Pairs and singularities}\label{ss-pairs}
A \emph{pair} $(X,B)$ consists of a normal variety $X$ and 
an $\R$-divisor $B\ge 0$ such that $K_X+B$ is $\R$-Cartier. 
Let $\phi\colon W\to X$ be a log resolution of $(X,B)$ and let 
$$
K_W+B_W=\phi^*(K_X+B).
$$
 The \emph{log discrepancy} of a prime divisor $D$ on $W$ with respect to $(X,B)$ 
is $1-\mu_DB_W$ and it is denoted by $a(D,X,B)$.
We say $(X,B)$ is \emph{lc} (resp. \emph{klt})(resp. \emph{$\varepsilon$-lc}) 
if $a(D,X,B)$ is $\ge 0$ (resp. $>0$)(resp. $\ge \varepsilon$) for every $D$. Note that if $(X,B)$ is $\varepsilon$-lc, then 
automatically $\varepsilon\le 1$ because $a(D,X,B)\le 1$ if $D$ is a prime divisor on $X$. 

A \emph{non-klt place} of $(X,B)$ is a prime divisor $D$ over $X$, that is, 
on birational models of $X$, such that $a(D,X,B)\le 0$. A \emph{non-klt centre} is the image on 
$X$ of a non-klt place. 

\emph{Sub-pairs} and their singularities are defined similarly by letting the coefficients of $B$ to be any real number. 
In this case instead of lc, klt, etc, we say sub-lc, sub-klt, etc.

\begin{lem}\label{l-gen-element-v.ample-div-passing-x,y-contraction}
Let $(X,B)$ be a projective $\varepsilon$-lc pair for some $\varepsilon>0$ and $f\colon X\to Z$ be a contraction. 
Let $A$ be a very ample divisor on $Z$. 
For each pair of closed points $z,z'\in Z$, let $L_{z,z'}$ be the sub-linear system of $|2A|$ 
consisting of the members that pass through $z,z'$. Then there is a 
non-empty open subset $U\subseteq Z$ such that if $z,z'\in U$ are closed points and  if 
$H$ is a general member of $L_{z,z'}$, then $(X,B+G)$ is a plt pair where 
$G:=f^*H$. In particular, $G$ is normal and $(G,B_G)$ is an $\varepsilon$-lc pair where 
$$
K_G+B_G:=(K_X+B+G)|_G.
$$
\end{lem}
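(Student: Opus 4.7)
The plan is to pass to a log resolution of $(X,B)$ and argue by Bertini there.

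First apply Lemma \ref{l-gen-element-v.ample-div-passing-x,y} to $(Z,A)$ to obtain an open $U_0\subseteq Z$ so that for closed points $z,z'\in U_0$ a general $H\in L_{z,z'}$ is smooth at $z,z'$. Take a log resolution $\pi\colon W\to X$ of $(X,B)$, write $K_W+B_W=\pi^*(K_X+B)$, so every coefficient of $B_W$ is at most $1-\varepsilon<1$, and set $g=f\circ\pi\colon W\to Z$; this is again a contraction since $\pi_*\mathcal{O}_W=\mathcal{O}_X$ implies $g_*\mathcal{O}_W=f_*\mathcal{O}_X=\mathcal{O}_Z$. Let $F_1,\dots,F_N$ be the prime divisors in $\Supp B_W\cup \mathrm{Exc}(\pi)$. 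By generic smoothness applied to $g$ and to its restrictions to the strata of the snc divisor $\Supp B_W$, shrink $U_0$ to a non-empty open $U\subseteq U_0$ such that $g$ is smooth over $U$, $\Supp B_W$ is relatively snc over $U$, and $U\cap g(F_j)=\emptyset$ for every $j$ with $g(F_j)\subsetneq Z$.

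Fix distinct closed points $z,z'\in U$ and let $H\in L_{z,z'}$ be general; put $G:=f^*H$. I claim $(W,B_W+g^*H)$ is log smooth. Near $g^{-1}(\{z,z'\})$, smoothness of $g$ over $U$ together with smoothness of $H$ at $z,z'$ makes $g^*H=g^{-1}(H)$ a smooth divisor, and the relative snc hypothesis of $B_W$ over $U$ gives its transversality with $B_W$. Away from $g^{-1}(\{z,z'\})$ the sub-linear system $g^*L_{z,z'}$ is base-point-free, so standard Bertini on the smooth $W$ yields both smoothness of $g^*H$ and transversality with $B_W$. Moreover, $F_j$ is a component of $g^*H$ only if $g(F_j)\subseteq H$: this fails trivially when $g(F_j)=Z$, and when $g(F_j)\subsetneq Z$ we have $z,z'\notin g(F_j)$ by the choice of $U$; since the set-theoretic base locus of $L_{z,z'}$ is exactly $\{z,z'\}$, the closed subset $\{H\in L_{z,z'}\colon g(F_j)\subseteq H\}$ is proper in $L_{z,z'}$, so for general $H$ we have $g(F_j)\not\subseteq H$. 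Hence no $F_j$ appears in $g^*H$. Combined with irreducibility of $G$ for general $H$ (by Bertini and connectedness of fibres of the contraction $f$), $g^*H$ is the strict transform $\pi^{-1}_*G$, a smooth prime divisor with coefficient $1$, not $\pi$-exceptional, and sharing no component with $B_W$.

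The identity $\pi^*(K_X+B+G)=K_W+B_W+g^*H$ then shows every $\pi$-exceptional log discrepancy of $(X,B+G)$ equals $1-\mu_E B_W\ge\varepsilon>0$, so $(X,B+G)$ is plt. In particular $G$ is normal by plt adjunction, and setting $K_G+B_G=(K_X+B+G)|_G$ with $G_W:=g^*H$ the restriction gives $K_{G_W}+B_W|_{G_W}=(\pi|_{G_W})^*(K_G+B_G)$, exhibiting $G_W\to G$ as a log resolution whose boundary coefficients are at most $1-\varepsilon$; hence $(G,B_G)$ is $\varepsilon$-lc. The main technical point is the behaviour of $g^*H$ at the base locus $g^{-1}(\{z,z'\})$, where Bertini does not apply; it is circumvented by arranging things so that $g$ is smooth over $U$ and $H$ is smooth at $z,z'$, so that $g^*H$ is the pullback of a smooth divisor by a smooth morphism near that locus.
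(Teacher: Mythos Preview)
Your proof is correct and follows essentially the same route as the paper: pass to a log resolution $W\to X$, shrink $U$ so that the strata of the boundary divisor map smoothly to $U$, then combine smoothness of $H$ at $z,z'$ (from Lemma~\ref{l-gen-element-v.ample-div-passing-x,y}) with smoothness of $g$ over $U$ to handle the base locus, and use Bertini elsewhere. The paper phrases the stratum condition as ``$S\to Z$ is smooth over $U$ for each stratum $S$ of $(W,\Sigma)$'' and checks directly that $E|_S$ is smooth for every stratum, while you package this as ``$\Supp B_W$ is relatively snc over $U$''; these amount to the same thing.

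One small correction: your claim that $G$ is irreducible (a ``prime divisor'') is not true in general. When $\dim Z=1$, a member $H\in L_{z,z'}$ is a $0$-cycle containing $z$ and $z'$, so $G=f^*H$ is a disjoint union of at least two fibres. This does not affect your argument: what you actually need, and what you have shown, is that $g^*H$ is smooth, hence reduced with pairwise disjoint components, shares no component with $\Supp B_W\cup\mathrm{Exc}(\pi)$, and is not $\pi$-exceptional. That is enough to make $(W,B_W+g^*H)$ a log smooth sub-pair whose only coefficient-$1$ components are the (disjoint) components of $g^*H$, giving pltness of $(X,B+G)$ and the $\varepsilon$-lc conclusion for $(G,B_G)$ exactly as you wrote.
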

\begin{proof}
Let $U$ be as in Lemma \ref{l-gen-element-v.ample-div-passing-x,y} chosen for $|2A|$ on $Z$.
We can assume that $U$ is contained in the smooth locus of $Z$. 
Let $\phi\colon W\to X$ be a log resolution of $(X,B)$ and let $\Sigma$ be the union of the exceptional divisors of 
$\phi$ and the birational transform of $\Supp B$. Shrinking $U$ we can assume that for any stratum $S$ 
of $(W,\Sigma)$ the morphism $S\to Z$ is smooth over $U$.  A stratum of $(W,\Sigma)$ is either  
$W$ itself or an irreducible component of 
$\bigcap_{i\in I} D_i$ for some $I\subseteq \{1,\dots,r\}$ where $D_1,\dots,D_r$ are the irreducible components of 
$\Sigma$. For each stratum $S$ we can assume that either 
$S\to Z$ is surjective or that its image is contained in $Z\setminus U$. 

Pick closed points $z,z'\in U$ and a general member $H$ of $L_{z,z'}$, and let $G=f^*H$ and $E=\phi^*G$. 
We claim that $(W,\Sigma+E)$ is log smooth. 
Let $S$ be a stratum of $(W,\Sigma)$. 
Since $L_{z,z'}$ is base point 
free outside $z,z'$ by definition of $L_{z,z'}$, 
the pullback of $L_{z,z'}$ to $S$ is base point free outside the fibres of $S\to Z$ over $z,z'$. Thus 
any singular point of $E|_S$ (if there is any) is mapped to  $z$ or $z'$. In particular, if 
$S\to Z$ is not surjective, then $E|_S$ is smooth because in this case $E|_S$ has no point 
mapping to either $z$ or $z'$. Assume that $S\to Z$ is surjective. 
Then $E|_S\to H$ is surjective. Moreover, 
by our choice of $U$, the fibres of $E|_S\to H$ over $z,z'$ are both smooth as they coincide with the 
fibres of $S\to Z$ over $z,z'$. Therefore, $E|_S$ is 
smooth because $H$ is smooth at $z,z'$ by the choice of $U$ and by 
Lemma \ref{l-gen-element-v.ample-div-passing-x,y}. To summarise we have shown that 
$E|_S$ is smooth for each stratum $S$ of $(W,\Sigma)$.  This implies that $(W,\Sigma+E)$ is log smooth. 

Let $K_W+B_W$ be the pullback of $K_X+B$. Then each coefficient of $B_W$ is $\le 1-\varepsilon$. 
Since $K_W+B_W+E$ is the pullback of $K_X+B+G$, we deduce that $(X,B+G)$ is plt, hence 
$G$ is normal \cite[Proposition 5.51]{kollar-mori}. On the other hand, 
$$
K_E+B_E:=K_E+B_W|_E=(K_W+B_W+E)|_E
$$ 
is the pullback of 
$$
K_G+B_G:=(K_X+B+G)|_G.
$$
Moreover, since $(W,B_W+E)$ is log smooth and since $E$ is not a component of $B_W$,  
the coefficients of $B_E$ are each at most $1-\varepsilon$. Therefore, $(G,B_G)$ is an 
$\varepsilon$-lc pair.
\end{proof}

\subsection{Fano type varieties}
Assume $X$ is a variety and $X\to Z$ is a contraction. We say $X$ is \emph{of Fano type over} $Z$ 
if there is a boundary $C$ such that $(X,C)$ is klt and $-(K_X+C)$ is ample over $Z$ 
(or equivalently if there is a boundary $C$ such that $(X,C)$ is klt and $-(K_X+C)$ is nef and big over $Z$). This is equivalent to 
having a boundary $B$ such that $(X,B)$ is klt, $K_X+B\sim_\R 0/Z$ and $B$ is big over $Z$. 
By \cite{BCHM}, we can run an MMP over $Z$ on any $\R$-Cartier $\R$-divisor $M$ on $X$ and the MMP ends 
with a minimal model or a Mori fibre space for $M$.

\begin{lem}\label{l-div-pef-on-FT}
Assume $f\colon X\to Z$ is a contraction of normal projective varieties, $X$ is of Fano type over $Z$, $L$ is an $\R$-divisor on 
$X$ and $A$ is an ample $\R$-divisor on $Z$. If $L$ is pseudo-effective, then 
$|L+af^*A|_\R\neq \emptyset$  for any real number $a>0$. If $L$ is pseudo-effective and $L$ is big over $Z$, then $L+af^*A$ is big for any real number $a>0$.
\end{lem}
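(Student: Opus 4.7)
Proof plan: The strategy is to reduce via the relative MMP and the relative base-point-free theorem to a sum of a pseudo-effective divisor and a pullback of ample, and then conclude by an interpolation between the pseudo-effective and ample cones. Since $X$ is of Fano type over $Z$, by \cite{BCHM} we may run an $L$-MMP over $Z$; global pseudo-effectivity of $L$ implies pseudo-effectivity over $Z$ (restricting a global ample to a general fibre shows $L|_{X_\eta}$ pseudo-effective), so the MMP terminates with a birational map $\phi\colon X\bir X'$ whose end-product $L':=\phi_*L$ is nef over $Z$, and $X'$ is still of Fano type over $Z$. Because $-K_{X'}$ is big over $Z$, the relative base-point-free theorem shows $L'$ is semi-ample over $Z$, giving a contraction $g\colon X'\to Y$ over $Z$ and an $\R$-divisor $H$ on $Y$ ample over $Z$ with $L'\sim_\R g^*H$.

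On a common resolution $W$ of $X,X'$ with $p\colon W\to X$ and $q\colon W\to X'$, the negativity lemma applied to the $L$-MMP yields $p^*L=q^*L'+E$ with $E\ge 0$ and $q$-exceptional, so
$$
p^*(L+af^*A)=q^*(L'+af'^*A)+E=q^*g^*(H+ah^*A)+E,
$$
where $h\colon Y\to Z$ and $f'=h\circ g$. Thus it suffices to show $H+ah^*A\sim_\R N$ for some effective $N$ on $Y$, since pulling $N$ back to $W$, adding $E$, and pushing forward by $p$ then produces an effective representative of $L+af^*A$ on $X$. To prove this, first observe that $H$ is globally pseudo-effective on $Y$: since $L'=g^*H$ is globally pseudo-effective and $g$ is a surjective contraction, a moving curve on $Y$ violating pseudo-effectivity of $H$ would lift to a covering family on $X'$ violating pseudo-effectivity of $g^*H$. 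Now pick $c>0$ such that $cH+h^*A$ is globally ample on $Y$, which is possible because $H$ is ample over $Z$ and $A$ is ample on $Z$. For any $\epsilon>0$, pseudo-effectivity of $H$ gives $H+\epsilon(cH+h^*A)\sim_\R N_\epsilon$ with $N_\epsilon\ge 0$ (pseudo-effective plus ample is big, hence $\R$-equivalent to effective), and dividing by $1+\epsilon c$ yields $H+\tfrac{\epsilon}{1+\epsilon c}h^*A\sim_\R \tfrac{1}{1+\epsilon c}N_\epsilon$. As $\epsilon$ sweeps $(0,\infty)$ the coefficient $\tfrac{\epsilon}{1+\epsilon c}$ covers the interval $(0,1/c)$, handling small values of $a$. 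For $a\ge 1/c$, since the ample $\R$-divisor $A$ is $\R$-linearly equivalent to an effective divisor, we decompose $H+ah^*A=(H+a_0h^*A)+(a-a_0)h^*A$ for some $a_0\in(0,1/c)$, both summands being $\R$-equivalent to effective divisors.

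For the second assertion, if $L$ is additionally big over $Z$ then $L'$ remains big over $Z$ (bigness over $Z$ is preserved by the MMP), which forces $g$ to be birational and $H$ to be big over $Z$. The same interpolation then actually gives $H+ah^*A$ big on $Y$ for every $a>0$, since pseudo-effective plus positive multiple of ample is big, bigness is preserved under positive scaling, and big plus nef is big. Pulling back by the birational $g$ preserves bigness on $X'$, and the identity $p^*(L+af^*A)=q^*g^*(H+ah^*A)+E$ combined with pushforward by $p$ yields bigness on $X$. The main obstacle is the interpolation step of the first part: pseudo-effective plus pullback of ample need not a priori be effective, and the argument crucially exploits that $H$ is ample over $Z$ to manufacture a global ample divisor $cH+h^*A$, after which the scaling trick converts pseudo-effectivity into effectivity for small $a$, and the range is extended to all $a>0$ using the fact that $A$ itself is $\R$-equivalent to an effective divisor.
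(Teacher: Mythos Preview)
Your proof is correct and follows essentially the same approach as the paper: run the relative $L$-MMP to reach a model where $L$ is semi-ample over $Z$, pass to the induced contraction $Y$ with $L=g^*N$ for $N$ ample over $Z$, observe $N$ is globally pseudo-effective, and interpolate between $N$ and a globally ample combination $N+bg^*A$. The paper's interpolation is marginally cleaner---given $a>0$ it simply picks $b>a$ large enough that $N+bg^*A$ is ample and writes $N+ag^*A=(1-\tfrac{a}{b})N+\tfrac{a}{b}(N+bg^*A)$ as pseudo-effective plus ample, avoiding your case split into small and large $a$---but the substance is identical.
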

\begin{proof}
Since  $L$ is pseudo-effective and $X$ is of Fano type over $Z$, $L$ has a minimal model over $Z$. 
Replacing $X$ with the minimal model we can assume $L$ is semi-ample over $Z$. Thus $L$ defines 
a contraction $X\to Y/Z$ and $L$ is the pullback of an ample$/Z$ $\R$-divisor $N$ on $Y$. Then 
$N+bg^*A$ is ample for any $b\gg 0$ where $g$ denotes $Y\to Z$. Since $L$ is pseudo-effective, 
$N$ is pseudo-effective, hence 
$$
|N+tbg^*A|_\R=|(1-t)N+t(N+bg^*A)|_\R \neq \emptyset
$$ 
for any $t\in (0,1]$.
In particular, if $b>a>0$, then letting $t=\frac{a}{b}$ we see that $|N+ag^*A|_\R\neq \emptyset$ which in turn implies 
$|L+af^*A|_\R\neq \emptyset$.

If additionally $L$ is big over $Z$, then $X\to Y$ is birational and similar arguments show that $L+af^*A$ is big for any real number $a>0$.
\end{proof}

\subsection{Complements}\label{ss-compl}
Let $(X,B)$ be a pair  and let $X\to Z$ be a contraction. 
A \emph{strong $n$-complement} of $K_{X}+B$ over a point $z\in Z$ is of the form 
$K_{X}+{B}^+$ such that over some neighbourhood of $z$ we have the following properties:
\begin{itemize}
\item $(X,{B}^+)$ is lc, 

\item $n(K_{X}+{B}^+)\sim 0$, and 

\item ${B}^+\ge B$.
\end{itemize}

When $Z$ is a point, we just say that $K_X+B^+$ is a strong $n$-complement of $K_X+B$. We recall one of the main 
results of \cite{B-compl} and one of the main results of \cite{B-BAB} on complements.

\begin{thm}[{\cite[Theorem 1.7]{B-compl}}]\label{t-bnd-compl-usual}
Let $d$ be a natural number and $\mathfrak{R}\subset [0,1]$ be a finite set of rational numbers.
Then there exists a natural number $n$ 
depending only on $d$ and $\mathfrak{R}$ satisfying the following.  
Assume $(X,B)$ is a projective pair such that 
\begin{itemize}

\item $(X,B)$ is lc of dimension $d$,

\item  the coefficients of $B$ are in $\Phi(\mathfrak{R})$, 

\item $X$ is of Fano type, and 

\item $-(K_{X}+B)$ is nef.\
\end{itemize}
Then there is a strong $n$-complement $K_{X}+{B}^+$ of $K_{X}+{B}$. 
Moreover, the complement is also a strong $mn$-complement for any $m\in \N$.
\end{thm}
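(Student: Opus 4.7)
The plan is to proceed by induction on the dimension $d$. The base case $d=1$ is elementary: $X\cong \PP^1$, $\deg(-K_X-B)\ge 0$, the coefficients of $B$ lie in $\Phi(\mathfrak{R})$, and the denominators appearing in $\Phi(\mathfrak{R})$ in the relevant range are controlled by $\mathfrak{R}$ itself, so a bounded $n$ kills the relevant divisor class after completing $B$ to a suitable $B^+$.

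For the inductive step, I would split into two cases depending on whether $(X,B)$ already carries a non-klt place. \textbf{Non-klt case.} Suppose there is a plt blow-up $\phi\colon Y\to X$ extracting a single divisor $S$ which is a non-klt place of $(X,B)$, and write $K_Y+B_Y=\phi^*(K_X+B)$ with coefficient $1$ on $S$. Apply divisorial adjunction
\[
K_S+B_S=(K_Y+B_Y)|_S.
\]
By Shokurov's theorem on hyperstandard coefficients under adjunction, the coefficients of $B_S$ lie in $\Phi(\mathfrak{S})$ for a finite $\mathfrak{S}\subset [0,1]\cap \Q$ depending only on $d$ and $\mathfrak{R}$. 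Since $S$ is of Fano type (being a divisor on a Fano type plt blow-up) and $-(K_S+B_S)$ is nef, the induction hypothesis supplies a strong $n_{d-1}$-complement $K_S+B_S^+$ with $n_{d-1}$ depending only on $d$ and $\mathfrak{R}$. I would then lift this complement to $Y$ and push down to $X$ via Kawamata--Viehweg vanishing: one rounds $-nK_Y-nB_Y$ suitably relative to $S$ and uses the short exact sequence
\[
0\to \mathcal{O}_Y(-n(K_Y+B_Y)-S)\to \mathcal{O}_Y(-n(K_Y+B_Y))\to \mathcal{O}_S\bigl(-n(K_S+B_S)\bigr)\to 0
\]
to extend the distinguished section realising the complement on $S$.

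\textbf{klt case.} If $(X,B)$ is klt, the first task is to create a non-klt centre without losing the hyperstandard shape of the coefficients. Since $X$ is of Fano type, I can run an MMP on $-(K_X+B)$ by \cite{BCHM}; it terminates with either a Mori fibre space structure $X'\to T$ of strictly smaller dimensional base, in which case one would induct on the base and lift via a canonical bundle formula argument, or a model on which $-(K_X+B)$ is big. In the big case, the aim is to find a bounded natural number $m$ and $0\le N\sim_\Q -m(K_X+B)$ together with a rational $t\le 1$ with bounded denominator such that the lc threshold of $\tfrac{1}{m}N$ with respect to $(X,B)$ equals $t$ and produces a non-klt place of the desired type. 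Replacing $B$ by $B+\tfrac{t}{m}N$, one reduces to the previous case. Producing such $m,N,t$ requires effective birational boundedness of the anticanonical system on $\varepsilon$-lc Fano type pairs, combined with Nadel-style multiplier ideal tuning of the singularities.

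\textbf{Main obstacle.} The klt branch is the heart of the proof. The delicate point is to uniformly bound the denominator of an anticanonical effective divisor that creates a non-klt centre of controlled shape, across the entire class of Fano type pairs of dimension $d$; this is essentially a BAB-flavoured boundedness input and is what makes the induction close. The precise matching of hyperstandard sets under adjunction, and the exact rounding in the vanishing step, are the other nontrivial book-keeping issues. The ``moreover'' clause is immediate: $n(K_X+B^+)\sim 0$ trivially implies $mn(K_X+B^+)\sim 0$ for every $m\in \N$.
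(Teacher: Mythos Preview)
This theorem is not proved in the present paper at all: it is quoted verbatim as \cite[Theorem 1.7]{B-compl} in the preliminaries section and used as a black box. There is therefore nothing here to compare your proposal against.

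That said, your outline does broadly match the architecture of the actual proof in \cite{B-compl}: induction on dimension, reduction to a non-klt situation via a plt blow-up, adjunction to $S$, applying induction on $S$, and lifting the complement using vanishing. Two points are worth flagging. First, the short exact sequence you wrote is not well-formed as it stands, since $n(K_Y+B_Y)$ is not integral; the genuine argument in \cite{B-compl} works with expressions of the shape $-nK_Y-\lfloor (n+1)B_Y\rfloor$ and imposes delicate divisibility conditions on $n$ precisely so that the rounding is compatible with restriction to $S$. Second, your claim that closing the induction in the klt case requires a ``BAB-flavoured boundedness input'' inverts the logical order: in \cite{B-compl} boundedness of complements is established \emph{without} assuming BAB, and is in fact one of the main inputs used subsequently in \cite{B-BAB} to prove BAB. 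The device for manufacturing a non-klt centre in the klt case is instead an effective birationality statement proved internally in \cite{B-compl}, combined with delicate control of lc thresholds and of the hyperstandard set under adjunction --- that, and not BAB, is what makes the induction close.
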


\begin{thm}[{\cite[Theorem 1.9]{B-BAB}}]\label{t-bnd-comp-lc-global}
Let $d$ be a natural number and 
$\mathfrak{R}\subset [0,1]$ be a finite set of rational numbers. Then there exists a natural number 
$n$ depending only on $d,\mathfrak{R}$ satisfying the following. Assume 

\begin{itemize}
\item $(X,B)$ is a projective lc pair of dimension $d$,

\item the coefficients of $B$ are in $\mathfrak{R}$, 

\item $M$ is a semi-ample Cartier divisor on $X$ defining a contraction $f\colon X\to Z$,

\item $X$ is of Fano type over $Z$, 

\item $M-(K_X+B)$ is nef and big, and 

\item $S$ is a non-klt centre of $(X,B)$ with $M|_{S}\equiv 0$.\\ 

\end{itemize}
Then there is a $\Q$-divisor $\Lambda\ge B$ such that  
\begin{itemize}
\item $(X,\Lambda)$ is lc over a neighbourhood of $z:=f(S)$, and

\item $n(K_X+\Lambda)\sim (n+2)M$.
\end{itemize}
\end{thm}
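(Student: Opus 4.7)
The plan is to reduce to finding a bounded complement on a non-klt centre of smaller dimension and then lift it to $X$ via Kawamata-Viehweg vanishing, using that $M - (K_X + B)$ is nef and big. First I would pass to a $\Q$-factorial dlt modification $\pi\colon (X',B')\to (X,B)$ with $K_{X'}+B'=\pi^*(K_X+B)$ and choose a component $T$ of $\lfloor B'\rfloor$ whose image on $X$ is contained in $S$; such a $T$ exists since $S$ is a non-klt centre. Setting $M':=\pi^*M$, the variety $X'$ remains of Fano type over $Z$, $M'-(K_{X'}+B')$ is nef and big, and $M'|_T\equiv 0$. Because the conclusion is local over $z$ and compatible with pushforward, it is enough to construct $\Lambda'\ge B'$ on $X'$ with $(X',\Lambda')$ lc over a neighbourhood of $z$ and $n(K_{X'}+\Lambda')\sim (n+2)M'$.

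Applying divisorial (Koll\'ar-Shokurov) adjunction, $(K_{X'}+B')|_T=K_T+B_T$ with $B_T\ge 0$ having coefficients in a hyperstandard set $\Phi(\mathfrak{S})$ for a finite set $\mathfrak{S}$ of rationals depending only on $\mathfrak{R}$. Since $T$ is contracted by $M'$ and $X'$ is Fano type over $Z$, after possibly replacing $T$ by a minimal lc centre and running a short MMP over $Z$ that preserves $T$, the variety $T$ inherits the Fano type property, and $-(K_T+B_T)\equiv (M'-(K_{X'}+B'))|_T$ is nef. Thus $(T,B_T)$ fits the hypotheses of Theorem \ref{t-bnd-compl-usual} in dimension $\le d-1$, yielding a strong $n$-complement $K_T+B_T^+$ for some $n$ depending only on $d$ and $\mathfrak{R}$; by the $mn$-complement clause I may take $n$ as divisible as needed.

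For the lift I would choose an integral divisor class
\[
N \;\sim\; (n+2)M' - n(K_{X'}+B') \;+\; (\text{integer adjustments matching } \lfloor B'\rfloor)
\]
so that $N-K_{X'}-T$ equals $\{B'\}$ plus a nef and big $\Q$-divisor, where the nef-and-big summand is assembled from a small fraction of $M'-(K_{X'}+B')$ together with the semi-ample $M'$. Kawamata-Viehweg vanishing then gives $H^1(X',\mathcal{O}_{X'}(N-T))=0$, and the restriction sequence
\[
0\to \mathcal{O}_{X'}(N-T)\to \mathcal{O}_{X'}(N)\to \mathcal{O}_T(N|_T)\to 0
\]
shows that the section of $\mathcal{O}_T(N|_T)$ corresponding to $nB_T^+$ (using $n(K_T+B_T^+)\sim 0$ to identify classes on $T$) lifts to an effective $D'\in |N|$. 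Setting $\Lambda':=B'+\tfrac{1}{n}D'$ gives $n(K_{X'}+\Lambda')\sim (n+2)M'$ and $\Lambda'|_T=B_T^+$, so by inversion of adjunction $(X',\Lambda')$ is plt along $T$. Connectedness of the non-klt locus inside the fibre of $X'\to Z$ over $z$ then rules out further non-klt behaviour in that fibre, so $(X',\Lambda')$ is lc over a neighbourhood of $z$, and pushing down to $X$ produces the desired $\Lambda$.

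The principal obstacle is the lifting step: the integer adjustments defining $N$ must simultaneously match on $T$ the section coming from $nB_T^+$ under restriction, and also make $N-K_{X'}-T$ decompose as $\{B'\}$ plus a nef-and-big $\Q$-divisor so that Kawamata-Viehweg applies. Arranging both depends delicately on blending the semi-ample $M'$ with a suitable fractional multiple of the nef-and-big $M'-(K_{X'}+B')$, and ensuring that $T$ is essentially the only lc centre one needs to lift from requires preliminary MMP steps in the fibre over $z$ together with Koll\'ar-Shokurov connectedness of the non-klt locus.
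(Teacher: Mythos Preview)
This theorem is not proved in the present paper: it is quoted in the preliminaries as \cite[Theorem 1.9]{B-BAB} and used as a black box (for instance in Step 3 of the proofs of Propositions \ref{l-bnd-cy-bnd-lct-special} and \ref{l-bnd-cy-bnd-klt-compl-1}). So there is no proof here to compare your proposal against.

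That said, your outline follows the standard strategy for such complement-lifting results (dlt modification, adjunction to a boundary component, bounded complement in lower dimension via Theorem \ref{t-bnd-compl-usual}, then lifting sections by Kawamata--Viehweg vanishing). A couple of points in your sketch are genuinely delicate and not fully justified as written. First, the claim that $T$ ``inherits the Fano type property'' after some MMP is not automatic: $T$ maps to a point in $Z$, so Fano type of $X'$ over $Z$ does not directly restrict; one typically needs to run an MMP on $-(K_{X'}+B'-T)$ or similar to arrange that $T$ is itself of Fano type. Second, your final connectedness argument is too quick: connectedness of the non-klt locus in the fibre over $z$ does not by itself show $(X',\Lambda')$ is lc over a neighbourhood of $z$, since new non-klt centres meeting the fibre could appear in $\Lambda'$ away from $T$; the actual argument requires more care with how $D'$ is chosen and typically uses that the restriction to $T$ is already a complement together with inversion of adjunction along the whole non-klt locus, not just $T$. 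If you want the full details, you should consult \cite{B-BAB} directly.
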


\subsection{Bounded families of pairs}\label{ss-bnd-couples}
A \emph{couple} $(X,D)$ consists of a normal projective variety $X$ and a  divisor 
$D$ on $X$ whose non-zero coefficients are all equal to $1$, i.e. $D$ is a reduced divisor. 
The reason we call $(X,D)$ a couple rather than a pair is that we are concerned with 
$D$ rather than $K_X+D$ and we do not want to assume $K_X+D$ to be $\Q$-Cartier 
or with nice singularities. Two couples $(X,D)$ and $(X',D')$ are isomorphic 
(resp. isomorphic in codimension one) if 
there is an isomorphism  $X\to X'$ (resp. birational map $X\bir X'$ which is an isomorphism in codimension one) 
mapping $D$ onto $D'$ (resp. such that $D$ is the birational transform of $D'$).

We say that a set $\mathcal{P}$ of couples  is \emph{birationally bounded} if there exist 
finitely many projective morphisms $V^i\to T^i$ of varieties and reduced divisors $C^i$ on $V^i$ 
such that for each $(X,D)\in \mathcal{P}$ there exist an $i$, a closed point $t\in T^i$, and a 
birational isomorphism $\phi\colon V^i_t\bir X$ such that $(V^i_t,C^i_t)$ is a couple and 
$E\le C_t^i$ where 
$V_t^i$ and $C_t^i$ are the fibres over $t$ of the morphisms $V^i\to T^i$ and $C^i\to T^i$ 
respectively, and $E$ is the sum of the 
birational transform of $D$ and the reduced exceptional divisor of $\phi$.
We say $\mathcal{P}$ is \emph{bounded} if we can choose $\phi$ to be an isomorphism. 

We say that a set $\mathcal{P}$ of couples  is \emph{bounded up to isomorphism in codimension one} 
if there is a bounded set $\mathcal{P}'$ of couples such that each $(X,D)\in \mathcal{P}$ is 
isomorphic in codimension one with some $(X',D')\in \mathcal{P}'$.
  
A set $\mathcal{R}$ of projective pairs $(X,B)$ is said to be \emph{log birationally bounded} 
(resp. \emph{log bounded}, etc) 
if the set of the corresponding couples $(X,\Supp B)$ is birationally bounded (resp. bounded, etc).
Note that this does not put any condition on the coefficients of $B$, e.g. we are not requiring the 
coefficients of $B$ to be in a finite set. If $B=0$ for all the $(X,B)\in\mathcal{R}$ we usually remove the 
log and just say the set is birationally bounded (resp. bounded, etc).

\begin{lem}\label{l-bnd-Cartier-index-family}
Let $\mathcal{P}$ be a bounded set of couples and $\mathfrak{R}\subset [0,1]$ be a finite set of 
rational numbers. Then there is a natural number $I$ depending only on $\mathcal{P},\mathfrak{R}$ 
such that if 
\begin{itemize}
\item $(X,B)$ is a projective klt pair,

\item the coefficients of $B$ are in $\mathfrak{R}$, and 

\item $(X,\Supp B)\in \mathcal{P}$,
\end{itemize}
then $I(K_X+B)$ is Cartier.
\end{lem}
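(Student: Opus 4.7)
The plan is to use the definition of boundedness to reduce to a finite amount of family data, and then run Noetherian induction on each parameter space to extract a uniform Cartier index.

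First, by the definition of bounded in \ref{ss-bnd-couples}, there exist finitely many projective morphisms $V^i\to T^i$ together with reduced divisors $C^i$ on $V^i$ such that every $(X,\Supp B)\in\mathcal{P}$ is isomorphic to $(V^i_t,C^i_t)$ for some $i$ and some closed point $t\in T^i$. I would replace each $T^i$ by a finite stratification into locally closed subvarieties so that, on each stratum: the morphism $V^i\to T^i$ is flat with geometrically normal fibres; the reduced divisor $C^i$ decomposes as $C^i_1+\cdots+C^i_{r_i}$ with each $C^i_j$ flat over $T^i$ and restricting to a prime divisor (or to zero) on every fibre. This is achieved by standard flattening stratifications together with the fact that normality and irreducibility of a bounded set of subschemes are constructible on fibres.

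Second, since $\mathfrak{R}$ is finite and $r_i$ is bounded, only finitely many relative $\Q$-divisors $\mathcal{B}^i=\sum_{j=1}^{r_i} b_j C^i_j$ with $b_j\in\mathfrak{R}\cup\{0\}$ can arise; any boundary $B$ allowed in the hypothesis of the lemma is the restriction $\mathcal{B}^i|_{V^i_t}$ for an appropriate choice of $i$, $t$, and coefficients. For each such relative pair $(V^i,\mathcal{B}^i)\to T^i$, a further Noetherian induction on $T^i$ lets me assume that on each stratum either $K_{V^i/T^i}+\mathcal{B}^i$ is $\Q$-Cartier with some fixed index $n$, or else $K_{V^i_t}+\mathcal{B}^i|_{V^i_t}$ fails to be $\R$-Cartier for every $t$ in the stratum. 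The klt hypothesis of the lemma forces $K_X+B$ to be $\R$-Cartier, so the second case never arises for the pairs under consideration and can be discarded.

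Finally, I would take $I$ to be the least common multiple of all the finitely many indices $n$ produced in the previous step, running over the finitely many families, strata, and choices of coefficients. Given $(X,B)$ as in the hypothesis, the couple $(X,\Supp B)$ occurs as a fibre $(V^i_t,C^i_t)$ on one of the $\Q$-Cartier strata with some index $n\mid I$; restricting the Cartier divisor $n(K_{V^i/T^i}+\mathcal{B}^i)$ to this fibre yields $n(K_X+B)$, so $I(K_X+B)$ is Cartier. The main obstacle is the second step: organising the stratification so that $\Q$-Cartierness and a bounded Cartier index can be extracted uniformly, which relies on relative Picard-type arguments to show that the locus of fibres on which a given relative Weil divisor fails to have its $n$-th multiple Cartier is closed. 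The remainder of the argument is essentially bookkeeping over a finite combinatorial dataset.
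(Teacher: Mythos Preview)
Your approach is genuinely different from the paper's, and the overall architecture (stratify the base, enumerate the finitely many coefficient choices, take an LCM) is sound. The paper instead argues by contradiction: it puts a hypothetical sequence $(X_i,B_i)$ with unbounded index into a single family $V\to T$, takes a relative log resolution $(W,\Sigma)\to V$, builds a boundary $\Delta$ on $W$ matching $(X_1,B_1)$, and runs an MMP on $K_W+\Delta$ over $V$. The point is that the resulting model $W'\to V$ is shown to be \emph{small}, so it gives a simultaneous small $\Q$-factorialisation of the whole family; the Cartier index of each $K_{X_i}+B_i$ is then bounded by that of $K_{W'_i}+\Lambda'_i$ via the cone theorem.

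The gap in your argument is precisely the dichotomy you flag as ``the main obstacle'', and it is more serious than the phrasing suggests. Knowing that $\{t: n(K_{V_t}+\mathcal{B}_t)\ \text{is Cartier}\}$ is open for each fixed $n$ is not enough: to run the Noetherian induction you need that if $K_{V_\eta}+\mathcal{B}_\eta$ fails to be $\Q$-Cartier on the generic fibre, then it fails on \emph{every} fibre of some nonempty open set. Equivalently, you need that a special fibre being $\Q$-Cartier forces the generic fibre to be, with controlled index. This is not a formal constructibility statement; the set $\{t: K_{V_t}+\mathcal{B}_t\ \text{is}\ \Q\text{-Cartier}\}$ is a countable union of opens and there is no a priori reason it is itself open or that the indices on it are bounded. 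Reflexive hulls do not commute with restriction to fibres, so one cannot simply read this off from the sheaf $\mathcal{O}_V(n(K_V+\mathcal{B}))$. In practice, the way one produces such uniform control is exactly by building a relative small $\Q$-factorialisation of the total space --- which is what the paper's MMP step does. So the ``relative Picard-type arguments'' you allude to would, if written out, essentially reproduce the paper's proof rather than bypass it.
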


\begin{proof}
Assume there is a sequence $(X_i,B_i)$ of pairs  as in the lemma such that if $I_i$ 
 is the smallest natural number so that $I_i(K_{X_i}+B_i)$ is Cartier, 
 then the $I_i$ form a strictly increasing sequence of numbers.  Perhaps after replacing 
 the sequence with a subsequence 
we can assume there is a projective morphism $V\to T$ of varieties, a reduced divisor $C$ on 
$V$, and a dense set of closed points $t_i\in T$ such that $X_i$ is the fibre of $V\to T$ over 
$t_i$ and  $\Supp B_i$ is contained in the fibre of $C\to T$ over $t_i$. Since $X_i$ are normal, 
replacing $V$ with its normalisation 
and replacing $C$ with its inverse image with reduced structure, we can assume $V$ is normal.

Let $\phi\colon W\to V$ be a log resolution of $(V,C)$ and let $\Sigma$ be the 
union of the birational transform of $C$ and the reduced exceptional divisor of $\phi$. After a finite base change and possibly shrinking $T$ we can assume that $T$ is smooth, $(W,\Sigma)$ is relatively log smooth over $T$ and $S\to T$ has irreducible fibres for each stratum of $(W,\Sigma)$.
Let $E$ be the reduced exceptional divisor of $W\to V$. 
Let $W_i$ be the fibre of $W\to T$ over $t_i$.
We can assume that $E|_{W_i}$ is the reduced exceptional divisor of $W_i\to X_i$. 

Since $(X_1,B_1)$ is klt, it is $\varepsilon$-lc for some $\varepsilon>0$. Let $\Delta_1$ be the birational transform of $B_1$ plus $(1-\frac{\varepsilon}{2})E|_{W_1}$. Then $\Supp \Delta_1\subseteq \Supp \Sigma|_{W_1}$. Since the fibres of $S\to T$ are irreducible for each component $S$ of $\Sigma$, there is a unique divisor $\Delta$ supported in $\Sigma$ so that $\Delta|_{W_1}=\Delta_1$. 
Note that $(W,\Delta)$ is klt and the coefficients of $\Delta$ belong to $\mathfrak{R}\cup\{1-\frac{\varepsilon}{2}\}$. 
Possibly replacing the sequence from the beginning, we can assume that the pushdown of $\Delta|_{W_i}$ to $X_i$ is equal to $B_i$ for every $i$. 

Running an MMP on $K_W+\Delta$ over $V$ ends with a minimal model $(W',\Delta')$. We claim that $W'\to V$ is a small birational morphism. Let $W_1'$ be the fibre of $W'\to T$ over $t_1$. Possibly shrinking $T$ around $t_1$ and using arguments similar to those of the proof of Lemma \ref{l-gen-element-v.ample-div-passing-x,y-contraction} we can choose smooth irreducible divisors $H_1, \dots,H_{\dim T}$ on $T$ so that if $S_i$ is the pullback of $H_i$ to $W$, then $W_1=\bigcap S_j$ and 
$$
(W,\Lambda:=\sum S_j+\Delta)
$$ 
is log smooth. Then the MMP is also an MMP on $K_W+\Lambda$. From this, we can see that $W_1'$ is just the birational transform of $W_1$ because any component of $W_1'$ is a non-klt centre of $(W',\Lambda')$ and any such centre is the birational transform of a non-klt centre of $(W,\Lambda)$, and because $W_1$ is the only non-klt centre of $(W,\Lambda)$ mapping to $t_1$. This shows that $W_1'\to X_1$ is birational.

By construction and by adjunction we have 
$$
K_{W_1}+\Delta_1=(K_{W}+\Lambda)|_{W_1}.
$$ 
Again by adjunction we can write 
$$
K_{W_1'}+\Lambda_1'=(K_{W'}+\Lambda')|_{W_1'}.
$$ 
Let $\alpha \colon U\to W_1$ and $\beta\colon U\to W_1'$ be a common resolution. 
Then since $W\bir W'$ is given by an MMP on $K_W+\Lambda$, we have 
$$
\alpha^*(K_{W_1}+\Delta_1)\ge \beta^*(K_{W_1'}+\Lambda_1').
$$
Thus since the pushdown of $K_{W_1}+\Delta_1$ to $X_1$ is $K_{X_1}+B_1$, we see that the pushdown of $K_{W_1'}+\Lambda_1'$ to $X_1$ is $\le K_{X_1}+B_1$. Therefore, since $K_{W_1'}+\Lambda_1'$ is nef over $X_1$, by the negativity lemma, we have 
$$
K_{W_1'}+\Lambda_1'\le \rho^*(K_{X_1}+B_1)
$$
where $\rho$ denotes $W_1'\to X_1$. 
In particular, every coefficient of $\Lambda_1'$ is $\le 1-\varepsilon$ as $(X_1,B_1)$ is $\varepsilon$-lc. This implies that 
no exceptional$/V$ component of $\Lambda'$ intersects $W_1'$, by Lemma \ref{l-coeff-dlt-adj} below, because the coefficient of each exceptional component of $\Lambda'$ is $1-\frac{\varepsilon}{2}$. But this means that every exceptional divisor is contracted by the MMP, hence $W'\to V$ is a small birational contraction. 

Replacing the sequence, we can assume that $W_i'\to X_i$ is a small birational contraction for any $i>1$. Moreover, for such $i$ the pushdown of 
$$
K_{W_i'}+\Lambda_i':=(K_{W'}+\Lambda')|_{W_i'}
$$ 
to $X_i$ is $K_{X_i}+B_i$ because the pushdown of $\Delta|_{W_i}=\Lambda|_{W_i}$ to $X_i$ is $B_i$. 
Since the Cartier index of $K_{W_i'}+\Lambda_i'$ is bounded, the Cartier index of $K_{X_i}+B_i$ is also bounded by the cone theorem, a contradiction.
\end{proof}

\begin{lem}\label{l-coeff-dlt-adj}
Let $(X,B)$ be a dlt pair, let $S$ be a non-klt centre, and write $K_S+B_S=(K_X+B)|_S$ by adjunction. Assume $D\ge 0$ is a $\Q$-Cartier divisor on $X$ such that $S\not\subset \Supp D$ and assume each component of $D$ has coefficient $\ge b$ in $B$. Then the coefficient of each component of $D|_S$ in $B_S$ is $\ge b$.
\end{lem}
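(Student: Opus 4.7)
The plan is to prove the lemma by induction on $\dim X-\dim S$.

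\textbf{Reductive step.} Assume $\dim X-\dim S\ge 2$. By the structure of non-klt centres of a dlt pair, $S$ is an irreducible component of an intersection of components of $\lfloor B\rfloor$, so there is a prime divisor $S_1$ of $X$ with $\mu_{S_1}(B)=1$ containing $S$. Divisorial adjunction yields a dlt pair $(S_1,B_{S_1})$ with $K_{S_1}+B_{S_1}=(K_X+B)|_{S_1}$, in which $S$ is again a non-klt centre. Applying the base case of the lemma to $(X,B)$ with $S_1$ in place of $S$ will show that every component of $D|_{S_1}$ has coefficient $\ge b$ in $B_{S_1}$. Since $D|_{S_1}$ is $\Q$-Cartier, $S\not\subset\Supp D|_{S_1}$ (as this would force $S\subset\Supp D$), and $\dim S_1-\dim S=\dim X-\dim S-1$, the inductive hypothesis applied to $(S_1,B_{S_1})$, $S$, and $D|_{S_1}$ will give the conclusion.

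\textbf{Base case.} Assume $S$ is a prime divisor, so $S$ is normal. Write $B=S+B'$ and take a log resolution $\phi\colon W\to X$ of $(X,B+D)$, setting $K_W+B_W=\phi^*(K_X+B)$. Let $S_W$ be the strict transform of $S$, a component of $B_W$ of coefficient one. Log-smooth adjunction on the snc pair $(W,\Supp B_W)$ gives $K_{S_W}+B_{S_W}=(K_W+B_W)|_{S_W}$ with $B_{S_W}=(B_W-S_W)|_{S_W}$, and setting $\psi=\phi|_{S_W}\colon S_W\to S$, this restricts to $\psi^*(K_S+B_S)$, so for any non-$\psi$-exceptional prime divisor $T_W$ on $S_W$ with image $T=\psi(T_W)$ one has $\mu_{T_W}(B_{S_W})=\mu_T(B_S)$. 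Now fix a component $T$ of $D|_S$, so $T$ is a component of $D_i|_S$ for some component $D_i$ of $D$ with $b_i:=\mu_{D_i}(B)\ge b$. Let $T_W$ be the strict transform of $T$ on $S_W$, a non-$\psi$-exceptional prime divisor. In the snc setting a unique component $C$ of $\Supp B_W$ other than $S_W$ passes through the generic point $\eta_{T_W}$ (since $\phi(\eta_{T_W})=\eta_T\in D_i\subseteq\Supp B$), and $\mu_{T_W}(B_{S_W})=\mu_C(B_W)$ by transversal intersection.

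\textbf{Case analysis.} Since $\phi(C)\supseteq T$, either $C$ is the strict transform $D_{i,W}$ of $D_i$, in which case $\mu_C(B_W)=b_i\ge b$ immediately, or $C$ is $\phi$-exceptional with centre $T$, in which case $\mu_C(B_W)=1-a(C;X,B)$ and one must verify $a(C;X,B)\le 1-b$. The latter bound will be obtained via the identity $a(C;X,B)=a(C;X,B-D)-\mu_C(\phi^*D)$, valid by $\Q$-Cartierness of $D$, together with a local analysis at $\eta_T$, where the components of $B$ through $\eta_T$ include $S$ (of coefficient one) and $D_i$ (of coefficient $b_i\ge b$). In both cases this yields $\mu_T(B_S)=\mu_{T_W}(B_{S_W})\ge b$, as required.

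\textbf{Main obstacle.} The delicate point will be the bound $a(C;X,B)\le 1-b$ for a $\phi$-exceptional divisor $C$ whose centre $T$ lies in a coefficient-$\ge b$ component $D_i$ of $B$. This quantitative strengthening of the dlt/lc condition uses the $\Q$-Cartierness of $D$ to make sense of the pullback $\phi^*D$ and the associated multiplicity $\mu_C(\phi^*D)$, combined with a two-dimensional localization at $\eta_T$ where the presence of the coefficient-one component $S$ and the coefficient-$\ge b$ component $D_i$ of $B$ through $\eta_T$ forces the log discrepancy of $C$ to drop by at least $b$ below one.
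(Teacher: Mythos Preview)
Your inductive reduction to the divisorial case (your ``reductive step'') is correct and matches the paper's argument exactly. The difference is entirely in the base case.

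The paper handles the base case in one line by invoking Shokurov's explicit adjunction formula \cite[Corollary 3.10]{Shokurov-log-flips}: for a component $P$ of $D|_S$, the coefficient of $P$ in $B_S$ has the form $1-\frac{1}{n}+\frac{c}{n}$ for some $n\in\N$ and some $c\ge b$ (here $c$ is a positive combination of the coefficients in $B$ of the components through $P$ other than $S$, at least one of which is a component of $D$). The inequality $1-\frac{1}{n}+\frac{c}{n}\ge b$ is then immediate from $c\ge b$, $n\ge 1$, and $b\le 1$.

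Your approach via an explicit log resolution is more laborious and, as written, has a genuine gap in the exceptional case. You correctly identify that the unique component $C$ of the snc divisor through $\eta_{T_W}$ other than $S_W$ determines $\mu_T(B_S)=\mu_C(B_W)$, and that if $C=D_{i,W}$ you are done. But when $C$ is exceptional with centre $T$, your claimed bound $a(C;X,B)\le 1-b$ is not established. The displayed identity ``$a(C;X,B)=a(C;X,B-D)-\mu_C(\phi^*D)$'' is not well-formed: $B-D$ need not be a boundary, the individual components $D_i$ need not be $\Q$-Cartier, and even granting $\Q$-Cartierness of $D$ the multiplicity $\mu_C(\phi^*D)$ can be an arbitrary positive rational (e.g.\ $\frac{1}{n}$ at an index-$n$ point), so this does not by itself force the discrepancy down by $b$. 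Your ``two-dimensional localization'' sketch is exactly the computation that \emph{proves} Shokurov's formula; carrying it out rigorously amounts to redoing that result. Since $\mu_T(B_S)=\mu_C(B_W)$, your desired inequality $\mu_C(B_W)\ge b$ is literally equivalent to $1-\frac{1}{n}+\frac{c}{n}\ge b$, so you gain nothing by avoiding the citation.

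The fix is simply to replace your base-case argument by the direct appeal to Shokurov's formula, as the paper does.
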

\begin{proof}
First assume $S$ is of codimension one, i.e. a component of $\rddown{B}$. Let $P$ be a component of $D|_S$. Then some component of $B$ with coefficient $\ge b$ contains $P$.
So the coefficient of $P$ in $B_S$ is of the form $1-\frac{1}{n}+\frac{c}{n}$ for some $n\in \N$ and real number $c\ge b$ \cite[Corollary 3.10]{Shokurov-log-flips}. We can check easily that this coefficient is at least $b$. If $S$ has higher codimension, then we pick a component $T$ of $\rddown{B}$ containing $S$ and prove the lemma inductively by replacing $(X,B)$ with $(T,B_T)$ and replacing $D$ with $D|_T$ where $K_T+B_T=(K_X+B)|_T$.  
\end{proof}

\subsection{BAB and lower bound on lc thresholds}

We recall some of the main results of \cite{B-BAB} regarding boundedness of Fano's and lc thresholds in families.

\begin{thm}[{\cite[Theorem 1.1]{B-BAB}}]\label{t-BAB}
Let $d$ be a natural number and $\varepsilon$ a positive real number. Then the projective 
varieties $X$ such that  

$\bullet$ $(X,B)$ is $\varepsilon$-lc of dimension $d$ for some boundary $B$, and 

$\bullet$  $-(K_X+B)$ is nef and big,\\\\
form a bounded family. 
\end{thm}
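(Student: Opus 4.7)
The plan is to follow a multi-step inductive strategy on the dimension $d$, exploiting the theory of complements and effective birationality.

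\emph{Step 1 (Reduction to the Fano case).} Given $(X,B)$ which is $\varepsilon$-lc of dimension $d$ with $-(K_X+B)$ nef and big, observe that $X$ is of Fano type. I would pass to a small $\Q$-factorialisation and run a $K_X$-MMP (which exists and terminates by \cite{BCHM}). The $\varepsilon$-lc condition on $(X,0)$ is preserved since we are running the MMP on $K_X$ alone. If the output is a Mori fibre space of positive relative dimension, one descends inductively via boundedness of the base and of the generic fibre; so the main case is when the output $X'$ is $\Q$-factorial $\varepsilon$-lc with $-K_{X'}$ ample. Replacing $X$ by $X'$, I reduce to $\varepsilon$-lc Fano varieties.

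\emph{Step 2 (Bounded $n$-complements and a volume bound).} Applying Theorem \ref{t-bnd-compl-usual} with $\mathfrak{R}=\{0\}$, there is $n=n(d)$ and a strong $n$-complement $K_X+B^+$ of $K_X$, so in particular $nB^+\sim -nK_X\ge 0$. The next task is to extract a uniform upper bound $\vol(-K_X)\le V=V(d,\varepsilon)$. The idea: if $\vol(-K_X)$ were too large, standard non-klt centre production would give an effective $N\sim_\Q -\lambda K_X$ with $\lambda$ small, whose non-klt locus through a very general point has log discrepancy below $\varepsilon$, contradicting the $\varepsilon$-lc hypothesis after tiebreaking.

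\emph{Step 3 (Effective birationality).} With a uniform upper bound on $\vol(-K_X)$ in hand, I would prove there is $m=m(d,\varepsilon)$ such that $\Phi_{|-mK_X|}$ is birational. The method is to pick two very general points $x,y\in X$, use the volume bound to produce $\Q$-divisors proportional to $-K_X$ whose non-klt locus isolates $x$ from $y$, tiebreak to get a unique non-klt place $S$ through $x$, and use adjunction on $S$ together with the inductive BAB in dimension $<d$ to lift sections of $-mK_X$ separating $x$ and $y$. This intertwining of non-klt centre creation with lower-dimensional boundedness is the \textbf{technical heart and the main obstacle}; it requires a careful uniform control on log discrepancies of the produced places.

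\emph{Step 4 (From birational to genuine boundedness).} Effective birationality combined with the volume bound yields log birational boundedness of the couples $(X,B^+)$: a bounded multiple of $-K_X$ embeds $X$ into a projective space of bounded dimension with bounded degree. To upgrade to actual boundedness, I would work on a fixed log resolution $\mathcal{V}\to \mathcal{T}$ of the bounded family of birational models, restrict the $\varepsilon$-lc MMP to fibres, and use a relative version of Lemma \ref{l-bnd-Cartier-index-family} to bound the Cartier index of $K_X$ in families. The $\varepsilon$-lc condition then prevents small modifications from producing unboundedly many distinct $X$'s isomorphic in codimension one, so the log canonical model construction gives a single bounded family containing all such $X$.
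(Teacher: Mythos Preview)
This theorem is not proved in the present paper. It is stated in the preliminaries (Section~3) as a result quoted verbatim from \cite[Theorem 1.1]{B-BAB}, and the paper's ``proof'' consists solely of that citation. The current paper \emph{uses} BAB as an input (for instance in Proposition~\ref{l-bnd-cy-bndness-volume}, Lemma~\ref{l-cy-fib-bnd-Neron-Severi-Mfs}, and Lemma~\ref{l-log-bnd-cy-fib-induction}), rather than reproving it. Your proposal is therefore not comparable to anything in the paper: you have sketched an outline of the argument from the cited work \cite{B-compl, B-BAB}, whereas the paper simply imports the statement as a black box.

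If one nonetheless reads your outline as a summary of the proof in \cite{B-BAB}, a few cautions are in order. Your Step~1 reduction to the absolute Fano case via a $K_X$-MMP ending in a Mori fibre space, with the positive-relative-dimension case handled ``inductively via boundedness of the base and of the generic fibre'', is not how the argument in \cite{B-BAB} proceeds and is in danger of circularity: bounding the base of such a fibration is essentially the content of the Fano-type-fibration boundedness developed in \emph{this} paper, which itself relies on BAB. The actual proof in \cite{B-compl, B-BAB} stays in the global setting and establishes effective birationality, boundedness of complements, and the lower bound on lc thresholds (your Steps~2--4) through a tightly intertwined induction in which these statements are proved simultaneously rather than sequentially; in particular the volume bound does not precede effective birationality in the way you suggest. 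So while the ingredients you list are broadly the right ones, the logical architecture you propose does not match the cited proof and would not stand on its own.
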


On the other hand, lc thresholds are bounded from below under suitable assumptions:

\begin{thm}[{{\cite[Theorem 1.8]{B-BAB}}}]\label{t-bnd-lct}
Let $d,r$ be natural numbers and $\varepsilon$ be a positive real number. 
Then  there is a positive real number $t$ depending only on $d,r,\varepsilon$ satisfying the following. 
Assume 

$\bullet$ $(X,B)$ is a projective $\varepsilon$-lc pair of dimension $d$, 

$\bullet$ $A$ is a very ample divisor on $X$ with $A^d\le r$,

$\bullet$ $A-B$ is pseudo-effective, and 

$\bullet$ $M\ge 0$ is an $\R$-Cartier $\R$-divisor with $A-M$ pseudo-effective.\\
Then  
$$
\lct(X,B,|M|_\R)\ge \lct(X,B,|A|_\R)\ge t.
$$
\end{thm}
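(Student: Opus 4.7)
The plan is to first reduce to proving the second inequality $\lct(X,B,|A|_\R)\ge t$ and then to establish this bound using boundedness in families coming from the very ample divisor $A$ of bounded degree.

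The inequality $\lct(X,B,|M|_\R)\ge \lct(X,B,|A|_\R)$ is essentially formal. Since $A-M$ is pseudo-effective and $A$ is ample, for any $\eta>0$ the $\R$-divisor $A-M+\eta A$ is big, so any $0\le N\sim_\R M$ may be written as $(1+\eta)A'-P$ with $A'\in|A|_\R$ and $P\ge 0$. Thus if $s<\lct(X,B,|A|_\R)$, lc-ness of $(X,B+\tfrac{s}{1+\eta}A')$ forces lc-ness of $(X,B+\tfrac{s}{1+\eta}N)$, and letting $\eta\to 0$ gives the desired inequality. The remaining content is the lower bound $\lct(X,B,|A|_\R)\ge t$.

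Next, very ampleness of $A$ together with $A^d\le r$ bounds $h^0(X,A)$ in terms of $d,r$, so embedding by $|A|$ realises $X$ as a subvariety of bounded degree in a projective space of bounded dimension; by Hilbert scheme arguments the couples $(X,\Supp A)$ belong to a bounded family $\mathcal{P}$. I would parametrise $\mathcal{P}$ by a finite type $V\to T$ with a reduced divisor $\mathcal{C}\subset V$, stratify and shrink $T$, and take a log resolution $W\to V$ in family so that $(W,\Sigma)$ is relatively log smooth over $T$, where $\Sigma$ is the total transform of $\mathcal{C}$. The fibres $W_t\to V_t=X$ then furnish simultaneous log resolutions of all the $(X,\Supp A)$.

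For any $(X,B,A,N)$ in the setup with $0\le N\sim_\R A$, the pseudo-effectivity of $A-B$ together with $A^d\le r$ and very ampleness of $A$ bounds the $A$-degree $A^{d-1}\cdot B\le r$, and likewise $A^{d-1}\cdot N=A^d\le r$; this in turn bounds the multiplicities of $B$ and $N$ at smooth points of $X$, and hence, after pullback by $\pi\colon W_t\to X$, the coefficients of the total transforms along components of $\Sigma_t$ by a uniform constant $C=C(d,r)$ depending only on the family $W\to V\to T$. Writing $K_{W_t}+B_{W_t}=\pi^*(K_X+B)$, the $\varepsilon$-lc hypothesis gives $\mu_D B_{W_t}\le 1-\varepsilon$ along every component $D$ of $\Sigma_t$; choosing $t:=\varepsilon/(2C)$ then forces $\mu_D(B_{W_t}+t\pi^*N)\le 1-\varepsilon/2$ on every such component, so $(X,B+tN)$ is lc at every valuation with centre contained in $\Supp\mathcal{C}_t$.

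The main obstacle is that a non-klt place of $(X,B+tN)$ need not have its centre inside $\Supp\mathcal{C}_t$, so the family resolution above does not automatically catch every such place. To dispose of this, I would enlarge $\mathcal{P}$ so that $\Supp B$ and $\Supp N$ are also dominated by divisors in the family: reduced divisors of bounded degree in the $|A|$-embedding form bounded Chow families (with at most $r$ components, each of bounded degree), so these supports lie in a bounded family of reduced divisors on $X$. After incorporating them into $\mathcal{C}$ and re-stratifying, the enriched $W_t\to V_t$ becomes a log resolution of $(X,B+tN)$ in family, every non-klt place lies on $\Sigma$, and the uniform coefficient bound from the previous paragraph yields the required $t$.
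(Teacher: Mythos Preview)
This theorem is not proved in the present paper; it is quoted from \cite{B-BAB} as a known input. So there is no ``paper's own proof'' to compare against here, but your attempt still deserves scrutiny, and it has a genuine gap.

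The reduction to the second inequality is essentially fine. The fatal step is the last paragraph, where you claim that $\Supp B$ and $\Supp N$ can be absorbed into a bounded family of reduced divisors. This is false. What is bounded is the \emph{weighted} degree $A^{d-1}\cdot B\le r$ and $A^{d-1}\cdot N\le r$, not the degree of the support. Since the coefficients of $B$ and of $N\in|A|_\R$ are arbitrary positive reals, their supports can have arbitrarily many components and arbitrarily large degree: for instance $N=\tfrac{1}{k}\sum_{i=1}^k H_i$ with $H_i\in|A|$ distinct has $\deg(\Supp N)=kA^d$, unbounded as $k\to\infty$. So you cannot parametrise these supports by a finite-type Chow scheme, and your ``family log resolution'' does not see the relevant valuations. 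The earlier claim that degree bounds control multiplicities of $\pi^*B$, $\pi^*N$ along a \emph{fixed} $\Sigma_t$ also breaks down for the same reason: the components of $B,N$ are moving, not fixed.

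This is exactly the point that makes the theorem hard: one must bound lc thresholds uniformly over linear systems whose members have unbounded support. The proof in \cite{B-BAB} does not proceed by simultaneous resolution; it goes through creation of non-klt centres, bounded complements, and boundedness of Fano varieties. Your approach would work if, say, the coefficients of $B$ and $N$ were bounded below away from zero (so supports become bounded), but that hypothesis is absent here and cannot be recovered.
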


Note that the conditions on $A,B,M$ essentially say that $X$ belongs to a bounded family and that the 
``degrees" of $B,M$ with respect $A$ are bounded.

\subsection{b-divisors}\label{ss-b-divisor}

We recall some definitions regarding b-divisors but not in full generality. 
Let $X$ be a variety. A \emph{b-$\R$-Cartier b-divisor over $X$} is the choice of  
a projective birational morphism 
$Y\to X$ from a normal variety and an $\R$-Cartier divisor $M$ on $Y$ up to the following equivalence: 
 another projective birational morphism $Y'\to X$ from a normal variety and an $\R$-Cartier divisor
$M'$ defines the same b-$\R$-Cartier  b-divisor if there is a common resolution $W\to Y$ and $W\to Y'$ 
on which the pullbacks of $M$ and $M'$ coincide.  

A b-$\R$-Cartier  b-divisor  represented by some $Y\to X$ and $M$ is \emph{b-Cartier} if  $M$ is 
b-Cartier, i.e. its pullback to some resolution is Cartier.

\subsection{Generalised pairs}

A \emph{generalised pair} consists of 
\begin{itemize}
\item a normal variety $X$ equipped with a projective
morphism $X\to Z$, 

\item an $\R$-divisor $B\ge 0$ on $X$, and 

\item a b-$\R$-Cartier  b-divisor over $X$ represented 
by some projective birational morphism $X' \overset{\phi}\to X$ and $\R$-Cartier divisor
$M'$ on $X'$
\end{itemize}
such that $M'$ is nef$/Z$ and $K_{X}+B+M$ is $\R$-Cartier,
where $M := \phi_*M'$. 

We usually refer to the pair by saying $(X,B+M)$ is a  generalised pair with 
data $X'\to X\to Z$ and $M'$, and call $M'$ the nef part. Note that our notation 
here, which seems to be preferred by others, is slightly different from those in \cite{BZh}\cite{B-compl}. 

We now define generalised singularities.
Replacing $X'$ we can assume $\phi$ is a log resolution of $(X,B)$. We can write 
$$
K_{X'}+B'+M'=\phi^*(K_{X}+B+M)
$$
for some uniquely determined $B'$. For a prime divisor $D$ on $X'$ the \emph{generalised log discrepancy} 
$a(D,X,B+M)$ is defined to be $1-\mu_DB'$. 
We say $(X,B+M)$ is 
\emph{generalised lc} (resp. \emph{generalised klt})(resp. \emph{generalised $\varepsilon$-lc}) 
if for each $D$ the generalised log discrepancy $a(D,X,B+M)$ is $\ge 0$ (resp. $>0$)(resp. $\ge \varepsilon$).

A \emph{generalised non-klt centre} of a generalised pair $(X,B+M)$ is the image on $X$ of a prime divisor 
$D$ over $X$  with $a(D,X,B+M)\le 0$, and 
the \emph{generalised non-klt locus} of the generalised pair is the union of all the generalised non-klt centres.  

\emph{Generalised sub-pairs} and their singularities are similarly defined by allowing the coefficients of 
$B$ to be any real number.

The next lemma is useful for reducing problems about generalised log Calabi-Yau fibrations to 
usual log Calabi-Yau fibrations.

\begin{lem}\label{l-from-gen-fib-to-usual-fib}
Let $d,r$ be natural numbers and $\varepsilon$ be a positive real number. Let 
$(X,B+M)\to Z$ be a generalised $(d,r,\varepsilon)$-Fano type fibration (as in \ref{d-gen-FT-fib}) such that  
 $-(K_X+\Delta)$ is big over $Z$ for some $0\le \Delta\le B$. 
Then we can find a boundary $\Theta\ge \Delta$ such that $(X,\Theta)\to Z$ is a $(d,r,\frac{\varepsilon}{2})$-Fano type fibration.
\end{lem}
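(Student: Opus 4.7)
I will take $\Theta = B + N$ for an effective divisor $N \ge 0$ with $N \sim_\R M + sf^*A$ for some small $s > 0$, chosen so that the valuation of $N$ at every prime divisor over $X$ is at most $\varepsilon/2$. Two ingredients drive the argument: first, the observation that $(X,B)$ is already $\varepsilon$-lc as a usual pair; second, Lemma \ref{l-div-pef-on-FT} combined with the Fano type of $X$ over $Z$ to produce effective representatives in the relevant $\R$-linear systems.

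\textbf{Key steps.} For the first ingredient, fix a log resolution $\phi\colon X' \to X$ on which $M'$ lives and compare $K_{X'}+B'+M' = \phi^*(K_X+B+M)$ with $K_{X'}+B'' = \phi^*(K_X+B)$; subtracting yields $B'' = B' - E_M$ where $E_M := M' - \phi^*M$ is $\phi$-exceptional. Since $M'$ is globally nef and hence $\phi$-nef, the negativity lemma forces $E_M \le 0$, so $B'' \le B' \le 1-\varepsilon$; thus $(X,B)$, and a fortiori $(X,\Delta)$ with $\Delta \le B$, is $\varepsilon$-lc. For the second ingredient, set $L' := L + sA$ for $s > 0$ chosen small enough that $A - L' = (A - L) - sA$ remains ample, by openness of ampleness. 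Using $K_X+B+M \sim_\R f^*L$ we obtain
\[
-(K_X + \Delta) + f^*L' \sim_\R (B-\Delta) + M + sf^*A,
\]
which is pseudo-effective globally (sum of an effective divisor, the pseudo-effective pushforward $M = \phi_*M'$, and the semi-ample $sf^*A$) and big over $Z$ by hypothesis. Applying Lemma \ref{l-div-pef-on-FT} with $L_{\mathrm{lem}} = M$ pseudo-effective gives $|M + sf^*A|_\R \ne \emptyset$; pick any effective $N$ there. Setting $\Theta := B + N$ then yields $\Theta \ge \Delta$, $K_X + \Theta \sim_\R f^*L'$, and $A - L'$ ample, so the numerical data of a $(d,r,\varepsilon/2)$-Fano type fibration is in place.

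\textbf{Main obstacle.} The hard part is to choose $N$ so that $(X,\Theta)$ is $\varepsilon/2$-lc, which (using $(X,B)$ being $\varepsilon$-lc) reduces to $\mu_D N \le \varepsilon/2$ at every prime divisor $D$ over $X$. To achieve this, I will use a controlled Bertini argument on $X'$: for any very ample $H'$ on $X'$ and any $\eta > 0$, the divisor $M' + s(f\phi)^*A + \eta H'$ is ample, so for $m \gg 0$ a very general $D_m \in |m(M' + s(f\phi)^*A + \eta H')|$ yields $N' := \tfrac{1}{m}D_m$ with coefficient $1/m$ and transversal to $B'$ and all $\phi$-exceptional divisors. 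Pushing down to $N := \phi_* N'$ on $X$ and absorbing the resulting $\eta \phi_* H'$ term by a slight enlargement of $s$ (and accompanying adjustment of $L'$ within the ample cone) delivers $\mu_D N \le \varepsilon/2$ at every $D$ over $X$. The delicate point is arranging this absorption so that both $K_X + \Theta \sim_\R f^*L'$ and the ampleness of $A - L'$ are preserved simultaneously; once done, $\Theta = B + N$ is the desired boundary and $(X,\Theta) \to Z$ is a $(d,r,\varepsilon/2)$-Fano type fibration.
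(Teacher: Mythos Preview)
Your first ingredient is correct (up to a harmless sign slip: with $E_M := M' - \phi^*M$ you actually get $B'' = B' + E_M$, and the negativity lemma gives $E_M \le 0$, so indeed $B'' \le B'$). The observation that $(X,B)$ is already $\varepsilon$-lc is genuinely useful.

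The gap is in your ``absorption'' step. After pushing down $N' \sim_\R M' + s(f\phi)^*A + \eta H'$ you get $N \sim_\R M + s f^*A + \eta\,\phi_*H'$, and hence
\[
K_X+\Theta = K_X+B+N \sim_\R f^*(L+sA) + \eta\,\phi_*H'.
\]
For $(X,\Theta)\to Z$ to be a Fano type fibration you need $K_X+\Theta \sim_\R f^*L'$ for some $L'$ on $Z$. This forces $\eta\,\phi_*H'$ to be $\R$-linearly equivalent to a pullback from $Z$. But $H'$ is very ample on $X'$, so $\phi_*H'$ is big on $X$; when $\dim X>\dim Z$ no big divisor on $X$ is a pullback from $Z$, so no adjustment of $s$ or of $L'$ can absorb this term. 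The difficulty is not cosmetic: the linear system $|M+sf^*A|_\R$ is in general neither big nor free, so you cannot expect an effective member whose pullback to every resolution has coefficients $\le \varepsilon/2$; the ample perturbation you need to run Bertini has to come from somewhere inside the data of the fibration, and the only place it can come from is the bigness hypothesis on $-(K_X+\Delta)$, which your argument never uses.

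The paper's construction exploits exactly this. Since $-(K_X+\Delta)$ is big over $Z$, the divisor $B-\Delta+M+f^*A$ is globally big, so on a log resolution one writes $B'-\Delta'+M'+\phi^*f^*A \sim_\R G'+H'$ with $G'\ge 0$ and $H'$ ample. The ample piece $H'$ is now ``in the budget'': for small $\alpha$ one takes a general $R'\sim_\R \alpha H'+(1-\alpha)M'$ and sets
\[
\Theta' := \Delta' + (1-\alpha)(B'-\Delta') + \alpha G' + R'.
\]
The point is that $\alpha G'+R' \sim_\R \alpha(B'-\Delta'+M'+\phi^*f^*A)+(1-\alpha)M'$, so the $\alpha H'$ is exactly compensated by scaling $B'-\Delta'$ down to $(1-\alpha)(B'-\Delta')$; one then checks $K_{X'}+\Theta' \sim_\R \phi^*f^*(L+\alpha A)$ with no stray horizontal term. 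Coefficients stay $\le 1-\varepsilon/2$ because $\Theta'\le B'+\alpha G'+R'$ with $\alpha$ small and $R'$ general. This balancing trick is the missing idea in your proposal.
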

\begin{proof}
Taking a $\Q$-factorialisation we can assume $X$ is $\Q$-factorial.  
Since $B-\Delta$ is effective and $M$ is pseudo-effective (as it is the pushdown of a nef divisor),
$B-\Delta+M$ is pseudo-effective. Moreover, since  $-(K_X+\Delta)$ is big over $Z$, 
$$
B-\Delta+M\sim_\R -(K_X+\Delta)/Z
$$ 
is big over $Z$. Thus  
$$
t(B-\Delta+M)+f^*A
$$ 
is globally big for any sufficiently small $t>0$, hence 
$$
B-\Delta+M+f^*A
$$ 
is globally big as  $B-\Delta+M$ is pseudo-effective.

Let $\phi\colon X'\to X$ be a log resolution of $(X,B)$ 
on which the nef part $M'$ of $(X,B+M)$ resides. Write 
$$
K_{X'}+B'+M'=\phi^*(K_X+B+M)
$$
and 
$$
K_{X'}+\Delta'=\phi^*(K_X+\Delta).
$$
Then
$$
B'-\Delta'+M'=\phi^*(B-\Delta+M).
$$ 
Since $(X,B+M)$ is generalised $\varepsilon$-lc, the coefficients of $B'$ do not exceed $1-\varepsilon$. 
In addition, since $M'$ is nef and $B\ge \Delta$, we have $B'\ge \Delta'$ by the negativity lemma 
applied to $B'-\Delta'$ and the morphism $\phi$.

Since 
$$
B-\Delta+M+f^*A
$$ 
is big, we can write 
$$
B'-\Delta'+M'+\phi^*f^*A=\phi^*(B-\Delta+M+f^*A)\sim_\R G'+H'
$$ 
where $G'\ge 0$ and $H'$ is ample. Replacing $\phi$ we can assume $\phi$ is a log resolution of 
$(X,B+\phi_*G')$.
Pick a small real number $\alpha>0$ and pick a general 
$$
0\le R'\sim_\R \alpha H'+(1-\alpha)M'.
$$
Let 
$$
\Theta':=\Delta'+(1-\alpha)(B'-\Delta')+\alpha G'+R'.
$$
We can make the above choices so that $(X',\Theta')$ is log smooth. Since 
$$
\Delta'\le \Delta'+(1-\alpha)(B'-\Delta')\le \Delta'+(B'-\Delta')=B',
$$ 
we have 
$$
\Delta'\le\Theta'\le B'+\alpha G'+R'.
$$
In particular, we can choose $\alpha$ and $R'$ so that the coefficients of $\Theta'$ do not exceed $1-\frac{\varepsilon}{2}$. 

By construction, we have 
$$
K_{X'}+\Theta'=K_{X'}+ \Delta'+(1-\alpha)(B'-\Delta')+\alpha G'+R'
$$
$$
\sim_\R K_{X'}+  \Delta'+(1-\alpha)(B'-\Delta')+\alpha G'+\alpha H'+(1-\alpha)M'
$$
$$
\sim_\R K_{X'}+ \Delta'+(1-\alpha)(B'-\Delta')+\alpha(B'-\Delta'+M'+ \phi^* f^*A)+(1-\alpha)M'
$$
$$
= K_{X'}+ \Delta'+B'-\Delta'+\alpha(M'+ \phi^* f^*A)+(1-\alpha)M'
$$
$$
=K_{X'}+ B'+M'+\alpha\phi^*f^*A\sim_\R \phi^*f^*(L+\alpha A).
$$
Therefore,   letting $\Theta=\phi_*\Theta'$, we have 
$$
K_X+\Theta\sim_\R f^*(L+\alpha A).
$$ 
Choosing $\alpha$ small enough we can ensure $A-(L+\alpha A)$ is 
ample. Moreover, since $K_{X'}+\Theta' \sim_\R 0/X$ and since the coefficients of $\Theta'$ do not 
exceed $1-\frac{\varepsilon}{2}$, the pair $(X,\Theta)$ is $\frac{\varepsilon}{2}$-lc.
Thus $(X,\Theta)\to Z$ is a $(d,r,\frac{\varepsilon}{2})$-Fano type fibration. Finally, it is obvious that $\Theta\ge \Delta$.
\end{proof}

\subsection{Mori fibre spaces}

The next lemma is similar to results in \cite{DiCerbo-Svaldi}. 

\begin{lem}\label{l-contraction-after-flops}
Suppose that 
\begin{itemize}
\item $(X,B)$ is a $\Q$-factorial projective klt pair where $B$ is a $\Q$-divisor, 

\item $h\colon X\to Y$ is a Mori fibre space structure, i.e. a non-birational $K_X$-negative extremal  contraction,

\item  $Y\to Z$ is a contraction,  

\item $K_X+B\sim_\Q 0/Z$, and

\item $Y\bir Y'/Z$ is a birational map to a $\Q$-factorial normal projective variety 
which is an isomorphism in codimension one.
\end{itemize}
Then there exists a birational map $X\bir X'$ to a $\Q$-factorial normal projective variety which is an isomorphism 
in codimension one and such that the induced map $X'\bir Y'$ is an extremal contraction (hence a morphism).
\end{lem}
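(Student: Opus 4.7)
My plan is to produce $X'$ by running a suitable MMP on $X$ over $Z$ based on the pullback of an ample divisor from $Y'$. First, I would pick a sufficiently general effective ample $\Q$-divisor $A'$ on $Y'$, let $A_Y$ denote its birational transform on $Y$ (which is $\Q$-Cartier since $Y$ is $\Q$-factorial), and set $A_X := h^*A_Y$ on $X$. For a small rational $\varepsilon>0$ the pair $(X, B+\varepsilon A_X)$ is $\Q$-factorial klt, and since $K_X+B\sim_\Q 0/Z$ we have
$$
K_X+B+\varepsilon A_X \;\sim_\Q\; \varepsilon A_X \text{ over } Z.
$$
Since $A_X$ is pulled back from $Y$, it is numerically trivial over $Y$, so the extremal ray of $h$ is $(K_X+B+\varepsilon A_X)$-trivial and will not be contracted in any such MMP.

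Next, I would run a $(K_X+B+\varepsilon A_X)$-MMP over $Z$, which is equivalently an MMP on $A_X$ over $Z$. Any $A_X$-negative extremal ray $R$ in $\NEbar(X/Z)$ has nontrivial image $h_*R$ in $Y$ with $A_Y\cdot h_*R<0$, so each step of the MMP on $X$ corresponds to an $A_Y$-negative extremal ray on $Y$ over $Z$. Because $A_Y$ is the birational transform of the ample divisor $A'$ on the small $\Q$-factorial modification $Y'$, its ample model over $Z$ is $Y'$, and the MMP on $A_Y$ over $Z$ consists of a finite sequence of flops terminating at $Y'$. I would lift each such flop on $Y$ to the corresponding flip or flop on $X$, using that $X$ is of Fano type over $Y$ (since $-K_X$ is $h$-ample and $(X,0)$ is klt) to guarantee existence of each individual step by \cite{BCHM}. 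Each lifted step is small on $X$ because its exceptional locus equals $h^{-1}$ of a codimension-$\ge 2$ locus on $Y$. This produces a small birational map $X\bir X'$ to a $\Q$-factorial normal projective variety on which $A_{X'}$, the birational transform of $A_X$, is semi-ample over $Z$.

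Since the graded section ring of $A_{X'}$ over $Z$ agrees with that of $A_Y$ via $h_*\mathcal{O}_X=\mathcal{O}_Y$, the ample model of $A_{X'}$ over $Z$ equals $Y'$, giving a contraction $g\colon X'\to Y'$ over $Z$. Because $X\bir X'$ is an isomorphism in codimension one and $\rho(X/Y)=1$, it follows that $\rho(X'/Y')=1$, so $g$ is an extremal contraction, as required.

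The main obstacle is justifying the termination and smallness of the MMP on $X$ over $Z$. The divisor $K_X+B+\varepsilon A_X$ is not big over $Z$ (because $A_X$ is pulled back through the non-birational contraction $h$), so the standard BCHM termination does not apply directly. Instead the argument must proceed step-by-step, lifting each flop on $Y$ (of which there are only finitely many, since $Y\bir Y'$ is a fixed small modification between $\Q$-factorial projective varieties) to a flip or flop on $X$ using Fano-type-ness of $X$ over $Y$; verifying that each such lift is indeed small, so that $X\bir X'$ is an isomorphism in codimension one, is the most delicate point.
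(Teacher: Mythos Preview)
Your approach is essentially the same as the paper's: decompose $Y\bir Y'$ into flops, lift each flop to $X$ by running an MMP on the pullback of an ample divisor, and conclude by a Picard-number count. The paper first writes $K_X+B\sim_\Q h^*(K_Y+B_Y+M_Y)$ to get a generalised klt structure on $Y$ and then, as you do, reduces to a single flop and runs an MMP on $G:=h^*H_Y$ over the flopping base.

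Two points need tightening. First, you invoke ``$X$ is of Fano type over $Y$'' to run the MMP, but the MMP is over $Z$ (or, after reducing to one flop, over the flopping contraction base $V$). What you need is Fano type over $V$: since $Y\to V$ is birational and $B$ is big over $Y$, $B$ is big over $V$, hence $(X,B)$ klt with $K_X+B\sim_\Q 0/V$ gives Fano type over $V$. This is exactly the paper's observation.

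Second, your justification for smallness --- ``its exceptional locus equals $h^{-1}$ of a codimension-$\ge 2$ locus on $Y$'' --- does not work: the exceptional locus is only \emph{contained in} such a preimage, and since Mori fibre spaces need not be equidimensional, $h^{-1}$ of a codimension-$\ge 2$ set can contain a divisor. The clean argument (implicit in the paper) is via Picard numbers: in the single-flop case $\rho(X/Z)=2$; the output $X'$ has a contraction $X'\to Y'$ to a lower-dimensional variety with $\rho(Y'/Z)=1$, which forces $\rho(X'/Z)\ge 2$; since the MMP cannot increase relative Picard number, $\rho(X'/Z)=2$, so no divisorial contraction occurred and $X\bir X'$ is small. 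Note also that one flop on $Y$ may correspond to several flips on $X$, not a single step; this is harmless once termination is secured via Fano type over $V$.
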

\begin{proof}
Since $K_X+B \sim_\Q 0/Y$, by adjunction, we can write 
$$
K_X+B\sim_\Q h^*(K_Y+B_Y+M_Y)
$$ 
where we consider $(Y,B_Y+M_Y)$ as a generalised pair (see \ref{rem-base-fib-gen-pair} below) 
which is generalised klt. Since $X$ is $\Q$-factorial and since $X\to Y$ is extremal and 
non-birational, $Y$ is also $\Q$-factorial \cite[Corollary 3.18]{kollar-mori}.  

Since 
$$
K_Y+B_Y+M_Y\sim_\Q 0/Z,
$$ 
by the same arguments as in \cite{Kaw-flops} we can decompose $Y\bir Y'$ into a sequence of flops: 
indeed if $H_{Y'}$ is an ample $\Q$-divisor on $Y'$, then after rescaling $H_{Y'}$, 
$$
(Y,B_Y+H_Y+M_Y)
$$ 
is 
generalised klt where $H_Y$ is the birational transform of $H_{Y'}$; now running an MMP on 
$$
K_Y+B_Y+H_Y+M_Y
$$ 
over $Z$ ends with $Y'$ as 
$$
K_{Y'}+B_{Y'}+H_{Y'}+M_{Y'}
$$ 
is ample; 
in particular, only flips can occur in the MMP which are flops with respect to $K_Y+B_Y+M_Y$. 

By the previous paragraph, to obtain $X'$ it is enough to consider the case when $Y\bir Y'/Z$ is one single flop, in particular, we can assume $Y\to Z$ is an extremal flopping contraction. 
Let $H_{Y'},H_{Y}$ be as before, and let $G$ be the pullback of $H_Y$ to $X$. 
Since $B$ is big over $Y$ and since $Y\to Z$ is birational, $B$ is also big over $Z$.
Thus $X$ is of Fano type over $Z$ as $(X,B)$ is klt and $K_X+B\sim_\Q 0/Z$. 
Therefore, there is a minimal model $X'$ for $G$ over $Z$. 
The birational transform of $G$ on $X'$, say $G'$, is semi-ample over $Z$, hence it defines a contraction 
$X'\to V'/Z$ and $G'$ is the pullback of an ample$/Z$ divisor $H_{V'}$. By construction, 
$V'$ is the ample model of $G$ over $Z$. Since $G$ is the pullback of $H_Y$, $V'$ is also the ample 
model of $H_Y$ over $Z$. But the ample model of $H_Y$ is $Y'$, hence $V'=Y'$. In particular, 
$X'\bir Y'$ is a morphism. 

Finally, since  $X\to Z$ has relative Picard number two,  $X'\to Z$ also has relative Picard number two 
as $X\bir X'$ is an isomorphism in codimension one. On the other hand, $Y'\to Z$ has relative 
Picard number one, so $X'\to Y'$ is an extremal contraction. 
\end{proof}

The Lemma also holds if $B$ is an $\R$-boundary because we can write $B=\sum r_iB_i$ where 
$r_i\ge 0$ are real numbers with $\sum r_i=1$, $(X,B_i)$ is klt and $K_{X}+B_i\sim_\Q 0/Z$. 

\bigskip

%%%%%%%%%%%%%%%%%%%%%%%
%%%%%%%%%%%%%%%%%%%%%%%%

\section{\bf Boundedness of Fano type fibrations}

In this section, we treat boundedness properties of Fano type log Calabi-Yau fibrations. 
We will frequently refer to Definition \ref{d-FT-fib} and use the notation therein.

\subsection{Numerical boundedness}
We start with bounding numerical properties.
The next statement and its proof are similar to \cite[Lemma 3.2]{Jiang}.

\begin{prop}\label{l-bnd-cy-numerical-bndness}
Let $d,r$ be natural numbers and $\varepsilon$ be a positive real number. 
Assume that Theorem \ref{t-sing-FT-fib-totalspace} holds in dimension $d-1$. Then there is a 
natural number $l$ depending only on $d,r,\varepsilon$ satisfying the following. 
Let $(X,B)\to Z$ be a $(d,r,\varepsilon)$-Fano type fibration (as in \ref{d-FT-fib}) such that 
\begin{itemize}
\item $-(K_X+\Delta)$ is nef over $Z$ for some $\R$-divisor $\Delta\ge 0$, and 

\item $f^*A+B-\Delta$ is pseudo-effective. 
\end{itemize}
Then $lf^*A-(K_X+\Delta)$ is nef (globally). 
\end{prop}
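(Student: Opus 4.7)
The plan is to verify the Kleiman criterion for $D := lf^*A - (K_X+\Delta)$, namely that $D \cdot C \ge 0$ for every irreducible curve $C \subset X$, with $l$ chosen uniformly in $d, r, \varepsilon$. The case where $C$ is contracted by $f$ is immediate: $f^*A \cdot C = 0$, while $-(K_X+\Delta) \cdot C \ge 0$ by the relative nefness hypothesis. Hence only curves not contracted by $f$ are at stake, and for such curves $f^*A \cdot C \ge 1$ since $A$ is very ample on $Z$.

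Using $K_X + B \sim_\R f^*L$ I would rewrite
\[
D \;=\; f^*(lA - L) + (B - \Delta).
\]
For every $l \ge 1$, the divisor $f^*(lA-L) = f^*((l-1)A + (A-L))$ is the pullback of an ample divisor on $Z$, hence globally nef, so the proposition reduces to showing $(l-1)f^*A + (B - \Delta)$ is globally nef. Since $K_X + B \sim_\R 0/Z$, we have $B - \Delta \equiv -(K_X+\Delta)$ over $Z$, so $B - \Delta$ is already nef over $Z$, and on contracted curves the inequality is automatic. The substantive task is thus to promote this relative nefness of $B - \Delta$ to global nefness by adding a uniformly bounded multiple of $f^*A$; the hypothesis that $f^*A + B - \Delta$ is pseudo-effective tells us the class is at least globally pseff, giving the right answer on moving curves, and the work is to rule out rigid curves with very negative intersection with $B - \Delta$.

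The quantitative engine is the inductive hypothesis, Theorem \ref{t-sing-FT-fib-totalspace} in dimension $d-1$. I would pick a general $H \in |A|$ and set $G := f^*H$. By Lemma \ref{l-gen-element-v.ample-div-passing-x,y-contraction}, $G$ is normal, $(X, B + G)$ is plt, and adjunction gives an $\varepsilon$-lc pair $(G, B_G)$ whose fibration $(G, B_G) \to H$ is a $(d-1, r, \varepsilon)$-Fano type fibration (noting $(A|_H)^{\dim H} = A^{\dim Z} \le r$ and that $A|_H - L|_H$ remains ample). The inductive theorem then supplies a uniform $t > 0$ controlling lc thresholds on $(G, B_G)$ of $\R$-Cartier divisors $P_G \ge 0$ with $(f|_G)^*(A|_H) + B_G - P_G$ or $(f|_G)^*(A|_H) - K_G - P_G$ pseudo-effective. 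Applied to the restrictions to $G$ of the divisors built from $\Delta$, $B$, and $f^*A$ (whose relevant pseff hypotheses transfer to $G$ by hypothesis and by general choice of $H$), this yields uniform numerical control on how far $\Delta$ can exceed $B$ along directions transverse to a general hyperplane section.

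The main obstacle is converting these slice-wise singularity bounds into a Kleiman-type statement on all of $X$. A curve $C \subset X$ not contracted by $f$ meets a very general $G$ in finitely many points and typically is not contained in any single $G$, so one cannot simply apply the inductive conclusion to $C$ on $G$. I expect the argument to combine: (a) the uniform lc-threshold constant $t$ from the inductive hypothesis, (b) the length bound $-(K_X+B) \cdot C_R \le 2d$ for $(K_X+B)$-negative extremal rays of the $\varepsilon$-lc pair $(X, B)$ via the cone theorem, and (c) the position of each extremal ray relative to $\NEbar(X/Z)$: rays inside $\NEbar(X/Z)$ are handled by the relative nefness hypothesis, while rays not in $\NEbar(X/Z)$ satisfy $f^*A \cdot C_R \ge 1$, and their $(B-\Delta)$-intersection is bounded below using the length bound together with the pseudo-effectivity $f^*A + B - \Delta \ge 0$. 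An alternative is to run a relative MMP on $X/Z$, using that $X$ is of Fano type over $Z$ to conclude $B - \Delta$ is semi-ample over $Z$ and thence pull back to a situation where the global picture can be directly controlled by the ample datum on $Z$. Either way, the bookkeeping that assembles $t$, $2d$ and $r$ into a single $l$ is the technical crux.
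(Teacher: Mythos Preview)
Your outline identifies the right ingredients---restriction to a general $G = f^*H$ to invoke the inductive hypothesis, and a length-of-extremal-rays argument---but there is a genuine gap in how these are combined, and the implementation you sketch in (b) would not work.

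The curves on which $D = lf^*A - (K_X+\Delta)$ might be negative need not span $(K_X+B)$-negative extremal rays: since $K_X+B \sim_\R f^*L$ is pulled back from $Z$, its sign on a horizontal curve is governed by $L$ and has no a priori relation to the sign of $D$. So the length bound for the $\varepsilon$-lc pair $(X,B)$ gives nothing. What the paper does instead is manufacture an auxiliary boundary $\Theta = B + \frac{1}{m-1}P$, where $0 \le P \sim_\R 2f^*A + B - \Delta$ (such $P$ exists because $f^*A+B-\Delta$ is pseudo-effective and $X$ is of Fano type over $Z$), and then observe the identity
\[
(m+2)f^*A - (K_X+\Delta) \;\sim_\R\; (m-1)(K_X+\Theta) + mC,
\]
with $C := f^*A - (K_X+B) \sim_\R f^*(A-L)$ nef. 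This forces any extremal ray $R$ negative for the left side to be $(K_X+\Theta)$-negative, which is the shape needed for a length bound.

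The second point your proposal does not address is that $(X,\Theta)$ is typically \emph{not} klt: the inductive step (via inversion of adjunction on a general $G$) only shows that the non-klt locus of $(X,\Theta)$ maps to finitely many closed points of $Z$. The standard cone theorem therefore does not apply. One needs Fujino's extension \cite[Theorem 1.1(5)]{Fujino-rays}: since any $D$-negative ray $R$ satisfies $f^*A \cdot R > 0$ while the non-klt locus $T$ is vertical, the image of $\overline{NE}(T)$ in $\overline{NE}(X)$ misses $R$, and then the bound $(K_X+\Theta)\cdot \Gamma \ge -2d$ holds for a generating curve $\Gamma$. From here the bookkeeping is indeed routine: combining $f^*A \cdot \Gamma \ge 1$ with the identity above gives $l = m+2+2d(m-1)$ for any $m$ with $\frac{1}{m-1} < t$. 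Your alternative MMP route does not obviously sidestep either of these two issues.
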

\begin{proof}
\emph{Step 1.}
\emph{In this step, we do some basic preparations and introduce some notation.}
Replacing $X$ with a $\Q$-factorialisation, we can assume $X$ is $\Q$-factorial. All 
the assumptions of the proposition are preserved. Put 
$$
C:=f^*A-(K_X+B).
$$
Let $m\ge 2$ be a natural number. We can write 
$$
 \begin{array}{l l}
 (m+2)f^*A-(K_X+\Delta) &  = 2f^*A+m(K_X+B)+mC-(K_X+\Delta) \\
 & = 2f^*A+m(K_X+B)-(K_X+\Delta)+mC \\
 & =2f^*A+(m-1)(K_X+B)+B-\Delta+m C \\
 & =(m-1)(K_X+B+\frac{1}{m-1}(2f^*A+B-\Delta)) +m C.
  \end{array} 
$$

On the other hand, since $f^*A+B-\Delta$ is pseudo-effective and since $X$ is of Fano type over $Z$, there is 
$$
0\le P\sim_\R 2f^*A+B-\Delta
$$
by Lemma \ref{l-div-pef-on-FT}.\\

\emph{Step 2.}
\emph{In this step, we find a real number $t>0$ depending only on $d,r,\varepsilon$ such that 
the non-klt locus of $(X,B+tP)$ is mapped to a finite set of closed points of $Z$.}
We can assume $\dim Z>0$ otherwise the proposition is trivial. 
Let $H\in |A|$ be a general element and let $G=f^*H$, and $g$ be the induced morphism $G\to H$. 
Let
$$
K_G+B_G:=(K_X+B+G)|_G.
$$
By definition of $G$, we have $B_G=B|_G$.
Then 
\begin{itemize}
\item $(G,B_G)$ is $\varepsilon$-lc as $(X,B)$ is $\varepsilon$-lc, 

\item  $-K_G$ is big over $H$ as $-K_G=-(K_X+G)|_G\sim -K_X|_G$ over $H$,

\item we have 
$$
K_G+B_G \sim_\R (f^*L+f^*H)|_G \sim_\R g^*(L+H)|_H\sim_\R g^*(L+A)|_H,
$$ 
and

\item $2A|_H-(L+A)|_H$ is ample as $A-L$ is ample. 
\end{itemize}
Therefore, $(G,B_G)\to H$ is a $(d-1,2^{d-1}r,\varepsilon)$-Fano type fibration.

Letting $P_G=P|_G$ and $Q_G=\Delta|_G$ 
we have 
$$
P_G+Q_G\sim_\R (2f^*A+B-\Delta)|_G+\Delta|_G=(2f^*A+B)|_G\sim g^*2A|_H+B_G.
$$
Since we are assuming Theorem \ref{t-sing-FT-fib-totalspace} in dimension $d-1$, 
we deduce that there is a real number $t>0$ depending only on $d,r,\varepsilon$ such that 
 $(G,B_G+tP_G)$ is klt. Note that here we used the assumption $\Delta\ge 0$ to ensure that 
$g^*2A|_H+B_G-P_G$ is pseudo-effective.

By inversion of adjunction \cite[Theorem 5.50]{kollar-mori} (which is stated for $\Q$-divisors but 
also holds for $\R$-divisors) and by the previous paragraph, 
$$
(X,B+G+tP)
$$ 
is plt near $G$. Since $G$ is a general member of $|f^*A|$, we deduce that the non-klt locus of 
$(X,B+tP)$ (possibly empty) is mapped to a finite set of closed points of $Z$.\\ 

\emph{Step 3.}
\emph{In this step, we consider extremal rays that intersect 
$$
(m+2)f^*A-(K_X+\Delta)
$$ negatively.}
Fix a natural number $m\ge 2$ so that $\frac{1}{m-1}<t$. Let 
$$
\Theta=B+\frac{1}{m-1}P.
$$ 
By Step 1 and by definition of $P$, we have 
$$ 
(m+2)f^*A-(K_X+\Delta)=(m-1)(K_X+B+\frac{1}{m-1}(2f^*A+B-\Delta)) +m C
$$
$$
\sim_\R (m-1)(K_X+B+\frac{1}{m-1}P) +m C=(m-1)(K_X+\Theta)+mC.
$$

Assume that $R$ is an extremal ray of $X$ with 
$$
((m+2)f^*A-(K_X+\Delta))\cdot R<0. 
$$
Since $-(K_X+\Delta)$ is nef over $Z$, $R$ is not vertical over $Z$, that is, $f^*A\cdot R>0$. 
On the other hand, by the previous paragraph,
$$
((m-1)(K_X+\Theta) +m C)\cdot R<0
$$
which in turn implies 
$$
(K_X+\Theta)\cdot R<0
$$
because $C\sim_\R f^*(A-L)$ is nef by Definition \ref{d-FT-fib}.\\
 
\emph{Step 4.}
\emph{In this step, we apply boundedness of length of extremal rays and finish the proof.}
Let $T$ be the non-klt locus of $(X,\Theta)$. By Step 2, $T$ is mapped to a finite set of closed points of $Z$ as $\frac{1}{m-1}<t$. 
Let $V$ be the image of $\overline{NE}(T)\to \overline{NE}(X)$ where by convention we put $\overline{NE}(T)=0$ 
if $T$ is zero-dimensional  or empty. Then $V\cap R=0$ as $f^*A$ intersects 
$R$ positively but intersects every class in $V$ trivially. 
Therefore, by \cite[Theorem 1.1(5)]{Fujino-rays}, 
$R$ is generated by a curve $\Gamma$ with 
$$
-2d\le (K_X+\Theta)\cdot \Gamma,
$$
hence 
$$
-2d(m-1)\le (m-1)(K_X+\Theta)\cdot \Gamma\le ((m-1)(K_X+\Theta) +m C)\cdot \Gamma
$$
$$
=((m+2)f^*A-(K_X+\Delta))\cdot \Gamma.
$$
Moreover, $f^*A\cdot \Gamma \ge 1$.
Therefore, taking 
$$
l=m+2+2d(m-1)
$$ 
we have 
$$
0\le -2d(m-1)+ 2d(m-1)f^*A\cdot \Gamma \le (lf^*A-(K_X+\Delta))\cdot R
$$
ensuring that $lf^*A-(K_X+\Delta)$ is nef.
\end{proof}

\subsection{Bounded very ampleness}
Next we treat very ampleness in a setting similar to the previous proposition.

\begin{lem}\label{l-bnd-cy-fib-v-ampleness}
Let $d,r$ be natural numbers, $\varepsilon$ be a positive real number, and $\mathfrak{R}\subset [0,1]$ be a 
finite set of rational numbers. Assume that Theorem \ref{t-sing-FT-fib-totalspace} holds in dimension $d-1$ 
and that Theorem \ref{t-log-bnd-cy-fib} holds in dimension $d$. 
Then there exist natural numbers $l,m$ depending only on $d,r,\varepsilon,\mathfrak{R}$ satisfying the following. 
If  $(X,B)\to Z$ is a $(d,r,\varepsilon)$-Fano type fibration (as in \ref{d-FT-fib}) such that  
\begin{itemize}
\item we have  $0\le \Delta\le B$ where the coefficients of $\Delta$ are in $\mathfrak{R}$, and

\item $-(K_X+\Delta)$ is ample over $Z$,
\end{itemize} 
then 
$$
m(lf^*A-(K_X+\Delta))
$$ 
is very ample.
\end{lem}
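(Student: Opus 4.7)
The plan is to obtain global nefness from Proposition \ref{l-bnd-cy-numerical-bndness}, upgrade it to global ampleness by a Nakai calculation, bound the Cartier index using Theorem \ref{t-log-bnd-cy-fib} together with Lemma \ref{l-bnd-Cartier-index-family}, and finally extract an effective very ample multiple by an effective Matsusaka-type argument on the resulting bounded family.

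The hypotheses of Proposition \ref{l-bnd-cy-numerical-bndness} hold for the given $\Delta$: $-(K_X+\Delta)$ is ample (hence nef) over $Z$, and $f^*A+B-\Delta\ge f^*A$ is pseudo-effective. So there is $l_0\in\N$ depending only on $d,r,\varepsilon$ such that $N_0:=l_0 f^*A-(K_X+\Delta)$ is globally nef. Put $l:=l_0+1$ and $N:=lf^*A-(K_X+\Delta)=N_0+f^*A$; I will show $N$ is globally ample by Nakai's criterion. Let $V\subseteq X$ be an irreducible subvariety of dimension $k\ge 1$. If $V$ lies in a fibre of $f$, then $N|_V=-(K_X+\Delta)|_V$ is ample because $-(K_X+\Delta)$ is ample over $Z$, so $(N|_V)^k>0$. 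Otherwise, set $W:=f(V)$ and $g:=f|_V\colon V\to W$, and let $e:=\dim V-\dim W\ge 0$ be the dimension of a general fibre $F_V$ of $g$. Expanding
\[
(N|_V)^k \;=\; \sum_{j=0}^{k}\binom{k}{j}(N_0|_V)^j\cdot (f^*A|_V)^{k-j},
\]
every summand is non-negative since $N_0|_V$ and $f^*A|_V$ are both nef on the projective variety $V$. For the summand with $j=e$ the projection formula gives
\[
(N_0|_V)^e\cdot (f^*A|_V)^{k-e} \;=\; (A|_W)^{k-e}\cdot g_*(N_0|_V)^e \;=\; (A|_W)^{\dim W}\cdot \bigl((-(K_X+\Delta))|_{F_V}\bigr)^e,
\]
which is strictly positive because $-(K_X+\Delta)|_{F_V}$ is ample on $F_V$ and $A|_W$ is very ample on $W$. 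Hence $(N|_V)^k>0$ and Nakai gives ampleness of $N$.

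Theorem \ref{t-log-bnd-cy-fib}, assumed in dimension $d$, shows that $(X,\Delta)$ is log bounded. Since the coefficients of $\Delta$ lie in the finite set $\mathfrak{R}$, Lemma \ref{l-bnd-Cartier-index-family} supplies $I\in\N$ depending only on $d,r,\varepsilon,\mathfrak{R}$ with $I(K_X+\Delta)$ Cartier; as $f^*A$ is automatically Cartier, so is $IN$.

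Fix a bounded family of couples $\mathcal V\to \mathcal T$ with a relatively very ample divisor $\mathcal A$ realising all the $(X,\Supp\Delta)$. The intersection numbers $IN\cdot\mathcal A_t^{d-1}$ on $X=\mathcal V_t$ are uniformly bounded: the contribution from $I(K_X+\Delta)$ is bounded by log boundedness together with the Cartier index bound, while that from $If^*A$ is bounded via the projection formula and $A^{\dim Z}\le r$. An effective Matsusaka-type theorem applied to the ample Cartier divisor $IN$ on this bounded family then furnishes $m_0\in\N$ depending only on $d,r,\varepsilon,\mathfrak{R}$ with $m_0 IN$ very ample; setting $m:=m_0 I$ completes the proof. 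The main obstacle is precisely this last step: effective Matsusaka in the bounded setting genuinely needs $IN$ to be ample rather than merely nef and big, which is why the Nakai computation above is essential to the argument.
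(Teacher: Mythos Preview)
Your overall architecture matches the paper's: get global nefness from Proposition~\ref{l-bnd-cy-numerical-bndness}, get the Cartier index from Theorem~\ref{t-log-bnd-cy-fib} and Lemma~\ref{l-bnd-Cartier-index-family}, upgrade to ampleness, then extract a bounded very ample multiple. Your Nakai computation is correct, though heavier than needed; the paper simply observes that $lf^*A-(K_X+\Delta)$ is globally nef and ample over $Z$, so $2\bigl(lf^*A-(K_X+\Delta)\bigr)+f^*A=(2l+1)f^*A-2(K_X+\Delta)$ is ample, which simultaneously gives ampleness of both $lf^*A-(K_X+\Delta)$ and $lf^*A-2(K_X+\Delta)$ after renaming $l$.

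The genuine gap is in your final step. You assert that $If^*A\cdot\mathcal A_t^{d-1}$ is bounded ``via the projection formula and $A^{\dim Z}\le r$'', but this does not follow: the projection formula gives $f^*A\cdot\mathcal A_t^{d-1}=A\cdot f_*(\mathcal A_t^{d-1})$, and log boundedness of $(X,\Delta)$ says nothing about the morphism $f$ or how the family polarisation $\mathcal A_t$ interacts with the fibres of $f$, so there is no a priori control on the $1$-cycle $f_*(\mathcal A_t^{d-1})$. Without this degree bound your Matsusaka input is missing, and even granting it you would still need to bound quantities like $K_X\cdot (IN)^{d-1}$, which unwinds to terms involving $B\cdot N^{d-1}$ that are again not obviously controlled.

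The paper sidesteps all of this by writing
\[
m\bigl(lf^*A-(K_X+\Delta)\bigr)=(K_X+\Delta)+m\bigl(lf^*A-(1+\tfrac{1}{m})(K_X+\Delta)\bigr),
\]
noting that the second summand is ample and Cartier once one knows both $lf^*A-(K_X+\Delta)$ and $lf^*A-2(K_X+\Delta)$ are ample and $m(K_X+\Delta)$ is Cartier, and then invoking Koll\'ar's effective base point free theorem \cite[Theorem~1.1]{kollar-ebpf} directly. This requires only the Cartier index bound and the two ampleness statements---no intersection numbers against an external family polarisation---so it closes the argument without the missing degree estimate.
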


\begin{proof}
Since we are assuming Theorem \ref{t-log-bnd-cy-fib}, $(X,\Delta)$ is log bounded. Thus by 
Lemma \ref{l-bnd-Cartier-index-family}, there is a bounded natural number $m$ such that 
$m(K_X+\Delta)$ is Cartier. On the other hand, by Proposition \ref{l-bnd-cy-numerical-bndness}, 
there is a bounded natural number $l$ such that $lf^*A-(K_X+\Delta)$ is nef. As $-(K_X+\Delta)$ 
is ample over $Z$, replacing $l$ with $2l+1$ we can assume that $lf^*A-2(K_X+\Delta)$ is ample 
(which also implies that $lf^*A-(K_X+\Delta)$ is ample). 

Now  
$$
m(lf^*A-(K_X+\Delta))=K_X+\Delta+(m(lf^*A-(K_X+\Delta))-(K_X+\Delta))
$$ 
where the term 
$$
(m(lf^*A-(K_X+\Delta))-(K_X+\Delta))=m(lf^*A-(1+\frac{1}{m})(K_X+\Delta))
$$
is ample because $1+\frac{1}{m}\le 2$ and $lf^*A-(K_X+\Delta)$ and $lf^*A-2(K_X+\Delta)$ are both ample. 
Thus replacing $m$ we can assume 
$$
|m(lf^*A-(K_X+\Delta))|
$$
is base point free, by the effective base point free theorem \cite[Theorem 1.1]{kollar-ebpf}. Therefore, 
$$
(d+4)m(lf^*A-(K_X+\Delta))
$$
is very ample by \cite[Lemma 1.2]{kollar-ebpf}. Now replace $m$ with $(d+4)m$.
\end{proof}

\subsection{Effective birationality}

The next statement and its proof are somewhat similar to \cite[Lemma 3.3]{Jiang}.

\begin{prop}\label{l-bnd-cy-effective-bir}
Let $d,r$ be natural numbers and $\varepsilon$ be a positive real number.
Assume that Theorems \ref{t-log-bnd-cy-fib} and \ref{t-sing-FT-fib-totalspace} hold in dimension $d-1$. Then there exist 
natural numbers $l,m$ depending only on $d,r,\varepsilon$ satisfying the following. If $(X,B)\to Z$ is a 
$(d,r,\varepsilon)$-Fano type fibration (as in \ref{d-FT-fib}), then the linear system $|m(lf^*A-K_X)|$ defines 
a birational map. 
\end{prop}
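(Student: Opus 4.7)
Set $N_l := lf^*A - K_X$. Using $K_X+B \sim_\R f^*L$, write $N_l = B + f^*(lA - L)$; for $l \ge 1$ the class $lA-L = (l-1)A + (A-L)$ is ample on $Z$, so $f^*(lA-L)$ is nef globally, and since $B\sim_\R -K_X/Z$ is big over $Z$, the divisor $N_l$ is globally big. When $\dim Z = 0$, $N_l = -K_X$ and the statement reduces to effective birationality for $\varepsilon$-lc Fano type log Calabi--Yau varieties, which is available via \cite{B-BAB}; so I assume $\dim Z \ge 1$. The overall plan is the standard effective birationality strategy: for two very general closed points $x_1, x_2 \in X$, produce a $\Q$-divisor $D \sim_\Q \lambda N_l$ with $\lambda < 1$ bounded in terms of $d,r,\varepsilon$ such that $(X, D)$ has an isolated non-klt centre at $x_1$ avoiding $x_2$, then lift sections via Kawamata--Viehweg/Nadel vanishing.

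The first key step is to establish a uniform lower bound $\vol(N_l) \ge c(d,r,\varepsilon) > 0$ for some bounded $l$. Expanding $N_l^d = (B + f^*(lA-L))^d$ and applying the projection formula, the dominant contribution is $\binom{d}{s}(lA-L)^s \cdot \vol(-K_F)$, where $s = \dim Z$ and $F$ is a general fibre of $f$; here $B|_F = -K_F$ since $(F,B_F)$ is log Calabi--Yau. The base intersection satisfies $(lA-L)^s \ge (l-1)^s A^s \ge 1$ by nefness of $(l-1)A$ and $A-L$. For the fibre factor, $(F, B_F)$ is an $\varepsilon$-lc log Calabi--Yau of Fano type in dimension $d-s \le d-1$; Theorem \ref{t-log-bnd-cy-fib} in dimension $d-1$ (with trivial $\Delta$) places $F$ in a bounded family, from which upper semicontinuity of the volume function gives $\vol(-K_F) \ge c_0(d,\varepsilon) > 0$.

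Given this volume bound, a standard Angehrn--Siu/Koll\'ar type construction produces a divisor $D \sim_\Q \lambda N_l$ with $\lambda$ bounded and $\mult_{x_1} D \ge d$, so that $(X,D)$ is not klt at $x_1$. To isolate a unique non-klt centre at $x_1$ avoiding $x_2$ I apply tie-breaking combined with the lc threshold lower bound of Theorem \ref{t-sing-FT-fib-totalspace} in dimension $d-1$: restricting to a general $G = f^*H$ as in Lemma \ref{l-gen-element-v.ample-div-passing-x,y-contraction} gives a $(d-1, r', \varepsilon)$-Fano type fibration on which the lct bound applies to the tie-breaking divisor. For the lifting, I verify that $mN_l - K_X - D$ is nef and big for bounded $m$, combining bigness of $(m-\lambda)N_l$ with Proposition \ref{l-bnd-cy-numerical-bndness} applied to a suitable auxiliary boundary to correct nefness; Kawamata--Viehweg/Nadel vanishing then gives $H^1(X, mN_l \otimes \mathcal{J}(X,D)) = 0$, and the resulting surjection of global sections produces a section of $mN_l$ vanishing at $x_2$ but not $x_1$, proving birationality.

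The hard part will be the uniform isolation of the non-klt centre: tie-breaking must proceed with coefficients bounded solely in terms of $d,r,\varepsilon$. This reduces, via the cut-down $G = f^*H$, to Theorem \ref{t-sing-FT-fib-totalspace} in dimension $d-1$, while Lemma \ref{l-gen-element-v.ample-div-passing-x,y-contraction} ensures preservation of $\varepsilon$-lc singularities on $G$ with bounded data. Once these ingredients are assembled uniformly, the resulting bounds on $l$ and $m$ depend only on $d,r,\varepsilon$.
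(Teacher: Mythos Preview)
Your approach is genuinely different from the paper's, and the step you yourself flag as ``the hard part'' is where the proposal has a real gap.

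The paper does not use an Angehrn--Siu type construction at all. After a $\Q$-factorialisation and an MMP making $-K_X$ ample over $Z$, it uses Proposition~\ref{l-bnd-cy-numerical-bndness} to arrange $lf^*A-K_X$ ample for bounded $l$. Then, for general $x,x'\in X$ with images $z,z'\in Z$, it takes a general $H$ in the sub-linear system of $|2A|$ through $z,z'$ (Lemma~\ref{l-gen-element-v.ample-div-passing-x,y-contraction}) and sets $G=f^*H$. The pair $(G,B_G)\to H$ is a $(d-1,r',\varepsilon)$-Fano type fibration, and Lemma~\ref{l-bnd-cy-fib-v-ampleness} (applied in dimension $d-1$, using both inductive hypotheses) makes $m((l+4)g^*A|_H-K_G)$ \emph{very ample} on $G$ for bounded $m$. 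A single Kawamata--Viehweg vanishing then shows the restriction $H^0(m((l+2)f^*A-K_X))\to H^0((\cdot)|_G)$ is surjective, so a section on $G$ separating $x,x'$ lifts to $X$. No non-klt centres, no tie-breaking, no volume lower bound is needed.

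In your proposal, the sentence ``this reduces, via the cut-down $G=f^*H$, to Theorem~\ref{t-sing-FT-fib-totalspace} in dimension $d-1$'' does not carry weight: the non-klt centre of $(X,D)$ through $x_1$ bears no a priori relation to a hyperplane pullback $G$, and the standard inductive isolation of centres requires uniform volume lower bounds \emph{on the centres themselves}, which you have no mechanism to produce. This is exactly the difficulty the paper's restriction-and-lift argument sidesteps. Separately, your volume computation is not rigorous: writing $\vol(N_l)=(B+f^*(lA-L))^d$ and extracting a ``dominant contribution'' presupposes $N_l$ nef and that the remaining binomial terms are nonnegative, neither of which you have arranged (you would at minimum need an MMP on $-K_X/Z$ and Proposition~\ref{l-bnd-cy-numerical-bndness} first, and even then the cross-terms need care).
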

\begin{proof}
We first replace $X$ with a $\Q$-factorialisation so that $K_X$ is $\Q$-Cartier. 
Then after running an MMP on $-K_X$ over $Z$,  
we can assume $-K_X$ is nef and big over $Z$. Replacing $X$ with the ample model of $-K_X$ over $Z$, 
we can assume $-K_X$ is ample over $Z$  ($X$ may no longer be $\Q$-factorial but 
we do not need it any more). If $\dim Z=0$, then the proposition holds by \cite[Theorem 1.2]{B-compl}. 
We then assume $\dim Z>0$.
By Proposition \ref{l-bnd-cy-numerical-bndness}, there is $l\in \N$  depending only on $d,r,\varepsilon$  such 
 that $lf^*A-K_X$ is nef.
Since $-K_X$ is ample over $Z$, replacing $l$ with $l+1$ we can assume $lf^*A-K_X$ is ample.

By Lemma \ref{l-gen-element-v.ample-div-passing-x,y-contraction}, there is a 
non-empty open subset $U\subseteq Z$ such that for any pair of closed points $z,z'\in U$ and any general 
member $H$ of the sub-linear system $L_{z,z'}$ of $|2A|$ consisting of elements passing through $z,z'$, 
the pullback $G=f^*H$ is normal and $(G,B_G)$ is an $\varepsilon$-lc pair where 
$$
K_G+B_G=(K_X+B+G)|_G.
$$

Pick distinct closed points $x,x'\in X$ such that $z:=f(x)$ and $z':=f(x')$ are in $U$. 
Let $H,G,B_G$ be as in the previous paragraph constructed for $z,z'$.
Denoting $G\to H$ by $g$ we then have 
\begin{itemize}
\item $(G,B_G)$ is $\varepsilon$-lc, 

\item $K_G+B_G\sim_\R (f^*L+G)|_G\sim g^*(L+2A)|_H$, 

\item $-K_G=-(K_X+G)|_G$ is ample over $H$, 

\item the divisor 
$$
3A|_H-(L+2A)|_H=(A-L)|_H
$$ 
is ample, and 

\item the divisor 
$$
(l+4)g^*A|_H-K_G\sim (l+4)f^*A|_G-(K_X+G)|_G\sim ((l+2)f^*A-K_X)|_G
$$ 
is ample.
\end{itemize}
In particular, $(G,B_G)\to H$ is a $(d-1, 2(3^{d-1})r, \varepsilon)$-Fano type fibration.
Applying Lemma \ref{l-bnd-cy-fib-v-ampleness} and perhaps replacing $l$, we can assume 
that 
$$
m((l+4)g^*A|_H-K_G)
$$ 
is very ample, for some bounded natural number $m\ge 2$.

On the other hand, by assumption  
$$
C:=f^*A-(K_X+B)\sim_\R f^*(A-L)
$$ 
is nef, and we can write 
$$
m((l+2)f^*A-K_X)-G=2mf^*A+m(lf^*A-K_X)-G
$$
$$
\sim_\R (2m-2)f^*A+m(lf^*A-K_X)
$$
$$
\sim_\R K_X+B+C+(2m-3)f^*A+m(lf^*A-K_X).
$$
Thus by the Kawamata-Viehweg vanishing theorem,
$$
h^1(m((l+2)f^*A-K_X)-G)=0,
$$ 
hence the map 
$$
H^0(m((l+2)f^*A-K_X)) \to H^0(m((l+2)f^*A-K_X)|_G)
$$
is surjective. Recall that by the previous paragraph, 
$$
H^0(m((l+2)f^*A-K_X)|_G) = H^0(m((l+4)g^*A|_H-K_G)).
$$

Now since  
$$
m((l+4)g^*A|_H-K_G)
$$ 
is very ample, we can find a section in 
$$
H^0(m((l+4)g^*A|_H-K_G))
$$ 
vanishing at $x$ but not at $x'$ (and vice versa).
This in turn gives a section in 
$$
H^0(m((l+2)f^*A-K_X))
$$ 
vanishing at $x$ but not at $x'$ (and vice versa). 
Therefore, 
$$
|m((l+2)f^*A-K_X)|
$$ 
defines a birational map. Now replace $l$ with $l+2$.
\end{proof}

\subsection{Boundedness of volume}

The next statement and its proof are similar to \cite[Theorem 4.1]{Jiang} (also see \cite[4.1]{DiCerbo-Svaldi}).

\begin{prop}\label{l-bnd-cy-bndness-volume}
Let $d,r,l$ be natural numbers and $\varepsilon$ be a positive real number.
Then there exists  
a natural number $v$ depending only on $d,r,l,\varepsilon$ satisfying the following. 
If $(X,B)\to Z$ is a $(d,r,\varepsilon)$-Fano type fibration (as in \ref{d-FT-fib}), then 
$$
\vol(lf^*A-K_X)\le v.
$$ 
\end{prop}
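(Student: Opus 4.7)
The plan is to proceed by induction on $d$. The case $d=1$ is immediate, since on a curve volumes are linear in degrees and these are bounded by the $(d,r,\varepsilon)$-Fano fibration data. So assume the proposition in dimensions below $d$.

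After a $\mathbb{Q}$-factorialisation and running the $B$-MMP over $Z$---equivalently, the $(-K_X)$-MMP over $Z$, which is crepant with respect to $(K_X,B)$ since $K_X+B\sim_\R 0/Z$ and hence preserves $\varepsilon$-lc singularities, the $(d,r,\varepsilon)$-Fano type data, and the volume of $lf^*A-K_X$ (at each divisorial step $\phi\colon X\to X'$ contracting $E$ one has $lf^*A-K_X=\phi^*(lf'^*A-K_{X'})+|a|E$ with $|a|E$ effective exceptional, and such additions do not affect volume)---I reduce to the case where $-K_X$ is nef over $Z$. Proposition \ref{l-bnd-cy-numerical-bndness} applied with $\Delta=0$ (the pseudo-effectivity hypothesis being automatic) then yields a bounded $l_0=l_0(d,r,\varepsilon)$ such that $N:=l_0f^*A-K_X$ is globally nef. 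Setting $D:=(l+l_0)f^*A-K_X=lf^*A+N$, a sum of nef divisors, gives $\vol(lf^*A-K_X)\le D^d$, and expanding
$$D^d=\sum_{i=0}^{e}\binom{d}{i}l^i V_i,\qquad V_i:=(f^*A)^i\cdot N^{d-i},\ e:=\dim Z,$$
reduces the problem to bounding each $V_i\ge 0$ (an intersection of nef classes), with terms $i>e$ vanishing.

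For $i=e$, $V_e=A^e\cdot(-K_F)^{d-e}$ on a general fibre $F$ is bounded, since $A^e\le r$ and $(-K_F)^{d-e}$ is bounded by BAB (Theorem \ref{t-BAB}) applied to $F$, which is $\varepsilon$-lc with $-K_F$ nef and big. For $1\le i<e$, take $S_i\subset Z$ a general codimension-$i$ complete intersection of members of $|A|$ and put $X_i:=f^{-1}(S_i)$ of dimension $d-i<d$; iterating Lemma \ref{l-gen-element-v.ample-div-passing-x,y-contraction} shows $(X_i,B|_{X_i})$ is $\varepsilon$-lc, and adjunction together with the choice of $(i+1)A|_{S_i}$ as the ample divisor exhibits $(X_i,B|_{X_i})\to S_i$ as a $(d-i,(d+1)^d r,\varepsilon)$-Fano type fibration with $N|_{X_i}=(l_0+i)f_i^*(A|_{S_i})-K_{X_i}$. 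The inductive hypothesis bounds $\vol(N|_{X_i})$, and since $N|_{X_i}$ is nef, $V_i=(N|_{X_i})^{d-i}\le\vol(N|_{X_i})$ is bounded.

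The main obstacle is the term $V_0=N^d$. When $e\ge 2$, I plan to apply the Khovanskii--Teissier log-concavity inequality $V_k^2\ge V_{k-1}V_{k+1}$ for the nef classes $f^*A$ and $N$ to deduce $V_0\le V_{e-1}^e/V_e^{e-1}$, which is bounded once a positive lower bound on $V_e=A^e(-K_F)^{d-e}$ is supplied (via the uniform positive lower bound on the anti-canonical volume of $\varepsilon$-lc Fano varieties in a bounded family, a further consequence of BAB). For $e=1$ this argument degenerates, and I instead use the canonical bundle formula $K_X+B\sim_\R f^*(K_Z+B_Z+M_Z)$ together with $A^1\le r$ (which bounds $g(Z)$ via Castelnuovo) and ampleness of $A-L$ to bound $\deg_Z L$; writing $D\sim_\R B+f^*H$ with $H:=(l+l_0)A-L$ of bounded degree on the curve $Z$, relative Kawamata--Viehweg vanishing (available because $-K_X$ is nef and big over $Z$) reduces the problem to bounding $h^0(Z,f_*\mathcal{O}_X(mB)\otimes\mathcal{O}_Z(mH))$, which is handled by combining semi-positivity of $f_*\mathcal{O}_X(mB)$ with standard rank-and-degree estimates on the curve, where the generic rank is controlled by $\vol(-K_F)$ via BAB and the degree is controlled by the canonical bundle formula. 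The primary difficulty of the proof is therefore this $V_0$ bound, particularly the case $\dim Z=1$ where Khovanskii--Teissier is unavailable.
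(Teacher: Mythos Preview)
Your approach diverges from the paper's and runs into two genuine problems.

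First, you invoke Proposition~\ref{l-bnd-cy-numerical-bndness} to produce a bounded $l_0$ with $l_0f^*A-K_X$ nef. But that proposition carries the hypothesis ``Theorem~\ref{t-sing-FT-fib-totalspace} holds in dimension $d-1$'', whereas the present proposition has no such hypothesis. So your argument only proves a conditional statement, not Proposition~\ref{l-bnd-cy-bndness-volume} as written. This matters: the paper's proof of \ref{l-bnd-cy-bndness-volume} is self-contained (induction only on \ref{l-bnd-cy-bndness-volume} itself, plus BAB and the canonical bundle formula), and this unconditional form is what allows it to sit cleanly inside the larger inductive machine.

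Second, and more seriously, your treatment of $V_0=N^d$ in the case $\dim Z=1$ does not work. You claim semi-positivity of $f_*\mathcal{O}_X(mB)$, but since $B\sim_\R-K_X/Z$ this is essentially $f_*\omega_{X/Z}^{-m}$, and there is no semi-positivity theorem for negative powers of the relative canonical; the classical results (Fujita, Kawamata, Viehweg) go the other way. Even if one had some positivity, that would bound degrees from \emph{below}, whereas bounding $h^0$ from above requires bounding degrees from \emph{above}. The ``standard rank-and-degree estimates'' you allude to do not close this gap. The Khovanskii--Teissier reduction for $e\ge 2$ is fine in principle, but it rests on the same nefness of $N$ obtained via Proposition~\ref{l-bnd-cy-numerical-bndness}, so it inherits the first problem.

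The paper avoids both issues by never making $lf^*A-K_X$ nef. Instead it takes $p$ to be the largest integer with $pf^*A-K_X$ not big, applies the slicing inequality $\vol(lf^*A-K_X)\le d(l-p)\vol((lf^*A-K_X)|_G)$ for general $G\in|f^*A|$ (Jiang's \cite[Lemma~2.5]{Jiang}), bounds the restricted volume by the inductive hypothesis on \ref{l-bnd-cy-bndness-volume} alone, and bounds $l-p$ by bounding $p$ from below via the canonical bundle formula: bigness of $(p+1)f^*A-K_X$ forces $(1-t)L+t(p+1)A+3dA$ to be big on $Z$, hence $p$ is bounded below in terms of $d,r,\varepsilon$. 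This uniform argument works for all $\dim Z\ge 1$ with no case split.
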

\begin{proof}
We can assume that $lf^*A-K_X$ is big, otherwise 
$$
\vol(lf^*A-K_X)=0.
$$ 
Moreover, taking a $\Q$-factorialisation, we can assume $X$ is $\Q$-factorial. 
If $\dim Z=0$, then $X$ belongs to a bounded family  \cite[Corollary 1.4]{B-BAB} 
so the proposition follows. We then assume $\dim Z>0$.

Let $p$ be the largest integer such that $pf^*A-K_X$ is not big. 
Then $p<l$.  Let $H\in |A|$ be a general element and $G=f^*H$. Then 
$$
lf^*A-K_X=pf^*A-K_X+(l-p)f^*A\sim pf^*A-K_X+(l-p)G,
$$
so by \cite[Lemma 2.5]{Jiang}, 
$$
\vol(lf^*A-K_X)\le \vol(pf^*A-K_X)+d(l-p)\vol((lf^*A-K_X)|_G).
$$
Since $pf^*A-K_X$ is not big, 
$$
\vol(pf^*A-K_X)=0,
$$
 so it is then enough to bound 
$$
d(l-p)\vol(lf^*A-K_X)|_G)
$$
from above. 

Define 
$$
K_G+B_G=(K_X+B+G)|_G.
$$ 
Then $(G,B_G)$ is $\varepsilon$-lc,  $-K_G$ is big over $H$, 
$$
K_G+B_G\sim_\R g^*(L+A)|_H
$$ 
where $g$ denotes $G\to H$, and 
$$
2A|_H-(L+A)|_H
$$ 
is ample. 
  Thus $(G,B_G)\to H$ is a 
$(d-1, 2^{d-1}r, \varepsilon)$-Fano type fibration, hence  applying induction on dimension shows that 
$$
\vol((lf^*A-K_X)|_G)=\vol(((l+1)f^*A-(K_X+G))|_G)
$$
$$
=\vol((l+1)g^*A|_H-K_G)\le \vol((l+1)g^*2A|_H-K_G)
$$
is bounded from above. Therefore, it is enough to show that $l-p$ is bounded from above 
which is equivalent to showing that $p$ is bounded from below. 

By definition of $p$, 
$$
(p+1)f^*A-K_X
$$ 
is big. Thus we can find 
$$
0\le R\sim_\Q  (p+1)f^*A-K_X.
$$
Let $F$ be a general fibre of $f\colon X\to Z$. Then
$$
K_F+B_F:=(K_X+B)|_F\sim_\R 0
$$ 
and
$$
R_F:=R|_F\sim_\Q -K_X|_F=-K_F\sim_\R B_F.
$$ 

By \cite[Corollary 1.4]{B-BAB}, 
$F$ belongs to a bounded family of varieties as $(F,B_F)$ is $\varepsilon$-lc and $-K_F$ is big. 
We can then find a very ample divisor 
$J_F$ on $F$ with bounded degree $J_F^{\dim F}$ such that 
$$
J_F-R_F \sim_\R J_F-B_F \sim_\Q J_F+K_F  
$$ 
is ample. Therefore, by \cite[Theorem 1.8]{B-BAB} (=Theorem \ref{t-bnd-lct}), 
the pair $(F,B_F+tR_F)$ is klt for some real number $t\in(0,1)$ depending only 
on $d,J_F^{\dim F},\varepsilon$. In particular, letting 
$$
\Delta:=(1-t)B+tR ~~\mbox{ and}~~ \Delta_F:=\Delta|_F,
$$ 
we see that 
$$
(F,\Delta_F=(1-t)B_F+tR_F)
$$ 
is klt. Therefore, $(X,\Delta)$ is klt near the generic fibre of $f$.
By construction, 
$$
K_X+\Delta= K_X+(1-t)B+tR\sim_\R K_X+(1-t)B+t(p+1)f^*A-tK_X
$$
$$
= (1-t)(K_X+B)+t(p+1)f^*A \sim_\R f^*((1-t)L+t(p+1)A).
$$

Now by adjunction, we can write 
$$
K_X+\Delta\sim_\R f^*(K_Z+\Delta_Z+M_Z)
$$
where $\Delta_Z$ is the discriminant divisor and $M_Z$ is the moduli divisor (see \ref{fib-adj-setup} below for more 
details on adjunction). In particular,
$$
(1-t)L+t(p+1)A\sim_\R K_Z+\Delta_Z+M_Z.
$$
By \cite[Theorem 3.6]{B-compl}, $M_Z$ is pseudo-effective. 
On the other hand, if $\psi\colon Z'\to Z$ is a resolution, then $K_{Z'}+3d\psi^* A$ is big \cite[Lemma 2.46]{B-compl}, 
hence $K_Z+3dA$ is big. 
This implies that 
$$
K_Z+\Delta_Z+M_Z+3dA
$$
is big. But then  
$$
(1-t)L+t(p+1)A+3dA
$$
is big which in turn implies that 
$$
(1-t)A+t(p+1)A+3dA=(1-t)(A-L)+(1-t)L+t(p+1)A+3dA
$$
is big as $A-L$ is ample. Therefore, 
$$
(1-t)+t(p+1)+3d>0
$$ 
which implies that $p$ is bounded from below as required. 
\end{proof}

\subsection{Log birational boundedness}
We now turn to birational boundedness in the context of $(d,r,\varepsilon)$-Fano type fibrations.

\begin{prop}\label{l-bnd-cy-bir-bnd}
Let $d,r,l$ be natural numbers and $\varepsilon,\delta$ be positive real numbers.
Assume that Theorems \ref{t-log-bnd-cy-fib} and \ref{t-sing-FT-fib-totalspace} hold in 
dimension $d-1$. Then there exists a bounded set of couples $\mathcal{P}$  depending only 
on $d,r,l,\varepsilon,\delta$ satisfying the following. 
Assume that $(X,B)\to Z$ is a $(d,r,\varepsilon)$-Fano type fibration (as in \ref{d-FT-fib}) 
and that $\Lambda\ge 0$ is an $\R$-divisor such that 
\begin{itemize}
\item each non-zero coefficient of $\Lambda$ is $\ge \delta$, and 

\item $lf^*A-(K_X+\Lambda)$ is pseudo-effective.
\end{itemize}
Then there exist a couple
$(\overline{X}, \overline{\Sigma})$, a very ample divisor  $\overline{D}\ge 0$ on ${\overline{X}}$ and 
a birational map $\rho\colon \overline{X} \bir X /Z$ such that 
\begin{enumerate}
\item $(\overline{X}, \overline{\Sigma}+\overline{D})$ is log smooth and belongs to $\mathcal{P}$, 

\item $\overline{\Sigma}$ contains the exceptional divisors of $\rho$ union 
the birational transform of $\Supp \Lambda$, 

\item and if $N:=lf^*A-K_X$ is nef and $\overline{N}:=\rho^*N$ (defined as in \ref{ss-divisors}), 
then $\overline{D}-\overline{N}$ is ample. 
\end{enumerate}
\end{prop}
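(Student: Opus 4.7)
The plan is to run a standard Hacon--M$^{\rm c}$Kernan--Xu style log birational boundedness argument, combining effective birationality with the volume bound. First, apply Proposition \ref{l-bnd-cy-effective-bir} to produce bounded natural numbers $l_0, m_0$ depending only on $d, r, \varepsilon$ such that $|m_0(l_0 f^*A - K_X)|$ defines a birational map. Replacing $l$ by $\max(l, l_0)$, which preserves the pseudo-effectivity of $l f^*A - (K_X + \Lambda)$ since $f^*A$ is nef and only enlarges the eventual family $\mathcal{P}$, set $M := m_0(l f^*A - K_X)$, so that $|M|$ defines a birational map $\phi \colon X \bir \overline{Y} \subseteq \PP^{N_M}$. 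By Proposition \ref{l-bnd-cy-bndness-volume}, $\vol(M)$ is bounded, hence $\deg \overline{Y}$ is bounded; a $d$-dimensional subvariety of bounded degree spans a linear subspace of bounded dimension, so after a generic linear projection $\overline{Y}$ embeds in some $\PP^{N_0}$ with $N_0$ bounded, and such subvarieties form a bounded family via the Hilbert scheme.

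Second, perform a log resolution $\overline{X} \to \overline{Y}$ in families to obtain a bounded set of smooth projective couples $(\overline{X}, \overline{\Sigma}_0)$ where $\overline{\Sigma}_0$ contains the full exceptional locus and the pullbacks of a bounded number of hyperplane sections. Resolve $\phi$ on a variety $W$ dominating $X$ and $\overline{X}$ via morphisms $\pi \colon W \to X$ and $\sigma \colon W \to \overline{X}$, and let $\rho \colon \overline{X} \bir X$ be the induced birational map. The base $Z$ itself varies in a bounded family since $A^{\dim Z} \le r$ and $A$ is very ample, so we can arrange $\rho$ to be a map over $Z$ without harming boundedness. Let $\overline{H}$ be a fixed very ample divisor of bounded degree on $\overline{X}$ coming from the bounded embedding, let $\overline{D} := c \overline{H}$ for a large but uniformly bounded natural $c$ to be specified, and define $\overline{\Sigma}$ to be $\overline{\Sigma}_0$ together with the $\rho$-exceptional divisors and the birational transform $\overline{\Lambda}$ of $\Supp \Lambda$, refining the log resolution if necessary to keep everything log smooth.

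Third, I would verify (1)--(3) and boundedness of $\overline{\Sigma}$. For the component count of $\overline{\Lambda}$: since $l f^*A - K_X - \Lambda$ is pseudo-effective and each non-zero coefficient of $\Lambda$ is at least $\delta$, an intersection computation on $W$ with the pullback of $\overline{H}^{d-1}$ bounds $\overline{\Lambda} \cdot \overline{H}^{d-1}$ in terms of $\vol(l f^*A - K_X)$ and the bounded geometry of $\overline{Y}$, and since each component has $\overline{H}$-degree at least $1$ and coefficient at least $\delta$, both the number and individual degrees of the components are bounded. For (3): when $N := l f^*A - K_X$ is nef, write $\pi^* M = \sigma^* \overline{H}_0 + F$ on $W$, where $\overline{H}_0$ is the pullback to $\overline{X}$ of a hyperplane section of $\overline{Y}$ under $\overline{X} \to \overline{Y}$ and $F \ge 0$ is the fixed part, supported on divisors exceptional for $\sigma$ or $\pi$. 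Pushing down by $\sigma$ shows that $m_0 \overline{N}$ is numerically dominated by $\overline{H}_0$ plus bounded $\sigma$-exceptional contributions, hence by a bounded multiple of $\overline{H}$, so taking $c$ sufficiently large uniformly across the family makes $\overline{D} - \overline{N}$ ample.

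The main obstacle is uniformity of the constant $c$: one must carefully account for the exceptional contributions from both $\overline{X} \to \overline{Y}$ and the resolution of $\rho$ and show they are controlled in terms of $(d, r, l, \varepsilon, \delta)$ only. The nefness hypothesis in (3) is essential, as it allows a negativity-lemma type argument on $W$ to bound the negativity of $\pi^*N$ on $\sigma$-exceptional divisors; without it, $\rho^*N$ could pick up arbitrarily negative components on $\overline{X}$-exceptional divisors and defeat the ampleness of $\overline{D} - \overline{N}$.
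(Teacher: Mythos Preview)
Your overall strategy---effective birationality plus the volume bound, then an HMX-style degree argument---is the same as the paper's. The gap is in your treatment of (3), and it is exactly the obstacle you flag but do not actually overcome.

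You write $\pi^*M=\sigma^*\overline{H}_0+F$ with $F\ge 0$ the fixed part, push forward by $\sigma$ to get $m_0\overline{N}=\overline{H}_0+\sigma_*F$, and then assert that $\sigma_*F$ has ``bounded $\sigma$-exceptional contributions''. But $\sigma_*$ kills $\sigma$-exceptional divisors; what survives in $\sigma_*F$ are the components of $F$ that are exceptional for $\overline{X}\to\overline{Y}$, and while their \emph{support} lies in $\overline{\Sigma}_0$, their \emph{coefficients} are the multiplicities of the base scheme of $|M|$, which are not a priori bounded. Your proposed negativity-lemma fix goes the wrong way: with $N$ nef one gets $\sigma^*\overline{N}\ge\pi^*N$ on $W$, which bounds $\overline{N}$ from \emph{below}, not above. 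So nothing in your argument yet controls the size of $\overline{N}$ and hence the constant $c$.

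The paper closes this gap differently. First, it enlarges $\Sigma_W$ to contain $\Supp M$ (not just $\Supp\Lambda$), so that on $\overline{X}$ one has $\Supp\overline{M}\subseteq\overline{\Sigma}$. Second, instead of comparing $\overline{N}$ with $\overline{H}_0$ directly, it passes through $\overline{G}:=\alpha^*G_W$ where $G_W\in|\phi^*f^*A+F_W|$ is the free part plus a vertical twist; this $\overline{G}$ is a big divisor with $\overline{G}\le\overline{\Sigma}$, so one can choose a bounded $b$ with $b\overline{G}-\overline{D}$ big. Then, using nefness of both $\mu^*\overline{D}$ and $\pi^*\phi^*M$ on a common resolution, one bounds
\[
\overline{D}^{\,d-1}\cdot\overline{M}\ \le\ \vol(\mu^*\overline{D}+\pi^*\phi^*M)\ \le\ \vol(b\pi^*G_W+\pi^*\phi^*M)\ \le\ \vol(bG+M)
\]
and the last quantity is a volume of the form $\vol(cf^*A-c'K_X)$ on $X$, bounded by Proposition~\ref{l-bnd-cy-bndness-volume}. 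This bounds the coefficients of $\overline{M}$ (since its support sits in the log bounded $\overline{\Sigma}$), and \emph{then} one replaces $\overline{D}$ to make $\overline{D}-\overline{N}$ ample. The same intersection bound is what makes your degree argument for $\overline{\Lambda}$ go through; as written, that step also depends on the unproved bound for $\overline{H}^{\,d-1}\cdot\overline{N}$. The twist by $\phi^*f^*A$ in $G_W$ is also what guarantees the contraction is a morphism over $Z$, which you asserted without argument.
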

\begin{proof}
\emph{Step 1.}
\emph{In this step, we introduce some notation.}
Taking a $\Q$-factorialisation we can assume $X$ is $\Q$-factorial. 
Note that if the proposition holds for $d,r,l',\varepsilon,\delta$ where $l'\ge l$, then it also holds for $d,r,l,\varepsilon,\delta$.
Thus by Proposition \ref{l-bnd-cy-effective-bir}, 
perhaps after replacing $l$ with a bounded multiple,  we can assume that 
there exists a natural number 
$m$ depending only on $d,r,\varepsilon$ such that the linear system $|m(lf^*A-K_X)|$ defines a birational map. Pick 
$$
0\le M\sim m(lf^*A-K_X).
$$
Take a log resolution 
$\phi\colon W\to X$ of $(X,\Lambda+M)$ such that $|\phi^*M|$ decomposes as the 
sum of a free part $|F_W|$ plus fixed 
part $R_W$ (cf. \cite[Lemma 2.6]{B-compl}). 
Let $\Sigma_W$ be the sum of the reduced exceptional divisor of $\phi$ and the birational transform of $\Supp (\Lambda+M)$ 
plus a general element $G_W$ of $|\phi^*f^*A+F_W|$. 
Let $\Sigma,F,R,G$ be the pushdowns of $\Sigma_W,F_W,R_W, G_W$ to $X$.\\ 

\emph{Step 2.}
\emph{In this step, we show that 
$$
\vol(K_W+\Sigma_W+(4d+2)G_W)
$$ 
is bounded from above.}
From the definition of $\Sigma_W$ and the assumption that the non-zero coefficients of 
$\Lambda$ are $\ge \delta$, we can see that 
$$
\Sigma =\Supp (\Lambda+M)+G \le \frac{1}{\delta}\Lambda+M+G.
$$
Moreover, 
$$
G+R\sim f^*A+F+R\sim f^*A+M,
$$
 and by assumption  
$lf^*A-(K_X+\Lambda)$  is pseudo-effective. Taking all these into account and  
letting $p=4d+2$ and $q=\frac{1}{\delta}$ we then have 
$$
 \begin{array}{l l}
\vol(K_W+\Sigma_W+pG_W) & \le \vol(K_X+\Sigma+pG) \\
& \le \vol(K_X+q\Lambda+M+(p+1)G) \\
& \le \vol(K_X+q\Lambda+M+(p+1)f^*A +(p+1)M)\\
& = \vol(K_X+q\Lambda+(p+1)f^*A +(p+2)M)\\
& = \vol((1-q)K_X+q(K_X+\Lambda)+(p+1)f^*A+(p+2)M)\\
& \le \vol((1-q)K_X+qlf^*A+(p+1)f^*A+(p+2)M)\\
& = \vol((1-q)K_X+(ql+p+1)f^*A+(p+2)M)\\
& =\vol((ql+p+1)f^*A+(p+2)mlf^*A-((p+2)m+q-1)K_X).
\end{array}
$$
The latter volume is bounded from above by Proposition  \ref{l-bnd-cy-bndness-volume} 
as all the numbers $l,m,p,q$ are fixed.  Thus 
$\vol(K_W+\Sigma_W+pG_W)$ is bounded from above.\\ 

\emph{Step 3.}
\emph{In this step, we show that $(X,\Supp (\Lambda+M))$ is log birationally bounded.}
Since 
$$
G_W\sim \phi^*f^*A+ F_W,
$$ 
$|G_W|$ is base point free and it defines a birational contraction $W\to \overline{X}'$.
In particular,
$$
K_W+\Sigma_W+(p-1)G_W
$$ 
is big (cf. \cite[Lemma 2.46]{B-compl}), hence 
$$
\vol(G_W)\le \vol(K_W+\Sigma_W+pG_W)
$$ 
which implies that the left hand side is also bounded from above. 
 Moreover,  by \cite[Lemma 3.2]{HMX1}, 
$\Sigma_W\cdot G_W^{d-1}$ is bounded from above.
Therefore, if 
$ \overline{\Sigma}'$ is the pushdown of $\Sigma_W$, then $(\overline{X}', \overline{\Sigma}')$ 
is log bounded (this follows from \cite[Lemma 2.4.2(4)]{HMX1}). 
Also the induced map ${\overline{X}'} \bir Z$ is a morphism by the choice of $G_W$: indeed 
any curve contracted by $W\to \overline{X}'$ intersects $G_W$ trivially hence it intersects the pullback of 
$A$ trivially which means the curve is also contracted over $Z$.

Now we can take a log resolution $\overline{X}\to \overline{X}'$ of $(\overline{X}', \overline{\Sigma}')$ such that if 
$\overline{\Sigma}$ is the union of the exceptional divisors and the birational transform of $\overline{\Sigma}'$, 
then $(\overline{X}, \overline{\Sigma})$ is log smooth and log bounded. By construction, 
$\overline{\Sigma}$ contains the reduced exceptional divisor of the induce map $\rho\colon \overline{X}\bir X$ union the 
birational transform of $\Supp (\Lambda+M)$. This settles (1) and (2) of the proposition except that 
we need to add $\overline{D}$.\\

\emph{Step 4.}
\emph{In this step, we prove the existence of the very ample divisor  $\overline{D}$.}
Denote the induced map $\overline{X}\bir W$ by $\alpha$.
By construction, $G_{W}\sim 0/ \overline{X}'$, hence $\overline{G}:=\alpha^*G_W\sim 0/ \overline{X}'$ 
(pullback under $\alpha$ is defined as in \ref{ss-divisors}).
Moreover, $\overline{G}\le \overline{\Sigma}$. 
In particular, $(\overline{X}, \overline{G})$ is log bounded and $\overline{G}$ is big, 
hence we can find a bounded natural 
number $b$ and a very ample divisor $\overline{D}$
such that $b\overline{G}-\overline{D}$ is big. Then
$(\overline{X}, \overline{\Sigma}+\overline{D})$ is log bounded, hence it belongs to some 
fixed bounded set of couples $\mathcal{P}$.

From now on we assume that $N=lf^*A-K_X$ is nef. Then $M\sim mN$ is also nef.
We will show that we can choose $\overline{D}$ so that $\overline{D}-\overline{M}$ is ample where 
$\overline{M}=\rho^*M$. 
Let $\pi\colon V\to W$ and $\mu\colon V\to \overline{X}$ be a common resolution. 
Then 
$$
\begin{array}{l l}
\overline{D}^{d-1}\cdot \overline{M} = \mu^*\overline{D}^{d-1}\cdot \mu^* \overline{M}
& = \mu^*\overline{D}^{d-1}\cdot \pi^*\phi^*M\\
& \le \vol(\mu^*\overline{D}+\pi^*\phi^*M)\\
& \le  \vol(b\mu^*\overline{G}+\pi^*\phi^*M)\\
&=\vol(b\pi^* G_W+\pi^*\phi^*M)\\
& =\vol(bG_W+\phi^*M)\\
& \le \vol(bG+M)\\
& \le \vol(bf^*A+bM+M)\\
&=\vol(bf^*A+(b+1)mlf^*A-(b+1)mK_X)
\end{array}
$$
where to get the second equality we use the fact $\overline{M}=\mu_*(\pi^*\phi^*M)$ and to get the 
first inequality we have used the fact that $\mu^*\overline{D},\pi^*\phi^*M$ are both nef. 
Therefore, $\overline{D}^{d-1}\cdot \overline{M}$ is bounded from above as the latter volume is bounded 
from above by Proposition \ref{l-bnd-cy-bndness-volume}.
This implies that the coefficients of $\overline{M}$ are bounded from above. 

Now since $\Supp \overline{M}\le \overline{\Sigma}$, $(\overline{X}, \Supp \overline{M})$ 
 is log bounded. Thus replacing $\overline{D}$ we can assume that $\overline{D}-\overline{M}$ is ample.
Finally, note that  $\overline{N}\sim_\Q \frac{1}{m} \overline{M}$, hence  
$$
\overline{D}-\overline{N}\sim_\Q \overline{D}-\frac{1}{m} \overline{M}
=\frac{1}{m}(m\overline{D}- \overline{M})
$$ 
is ample.  
\end{proof}

\subsection{Lower bound on lc thresholds: special case}

We prove a special case of Theorem \ref{t-sing-FT-fib-totalspace} which is crucial for the 
rest of this section.

\begin{prop}\label{l-bnd-cy-bnd-lct-special}
Let $d,r,l$ be natural numbers and $\varepsilon$ be a positive real number.
Assume that Theorems \ref{t-log-bnd-cy-fib} and \ref{t-sing-FT-fib-totalspace}
hold in dimension $d-1$. Then there exists  
a positive real number $t$ depending only on $d,r,l,\varepsilon$ satisfying the following. 
Assume that $(X,B)\to Z$ is a $(d,r,\varepsilon)$-Fano type fibration (as in \ref{d-FT-fib}) and that 
$0\le P\sim_\R lf^*A$. Then $(X,B+tP)$ is klt. 
\end{prop}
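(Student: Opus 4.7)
The proof proceeds by cutting $X$ with a general divisor pulled back from $Z$ and invoking the inductive hypothesis, Theorem \ref{t-sing-FT-fib-totalspace} in dimension $d-1$, in the spirit of Step~2 of the proof of Proposition \ref{l-bnd-cy-numerical-bndness}.

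First, since $0 \le P \sim_\R l f^*A$, one has $P \cdot F = 0$ for a general fibre $F$, so every component of $P$ is vertical with respect to $f$. Fix a closed point $z \in Z$. By Lemma \ref{l-gen-element-v.ample-div-passing-x,y-contraction} applied to $(X,B)$, there is a non-empty open $U \subseteq Z$ such that for $z, z' \in U$ and a general member $H$ of the sub-linear system $L_{z,z'} \subseteq |2A|$, the divisor $G := f^*H$ is normal, $(X,B+G)$ is plt, and setting $K_G + B_G := (K_X+B+G)|_G$ and $g := f|_G$, the pair $(G, B_G)$ is $\varepsilon$-lc of dimension $d-1$. A direct calculation (cf.\ the proof of Proposition \ref{l-bnd-cy-effective-bir}) yields $K_G + B_G \sim_\R g^*(L+2A)|_H$; choosing $A' := 3 A|_H$ as the very ample divisor on $H$, the fibration $(G, B_G) \to H$ is a $(d-1, r', \varepsilon)$-Fano type fibration for some $r'$ bounded in terms of $d, r$, since $A' - L' = (A - L)|_H$ is ample for $L' := (L+2A)|_H$.

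Now set $P_G := P|_G$. Then $P_G / l \sim_\R g^*A|_H$, and therefore $g^*A' + B_G - P_G/l \sim_\R 2 g^*A|_H + B_G$ is pseudo-effective. The inductive assumption, Theorem \ref{t-sing-FT-fib-totalspace} in dimension $d-1$ applied to $(G, B_G) \to H$ with the divisor $P_G/l$, supplies some $t_0 > 0$ depending only on $d, r, \varepsilon$ with $(G, B_G + (t_0/l) P_G)$ klt. By inversion of adjunction, $(X, B + (t_0/l) P + G)$ is plt in a neighbourhood of $G$, so $(X, B + (t_0/l) P)$ is klt near $G$, and in particular near $f^{-1}(z)$.

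Setting $t := t_0/l$ — which depends only on $d, r, l, \varepsilon$ — this shows $(X, B + tP)$ is klt along every fibre over $U$. To extend the klt conclusion to fibres over points of $Z \setminus U$, one iterates the construction after passing to a suitable birational model of $(X, B)$ (preserving the Fano type fibration structure up to a bounded change of constants) which enlarges $U$ to contain any prescribed point, and then re-applies the cutting argument; this process terminates because $Z \setminus U$ has strictly smaller dimension. The principal technical obstacle is precisely this last step: running Lemma \ref{l-gen-element-v.ample-div-passing-x,y-contraction}'s construction uniformly across all closed points of $Z$ without allowing the lct bound $t$ to deteriorate.
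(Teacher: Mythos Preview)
Your hyperplane-section argument correctly handles the part of the problem where the non-klt locus of $(X,B+tP)$ is horizontal over $Z$, and it matches Step~2 of the paper's proof. The real gap is exactly where you flag it: the case where the non-klt locus maps to a closed point of $Z$. Your proposed fix, namely ``pass to a birational model enlarging $U$ and iterate'', does not work. The open set $U$ in Lemma~\ref{l-gen-element-v.ample-div-passing-x,y-contraction} depends on a fixed log resolution of $(X,B)$, and there is no mechanism to push a prescribed closed point $z$ into $U$ by a birational modification while keeping the data $d,r,l,\varepsilon$ bounded; moreover, any iteration over strata of $Z\setminus U$ would make $t$ depend on the stratification, not just on $d,r,l,\varepsilon$.

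The paper's argument for this case is entirely different and is the substance of the proposition. One first arranges that the offending divisor $T$ satisfies $a(T,X,B)=\varepsilon$ and is a divisor on $X$ mapping to a closed point $z$. Then one runs an MMP on $-(K_X+T)$ over $Z$ and applies the global lc complement theorem (Theorem~\ref{t-bnd-comp-lc-global}) to produce a boundary $\Lambda\ge T$ with $(X,\Lambda)$ lc over $z$ and $n(K_X+\Lambda)\sim mf^*A$ for bounded $n,m$. Finally one invokes Lemma~\ref{l-bnd-cy-bnd-lct-special-I}, which in turn rests on the log birational boundedness of Proposition~\ref{l-bnd-cy-bir-bnd} together with Theorem~\ref{t-bnd-lct}, to bound $\mu_T P$. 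So the missing ingredients in your proposal are the theory of bounded complements over $z$ and the log birational boundedness machinery; the inductive cutting argument alone cannot reach fibres over special points.
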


We prove a lemma before proving the proposition.

\begin{lem}\label{l-bnd-cy-bnd-lct-special-I}
Let $d,r,l,n$ be natural numbers and $\varepsilon$ be a positive real number.
Assume that Theorems \ref{t-log-bnd-cy-fib} and \ref{t-sing-FT-fib-totalspace}
hold in dimension $d-1$. Then there exists  
a natural number $v$ depending only on $d,r,l,n,\varepsilon$ satisfying the following. 
Assume that $(X,B)\to Z$ is a $(d,r,\varepsilon)$-Fano type fibration (as in \ref{d-FT-fib}) and that 
\begin{itemize}
\item $0\le P\sim_\R lf^*A$,

\item $T$ is a prime divisor over $X$,

\item $(X,\Lambda)$ is lc over a neighbourhood of $z$ where $\Lambda\ge 0$ and 
$z$ is the generic point of the image of $T$ on $Z$,

\item $a(T,X,B)\le 1$ and $a(T,X,\Lambda)=0$, and

\item  $n(K_X+\Lambda)\sim (n+2)lf^*A$.
\end{itemize}
Then $\mu_TP\le v$.
\end{lem}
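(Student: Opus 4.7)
The plan is to apply Proposition \ref{l-bnd-cy-bir-bnd} to obtain a log birationally bounded model $\overline{X}$ of $X$, and then to bound $\mu_T P$ by intersection theory on $\overline{X}$. Since $(n+2)lf^*A$ and $K_X$ are integral and $n(K_X+\Lambda)\sim (n+2)lf^*A$, the divisor $n\Lambda$ is integral, so the non-zero coefficients of $\Lambda$ are at least $1/n$. Picking a natural number $l'\ge (n+2)l/n$ depending only on $l$ and $n$, the divisor $l'f^*A-(K_X+\Lambda)\sim_\Q (l'-(n+2)l/n)f^*A$ is nef, hence pseudo-effective. I therefore apply Proposition \ref{l-bnd-cy-bir-bnd} with parameters $(d,r,l',\varepsilon,1/n)$ to $((X,B)\to Z,\Lambda)$ to produce a log smooth couple $(\overline{X},\overline{\Sigma}+\overline{D})$ in a bounded family together with a birational map $\rho\colon\overline{X}\bir X/Z$, where $\overline{D}$ is very ample and $\overline{\Sigma}$ contains both the $\rho$-exceptional divisors and the birational transform of $\Supp\Lambda$.

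Next I would bound $\overline{D}^{d-1}\cdot\overline{P}$, where $\overline{P}:=\rho^*P$. Pick a common log resolution $\pi\colon V\to\overline{X}$, $\mu\colon V\to X$ on which $T$ is a divisor and $\mu$ resolves $(X,B+\Lambda+P)$, and let $\overline{f}\colon\overline{X}\to Z$ denote the induced morphism. Pushforward preserves $\R$-linear equivalence, so from $\mu^*P\sim_\R l\mu^*f^*A=l\pi^*\overline{f}^*A$ one gets $\overline{P}\sim_\R l\overline{f}^*A$, giving $\overline{D}^{d-1}\cdot\overline{P}=l\,\overline{D}^{d-1}\cdot\overline{f}^*A$. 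The proof of Proposition \ref{l-bnd-cy-bir-bnd} furnishes a divisor $\overline{G}$ with $\Supp\overline{G}\subseteq\overline{\Sigma}$ and $\overline{G}\sim\overline{f}^*A+\overline{F}$ for an effective $\overline{F}$, so $\overline{D}^{d-1}\cdot\overline{f}^*A\le\overline{D}^{d-1}\cdot\overline{G}$, and the right side is bounded by log boundedness of $(\overline{X},\overline{\Sigma}+\overline{D})$. This yields $\overline{D}^{d-1}\cdot\overline{P}\le v'$ for some $v'=v'(d,r,l,n,\varepsilon)$.

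The conclusion splits into two cases. If $T$ corresponds to a prime divisor on $\overline{X}$ (that is, $T$ is not $\pi$-exceptional), then $T\subseteq\overline{\Sigma}$ because $a(T,X,\Lambda)=0$ forces $T$ to be a component of $\Lambda$ or a $\rho$-exceptional divisor; effectivity of $\overline{P}$ together with $\overline{D}^{d-1}\cdot T\ge 1$ then gives $\mu_T P=\mu_T\overline{P}\le v'$. The hard part will be the case when $T$ is $\pi$-exceptional: here $\mu_T\overline{P}=0$ while $\mu_T P$ may be positive, so a direct intersection estimate on $\overline{X}$ fails. My approach is to combine the hypothesis $a(T,X,B)\le 1$ with $a(T,\overline{X},0)\ge 2$ (holding since $\overline{X}$ is smooth and the centre of $T$ on $\overline{X}$ has codimension $\ge 2$) to force the coefficient of $T$ in the pullback of the crepant image of $K_X+B$ on $\overline{X}$ to be at least $1$. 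This pins $T$ down to a bounded collection of valuations over $(\overline{X},\overline{\Sigma}+\overline{D})$, allowing one to pass to a further bounded blowup of $\overline{X}$ realising $T$ as a divisor and reduce to the previous case.
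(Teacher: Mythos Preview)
Your approach has a genuine gap in the case where $T$ is not a divisor on $\overline{X}$. The claim that the conditions $a(T,X,B)\le 1$ and $a(T,\overline{X},0)\ge 2$ ``pin $T$ down to a bounded collection of valuations'' is not correct: even for a single fixed log smooth couple $(\overline{X},\overline{\Sigma})$ there are infinitely many prime divisors $T$ over $\overline{X}$ with $a(T,\overline{X},\overline{\Sigma})=0$ (toric valuations over the strata), and the extra inequality you derive does not cut this down to a finite or bounded set. So the reduction to a further bounded blow-up does not go through, and the intersection-theoretic bound $\overline{D}^{d-1}\cdot\overline{P}\le v'$ cannot be upgraded to a bound on $\mu_TP$ this way.

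The paper's argument avoids any case distinction on whether $T$ lives on $\overline{X}$. The key steps you are missing are these. First, since $K_X+\Lambda\sim_\Q 0/Z$, the crepant pullback $(\overline{X},\overline{\Lambda})$ satisfies $a(T,\overline{X},\overline{\Lambda})=0$; because $(X,\Lambda)$ is lc over $z$ and $\Supp\overline{\Lambda}\subseteq\overline{\Sigma}$, one gets $a(T,\overline{X},\overline{\Sigma})=0$. Second, one needs an honest $\varepsilon'$-lc \emph{pair} on $\overline{X}$, not the sub-pair $(\overline{X},\overline{B})$: the paper first runs an MMP on $-K_X$ over $Z$ so that $N:=qf^*A-K_X$ is nef and then invokes conclusion (3) of Proposition~\ref{l-bnd-cy-bir-bnd} (which you did not use) to show that the negative coefficients of $\overline{B}$ are bounded below; this allows one to set $\overline{\Delta}:=c\overline{B}+(1-c)\overline{\Sigma}\ge 0$ for a fixed $c\in(0,1)$, producing a $c\varepsilon$-lc pair with $a(T,\overline{X},\overline{\Delta})=c\,a(T,\overline{X},\overline{B})\le c<1$. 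Third, one applies Theorem~\ref{t-bnd-lct} to $(\overline{X},\overline{\Delta})$ and $\overline{P}$ to get $(\overline{X},\overline{\Delta}+t\overline{P})$ klt for a fixed $t>0$, whence $t\,\mu_T\overline{P}<a(T,\overline{X},\overline{\Delta})<1$. Finally, and this is a point you also misread, the relation $P\sim_\R lf^*A\sim_\R 0/Z$ forces $\mu^*P=\pi^*\overline{P}$ on any common resolution (by the negativity lemma applied in both directions to the $\pi$-exceptional, $\pi$-numerically trivial difference), so $\mu_TP=\mu_T\overline{P}$ holds for \emph{every} $T$, exceptional or not; this is what closes the argument uniformly. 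Your statement that ``$\mu_T\overline{P}=0$ while $\mu_TP$ may be positive'' conflates the coefficient of $T$ in $\overline{P}$ as a divisor on $\overline{X}$ with $\mu_T\overline{P}$ in the sense of \S\ref{ss-divisors}.
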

Note that $T$ may not be a divisor on $X$. Here $\mu_TP$ means the coefficient of $T$ in the pullback of $P$ to any resolution of $X$ on which $T$ is a divisor.
\begin{proof}
\emph{Step 1.}
\emph{In this step, we discuss log birational boundedness of $(X,\Supp \Lambda)$.} 
Taking a $\Q$-factorialisation, we can assume that $X$ is $\Q$-factorial. 
After running an MMP on $-K_X$ over $Z$, we can assume that $-K_X$ is nef over $Z$. 
By Lemma \ref{l-bnd-cy-numerical-bndness}, $qf^*A-K_X$ is globally nef for some bounded natural number $q$. 
By Lemma \ref{l-bnd-cy-bir-bnd}, $(X,\Supp \Lambda)$ is log birationally bounded,
that is, there exist a couple $(\overline{X}, \overline{\Sigma})$, a very ample divisor $\overline{D}$, and  
a birational map $\rho\colon \overline{X}\bir X/Z$ such that 
\begin{itemize}
\item $(\overline{X}, \overline{\Sigma}+\overline{D})$ is log smooth and belongs to a bounded set of couples $\mathcal{P}$, 

\item $\overline{\Sigma}$ contains the exceptional divisors of $\rho$ union 
the birational transform of $\Supp \Lambda$, 

\item and if  $N:=qf^*A-K_X$ and $\overline{N}=\rho^*N$, then $\overline{D}-\overline{N}$ is ample. 
\end{itemize}
Let $K_{\overline{X}}+\overline{B}$ and $K_{\overline{X}}+\overline{\Lambda}$ be the 
pullbacks of $K_X+B$ and $K_X+\Lambda$, respectively. Since $(X,\Lambda)$ is lc over $z$ 
and $K_X+\Lambda\sim_\Q 0/Z$, 
$(\overline{X}, \overline{\Lambda})$ is  sub-lc over $z$. Moreover, from 
$
a(T,{X}, \Lambda)=0
$ 
we get 
$
a(T,\overline{X}, \overline{\Lambda})=0.
$ 
 But then since 
$\Supp  \overline{\Lambda} \subseteq \overline{\Sigma}$, we have $\overline{\Lambda} \le \overline{\Sigma}$ 
over $z$, hence $T$ is also an lc place of $(\overline{X}, \overline{\Sigma})$, 
that is, 
$$
a(T,\overline{X}, \overline{\Sigma})=0.
$$\ 

\emph{Step 2.}
\emph{In this step, we study numerical properties of $\overline{D}$.}
Since $(\overline{X}, \overline{\Sigma})$ is log bounded, we can assume that 
$\overline{D}-\overline{\Sigma}$
is ample. Moreover, by adding a general element of $|(n+2)f^*A|$ to $\Lambda$ we can assume that some element of 
$|(n+2)\overline{f}^*A|$ is a component of $\Sigma_{\overline{X}}$ where $\overline{f}$ denotes $\overline{X}\to Z$; 
this requires replacing $l$ with 
$l+n$ to preserve the condition 
$$
n(K_X+\Lambda)\sim (n+2)lf^*A,
$$ 
and replacing $P$ accordingly. 
Thus we can also assume that 
$$
\overline{D}-(n+2)\overline{f}^*A
$$ 
is ample and that $\overline{D}-\overline{P}$ 
is ample where $\overline{P}=\rho^*P$.

Now
$$
\overline{D}-(K_{\overline{X}}+\overline{B})
\sim_\R \overline{D}-\overline{f}^*A+\overline{f}^*A-(K_{\overline{X}}+\overline{B})
$$
$$
\sim_\R \overline{D}-\overline{f}^*A+ \overline{f}^*(A-L)
$$ 
is ample. In addition we can assume 
 $\overline{D}+K_{\overline{X}} $ is ample as well, hence replacing $\overline{D}$ with  $2\overline{D}$
we can assume that  $\overline{D}-\overline{B}$ is ample.\\ 

\emph{Step 3.}
\emph{In this step, we prove the lemma assuming that the coefficients of $\overline{B}$ are bounded from below.} 
That is, assume that the coefficients of $\overline{B}$ are $\ge p$ for some fixed integer $p$.
Under this assumption, there is $c\in (0,1)$ depending only on $p$ such that 
$$
\overline{\Delta}:=c\overline{B}+(1-c)\overline{\Sigma}\ge 0
$$
because the components of $\overline{B}$ with negative coefficients are exceptional over $X$, hence 
are components of $\overline{\Sigma}$.
In particular, since $(\overline{X},\overline{B})$ is sub-$\varepsilon$-lc, 
$(\overline{X},\overline{\Delta})$ is a $c\varepsilon$-lc pair. Moreover, 
$$
a(T,\overline{X},\overline{\Delta})=ca(T,\overline{X},\overline{B})+(1-c)a(T,\overline{X},\overline{\Sigma})
=ca(T,\overline{X},\overline{B})<1.
$$
In addition,
$$
\overline{D}-\overline{\Delta}=\overline{D}-c\overline{B}-(1-c)\overline{\Sigma}
=c(\overline{D}-\overline{B})+(1-c)(\overline{D}-\overline{\Sigma})
$$
is ample. 

Now applying \cite[Theorem 1.8]{B-BAB} (=Theorem \ref{t-bnd-lct}), we deduce that 
$(\overline{X},\overline{\Delta}+t\overline{P})$ is klt for some real number $t>0$ bounded away from zero. 
Therefore, $\mu_T\overline{P}$ is bounded from above because 
$$
0\le a(T,\overline{X},\overline{\Delta}+t\overline{P})
=a(T,\overline{X},\overline{\Delta})-t\mu_T\overline{P}<1-t\mu_T\overline{P}.
$$
But $\mu_TP=\mu_T\overline{P}$ because the pullback of $P$ and $\overline{P}$ to any common resolution of $X,\overline{X}$ coincide in view of $P\sim_\Q 0/Z$.\\ 

\emph{Step 4.}
\emph{Finally, it is enough to show that the coefficients of $\overline{B}$ are bounded from below.} 
Define $K_{\overline{X}}+\overline{E}=\rho^*K_X$. It is enough to show that 
the coefficients of $\overline{E}$ are bounded from below because $\overline{E}\le \overline{B}$. Write 
$\overline{E}$ as the difference $\overline{E}^+ - \overline{E}^-$ where $\overline{E}^+, \overline{E}^-\ge 0$ 
have no common components. 
Observe that 
$$
\overline{N}=\rho^*(qf^*A-K_X)= q\overline{f}^*A-(K_{\overline{X}}+\overline{E})
=q\overline{f}^*A-(K_{\overline{X}}+\overline{E}^+)+\overline{E}^-,
$$
hence 
$$
2\overline{D}-\overline{E}^-\sim_\R \overline{D}-\overline{N}+\overline{D}-(K_{\overline{X}}+\overline{E}^+)+q\overline{f}^*A.
$$
By Step 1, $\overline{D}-\overline{N}$ is ample and $\overline{E}^+\le \overline{\Sigma}$.
Replacing $\overline{D}$ with a multiple we can assume that $\overline{D}-(K_{\overline{X}}+\overline{E}^+)$ is ample.
Thus $2\overline{D}-\overline{E}^-$ is ample which implies that $\overline{D}^{d-1}\cdot\overline{E}^-$ is bounded 
from above, hence the coefficients of 
$\overline{E}^-$ are bounded from above which in turn implies that the coefficients of $\overline{E}$ 
are bounded from below as required.
\end{proof}

\begin{proof}[Proof of Proposition \ref{l-bnd-cy-bnd-lct-special}]
\emph{Step 1.}
\emph{In this step, we will translate the problem into showing that the multiplicity of $P$ along certain 
divisors is bounded from above.} First we can assume 
$P\neq 0$ otherwise the statement is trivial. In particular,  $\dim Z>0$.
Taking a $\Q$-factorialisation we can assume $X$ is $\Q$-factorial.
Pick a small $\varepsilon'\in (0,\varepsilon)$. Let $s$ be the $\varepsilon'$-lc threshold of $P$ with respect to 
$(X,B)$, that is, $s$ is the largest number such that $(X,B+sP)$ is $\varepsilon'$-lc. It is enough to show that 
$s$ is bounded from below away from zero. In particular, we can assume $s<1$.

There is a prime divisor $T$ over $X$ with log discrepancy
$$
a(T,X,B+sP)=\varepsilon'<1.
$$ 
Since $P$ is vertical over $Z$, $T$ is vertical over $Z$.
It is enough to show that $\mu_TP$, the coefficient of $T$ in the pullback of $P$ on any resolution, 
is bounded from above because 
$$
s\mu_TP=a(T,X,B)-a(T,X,B+sP)\ge \varepsilon-\varepsilon'.
$$
We devote the rest of the proof to showing that $\mu_TP$ is bounded from above.\\ 

\emph{Step 2.}
\emph{In this step, we apply induction and reduce to the case when $T$ maps to a 
closed point on $Z$.} By the choice of $P$, 
$$
K_X+B+sP\sim_\R f^*(L+slA),
$$
and since $s<1$, 
$$
(l+1)A-(L+slA)
$$ 
is ample. Thus 
replacing $B$ with $B+sP$, replacing $A$ with $(l+1)A$ (and replacing $r$ accordingly), 
and replacing $\varepsilon$ with $\varepsilon'$, we can assume 
that $\varepsilon$ is sufficiently small and that  $a(T,X,B)=\varepsilon$ (we will not use $s$ any more). 
Extracting $T$ we can also assume $T$ is a divisor on $X$. 
Our goal still is to show that $\mu_TP$ is bounded from above.

Take a hyperplane section $H\sim A$ of $Z$ and let $G=f^*H$. Consider
$$
K_G+B_G:=(K_X+B+G)|_G
$$ 
and $P_G:=P|_G$. Then $(G,B_G)$ is $\varepsilon$-lc, $-K_G$ is big over $H$, 
$$
K_G+B_G\sim_\R g^*(L+A)|_H
$$ 
where $g$ denotes $G\to H$, and $2A|_H-(L+A)|_H$ is ample. 
Thus $(G,B_G)\to H$ is a $(d-1, 2^{d-1}r, \varepsilon)$-Fano type fibration.
Moreover, $2P_G\sim_\R lg^*2A|_H$.
Applying induction on dimension we find a real number $u>0$ depending only on $d,r,l,\varepsilon$ 
such that $(G,B_G+uP_G)$ is klt. Then by inversion of adjuction  \cite[Theorem 5.50]{kollar-mori} (which is 
stated for $\Q$-divisors but also holds for $\R$-divisors),
the pair $(X,B+G+uP)$ is plt near $G$. In particular, if the image of $T$ on $Z$ is positive-dimensional, 
then $T$ intersects $G$, then $\mu_TP$ is bounded from above. 
Therefore, we can assume that the image of $T$ on $Z$ is a closed point $z$.\\ 

\emph{Step 3.}
\emph{In this step, we finish the proof by applying Lemma \ref{l-bnd-cy-bnd-lct-special-I}.}
Let $\Delta=(1-\varepsilon)T$ and let $\Theta=T$ (we introduce this notation because we will apply the  arguments of this step later in the proof of \ref{l-bnd-cy-bnd-klt-compl-1}). 
Since $T$ is vertical over $Z$, $-(K_X+\Theta)$ is big over $Z$.
Run an MMP on $-(K_X+\Theta)$ over $Z$ 
and let $X'$ be the resulting model. We denote the pushdown of each divisor $D$ to $X'$ 
by $D'$. Then $-(K_{X'}+\Theta')$ is nef and big over $Z$.
By construction, $-\varepsilon T'\le B'-\Theta'$, hence since $T$ is mapped to a closed point on $Z$, 
$(f^*A)'+B'-\Theta'$ is pseudo-effective. Thus   
by Proposition \ref{l-bnd-cy-numerical-bndness}, we can assume that $(lf^*A)'-(K_{X'}+\Theta')$ is nef  after replacing $l$. Since $(lf^*A)'-(K_{X'}+\Theta')$ is nef and relatively big over $Z$ and since $A$ is ample, 
$$
((l+1)f^*A)'-(K_{X'}+\Theta')
$$ 
is nef and big by Lemma \ref{l-div-pef-on-FT}. So replacing $l$ with $l+1$ we can assume that $(lf^*A)'-(K_{X'}+\Theta')$ is nef and big.
 
 On the other hand, $(X',\Theta'-\varepsilon T')$ is klt as $\Theta'-\varepsilon T'\le B'$. 
Since $\varepsilon$ is assumed to be sufficiently small, by the ACC for 
lc thresholds \cite[Theorem 1.1]{HMX2}, $(X',\Theta')$ is lc.
Note that $T$ is not contracted over $X'$: assume not; then since $X\bir X'$ is an MMP on $-(K_{X}+\Theta)$ we get  
$$
0=a(T,X,\Theta)\ge a(T,X',\Theta')=a(T,X',\Delta')
$$
which implies that $(X',\Delta')$ is not klt contradicting the facts that $(X',B')$ is klt and $\Delta'\le B'$. 
Now $T'$ is clearly a non-klt centre of $(X',T')$. 

Then applying Theorem \ref{t-bnd-comp-lc-global} (by taking $B=\Theta'$, $M=(lf^*A)'$ and 
$S=T'$), there exist a 
bounded natural number $n$ and $\Lambda'\ge \Theta'$ such that $(X',\Lambda')$ is lc over $z$ and 
$$
n(K_{X'}+\Lambda')\sim (n+2)l(f^*A)'.
$$ 
Since $X\bir X'$ is an MMP on $-(K_X+\Theta)$ over $Z$, 
taking $K_X+\Lambda$ to be the crepant pullback of 
$K_{X'}+\Lambda'$ to $X$ we get $\Lambda\ge \Theta\ge \Delta$ such that $(X,\Lambda)$ is lc over $z$ and 
$$
n(K_{X}+\Lambda)\sim (n+2)lf^*A.
$$ 
Finally, apply Lemma \ref{l-bnd-cy-bnd-lct-special-I} 
to deduce that $\mu_TP$ is bounded.
\end{proof}

\subsection{Bounded klt complements}
In this subsection, we treat Theorem \ref{t-bnd-comp-lc-global-cy-fib} inductively. 
We first consider a weak version.

\begin{prop}\label{l-bnd-cy-bnd-klt-compl-1}
Let $d,r$ be natural numbers, $\varepsilon$ be a positive real number, and $\mathfrak{R}\subset [0,1]$ 
be a finite set of rational numbers.
Assume that Theorems \ref{t-log-bnd-cy-fib} and \ref{t-sing-FT-fib-totalspace}
hold in dimension $d-1$. Then there exist  
natural numbers $n,m$ depending only on $d,r,\varepsilon,\mathfrak{R}$ satisfying the following. 
Assume that $(X,B)\to Z$ is a $(d,r,\varepsilon)$-Fano type fibration (as in \ref{d-FT-fib}) and that 
\begin{itemize}
\item we have $0\le \Delta\le B$ with coefficients in $\mathfrak{R}$, and 

\item $-(K_X+\Delta)$ is big over $Z$. 
\end{itemize}
Then for each point $z\in Z$ there is a $\Q$-divisor $\Lambda\ge \Delta$ such that 
\begin{itemize}
\item $(X,\Lambda)$ is lc over $z$, and

\item $n(K_X+\Lambda)\sim mf^*A$. 
\end{itemize} 
\end{prop}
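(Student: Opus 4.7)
The plan is to augment $\Delta$ to a boundary $\Theta$ with coefficients in a fixed finite rational set and containing a non-klt centre mapping to $z$, and then to invoke Birkar's global lc complement theorem \ref{t-bnd-comp-lc-global} on an MMP model to produce $\Lambda\ge\Theta\ge\Delta$ with the required properties.

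First I would reduce and globalize. Taking a $\Q$-factorialisation and running an MMP on $-(K_X+\Delta)$ over $Z$ (both valid since $X$ is of Fano type over $Z$), I may assume $X$ is $\Q$-factorial and $-(K_X+\Delta)$ is nef and big over $Z$, with all $(d,r,\varepsilon)$-Fano type fibration data preserved and the coefficients of $\Delta$ still in $\mathfrak{R}$. Since $B\ge\Delta$, the divisor $f^*A+B-\Delta$ is pseudo-effective, so Proposition \ref{l-bnd-cy-numerical-bndness} produces a bounded natural $l_0$ with $l_0f^*A-(K_X+\Delta)$ nef; combined with Lemma \ref{l-div-pef-on-FT} and the relative bigness, after replacing $l_0$ by $l_0+1$ I may assume $l_0f^*A-(K_X+\Delta)$ is nef and big globally. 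I may also shrink $\varepsilon$ in advance (since $\varepsilon$-lc-ness is monotone) so that it is smaller than any positive lower bound on lc thresholds provided by the ACC theorem for lc thresholds \cite[Theorem 1.1]{HMX2}.

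Next I would create the non-klt centre, following the device used in Step 3 of the proof of Proposition \ref{l-bnd-cy-bnd-lct-special}. Pick $N\in|cA|$ for a bounded $c$ having very high multiplicity at $z$, set $P:=f^*N$, and let $s$ be the $\varepsilon$-lc threshold of $P$ with respect to $(X,\Delta)$; the concentrated singularity of $P$ at $z$ forces an $\varepsilon$-lc place $T$ of $(X,\Delta+sP)$ to have centre on $X$ mapping to $z$, with $a(T,X,\Delta+sP)=\varepsilon$. Replacing $\Delta$ by $\Delta+sP$ and correspondingly adjusting $A$ and $\varepsilon$ (which keeps the coefficients in a fixed finite rational set by the same tracking as in \cite{B-BAB, B-compl}) and then extracting $T$, I set $\Theta:=\Delta^{\sim}+T$; the coefficient of $T$ in the crepant pullback of $B$ is then exactly $1-\varepsilon$. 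Since $T$ maps into $f^{-1}(z)$, the divisor $-(K_X+\Theta)$ remains big over $Z$, and an MMP over $Z$ reaches $(X',\Theta')$ on which $-(K_{X'}+\Theta')$ is nef and big over $Z$; the pushdown $T'$ is still a non-klt centre and $T'\subseteq (f')^{-1}(z)$. From $\Delta^{\sim}\le B^{\sim}$ and the coefficient $1-\varepsilon$ along $T$ one obtains $\Theta'-\varepsilon T'\le B'$, so $(X',\Theta'-\varepsilon T')$ is $\varepsilon$-lc; ACC together with the choice of small $\varepsilon$ then forces $(X',\Theta')$ itself to be lc.

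Finally I would apply the complement theorem. A further application of Proposition \ref{l-bnd-cy-numerical-bndness} and Lemma \ref{l-div-pef-on-FT} yields a bounded $l$ with $l(f')^*A-(K_{X'}+\Theta')$ nef and big, and Theorem \ref{t-bnd-comp-lc-global} applied with $M:=l(f')^*A$ (a semi-ample Cartier divisor factoring through $f'$) and $S:=T'$ (a non-klt centre with $M|_{T'}\equiv 0$) produces a bounded $n$ and $\Lambda'\ge\Theta'$ with $(X',\Lambda')$ lc over $z$ and $n(K_{X'}+\Lambda')\sim(n+2)l(f')^*A$. Pulling $\Lambda'$ back crepantly through the MMP and the extraction of $T$ gives $\Lambda\ge\Delta$ on $X$ with $(X,\Lambda)$ lc over $z$ and $n(K_X+\Lambda)\sim mf^*A$, where $m:=(n+2)l$. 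The main obstacle will be the coordinated management of coefficient sets: the $\varepsilon$-lct construction of $T$ must be carried out so that the augmented boundary and all intermediate pairs have coefficients in a fixed finite rational set, while keeping the numerical control provided by Proposition \ref{l-bnd-cy-numerical-bndness} and verifying lc-ness of $(X',\Theta')$ via ACC after the MMP.
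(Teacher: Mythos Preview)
Your overall strategy matches the paper's, but there is a genuine gap in how you create the non-klt place. You take $s$ to be the $\varepsilon$-lc threshold of $P$ with respect to $(X,\Delta)$ and then replace $\Delta$ by $\Delta+sP$, claiming the coefficients remain in a fixed finite rational set. They do not: $s$ is an uncontrolled real number, so neither $\Delta+sP$ nor the resulting $\Theta$ has coefficients in any fixed finite set, and Theorem \ref{t-bnd-comp-lc-global} then cannot be applied. Moreover your assertion that the coefficient of $T$ in the crepant pullback of $B$ is exactly $1-\varepsilon$ does not follow, since you computed $a(T,X,\Delta+sP)=\varepsilon$, not $a(T,X,B)=\varepsilon$; without this you cannot conclude $\Theta'-\varepsilon T'\le B'$, which is precisely the input to the ACC argument showing $(X',\Theta')$ is lc. The paper fixes both issues by computing the threshold with respect to $(X,B)$ instead: it replaces $B$ by $B+uM$ (where $B$ is allowed arbitrary real coefficients) and leaves $\Delta$ untouched until after extracting $T$, at which point it merely adds $(1-\varepsilon)T$ to $\Delta$ with $\varepsilon$ a chosen small rational number. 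Then $\Theta$ has coefficients in the finite set $\mathfrak{R}\cup\{1-\varepsilon,1\}$, and $\Theta-\varepsilon T\le B$ holds on the extraction by construction.

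A secondary issue: a single $N\in|cA|$ with high multiplicity at $z$ does not obviously pin the centre of $T$ over $z$, since $(X,\Delta+sP)$ could fail to be $\varepsilon$-lc away from $f^{-1}\{z\}$ as well. The paper instead averages $p(d+1)$ general members $M_i$ of the sub-linear system $V_z\subset|f^*A|$ consisting of elements containing the fibre over $z$, setting $M=\frac{1}{p}\sum M_i$: generality ensures $(X,B+M)$ stays $\varepsilon$-lc outside $f^{-1}\{z\}$, while having total coefficient $d+1$ along the fibre forces non-lc-ness there, so the place computing the threshold is forced to lie over $z$.
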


\begin{proof}
\emph{Step 1.}
\emph{In this step, we create singularities over $z$.}
We can assume $\varepsilon<1$.
It is enough to prove the proposition with $z$ replaced by any closed point $z'$ in the closure $\bar{z}$ because 
any open neighbourhood of $z'$ contains $z$. Thus from now on we assume that $z$ is a closed point.  
Taking a $\Q$-factorialisation, we can assume $X$ is $\Q$-factorial. 
We can assume that $\dim Z>0$ otherwise we apply \cite[Theorem 1.7]{B-compl} (when $\dim Z=0$ we first run an MMP on $-(K_X+\Delta)$ to make $-(K_X+\Delta)$ nef. ).

Consider the sub-linear system $V_z$ of $|f^*A|$ consisting of elements 
containing the fibre $f^{-1}\{z\}$, and pick $P$ in $V_z$.
Since $A$ is very ample, $V_z$ is base point free outside $f^{-1}\{z\}$. 
Replacing $A$ with $2A$ we can assume that $\dim V_z>0$.

Let $p$ be a natural number such that $\frac{1}{p}<1-\varepsilon$. 
Pick distinct general elements $M_1,\dots,M_{p(d+1)}$ in $V_z$ and let 
$$
M=\frac{1}{p}(M_1+\dots+M_{p(d+1)}).
$$ 
Then $(X,B+M)$ is $\varepsilon$-lc  outside $f^{-1}\{z\}$ by generality of the $M_i$ and the assumption 
$\frac{1}{p}<1-\varepsilon$.  On the other hand, $(X,B+M)$ is not lc at any point of 
 $f^{-1}\{z\}$ by \cite[Theorem 18.22]{Kollar-flip-abundance}.\\
 
\emph{Step 2.}
\emph{In this step, we reduce the problem to the situation when there is a prime divisor 
$T$ on $X$ mapping to $z$ with $a(T,X,B)=\varepsilon$ sufficiently small.} 
Now pick a sufficiently small rational number $\varepsilon'\in (0,\varepsilon)$ and 
let $u$ be the largest number such that $(X,B+uM)$ is $\varepsilon'$-lc. There is a prime divisor $T$ over $X$ 
such that 
$$
a(T,X,B+uM)=\varepsilon'.
$$
As $(X,B+M)$ is not lc near $f^{-1}\{z\}$, $u<1$. 
Since $(X,B+uM)$ is $\varepsilon$-lc outside $f^{-1}\{z\}$ and since $\varepsilon'<\varepsilon$, 
the centre of $T$ on $X$ is contained in $f^{-1}\{z\}$. On the other hand, it is clear that 
$$
K_X+B+uM\sim_\R f^*(L+u(d+1)A).
$$ 
  
Replacing $\varepsilon$ with $\varepsilon'$ and replacing $B$ with $B+uM$ (and replacing $A,r$ accordingly) 
we can assume that 
$\varepsilon$ is sufficiently small and that there is a prime divisor $T$ over $X$ mapping to $z$ with $a(T,X,B)=\varepsilon$. 
Extracting $T$ we can assume it is a divisor on $X$; if $T$ is not exceptional over the 
original $X$, we increase the coefficient of $T$ in $\Delta$ to $1-\varepsilon$; but if $T$ is exceptional over the original $X$, 
then we let $\Delta$ be the birational transform of the original $\Delta$ plus $(1-\varepsilon)T$ (we then add $1-\varepsilon$ to $\mathfrak{R}$ so that the coefficients of $\Delta$ are still in $\mathfrak{R}$). 
The bigness of $-(K_X+\Delta)$ over $Z$ is preserved as $T$ is vertical over $Z$, and the condition $\Delta\le B$ is also preserved.\\ 

\emph{Step 3.}
\emph{In this step, we find a bounded complement of $K_X+\Delta$ using Theorem \ref{t-bnd-comp-lc-global}.}
Let $\Theta$ be the same as $\Delta$ except that we increase the coefficient of $T$ to $1$.
Adding $1$ to $\mathfrak{R}$ we can assume that the coefficients of $\Theta$ are in $\mathfrak{R}$. 
Then following the same arguments as in Step 3 of the proof of \ref{l-bnd-cy-bnd-lct-special} using the same notation shows that there exist bounded natural numbers $n,m$ and $\Lambda\ge T$ such that $(X,\Lambda)$ is lc over $z$ and 
$n(K_{X}+\Lambda)\sim mf^*A$.
\end{proof}

Now we strengthen the previous statement by replacing lc over $z$ with klt over $z$. 

\begin{prop}\label{l-bnd-cy-bnd-klt-compl-2}
Let $d,r$ be natural numbers, $\varepsilon$ be a positive real number, and $\mathfrak{R}\subset [0,1]$ 
be a finite set of rational numbers.
Assume that Theorems \ref{t-log-bnd-cy-fib}, \ref{t-sing-FT-fib-totalspace}, and 
\ref{t-bnd-comp-lc-global-cy-fib} hold in dimension $d-1$. Then there exist  
natural numbers $n,m$ depending only on $d,r,\varepsilon,\mathfrak{R}$ satisfying the following. 
Assume that $(X,B)\to Z$ is a $(d,r,\varepsilon)$-Fano type fibration (as in \ref{d-FT-fib}) and that 
\begin{itemize}
\item we have $0\le \Delta\le B$ with coefficients in $\mathfrak{R}$, and 

\item $-(K_X+\Delta)$ is big over $Z$. 
\end{itemize}
Then for each point $z\in Z$ there is a $\Q$-divisor $\Lambda\ge \Delta$ such that 
\begin{itemize}
\item $(X,\Lambda)$ is klt over $z$, and

\item $n(K_X+\Lambda)\sim mf^*A$. 
\end{itemize} 
\end{prop}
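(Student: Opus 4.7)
The strategy is to bootstrap Proposition~\ref{l-bnd-cy-bnd-klt-compl-1}, which only yields an lc complement over $z$, up to a klt complement by restricting along a divisorial non-klt place, applying the inductive hypothesis in dimension $d-1$, and then lifting back to $X$.

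First I would reduce to the case of a closed point $z$ and $\Q$-factorial $X$ exactly as in the proof of Proposition~\ref{l-bnd-cy-bnd-klt-compl-1}. Applying that proposition produces bounded $n_0,m_0\in\N$ and $\Lambda_0\ge\Delta$ with $(X,\Lambda_0)$ lc over $z$ and $n_0(K_X+\Lambda_0)\sim m_0 f^*A$. If $(X,\Lambda_0)$ is already klt over $z$, we are done. Otherwise, after shrinking $Z$ around $z$, every non-klt centre of $(X,\Lambda_0)$ maps to $z$. Take a $\Q$-factorial dlt modification $\phi\colon Y\to X$ of $(X,\Lambda_0)$ and fix a component $T$ of $\lfloor\Lambda_{0,Y}\rfloor$ whose image is $z$, where $K_Y+\Lambda_{0,Y}:=\phi^*(K_X+\Lambda_0)$.

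Next, apply divisorial adjunction to write $K_T+\Lambda_{0,T}=(K_Y+\Lambda_{0,Y})|_T$, and Stein factorise the induced map as $T\overset{g}{\to}Z_T\to Z$. Then $(T,\Lambda_{0,T})$ is lc, $n_0(K_T+\Lambda_{0,T})\sim m_0\, g^*(A|_{Z_T})$, and $T$ is of Fano type over $Z_T$ (as $Y$ is Fano type over $Z$ and $T$ is an lc place, this is inherited in the standard way). The divisor $A|_{Z_T}$ is a bounded very ample polarisation of $Z_T$, so after invoking Lemma~\ref{l-from-gen-fib-to-usual-fib} to replace the generalised pair produced by adjunction by an ordinary Fano type fibration and enlarging $\mathfrak{R}$ to a finite set containing the hyperstandard coefficients of an appropriate sub-boundary of $\Lambda_{0,T}$, one obtains a $(d-1,r',\varepsilon')$-Fano type fibration on $T$ with bounded data. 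The inductive hypothesis (Theorem~\ref{t-bnd-comp-lc-global-cy-fib} in dimension $d-1$) then furnishes bounded $n_1,m_1\in\N$ and a $\Q$-divisor $\Lambda_T$ on $T$ with $(T,\Lambda_T)$ klt and $n_1(K_T+\Lambda_T)\sim m_1\, g^*(A|_{Z_T})$.

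Finally, lift $\Lambda_T$ from $T$ to $Y$ and push forward to $X$. By Proposition~\ref{l-bnd-cy-numerical-bndness} a bounded multiple of $f^*A-K_X$ is nef, so combining with Lemma~\ref{l-div-pef-on-FT} one can arrange a bounded nef and big divisor of the form $n(K_Y+\Lambda_Y')-T$ for a suitable auxiliary $\Lambda_Y'$; Kawamata--Viehweg vanishing then gives the surjectivity of the restriction $H^0(Y,\mathcal{O}(\cdots))\twoheadrightarrow H^0(T,\mathcal{O}(n_1(K_T+\Lambda_T)))$, and sections extend from $T$ to $Y$. Choosing the extension so that the coefficient of $T$ in the lifted $\Lambda_Y$ strictly drops below $1$, and iterating this construction over each divisorial non-klt component of $\lfloor\Lambda_{0,Y}\rfloor$ above $z$, produces $\Lambda_Y\ge\phi^{-1}_*\Delta$ with $(Y,\Lambda_Y)$ klt over $z$ and bounded $n,m$ satisfying $n(K_Y+\Lambda_Y)\sim m\phi^*f^*A$. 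Pushing forward yields the desired $\Lambda$ on $X$.

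The main obstacle is this last lifting step: one must simultaneously preserve the bounded linear equivalence $n(K_X+\Lambda)\sim mf^*A$ and force \emph{every} divisorial non-klt coefficient above $z$ strictly below $1$, so that the result is klt (not merely lc) over $z$. This is the same extension-of-complements mechanism that drives Theorem~\ref{t-bnd-comp-lc-global}, but here it must be carried out iteratively over the full stratification of non-klt places above $z$ and interlocked with the bounds coming from the inductive hypothesis.
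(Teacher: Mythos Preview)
Your approach diverges from the paper's at a fundamental level, and the divergence creates a real gap in the lifting step.

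The paper does \emph{not} restrict to a non-klt centre $T$ over $z$. Instead it uses a ``swap trick'': first it thickens $\Delta$ to $\tilde\Delta=\Delta+tP$ where $P\in|f^*A|$ contains the whole fibre $f^{-1}\{z\}$ (the bounded $t$ comes from Proposition~\ref{l-bnd-cy-bnd-lct-special}). It then seeks an lc complement $\Lambda\ge\tilde\Delta$ whose non-klt locus maps to only \emph{finitely many} closed points of $Z$; once this is achieved, replacing $tP$ by $tQ$ for a general $Q\in|f^*A|$ automatically makes the pair klt over $z$, because every non-klt centre above $z$ sits inside $\Supp P$. To force the non-klt locus to be over a finite set, the paper restricts to a \emph{general horizontal} divisor $G=f^*H$ for $H\in|A|$, applies Theorem~\ref{t-bnd-comp-lc-global-cy-fib} in dimension $d-1$ on $G$, and lifts a general section from $G$ to $X$ via Kawamata--Viehweg vanishing. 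Generality of the lifted section on the full linear system ensures no new non-klt centres appear near $G$.

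Your proposal restricts instead to a \emph{vertical} divisor $T$ mapping to the point $z$. Two problems arise. First, since $T\to Z$ factors through $z$, the Stein factorisation gives $Z_T=\{\mathrm{pt}\}$, so there is no polarisation $A|_{Z_T}$ and the ``$(d-1,r',\varepsilon')$-Fano type fibration'' you invoke is the degenerate case $\dim Z=0$; this can be handled, but it is not the set-up you describe. Second, and more seriously, the lifting mechanism you cite (that of Theorem~\ref{t-bnd-comp-lc-global}) produces \emph{lc} complements: extending a section from $T$ to $Y$ gives a divisor whose restriction to $T$ is prescribed, but there is no way to ``choose the extension so that the coefficient of $T$ strictly drops below $1$''. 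Surjectivity of $H^0(Y)\to H^0(T)$ says nothing about the order of vanishing along $T$ of the lift, and in fact the standard inversion-of-adjunction argument \emph{keeps} $T$ as an lc centre. Iterating over the components of $\lfloor\Lambda_{0,Y}\rfloor$ does not help, and the step ``force every divisorial non-klt coefficient below $1$'' is exactly the difficulty the proposition is meant to overcome. The paper's horizontal restriction plus the $P\leftrightarrow Q$ swap is the missing idea.
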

\begin{proof}
\emph{Step 1.}
\emph{In this step, we modify $B$ and introduce a divisor $\tilde\Delta$.}
We can assume that $\dim Z>0$ otherwise we apply \cite[Corollary 1.2]{B-compl} which shows that 
$(X,\Delta)$ is log bounded. Moreover, it is enough to prove the proposition by replacing $z$ 
with any closed point $z'$ in $\bar{z}$ because 
$(X,\Lambda)$ being klt over $z'$ implies that it is klt over $z$. Thus from now on we assume that $z$ is a closed point. 
Then as $A$ is very ample we can find $P\in |f^*A|$ containing $f^{-1}\{z\}$.

 By Proposition \ref{l-bnd-cy-bnd-lct-special}, there is a rational number 
$t>0$ depending only on $d,r,\varepsilon$ such that  $(X,B+2tP)$ is lc. Since 
$$
B+tP=\frac{1}{2}B+\frac{1}{2}(B+2tP),
$$
the pair $(X,B+tP)$ is $\frac{\varepsilon}{2}$-lc. Moreover, 
$$
K_X+B+tP\sim_\R f^*(L+tA).
$$
Thus replacing $B$ with $B+tP$ 
 and replacing $\varepsilon$ with $\frac{\varepsilon}{2}$, 
we can assume that 
$$
B\ge \tilde{\Delta}:=\Delta +tP
$$ 
for some fixed rational number $t\in (0,1)$ (here we can replace 
$A$ with $2A$  to ensure that $f^*A-(K_X+B)$ is still nef, and then replace $r$ accordingly). 
Since $P$ is integral and $t$ is fixed, the coefficients of $\tilde{\Delta}$ belong to 
a fixed finite set, so expanding $\mathfrak{R}$ we can assume they belong to $\mathfrak{R}$.\\

\emph{Step 2.}
\emph{In this step, we reduce the proposition to existence of a special lc complement.}
Assume that there exist bounded natural numbers $n,m$ and a $\Q$-divisor $\Lambda\ge \tilde{\Delta}$ 
such that 
\begin{enumerate}
\item $(X,\Lambda)$ is lc over $z$, 

\item the non-klt locus of $(X,\Lambda)$ is mapped to a finite set of closed points on $Z$, and 

\item that $n(K_X+\Lambda)\sim mf^*A$. 
\end{enumerate}

Assume that $Q\in |f^*A|$ is general and let 
$$
\Lambda':=\Lambda-tP+tQ.
$$ 
By (2), any non-klt centre of $(X,\Lambda)$ intersecting $f^{-1}\{z\}$ is actually contained in 
$f^{-1}\{z\}$. Thus since $P$ contains $f^{-1}\{z\}$, $(X,\Lambda')$ is klt over $z$. Moreover, 
$\Lambda'\ge \Delta$, and perhaps after replacing $n,m$ with a bounded multiple 
we have   
$$
n(K_X+\Lambda')=n(K_X+\Lambda-tP+tQ)\sim n(K_X+\Lambda)\sim m f^*A.
$$
Therefore, it is enough to find $n,m,\Lambda$ as in (1)-(3). 
At this point we replace $\Delta$ with $\tilde{\Delta}$. The bigness of $-(K_X+\Delta)$ over $Z$ 
is preserved as $P$ is vertical.\\ 

\emph{Step 3.}
\emph{In this step, we find a bounded lc complement of $K_X+\Delta$ and study it.}
After taking a $\Q$-factoriallisation of $X$ and 
running an MMP on $-(K_X+\Delta)$ over $Z$ we can assume that $-(K_X+\Delta)$ is nef over $Z$. 
Applying Proposition \ref{l-bnd-cy-numerical-bndness}, there is a bounded natural number $l$ such that 
$lf^*A-(K_X+\Delta)$ is nef globally. Replacing $l$ with $l+1$ we can assume 
$lf^*A-(K_X+\Delta)$ is nef and big.

By Proposition \ref{l-bnd-cy-bnd-klt-compl-1}, there exist bounded natural numbers $n,m$ and a 
$\Q$-divisor $\Lambda\ge \Delta$ such that $(X,\Lambda)$ is lc over $z$ and 
$$
n(K_{X}+\Lambda)\sim mf^*A.
$$
 Then 
$$
n(\Lambda-\Delta)=n(K_X+\Lambda)-n(K_X+\Delta)\sim mf^*A-n(K_X+\Delta),
$$
hence
 $$
 n(\Lambda-\Delta)\in |mf^*A-n(K_X+\Delta)|.
 $$
Multiplying $n,m$ by a bounded number we can assume that $n\Delta$ is integral.

Now by adding a general member of $|2lf^*A|$ to $\Lambda$ and replacing $m$ with $m+2nl$ 
to preserve $n(K_{X}+\Lambda)\sim mf^*A$, we can assume that $m-1\ge l(n+1)$, hence  
$$
(m-1)f^*A-(n+1)(K_X+\Delta)
$$
is nef and big.\\ 

\emph{Step 4.}
\emph{In this step, we consider the restriction of $|mf^*A-n(K_X+\Delta)|$ to a general 
member of $|f^*A|$.} Let $H$ be a general member of $|A|$ and let $G=f^*H$. Then 
$$
mf^*A-n(K_X+\Delta)-G\sim K_X+\Delta+(m-1)f^*A-(n+1)(K_X+\Delta).
$$
 Thus 
$$
H^1(mf^*A-n(K_X+\Delta)-G)=0
$$ 
by the Kawamata-Viehweg vanishing theorem, hence the restriction map 
$$
H^0(mf^*A-n(K_X+\Delta)) \to H^0((mf^*A-n(K_X+\Delta))|_G) 
$$
is surjective. Note that for any Weil divisor $D$ on $X$, we have 
$$
\mathcal{O}_X(D)\otimes\mathcal{O}_G\simeq \mathcal{O}_G(D|_G)
$$
by the choice of $G$ (see \cite[2.41]{B-compl}). This is used to get the above surjectivity.\\ 

\emph{Step 5.}
\emph{In this step, we consider complements on $G$.}
Define 
$$
K_G+B_G=(K_X+B+G)|_G
$$ 
and
$$
K_G+\Delta_G=(K_X+\Delta+G)|_G.
$$ 
Then as we have seen several times in this section, $(G,B_G)\to H$ is a $(d-1,r',\varepsilon)$-Fano type 
fibration for some fixed $r'$. Moreover, $\Delta_G\le B_G$, the coefficients of $\Delta_G$ are in $\mathfrak{R}$, 
and $-(K_G+\Delta_G)$ is big over $H$. 

Since we are assuming Theorem \ref{t-bnd-comp-lc-global-cy-fib} in dimension $d-1$, 
there exist bounded natural numbers $p,q$ and there is a $\Q$-divisor $\Lambda_{G}'\ge \Delta_G$ such that 
$(G,\Lambda_G')$ is klt and 
$$
p(K_G+\Lambda_G')\sim qg^*A|_H
$$
 where $g$ denotes the morphism $G\to H$. 
Replacing both $n$ and $p$ with $np$  and then replacing $m$ and $q$ with $mp$ and $nq$, respectively, 
we can assume that $n=p$. Next if $q<m+n$, then we increase $q$ to $m+n$ by adding $\frac{1}{n}D_G$ 
to $\Lambda_G'$ where $D_G$ is a general element of $(m+n-q)g^*A|_H$. 
If $q\ge m+n$, we similarly increase $m$ to $q-n$ by modifying $\Lambda$ 
so that we can again assume $q=m+n$. Thus we now have 
$$
n(K_G+\Lambda_G')\sim (m+n)g^*A|_H.
$$
Note that in the process, the  inequality $m-1\ge l(n+1)$
of Step 3 is preserved, so the surjectivity of Step 4 still holds.\\

\emph{Step 6.}
\emph{In this step, we finish the proof.}
By construction,
$$
nR_G:=n(\Lambda_G'-\Delta_G)\in |(m+n)g^*A|_H-n(K_G+\Delta_G)|,
$$
and 
$$
(G,\Lambda_G'=\Delta_G+R_G)
$$ 
is klt. Thus if we replace $nR_G$ with any general element of
 $$
 |(m+n)g^*A|_H-n(K_G+\Delta_G)|,
 $$ 
then the pair $(G,\Delta_G+R_G)$ is still klt. On the other hand,
$$  
\begin{array}{l l}
 (m+n)g^*A|_H-n(K_G+\Delta_G) &= (m+n)g^*A|_H-n(K_X+\Delta+G)|_G\\
 & =((m+n)f^*A-nG-n(K_X+\Delta))|_G\\
 & \sim (mf^*A-n(K_X+\Delta))|_G.
\end{array}
$$
  Thus, by the surjectivity in Step 4, a general element 
$$
nR\in |mf^*A-n(K_X+\Delta)|
$$
restricts to a general element 
 $$
nR_G\in |(m+n)g^*A|_H-n(K_G+\Delta_G)|.
 $$ 
 
Now in view of 
$$
K_G+\Delta_G+R_G=(K_X+\Delta+R+G)|_G
$$ 
and inversion of adjunction \cite[Theorem 5.50]{kollar-mori},
the pair $(X,\Delta+R+G)$ is plt near $G$, hence $(X,\Delta+R)$ is klt near $G$. 
Therefore, replacing $\Lambda$ with $\Delta+R$ we can assume that $(X,\Lambda)$ is klt near 
$G$. In other words, the non-klt locus of 
$(X,\Lambda)$ is mapped to a finite set of closed points of $Z$.
Note that $(X,\Lambda)$ is still lc over $z$ because 
 $$
 n(\Lambda-\Delta)\in |mf^*A-n(K_X+\Delta)|
 $$ 
and because 
$$
nR\in |mf^*A-n(K_X+\Delta)|
$$ 
is a general element. Thus we have satisfied the conditions (1)-(3) of Step 2.
\end{proof}

\begin{lem}\label{l-bnd-cy-bnd-klt-compl-induction}
Assume that Theorems \ref{t-log-bnd-cy-fib}, \ref{t-sing-FT-fib-totalspace}, and \ref{t-bnd-comp-lc-global-cy-fib} 
hold in dimension $d-1$. Then Theorem \ref{t-bnd-comp-lc-global-cy-fib} holds in dimension $d$.
\end{lem}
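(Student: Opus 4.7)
The plan is to upgrade the ``klt over a single point'' complements provided by Proposition \ref{l-bnd-cy-bnd-klt-compl-2} to a globally klt complement, via a Noetherian induction on the image of the non-klt locus in $Z$ combined with a convex combination trick. The hypotheses of the lemma are precisely what is needed to invoke Proposition \ref{l-bnd-cy-bnd-klt-compl-2}.

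First I would apply Proposition \ref{l-bnd-cy-bnd-klt-compl-2} to obtain bounded natural numbers $n,m$ depending only on $d,r,\varepsilon,\mathfrak{R}$ such that for each $z\in Z$ there exists a $\Q$-divisor $\Lambda_z\ge \Delta$ with $(X,\Lambda_z)$ klt over $z$ and $n(K_X+\Lambda_z)\sim mf^*A$. Inspecting that proof, in particular the lifting of klt complements from a general $G=f^*H$ with $H\in|A|$ via Kawamata--Viehweg vanishing, the non-klt locus of $(X,\Lambda_z)$ is mapped to a finite set $S_z\subset Z$ of closed points not containing $z$. I then take $\Lambda^{(0)}:=\Lambda_\eta$ for $\eta$ the generic point of $Z$, so that $(X,\Lambda^{(0)})$ is klt over an open dense $U^{(0)}\subseteq Z$ whose complement $S^{(0)}\subset Z$ is a finite set of closed points.

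If $S^{(0)}=\emptyset$ we are done. Otherwise I iterate as follows: given $\Lambda^{(i)}\ge\Delta$ that is klt outside the fibres over a finite closed set $S^{(i)}$, pick $z\in S^{(i)}$ and apply Proposition \ref{l-bnd-cy-bnd-klt-compl-2} to produce $\Lambda_z\ge\Delta$ klt over $z$ with $n(K_X+\Lambda_z)\sim mf^*A$; then set
$$
\Lambda^{(i+1)} \;:=\; (1-\epsilon)\,\Lambda^{(i)} + \epsilon\,\Lambda_z
$$
for a small rational $\epsilon\in(0,1)$. Clearing denominators, we still have $N(K_X+\Lambda^{(i+1)})\sim Mf^*A$ for uniformly bounded $N,M$. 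Provided $(X,\Lambda^{(i)})$ is uniformly $\varepsilon'$-lc over $U^{(i)}=Z\setminus S^{(i)}$, and provided $(X,\Lambda_z)$ is lc at the other points of $S^{(i)}$, a sufficiently small but uniformly bounded $\epsilon$ ensures that $(X,\Lambda^{(i+1)})$ remains klt over $U^{(i)}$, and by the convex combination of an lc pair with a pair klt at $z$, it is also klt over $z$; hence $S^{(i+1)}\subsetneq S^{(i)}$. After at most $|S^{(0)}|$ iterations the bad set is empty, and clearing denominators yields the desired globally klt complement with $N,M$ depending only on $d,r,\varepsilon,\mathfrak{R}$.

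The hard part will be the two uniformity statements underlying the iteration. For the first, a uniform $\varepsilon'$-lc bound on $(X,\Lambda^{(i)})$ over its klt locus, I would combine the log birational boundedness of $(X,\Lambda^{(i)})$ coming from Proposition \ref{l-bnd-cy-bir-bnd} with the ACC for lc thresholds of Hacon--M$^{\rm c}$Kernan--Xu, so that log discrepancies are bounded away from zero uniformly across the relevant bounded family. For the second, arranging that $(X,\Lambda_z)$ is lc at the other bad points $S^{(i)}\setminus\{z\}$, I would revisit the construction of $\Lambda_z$ in Proposition \ref{l-bnd-cy-bnd-klt-compl-2}, exploiting the flexibility in the general divisor $Q\in|f^*A|$ appearing in Step 2 of that proof: since $A$ is very ample, $|f^*A|$ contains elements whose supports avoid any prescribed finite set of closed points of $Z$, so we can force the non-klt locus of $\Lambda_z$ to miss those points. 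Combined with the bound on $|S^{(0)}|$ (which is in turn controlled by the base locus of $|mf^*A-n(K_X+\Delta)|$ off a general $G$, and so by the boundedness results established earlier in this section), the iteration then terminates in boundedly many steps and produces the required globally klt complement.
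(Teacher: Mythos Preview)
Your iterative convex-combination scheme has a genuine gap, and it is precisely the part you flag as ``hard''. You need the number of bad points $|S^{(0)}|$ to be bounded in terms of $d,r,\varepsilon,\mathfrak{R}$ alone, because each iteration multiplies the denominators by the denominator of $\epsilon$; after $|S^{(0)}|$ steps your $N$ is of order $n\cdot q^{|S^{(0)}|}$. Nothing in the paper, and nothing in your sketch about base loci, gives a uniform bound on this finite set --- it depends on the specific $X\to Z$ and the specific choice of $\Lambda^{(0)}$. The two auxiliary uniformity claims (uniform $\varepsilon'$-lc over $U^{(i)}$ via ACC, and forcing $(X,\Lambda_z)$ to be lc at the remaining points of $S^{(i)}$ by tweaking $Q$) are also not established by what you wrote; in particular, Proposition \ref{l-bnd-cy-bnd-klt-compl-2} only asserts klt over $z$, with no control at the other bad points, and the divisor $Q$ in that proof does not govern where those other points lie.

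The paper sidesteps all of this with a single stroke. Once Proposition \ref{l-bnd-cy-bnd-klt-compl-2} provides, for each $z$, a boundary $\Gamma_z\ge\Delta$ with $(X,\Gamma_z)$ klt over an open neighbourhood $U_z$ and $n(K_X+\Gamma_z)\sim mf^*A$, one covers $Z$ by finitely many $U_{z_1},\dots,U_{z_p}$ (with no need to bound $p$). Each $n(\Gamma_i-\Delta)$ is then a \emph{member} of the fixed linear system $|mf^*A-n(K_X+\Delta)|$, and a \emph{general} member $nR$ of this linear system yields a pair $(X,\Delta+R)$ that is klt over $U_{z_i}$, by semicontinuity of log discrepancies (a general member is at least as mild as any particular one). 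Since the $U_{z_i}$ cover $Z$, $(X,\Lambda)$ with $\Lambda=\Delta+R$ is globally klt and $n(K_X+\Lambda)\sim mf^*A$ with the \emph{same} $n,m$. No iteration, no bound on $p$, no convex combinations are needed.
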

\begin{proof}
 By Proposition \ref{l-bnd-cy-bnd-klt-compl-2}, there exist 
natural numbers $n,m$ depending only on $d,r,\varepsilon, \mathfrak{R}$ such that for each point $z\in Z$ 
 there is a $\Q$-divisor $\Gamma\ge \Delta$ such that 
\begin{itemize}
\item $(X,\Gamma)$ is klt over some neighbourhood $U_z$ of $z$, and

\item $n(K_X+\Gamma)\sim mf^*A$. 
\end{itemize} 
We can find finitely many closed points $z_1,\dots,z_p$ in $Z$ such that the corresponding open sets $U_{z_i}$ cover 
$Z$. For each $z_i$ let $\Gamma_i$ be the corresponding boundary as above.

From 
$$
n(\Gamma_i-\Delta)= n(K_X+\Gamma_i)-n(K_X+\Delta)\sim mf^*A -n(K_X+\Delta)
$$ 
we get
$$
n(\Gamma_i-\Delta) \in |mf^*A-n(K_X+\Delta)|.
$$
Therefore, if $nR$ is a general member of $|mf^*A-n(K_X+\Delta)|$ and if we let $\Lambda:=\Delta+R$, 
then 
\begin{itemize}
\item $(X,\Lambda)$ is klt over $U_{z_i}$,  and 

\item $n(K_X+\Lambda)\sim mf^*A$. 
\end{itemize} 
Finally, since we have only finitely many open sets $U_{z_i}$ involved, $(X,\Lambda)$ is klt everywhere. 
\end{proof}

\subsection{A special case of boundedness of Fano type fibrations}

We treat a special case of Theorem \ref{t-log-bnd-cy-fib} inductively.

\begin{lem}\label{l-bnd-cy-fib-ample-case}
Let $d,r$ be natural numbers, $\varepsilon$ be a positive real number, and $\mathfrak{R}\subset [0,1]$ be a 
finite set of rational numbers. Assume that Theorems \ref{t-log-bnd-cy-fib}, \ref{t-sing-FT-fib-totalspace}, 
and \ref{t-bnd-comp-lc-global-cy-fib} hold in dimension $d-1$. Consider the set of all
$(d,r,\varepsilon)$-Fano type fibrations $(X,B)\to Z$ (as in \ref{d-FT-fib}) and $\R$-divisors $0\le \Delta\le B$ such that  
\begin{itemize}
\item the coefficients of $\Delta$ are in $\mathfrak{R}$, and

\item $-(K_X+\Delta)$ is ample over $Z$. 
\end{itemize} 
Then the set of such $(X,\Delta)$ is log bounded.
\end{lem}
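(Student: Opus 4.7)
The strategy is to combine bounded klt complements in dimension $d$ (available under our inductive hypotheses via Lemma~\ref{l-bnd-cy-bnd-klt-compl-induction}) with the log birational boundedness of Proposition~\ref{l-bnd-cy-bir-bnd}, and then upgrade to genuine log boundedness using the ampleness of $-(K_X+\Delta)$ over $Z$.

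First I take a $\Q$-factorialisation and assume $X$ is $\Q$-factorial. Under our hypotheses Lemma~\ref{l-bnd-cy-bnd-klt-compl-induction} gives Theorem~\ref{t-bnd-comp-lc-global-cy-fib} in dimension $d$; applied to $(X,B)\to Z$ and $\Delta$, it yields bounded natural numbers $n,m$ and a klt boundary $\Lambda\ge\Delta$ with $n(K_X+\Lambda)\sim mf^*A$. In parallel, Proposition~\ref{l-bnd-cy-numerical-bndness} (whose hypothesis is Theorem~\ref{t-sing-FT-fib-totalspace} in dimension $d-1$) produces a bounded $l$ such that $N:=lf^*A-(K_X+\Delta)$ is globally nef. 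Since $-(K_X+\Delta)$ is ample over $Z$ and $N$ is globally nef, the divisor $(l+1)f^*A-(K_X+\Delta)=N+f^*A$ is globally ample by Nakai--Moishezon: on a subvariety $V$ of dimension $k$ with $\dim f(V)=j$, the surviving term in the expansion of $(N+f^*A)^k\cdot V$ is $\binom{k}{j}(f^*A)^j\cdot N^{k-j}\cdot V$, which is positive because $N$ is relatively ample on the fibres of $f|_V$.

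Next I apply Proposition~\ref{l-bnd-cy-bir-bnd} to $\Delta$, taking $\delta:=\min(\mathfrak{R}\setminus\{0\})$ as the lower bound on its non-zero coefficients and the $l$ above (so that $lf^*A-(K_X+\Delta)$ is nef, in particular pseudo-effective). The output is a log bounded couple $(\overline{X},\overline{\Sigma}+\overline{D})$ with $\overline{D}$ very ample, and a birational map $\rho\colon\overline{X}\bir X/Z$ such that $\overline{\Sigma}$ contains the exceptional divisors of $\rho$ and the birational transform of $\Supp\Delta$, while $\overline{D}-\overline{N}$ is ample for $\overline{N}:=\rho^*N$. This immediately yields log birational boundedness of $(X,\Delta)$.

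The principal step is to upgrade from log birational to log boundedness. I define $\overline{\Theta}$ on $\overline{X}$ to be the birational transform of $\Delta$ plus $(1-\tfrac{\varepsilon}{2})$ times each $\rho$-exceptional prime divisor. Writing $K_{\overline{X}}+\overline{\Delta}:=\rho^*(K_X+\Delta)$, the $\varepsilon$-lc hypothesis on $(X,B)$ combined with $\Delta\le B$ forces every coefficient of $\overline{\Delta}$ to be $\le 1-\varepsilon$, so $\overline{\Delta}\le\overline{\Theta}$, $(\overline{X},\overline{\Theta})$ is a klt pair with coefficients in a fixed finite set, and $\rho_*\overline{\Theta}=\Delta$. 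Because $-(K_X+\Delta)$ is ample over $Z$, $X$ is realised as the ample model over $Z$ of $-(K_{\overline{X}}+\overline{\Theta})$: the associated MMP over $Z$ starting from $\overline{X}$ contracts precisely the $\rho$-exceptional divisors and terminates at $X$. Since $(\overline{X},\overline{\Theta})$ varies in a bounded family of log pairs and $K_{\overline{X}}+\overline{\Theta}$ has bounded Cartier index by Lemma~\ref{l-bnd-Cartier-index-family}, a standard argument shows the ample model $X$ itself lies in a bounded family; combined with $\Supp\Delta\subseteq\rho_*\overline{\Sigma}$ having bounded degree with respect to the pushforward of $\overline{D}$, this gives the desired log boundedness of $(X,\Delta)$. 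The main obstacle is exactly this last upgrade: controlling the MMP from $\overline{X}$ to $X$ uniformly within the bounded family and ensuring that the ample models of a bounded family of klt pairs of Fano type over $Z$ form a bounded family.
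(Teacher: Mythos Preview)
Your argument has a genuine gap at the final upgrade step, which you yourself flag as ``the main obstacle.'' You want to deduce that the relative anticanonical model $X$ of $(\overline X,\overline\Theta)$ over $Z$ is bounded from the fact that $(\overline X,\overline\Theta)$ lies in a bounded family. This is not a standard argument: $\rho\colon\overline X\bir X$ is only a rational map (Proposition~\ref{l-bnd-cy-bir-bnd} produces $\overline X$ as a resolution of a birational \emph{contraction} of a resolution of $X$), so it is not clear that an MMP on $\overline X$ over $Z$ lands on $X$; and even granting that, boundedness of relative ample models over a varying base $Z$ is exactly the sort of statement the whole paper is building towards, not something available off the shelf. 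Note also that the klt complement $\Lambda$ you produce in your first paragraph is never used afterwards.

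The paper sidesteps this difficulty by staying on $X$ and exploiting $\Lambda$ directly. After obtaining $\Lambda\ge\Delta$ with $(X,\Lambda)$ klt and $n(K_X+\Lambda)\sim mf^*A$, one enlarges $m$ so that $n(\Lambda-\Delta)\sim mf^*A-n(K_X+\Delta)$ is \emph{globally ample}; this is precisely where the relative ampleness of $-(K_X+\Delta)$ enters. For small $t>0$ set $\Theta:=\Lambda+t(\Lambda-\Delta)$. Then $(X,\Theta)$ is $\tfrac{1}{2n}$-lc, its nonzero coefficients are $\ge\tfrac{1}{n}$ (since $n\Lambda$ is integral and $\Supp\Theta=\Supp\Lambda$), and
\[
K_X+\Theta\sim_\Q lf^*A+t(\Lambda-\Delta)
\]
is globally ample. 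Applying Proposition~\ref{l-bnd-cy-bir-bnd} with $\Lambda$ in the role of the boundary (using $lf^*A-(K_X+\Lambda)\sim_\Q 0$) gives log birational boundedness of $(X,\Lambda)$, hence of $(X,\Theta)$. Now \cite[Theorem~1.6]{HMX2} upgrades this to log boundedness: a log birationally bounded set of klt pairs with ample log canonical divisor and coefficients bounded below is log bounded. Since $\Delta\le\Theta$, log boundedness of $(X,\Delta)$ follows. The key idea you are missing is to manufacture a globally ample $K_X+\Theta$ on $X$ itself and invoke \cite[Theorem~1.6]{HMX2}, rather than attempting to control an MMP from a bounded model.
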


\begin{proof}
By Lemma \ref{l-bnd-cy-bnd-klt-compl-induction}, our assumptions imply Theorem \ref{t-bnd-comp-lc-global-cy-fib} 
in dimension $d$, hence there exist 
natural numbers $n,m$ depending only on $d,r,\varepsilon,\mathfrak{R}$ and a $\Q$-divisor $\Lambda\ge \Delta$ such that 
\begin{itemize}
\item $(X,\Lambda)$ is klt, and  

\item $n(K_X+\Lambda)\sim mf^*A$. 
\end{itemize} 
We have 
$$
n(\Lambda-\Delta) \in |mf^*A-n(K_X+\Delta)|.
$$
Increasing $m$ (by adding to $\Lambda$ appropriately) and applying Proposition \ref{l-bnd-cy-numerical-bndness}, 
we can assume that $mf^*A-n(K_X+\Delta)$ is nef and that $l:=\frac{m}{n}$ is a natural number. 
Since $-(K_X+\Delta)$ is ample over $Z$, replacing $m$ with $2m$ (which then replaces 
$l$ with $2l$), we can assume that $mf^*A-n(K_X+\Delta)$ is ample.
In particular,  $\Lambda-\Delta$ is ample. 

Since $(X,\Lambda)$ is klt and $n(K_X+\Lambda)$ is Cartier, $(X,\Lambda)$ is  $\frac{1}{n}$-lc. 
Pick a small $t>0$ such that 
$$
(X,\Theta:=\Lambda+t(\Lambda-\Delta))
$$
is $\frac{1}{2n}$-lc. Here $t$ depends on $(X,\Lambda)$. 
Then
$$
K_X+\Theta=K_X+\Lambda+t(\Lambda-\Delta)\sim_\Q l f^*A+t(\Lambda-\Delta)
$$
is ample. In addition, since $\Supp(\Lambda-\Delta)\subseteq \Lambda$ and since $n\Lambda$ is integral, 
each non-zero coefficient of $\Theta$ is at least $\frac{1}{n}$.

Now since $n\Lambda$ is integral and since 
$$
lf^*A -(K_X+\Lambda)\sim_\Q 0,
$$ 
 $(X,\Lambda)$ is log birationally bounded, by Proposition \ref{l-bnd-cy-bir-bnd}. Thus  
 $(X,\Theta)$ is also log birationally bounded as $\Supp \Theta=\Supp \Lambda$. Therefore, $(X,\Theta)$ is log bounded by 
\cite[Theorem 1.6]{HMX2} which implies that $(X,\Delta)$ is log bounded as $\Delta\le \Theta$. 
\end{proof}

\subsection{Boundedness of generators of N\'eron-Severi groups}

To treat Theorem \ref{t-log-bnd-cy-fib} in full generality, we need to discuss 
generators of relative N\'eron-Severi groups. We start with bounding global Picard numbers.

\begin{lem}\label{l-cy-fib-bnd-picard-number}
Let $d,r$ be natural numbers and $\varepsilon$ be a positive real number. 
Assume that Theorems \ref{t-log-bnd-cy-fib}, \ref{t-sing-FT-fib-totalspace}, and \ref{t-bnd-comp-lc-global-cy-fib} 
hold in dimension $d-1$. Then there is a natural number $p$ depending only on $d,r,\varepsilon$ 
satisfying the following.
If $(X,B)\to Z$ is a $(d,r,\varepsilon)$-Fano type fibration (as in \ref{d-FT-fib}), then the Picard number $\rho(X)\le p$.
\end{lem}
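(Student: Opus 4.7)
The plan is to combine the existence of bounded klt complements in dimension~$d$ with log birational boundedness and a minimal model program argument. First, by Lemma~\ref{l-bnd-cy-bnd-klt-compl-induction}, the inductive hypotheses ensure that Theorem~\ref{t-bnd-comp-lc-global-cy-fib} holds in dimension~$d$. Applying it with $\Delta=0$ produces bounded natural numbers $n,m$ (depending only on $d,r,\varepsilon$) and a $\Q$-divisor $\Lambda\ge 0$ such that $(X,\Lambda)$ is klt and $n(K_X+\Lambda)\sim m f^*A$; in particular, the coefficients of $\Lambda$ lie in the finite set $\tfrac{1}{n}\Z\cap[0,1)$.

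Next I would replace $X$ by a small $\Q$-factorialisation, which can only raise the Picard number, so it suffices to bound $\rho$ in the $\Q$-factorial case. Proposition~\ref{l-bnd-cy-numerical-bndness} yields a bounded $l$ with $lf^*A-K_X$ nef, and then Proposition~\ref{l-bnd-cy-bir-bnd} applied with $\Lambda$ in place of the auxiliary divisor gives a bounded log smooth couple $(\overline X,\overline\Sigma+\overline D)\in\mathcal P$ and a birational map $\rho\colon\overline X\bir X/Z$ such that $\overline\Sigma$ contains the exceptional divisors of $\rho$ together with the birational transform of $\Supp\Lambda$. Since $(\overline X,\overline\Sigma)$ varies in a bounded family, both $\rho(\overline X)$ and the number of components of $\overline\Sigma$ are bounded.

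The final step is to compare $\rho(X)$ with $\rho(\overline X)$ via an MMP. After further bounded blowups of $\overline X$ along centres contained in $\overline\Sigma$, one may arrange that $\rho$ is a birational morphism $\overline X\to X$ while keeping $(\overline X,\overline\Sigma)$ in a fixed bounded family. Writing $K_{\overline X}+\overline\Lambda=\rho^*(K_X+\Lambda)$, the sub-pair $(\overline X,\overline\Lambda)$ is sub-klt, and the negative part of $\overline\Lambda$ is supported on $\rho$-exceptional divisors contained in $\overline\Sigma$. Decomposing $\overline\Lambda=(\overline\Lambda)_{\ge 0}-E$ with $E\ge 0$ effective and supported in $\overline\Sigma$, we have $K_{\overline X}+(\overline\Lambda)_{\ge 0}\sim_{\Q} E$ over $X$, and an MMP on this divisor over $X$ contracts precisely the components of $E$ and terminates at a $\Q$-factorial minimal model of $(X,\Lambda)$ that is isomorphic to $X$ in codimension one. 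Since divisorial contractions drop $\rho$ by one and flips preserve it,
\[
\rho(X)\;=\;\rho(\overline X)-\#\{\text{divisorial contractions in this MMP}\}\;\le\;\rho(\overline X),
\]
which is bounded.

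The main obstacle is the very first sentence of the final step: turning the birational map $\rho$ of Proposition~\ref{l-bnd-cy-bir-bnd} into a morphism while keeping $\overline X$ in a bounded family. Resolving the indeterminacy of $\rho$ by blowups along bounded centres uses that the birational transform of $\Supp\Lambda$ already sits inside $\overline\Sigma$ and that $K_X+\Lambda$ has bounded Cartier index; any ``extracted'' divisor on $X$ above $\overline X$ must either appear as a component of $\Supp\Lambda$ (and is thus tracked by $\overline\Sigma$) or have log discrepancy equal to $1$ with respect to $(X,\Lambda)$, which should constrain the blowups enough to keep the modified $\overline X$ in a larger but still bounded family of log smooth couples.
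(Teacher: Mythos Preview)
Your argument has a genuine gap at exactly the point you flag: turning the birational map $\rho\colon\overline X\bir X$ of Proposition~\ref{l-bnd-cy-bir-bnd} into a morphism while keeping $\overline X$ in a bounded family. Your proposed justification does not work. You note that a prime divisor $D$ on $X$ which is exceptional over $\overline X$ has $a(D,X,\Lambda)=1$ when $D\notin\Supp\Lambda$; but this is a log discrepancy with respect to a pair on $X$, whereas extracting $D$ from $\overline X$ in a controlled way requires bounding $a(D,\overline X,\Gamma)$ for some pair $(\overline X,\Gamma)$ living on $\overline X$. There is no crepant relation between $(X,\Lambda)$ and anything on $\overline X$ (indeed $K_X+\Lambda\sim_\Q\tfrac{m}{n}f^*A$ is not numerically trivial, so no such relation exists), so the information $a(D,X,\Lambda)=1$ gives you nothing on the $\overline X$ side. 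Moreover, resolving the indeterminacy of $\rho$ is not just a matter of extracting the missing divisors of $X$: the sequence of blowups needed to make a birational map between smooth varieties a morphism has no a priori bound, even when the indeterminacy locus lies in a fixed divisor.

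The paper sidesteps this entirely by going \emph{down} rather than up. After $\Q$-factorialising, run an MMP on $-K_X$ over $Z$ and pass to the ample model $Y$ of $-K_X$ over $Z$; then $(Y,B_Y)\to Z$ is again a $(d,r,\varepsilon)$-Fano type fibration with $-K_Y$ ample over $Z$, so $Y$ is bounded by Lemma~\ref{l-bnd-cy-fib-ample-case}. Now every prime divisor $D$ on $X$ contracted over $Y$ satisfies $a(D,Y,0)\le a(D,X,0)=1$, and this inequality is on the correct side: it lets one invoke \cite[Proposition~2.5]{HX} to produce a bounded $X'$ with a morphism $X'\to Y$ extracting exactly these divisors, so that $X\bir X'$ is an isomorphism in codimension one. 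Taking a bounded resolution $W\to X'$ and bounding $\dim_\R H^2(W,\R)$ in the family finishes the proof, since $\rho(X)=\rho(X')\le\rho(W)$. The key point you are missing is that the log discrepancy bound must be relative to the \emph{bounded} variety, and passing to the anti-canonical model over $Z$ is what makes this automatic.
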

\begin{proof}
Replacing $X$ with a $\Q$-factorialisation, we can assume $X$ is $\Q$-factorial. 
Running an MMP on $-K_X$ over $Z$, we find $Y$ so that $-K_Y$ is nef and big over $Z$. 
Replace $Y$ with the ample model of $-K_Y$ over $Z$ so that  $-K_Y$ 
becomes ample over $Z$. Let $K_Y+B_Y$ be the pushdown of $K_X+B$. Then 
$(Y,B_Y)\to Z$ is a $(d,r,\varepsilon)$-Fano type fibration. 
Now applying Lemma \ref{l-bnd-cy-fib-ample-case} to $(Y,B_Y)\to Z$ we deduce that 
$Y$ is bounded. 

By construction, if $D$ is a prime divisor on $X$ contracted over $Y$, then 
$$
a(D,Y,0)\le a(D,X,0)=1.
$$ 
Thus, by  \cite[Proposition 2.5]{HX}, there is a birational morphism $X'\to Y$ from a bounded normal 
projective variety such that the induced map $X\bir X'$ is an isomorphism in codimension one.

We can take a resolution $W\to X'$ such that $W$ is bounded. Then there exist finitely many surjective 
smooth projective morphisms $V_i\to T_i$ between smooth varieties, depending only on $d,r,\varepsilon$,  such that 
$W$ is a fibre of $V_i\to T_i$ over some closed point for some $i$. Since smooth morphisms are locally 
products in the complex topology (here we can assume that the ground field is $\C$),  
$\dim_\R H^2(W,\R)$ is bounded by some number $p$ depending only on $d,r,\varepsilon$. 
In particular, since the N\'eron-Severi group $N^1(W)$ is embedded in $H^2(W,\R)$ as a vector space, we get  
$$
\rho(W)\le \dim_\R H^2(W,\R)\le p.
$$ 
Since $X\bir X'$ is an isomorphism in codimension one and since $W\to X'$ is a morphism,  
the induced map $X\bir W$ does not contract divisors, hence $\rho(X)\le \rho(W)\le p$. 
\end{proof}

\begin{prop}\label{p-cy-fib-bnd-Neron-Severi}
Let $d,r$ be natural numbers, $\varepsilon$ be a positive real number, and $\mathfrak{R}\subset [0,1]$ be a 
finite set of rational numbers. 
Assume that Theorems \ref{t-log-bnd-cy-fib}, \ref{t-sing-FT-fib-totalspace}, and \ref{t-bnd-comp-lc-global-cy-fib} 
hold in dimension $d-1$. Then there is a bounded set $\mathcal{P}$ of couples depending only on 
$d,r,\varepsilon, \mathfrak{R}$ satisfying the following. 
Suppose that 
\begin{itemize}
\item $(X,B)\to Z$ is a $(d,r,\varepsilon)$-Fano type fibration (as in \ref{d-FT-fib}), and that

\item  the coefficients of $B$ are in $\mathfrak{R}$.
\end{itemize}
Then there exist a birational map $X\bir X'$  and a reduced divisor $\Sigma'$ on $X'$ 
such that 
\begin{itemize}
\item $X'$ is a $\Q$-factorial normal projective variety, 

\item $X\bir X'$ is an isomorphism in codimension one, 

\item $(X',\Sigma')$ belongs to $\mathcal{P}$, 

\item $\Supp B'\subseteq \Sigma'$ where $B'$ is the birational transform of $B$, and 

\item the irreducible components of $\Sigma'$ generate $N^1(X'/Z)$.
\end{itemize}
\end{prop}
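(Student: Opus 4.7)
The plan is to apply Proposition \ref{l-bnd-cy-bir-bnd} to $(X,B)$ to obtain a log birational model $(\bar X,\bar\Sigma+\bar D)$ in a bounded family, then descend via an MMP to a $\Q$-factorial small modification $X'$ of $X$, and finally enlarge the resulting reduced divisor so that its components generate $N^1(X'/Z)$ while the couple remains bounded.

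First I would replace $X$ with its $\Q$-factorialization, a small modification, so that $X$ is $\Q$-factorial; Lemma \ref{l-cy-fib-bnd-picard-number} then bounds $\rho(X)\le p$ in terms of $d,r,\varepsilon$. Since $K_X+B\sim_\R f^*L$ with $A-L$ ample, the divisor $lf^*A-(K_X+B)$ is ample for every $l\ge 1$, so the pseudo-effectivity hypothesis of Proposition \ref{l-bnd-cy-bir-bnd} is automatic; and because the coefficients of $B$ lie in the finite set $\mathfrak{R}$, their non-zero values are bounded below by some $\delta>0$ depending only on $\mathfrak{R}$. Applying Proposition \ref{l-bnd-cy-bir-bnd} with $\Lambda=B$ produces a bounded set of couples $\mathcal{P}_0$, a log smooth member $(\bar X,\bar\Sigma+\bar D)\in\mathcal{P}_0$ with $\bar D$ very ample, and a birational map $\rho\colon\bar X\bir X$ over $Z$ such that $\bar\Sigma$ contains the $\rho$-exceptional divisors and the birational transform of $\Supp B$. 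By passing to a higher log resolution I may assume $\rho$ is a morphism, at the cost only of enlarging $\mathcal{P}_0$.

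Next I would descend to $X'$. Since $(X,B)$ is $\varepsilon$-lc, if $K_{\bar X}+\bar B^*=\rho^*(K_X+B)$ then every $\rho$-exceptional component of $\bar B^*$ has coefficient at most $1-\varepsilon$, so setting $\bar\Lambda:=\rho^{-1}_*B+(1-\tfrac{\varepsilon}{2})\bar E$, where $\bar E$ is the reduced $\rho$-exceptional divisor, yields a klt pair $(\bar X,\bar\Lambda)$ with $\bar F:=K_{\bar X}+\bar\Lambda-\rho^*(K_X+B)\ge\tfrac{\varepsilon}{2}\bar E$ effective and fully supported on the $\rho$-exceptional locus. A $(K_{\bar X}+\bar\Lambda)$-MMP over $X$ exists and terminates, and by the negativity lemma contracts exactly the $\rho$-exceptional divisors, producing a $\Q$-factorial small modification $X'$ of $X$. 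Let $\Sigma_0'$ be the reduced pushforward of $\bar\Sigma+\bar D$ to $X'$; then $\Supp B'\subseteq\Sigma_0'$, and because $X'$ arises from $(\bar X,\bar\Sigma+\bar D)\in\mathcal{P}_0$ by contracting a subset of the boundedly many components of $\bar\Sigma$, finiteness of Mori chambers in bounded families shows that the couples $(X',\Sigma_0')$ lie in a bounded set $\mathcal{P}'$.

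The main obstacle is the final step: to enlarge $\Sigma_0'$ to a reduced divisor $\Sigma'$ whose components generate $N^1(X'/Z)$ while preserving boundedness of the couple. Since $\mathcal{P}'$ is bounded, the Picard rank of its members is uniformly bounded and, after decomposing the parameter space into finitely many strata on which the rank is locally constant, I can choose uniformly across the family finitely many very ample divisors of bounded degree whose classes, together with those of the components of $\Sigma_0'$, span $N^1(X')$ on every fibre; this uses semi-continuity of the Picard rank and openness of ampleness in families. Taking sufficiently general members so that the enlarged couple is log smooth, and adjoining their supports to $\Sigma_0'$, yields the final $\Sigma'$. The couples $(X',\Sigma')$ then lie in a bounded set $\mathcal{P}$, and since $N^1(X'/Z)$ is the quotient of $N^1(X')$ by $(f')^*N^1(Z)$, the components of $\Sigma'$ generate $N^1(X'/Z)$ as required.
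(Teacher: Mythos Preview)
Your approach diverges substantially from the paper's, and contains a genuine gap at the crucial step.

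The main problem is your claim that $(X',\Sigma_0')$ lies in a bounded set $\mathcal{P}'$ because ``$X'$ arises from $(\bar X,\bar\Sigma+\bar D)\in\mathcal{P}_0$ by contracting a subset of the boundedly many components of $\bar\Sigma$,'' and that ``finiteness of Mori chambers in bounded families'' then gives boundedness. This is not a valid argument. The MMP you run is over $X$, and $X$ is precisely the unbounded object whose small modifications you are trying to control. Finiteness of Mori chambers (in the sense of \cite{BCHM}) applies to a fixed variety of Fano type over a fixed base; here $\bar X$ is not known to be of Fano type, and the base $X$ of your relative MMP ranges over an a priori unbounded family. Knowing that the source $\bar X$ is bounded and that only boundedly many divisors get contracted does not by itself bound the target: you would need uniform control over which extremal contractions occur, which amounts to already knowing that $X'$ is bounded. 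This is essentially the content of the proposition, so the argument is circular.

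The paper takes a completely different route: a double induction on $\dim X$ and on the relative Picard number $\rho(X/Z)$. When $f$ is not birational one runs an MMP to reach a Mori fibre space $X\to Y/Z$, applies adjunction and \cite{B-sing-fano-fib} so that $Y$ carries a lower-dimensional Fano type fibration, invokes the proposition inductively on $Y$, and then uses Lemmas~\ref{l-contraction-after-flops} and~\ref{l-bnd-cy-fib-ample-case} together with Proposition~\ref{l-bnd-cy-bnd-lct-special} to pull boundedness back up to $X$ (Lemma~\ref{l-cy-fib-bnd-Neron-Severi-Mfs}). When $f$ is birational one factors $X\to Z$ into extremal contractions and inducts on $\rho(X/Z)$, using \cite{DST} to control the intermediate steps (Lemma~\ref{l-cy-fib-bnd-Neron-Severi-sqf}). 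The generation of $N^1(X'/Z)$ by the components of $\Sigma'$ falls out of this inductive structure automatically, rather than being tacked on at the end by adjoining very ample divisors.

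A secondary issue: your reduction ``by passing to a higher log resolution I may assume $\rho$ is a morphism, at the cost only of enlarging $\mathcal{P}_0$'' also requires justification, since resolving the indeterminacy of $\bar X\dashrightarrow X$ means blowing up along centers that depend on $X$ and are not a priori bounded.
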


By $N^1(X'/Z)$ we mean $\Pic(X')\otimes \R$ modulo numerical equivalence over $Z$. 
Note that there is a natural surjective map $N^1(X')\to N^1(X'/Z)$.
We prove some lemmas before giving the proof of the proposition.

\begin{lem}\label{l-cy-fib-bnd-Neron-Severi-Mfs}
Assume that Proposition \ref{p-cy-fib-bnd-Neron-Severi} holds in dimension $\le d-1$. Then the proposition holds 
in dimension $d$ when $X$ is $\Q$-factorial and there is a non-birational extremal contraction 
$h\colon X\to Y/Z$.
\end{lem}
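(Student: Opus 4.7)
The strategy is to descend to the lower-dimensional base $Y$ via adjunction, apply the inductive hypothesis there, and lift the conclusion back to $X$ via the Mori fibre space $h\colon X\to Y$.

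First I would set up adjunction for $h$. Since $K_X+B\sim_\R f^*L$ factors through $g\colon Y\to Z$, we have $K_X+B\sim_\R 0/Y$, so the generalised canonical bundle formula produces a generalised pair $(Y,B_Y+M_Y)$ with $K_X+B\sim_\R h^*(K_Y+B_Y+M_Y)$ and $K_Y+B_Y+M_Y\sim_\R g^*L$. Since $Y$ is of Fano type over $Z$ (as the contraction of the Fano-type $X$), $-K_Y$ is big over $Z$. The general fibre of $h$ is an $\varepsilon$-lc Fano of dimension $\le d-1$, hence bounded by Theorem \ref{t-BAB}; this bounded-fibre setting controls $B_Y$ and $M_Y$ enough to conclude that $(Y,B_Y+M_Y)$ is generalised $\varepsilon'$-lc for some $\varepsilon'>0$ depending only on $d,\varepsilon$. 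Since the same $A$ and $L$ witness conditions (4)--(5) of Definition \ref{d-gen-FT-fib}, $(Y,B_Y+M_Y)\to Z$ is a generalised $(\dim Y,r,\varepsilon')$-Fano type fibration. I then apply Lemma \ref{l-from-gen-fib-to-usual-fib} with $\Delta=0$ to replace it with a usual $(\dim Y,r,\varepsilon'/2)$-Fano type fibration $(Y,\Theta_Y)\to Z$.

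Next, by the inductive hypothesis (Proposition \ref{p-cy-fib-bnd-Neron-Severi} in dimension $\le d-1$) applied to $(Y,\Theta_Y)\to Z$, I obtain a birational map $Y\bir Y'/Z$ which is an isomorphism in codimension one, a $\Q$-factorial normal projective $Y'$, and a reduced divisor $\Sigma_{Y'}$ on $Y'$ whose components generate $N^1(Y'/Z)$, with $(Y',\Sigma_{Y'})$ in a bounded set. Now I invoke Lemma \ref{l-contraction-after-flops} applied to the Mori fibre space $h\colon X\to Y$ and the birational map $Y\bir Y'/Z$: this produces $X\bir X'/Z$ which is an isomorphism in codimension one, with $X'$ $\Q$-factorial and the induced map $h'\colon X'\to Y'$ an extremal (hence non-birational) contraction.

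I then set $\Sigma':=\Supp B_{X'}\cup \Supp h'^*\Sigma_{Y'}$ (as a reduced divisor), where $B_{X'}$ is the birational transform of $B$. The inclusion $\Supp B'\subseteq \Sigma'$ is automatic. For the generation property I use the short exact sequence
$$
0\to N^1(Y'/Z)\xrightarrow{h'^*}N^1(X'/Z)\to N^1(X'/Y')\to 0,
$$
whose right-hand term is one-dimensional by extremality of $h'$. Since $K_{X'}+B_{X'}\sim_\R 0/Y'$ (being pulled back from $Z$ through $Y'$) and $-K_{X'}$ is $h'$-ample, $B_{X'}$ is $h'$-ample, so $B_{X'}$ contains a horizontal component over $Y'$ which generates $N^1(X'/Y')$. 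Together with pullbacks of components of $\Sigma_{Y'}$ generating $h'^*N^1(Y'/Z)$, this shows the components of $\Sigma'$ generate $N^1(X'/Z)$.

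The main obstacle is showing $(X',\Sigma')$ belongs to a bounded set. By Lemma \ref{l-bnd-cy-bnd-klt-compl-induction}, Theorem \ref{t-bnd-comp-lc-global-cy-fib} already holds in dimension $d$ under our hypotheses, yielding bounded naturals $n,m$ and a $\Q$-divisor $\Lambda\ge 0$ with $(X,\Lambda)$ klt and $n(K_X+\Lambda)\sim mf^*A$; taking birational transforms gives $(X',\Lambda')$ klt with $n(K_{X'}+\Lambda')\sim mf'^*A$, hence a bounded Cartier index for $K_{X'}$ modulo pullback from $Z$. Combining this with the $h'$-ampleness of $-K_{X'}$, the boundedness of $Y'$, and effective base point freeness, I would produce a bounded relatively very ample Cartier divisor on $X'/Y'$, embedding $X'$ into a projective bundle over $Y'$ of bounded relative degree (the latter controlled by Theorem \ref{t-BAB} applied to the general fibre of $h'$). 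Boundedness of $\Sigma'$ then follows from bounded intersection numbers: the pullback components $h'^*\Sigma_{Y'}$ are controlled via $(Y',\Sigma_{Y'})$ being bounded, while horizontal components of $B_{X'}$ are controlled via bounded intersections with fibres of $h'$, again using BAB. The hard part is carrying out this last step rigorously, assembling the bounded relative embedding with the bounded base into genuine boundedness of the couple $(X',\Sigma')$.
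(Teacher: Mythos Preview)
Your overall plan---descend to $Y$ by adjunction, apply induction there, and lift via Lemma \ref{l-contraction-after-flops}---matches the paper, as does your construction of $\Sigma'$ and the generation argument. There are two issues.

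First, a small gap: to apply Proposition \ref{p-cy-fib-bnd-Neron-Severi} inductively on $Y$ you need the boundary's coefficients to lie in a fixed finite set $\mathfrak{R}$, and Lemma \ref{l-from-gen-fib-to-usual-fib} does not provide this. The paper fixes it by first applying Lemma \ref{l-bnd-cy-bnd-klt-compl-induction} on $Y$ to produce a klt complement $\Lambda_Y$ with $n(K_Y+\Lambda_Y)\sim mg^*A$, whose coefficients lie in $\frac{1}{n}\Z$, and only then invokes the inductive hypothesis on $(Y,\Lambda_Y)\to Z$.

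The substantive gap is your boundedness step. You propose embedding $X'$ in a projective bundle over $Y'$ and controlling degrees directly, and you rightly flag this as the hard part. The paper avoids this entirely with a trick you are missing: once $(Y',\Sigma_{Y'})$ is log bounded, choose a very ample $H$ on $Y'$ with bounded $H^{\dim Y'}$ so that $H-g'^*L$ and $H-\Sigma_{Y'}$ are ample. Then $(X',B')\to Y'$ is itself a $(d,s,\varepsilon)$-Fano type fibration \emph{over the new base $Y'$} for some bounded $s$. Now the already-established machinery in dimension $d$ applies over this base: Proposition \ref{l-bnd-cy-bnd-lct-special} bounds the lc threshold of $h'^*\Sigma_{Y'}$, so $(X',\Theta':=B'+th'^*\Sigma_{Y'})$ is $\frac{\varepsilon}{2}$-lc with coefficients in a fixed finite set; since $h'$ is extremal and $\Theta'$ is big over $Y'$, $-(K_{X'}+\frac{1}{2}\Theta')$ is ample over $Y'$; then Lemma \ref{l-bnd-cy-fib-ample-case} applied to $(X',\Theta')\to Y'$ with $\Delta=\frac{1}{2}\Theta'$ gives log boundedness of $(X',\frac{1}{2}\Theta')$, hence of $(X',\Sigma')$. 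The key move is switching the base from $Z$ to the bounded $Y'$, which lets you recycle the whole theory rather than rebuild boundedness by hand.
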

\begin{proof}
First note that if $\dim Y=0$, then $X$ is an $\varepsilon$-lc Fano variety with Picard number one 
and $K_X+B\sim_\Q 0$, hence $X$ belongs to a bounded family by \cite[Theorem 1.4]{B-compl}, hence $(X,B)$ is log 
bounded as the coefficients of $B$ are in $\mathfrak{R}$ which implies the result in this case as $N^1(X/Z)$ is generated by the components of $B$.
We can then assume that $\dim Y>0$.

Let $F$ be a general fibre of $h$ and let $K_F+B_F:=(K_X+B)|_F$. Then 
$(F,B_F)$ is $\varepsilon$-lc, $K_F+B_F\sim_\Q 0$, and $B_F$ is big with coefficients in $\mathfrak{R}$, 
hence $F$ belongs to a bounded family by \cite[Theorem 1.4]{B-compl} which implies that 
$(F,B_F)$ is log bounded. Moreover, by adjunction, we can write 
$$
K_X+B\sim_\Q h^*(K_Y+B_Y+M_Y)
$$
where we consider $(Y,B_Y+M_Y)$ as a generalised pair as in Remark \ref{rem-base-fib-gen-pair} below.
By \cite[Theorem 1.4]{B-sing-fano-fib}, $(Y,B_Y+M_Y)$ is generalised $\delta$-lc for 
some fixed $\delta>0$ which depends only on $d,\varepsilon, \mathfrak{R}$. 

By construction, 
$$
K_Y+B_Y+M_Y\sim_\R g^*L
$$
where $g$ denotes the morphism $Y\to Z$. Moreover, since $X$ is of Fano type over $Z$, $Y$ is also of 
Fano type over $Z$ (cf, the proof of \cite[Lemma 2.12]{B-compl} works in the relative setting).
Then 
$$
(Y,B_Y+M_Y)\to Z
$$ 
is a generalised $(d',r,\delta)$-Fano type fibration for some $d'\le d-1$, 
as in \ref{d-gen-FT-fib}. 
By Lemma \ref{l-from-gen-fib-to-usual-fib}, we can find a boundary
$\Delta_Y$ so that $(Y,\Delta_Y)\to Z$ is a $(d',r,\frac{\delta}{2})$-Fano type fibration.
Therefore, applying Lemma \ref{l-bnd-cy-bnd-klt-compl-induction}, there exist bounded natural numbers 
$n,m$ and a boundary $\Lambda_Y$ such that $(Y,\Lambda_Y)$ is klt and 
$n(K_X+\Lambda_Y)\sim mg^*A$. In particular, $(Y,\Lambda_Y)\to Z$ is a $(d',r',\varepsilon':=\frac{1}{n})$-Fano type 
fibration for some fixed $r'$. Moreover, increasing $m$ by adding to $\Lambda_Y$, we can assume that $m>n$.
Since we are assuming Proposition \ref{p-cy-fib-bnd-Neron-Severi} in dimension $d-1$, 
 applying it to $(Y,\Lambda_Y)\to Z$ we deduce that  there exist a birational map 
$Y\bir Y'/Z$ to a $\Q$-factorial normal projective variety and 
a reduced divisor $\Sigma_{Y'}$ on $Y'$ satisfying the properties listed in \ref{p-cy-fib-bnd-Neron-Severi}. 

By Lemma \ref{l-contraction-after-flops}, there exists a birational map $X\bir X'/Z$ which is an isomorphism in codimension one 
so that the induced map $X'\bir Y'$ is an extremal contraction (hence a morphism) and 
$X'$ is normal projective and $\Q$-factorial. Let $B'$ on $X'$ be the birational transform of $B$ and let 
$\Lambda_{Y'}$ on $Y'$ be the birational transform of $\Lambda_Y$.  
To ease notation we can replace $(X,B)$ and $(Y,\Lambda_Y)$ with $(X',B')$ and $(Y',\Lambda_Y')$ 
and denote $\Sigma_{Y'}$ by $\Sigma_Y$. By construction, $\Supp \Lambda_Y\le \Sigma_Y$.

Since $\Supp \Lambda_Y\subseteq \Sigma_Y$ and since $(Y,\Sigma_Y)$ is log bounded, 
there is a very ample divisor $H$ on $Y$ with bounded $s:=H^{\dim Y}$ 
such that 
$$
H-(K_Y+\Lambda_Y)\sim_\Q H-\frac{m}{n}g^*A
$$ 
is ample which implies that $H-g^*A$ is ample as $m>n$. Then  
$$
H-g^*L=H-g^*A+g^*(A-L)
$$
is ample as $A-L$ is ample. Thus $(X,B)\to Y$ is a $(d,s,\varepsilon)$-Fano type fibration in view of $K_X+B\sim_\R h^*g^*L$. 

Replacing $H$ we can in addition assume that $H-\Sigma_Y$ is ample. Thus we can find $0\le P\sim_\R h^*H$ such that 
$P\ge h^*\Sigma_Y$. Now applying 
Proposition \ref{l-bnd-cy-bnd-lct-special} to $(X,B)\to Y$, there is a fixed rational number $t\in (0,1)$ 
such that $(X,B+2tP)$ is klt. Thus $(X,B+2th^*\Sigma_Y)$ is klt, so
$$
(X,\Theta:=B+th^*\Sigma_Y)
$$ 
is $\frac{\varepsilon}{2}$-lc. Therefore, from 
$$
K_X+\Theta=K_X+B+th^*\Sigma_Y\sim_\R h^*(g^*L+t\Sigma_Y)
$$
we deduce that $(X,\Theta)\to Y$ is a $(d,s,\frac{\varepsilon}{2})$-Fano type fibration, 
perhaps after replacing $H$ with $2H$ and replacing $s$ accordingly.

On the other hand, the coefficients of $th^*\Sigma_Y$ 
belong to a fixed finite set because $t$ is fixed, the Cartier index of $\Sigma_Y$ is bounded by Lemma \ref{l-bnd-Cartier-index-family} applied to $(Y,0)$ and $(Y,\frac{1}{2}\Sigma)$, 
and the coefficients of  $th^*\Sigma_Y$ are less than $1$. Thus the coefficients of $\Theta$ belong to a 
fixed finite set. Moreover, since $X\to Y$ is extremal and $\Theta$ is big over $Y$, $\Theta$ is ample 
over $Y$, hence  
$$
-(K_X+\frac{1}{2}\Theta)\sim_\R \frac{1}{2}\Theta/Y
$$ 
is ample over $Y$. 
Therefore, applying Lemma \ref{l-bnd-cy-fib-ample-case} to $(X,\Theta)\to Y$ (by taking $\Delta=\frac{1}{2}\Theta$) 
we deduce that $(X,\frac{1}{2}\Theta)$ is log bounded.
Since $B$ is big over $Y$, it is ample over $Y$, 
hence the components of $B$ and $h^*\Sigma_Y$ together  generate $N^1(X/Z)$ 
as the components of $\Sigma_Y$ generate $N^1(Y/Z)$. Now let $\Sigma:=\Supp \Theta$.
\end{proof}

\begin{lem}\label{l-cy-fib-bnd-Neron-Severi-sqf}
Proposition \ref{p-cy-fib-bnd-Neron-Severi} holds when $X\to Z$ is a small $\Q$-factorialisation.
\end{lem}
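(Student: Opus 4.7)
Since $f\colon X\to Z$ is a small birational morphism and $X$ is already $\Q$-factorial, the conclusion of Proposition \ref{p-cy-fib-bnd-Neron-Severi} allows us to take for $X'$ any normal projective model isomorphic to $X$ in codimension one; the tasks are then to construct such a model together with a reduced divisor $\Sigma'$ so that $(X',\Sigma')$ lies in a bounded set, $\Supp B'\subseteq \Sigma'$, and the components of $\Sigma'$ span $N^1(X'/Z)$. The first step is to produce a bounded klt $\Q$-complement of $K_X+B$ whose support contains $\Supp B$. Pick a small rational $\tau\in(0,1)$, set $\Delta:=(1-\tau)B$, and enlarge $\mathfrak{R}$ to absorb $(1-\tau)\mathfrak{R}$. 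Since $K_X+B\sim_\R 0/Z$, we have $-(K_X+\Delta)\sim_\R \tau B\sim_\R -\tau K_X/Z$, which is big over $Z$. Lemma \ref{l-bnd-cy-bnd-klt-compl-induction}, valid under the inductive hypotheses already in force in this subsection, then yields bounded naturals $n,m$ and a $\Q$-divisor $\Lambda\ge \Delta$ with $(X,\Lambda)$ klt and $n(K_X+\Lambda)\sim m f^*A$. Thus $\Supp B\subseteq \Supp\Lambda$, the pair $(X,\Lambda)$ is $\tfrac1n$-lc, and the non-zero coefficients of $\Lambda$ are $\ge \tfrac1n$.

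Now run a $-K_X$-MMP over $Z$ (a sequence of flops, since $f$ is small and $K_X+B\sim_\R 0/Z$) and replace $X$ by the result so that $-K_X$ is nef over $Z$; by Proposition \ref{l-bnd-cy-numerical-bndness} there is a bounded $l$ with $N:=lf^*A-K_X$ globally nef. Since $K_X+\Lambda\sim_\Q \tfrac{m}{n}f^*A$, we have $lf^*A-(K_X+\Lambda)$ pseudo-effective for $l\ge \tfrac{m}{n}$, so Proposition \ref{l-bnd-cy-bir-bnd} applies with the divisor $\Lambda$ and $\delta=\tfrac1n$. It produces a bounded set $\mathcal{P}_0$ of log smooth couples, a couple $(\bar X,\bar\Sigma)\in\mathcal{P}_0$, a very ample divisor $\bar D$ on $\bar X$ with $(\bar X,\bar\Sigma+\bar D)\in\mathcal{P}_0$, and a birational map $\rho\colon \bar X\bir X/Z$ such that $\bar\Sigma$ contains the sum $\bar E$ of $\rho$-exceptional prime divisors and the birational transform of $\Supp\Lambda$, and $\bar D-\rho^*N$ is ample. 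To recover $X$ from $\bar X$ by an MMP, set $\Phi:=\rho^{-1}_*\Lambda+(1-\tfrac{1}{2n})\bar E$, so that $(\bar X,\Phi)$ is klt with $\Supp\Phi\subseteq \bar\Sigma$, and let $\Psi$ be determined by $K_{\bar X}+\Psi=\rho^*(K_X+\Lambda)$. The $\tfrac1n$-lc property of $(X,\Lambda)$ forces $\Psi\le\Phi$ with $\Phi-\Psi\ge \tfrac{1}{2n}\bar E$ effective and supported on $\bar E$, while $K_{\bar X}+\Psi=\rho^*(\tfrac{m}{n}f^*A)$ is pulled back from $Z$. A $(K_{\bar X}+\Phi)$-MMP over $Z$ is therefore $(\Phi-\Psi)$-negative, every step is supported on $\bar E$, and by the negativity lemma it terminates at a small birational model $X'\to Z$ which is isomorphic to $X$ in codimension one, since the ample model of $K_X+\Lambda$ over $Z$ is $Z$ itself.

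Let $\Sigma'$ be the image of $\bar\Sigma$ on $X'$; by construction $\Supp B\subseteq \Sigma'$. Log boundedness of $(X',\Sigma')$ follows from that of $(\bar X,\bar\Sigma+\bar D)$ together with the uniform control on the intersection numbers $\bar D^{d-1}\cdot \bar\Sigma$ afforded by the ampleness of $\bar D-\rho^*N$, by the argument of \cite[Lemma 2.4.2]{HMX1} adapted to track $\bar D$ and $\bar\Sigma$ through divisorial contractions of components of $\bar E$ and intervening flops. Finally, to ensure that the components of $\Sigma'$ generate $N^1(X'/Z)$, I use that $\rho(X'/Z)\le \rho(X')$ is bounded by Lemma \ref{l-cy-fib-bnd-picard-number}: if necessary, enlarge $\Sigma'$ by adjoining the pushforwards to $X'$ of boundedly many general members of sub-linear systems of multiples of $\bar D$ whose classes span $N^1(X'/Z)$; this affects only a bounded number of components of bounded complexity, so log boundedness is preserved. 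The main obstacle is the MMP argument of the previous paragraph, where one must simultaneously show termination with a model isomorphic to $X$ in codimension one (rather than a genuinely inequivalent small $\Q$-factorial modification of $Z$) and preservation of log boundedness of $(\bar\Sigma+\bar D)$ through divisorial contractions and flips; both hinge on the semi-ampleness of $\rho^*(K_X+\Lambda)$, which forces the ample model to be $Z$, and on the ampleness of $\bar D-\rho^*N$, which keeps all degrees uniformly bounded.
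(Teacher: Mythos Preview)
Your proposal has a decisive gap in the final paragraph, and the paper's argument is structurally different.

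The fatal problem is your plan to ``enlarge $\Sigma'$ by adjoining the pushforwards to $X'$ of boundedly many general members of sub-linear systems of multiples of $\bar D$ whose classes span $N^1(X'/Z)$.'' All members of $|k\bar D|$ (and of any sub-linear system thereof) are linearly equivalent to $k\bar D$, hence numerically equivalent to one another; their pushforwards to $X'$ all lie on the single line $\R\cdot[\bar D]$ in $N^1(X')$. They cannot span $N^1(X'/Z)$ unless that space is one-dimensional. Producing divisors that are simultaneously bounded \emph{and} generate the relative N\'eron--Severi group is precisely the content of the proposition; you have assumed it rather than proved it.

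There are also softer issues earlier. Your $(K_{\bar X}+\Phi)$-MMP is run over $Z$, not over $X$, and $\rho\colon\bar X\bir X$ is only a birational map, not a morphism; so the assertion that the MMP terminates at a model isomorphic to $X$ in codimension one (rather than some other small $\Q$-factorial modification of $Z$) needs a genuine argument, and ``semi-ampleness of $\rho^*(K_X+\Lambda)$'' only tells you the ample model is $Z$, which does not pin down which small model you reach. Likewise, ``log boundedness \ldots\ by the argument of \cite[Lemma 2.4.2]{HMX1} adapted to track $\bar D$ and $\bar\Sigma$ through divisorial contractions and flips'' is a claim, not a proof; tracking boundedness through an unspecified MMP is exactly the kind of thing this section is set up to avoid.

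For comparison, the paper does not attempt to descend boundedness from $\bar X$ through an MMP at all. It argues by induction on $\rho(X/Z)$ (bounded by Lemma \ref{l-cy-fib-bnd-picard-number}): after the same klt complement step, it arranges $K_X+B$ to be the pullback of an ample divisor on $Z$, decomposes $X\to Z$ into extremal contractions $X=X_1\to\cdots\to X_l=Z$, and invokes \cite[Theorem 6]{DST} to log-bound each $(X_i,B_i)$ from the finitely many possible values of $\vol(K_{X_i}+B_i)$. A bounded very ample divisor $G_{l-1}$ on $X_{l-1}$ is then pulled back and added to $B$, so that $(X,\Theta)\to X_{l-1}$ is again a Fano type fibration with strictly smaller relative Picard number; the induction hypothesis produces $\Sigma'$ generating $N^1(X'/X_{l-1})$, and since $X_{l-1}\to Z$ is extremal and $P'\le\Sigma'$ is pulled back from an ample$/Z$ divisor on $X_{l-1}$, one gets generation of $N^1(X'/Z)$. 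The generation of the N\'eron--Severi group is thus built into the inductive structure, not patched on at the end.
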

\begin{proof}
We will apply induction on the relative Picard number $\rho(X/Z):=\dim_\R N^1(X/Z)$. 
By Lemma \ref{l-cy-fib-bnd-picard-number}, $\rho(X)$ is bounded, so 
$\rho(X/Z)$ is bounded as well because $\rho(X/Z)\le \rho(X)$. 
The case $\rho(X/Z)=0$ is trivial in which case $X\to Z$ is an isomorphism and 
$(X,B)$ is log bounded, so we assume $\rho(X/Z)>0$.

By Lemma \ref{l-bnd-cy-bnd-klt-compl-induction}, our assumptions imply Theorem \ref{t-bnd-comp-lc-global-cy-fib} 
in dimension $d$. Since $X\to Z$ is birational, $-(K_X+B)$ is big over $Z$, hence applying 
the theorem there exist bounded natural numbers $n,m$ and a boundary $\Lambda\ge B$ such that 
$(X,\Lambda)$ is klt and $n(K_X+\Lambda)\sim mf^*A$. In particular, $n(K_X+\Lambda)$ is Cartier and 
$(X,\Lambda)$ is $\frac{1}{n}$-lc. Replacing $B$ with $\Lambda$, $\epsilon$ with $\frac{1}{n}$, 
$A$ with $2mA$, and replacing $r,\mathfrak{R}$ accordingly, we can assume that $n(K_X+B)$ is Cartier for 
some fixed natural number $n$. Replacing $n$ with $2n$ we can assume $n\ge 2$.

Let $B_Z$ be the pushdown of $B$. By boundedness of length of extremal rays \cite{kawamata-bnd-ext-ray}, 
$$
K_Z+B_Z+(2d+1)A
$$ 
is ample. Thus taking a general member $G\in |n(2d+1)f^*A|$, adding $\frac{1}{n}G$ to $B$, 
and then replacing $A$ with $(2d+2)A$ (to keep the ampleness of $A-L$), we can assume that 
$K_X+B$ is the pullback of some ample divisor on $Z$ and that $B-\frac{1}{2}f^*A$ is pseudo-effective. 
We have used the assumption $n\ge 2$ to make sure that the $\epsilon$-lc property of $(X,B)$ is preserved.

By the cone theorem \cite[Theorem 3.7]{kollar-mori}, we can decompose 
$X\to Z$ into a sequence 
$$
X=X_1\to X_2 \to \cdots \to X_l=Z
$$
of extremal contractions. 
Let $B_i$ be the pushdown of $B$. Then 
$$
(K_X+B)^d=\vol(K_X+B)=\vol(K_{X_i}+B_i)\le \vol(A)=A^d\le r,
$$ 
hence there are only finitely many possibilities for $\vol(K_{X_i}+B_i)$ as $n(K_X+B)$ is Cartier. 
Therefore, by \cite[Theorem 6]{DST}, the set of such $(X_i,B_i)$ is log bounded.
In particular, there is a very ample divisor $G_{l-1}$ on $X_{l-1}$ 
with bounded $G_{l-1}^d$ such that $G_{l-1}-A_{l-1}$ is ample where $A_{l-1}$ is the 
pullback of $A$ (here we are using the property that $B-\frac{1}{2}f^*A$ is pseudo-effective). 

Let $G$ be the pullback of $G_{l-1}$ to $X$. Let $\Theta=B+\frac{1}{n}P$ for some general element $P\in |nG|$.  
Then $K_X+\Theta\sim_\Q 0/X_{l-1}$ and 
$$  
\begin{array}{l l}
2G-(K_X+\Theta) &=2G-(K_X+B)-\frac{1}{n}P\\
& \sim_\Q G-(K_X+B)\\
& =G-f^*A+f^*A-f^*L
\end{array}
$$
is the pullback of an ample divisor on $X_{l-1}$ as $G_{l-1}-A_{l-1}$ and $A-L$ are ample.
Thus $(X,\Theta)\to X_{l-1}$ is a $(d,u,\varepsilon)$-Fano type fibration for some fixed number $u$. 

Now $\rho(X/X_{l-1})<\rho(X/Z)$.
Therefore, by induction on the relative Picard number, there is a birational map $X\bir X'/X_{l-1}$ and a reduced divisor 
 $\Sigma'$ on $X'$ satisfying the properties listed in \ref{p-cy-fib-bnd-Neron-Severi} with $\Theta, X_{l-1}$ instead of $B,Z$.
Now since $P'$, the birational transform of $P$, is the pullback of some ample$/Z$ divisor on $X_{l-1}$, since $P'\le \Sigma'$, 
and since $X_{l-1}\to X_l=Z$ is extremal, the components of $\Sigma'$ generate $N^1(X'/Z)$. This proves the lemma.
\end{proof}

\begin{proof}[Proof of Proposition \ref{p-cy-fib-bnd-Neron-Severi}]
Replacing $X$ with a $\Q$-factorialisation, we can assume $X$ is $\Q$-factorial. 
By Lemma \ref{l-cy-fib-bnd-picard-number},  the Picard number $\rho(X)$ is bounded. 
We will apply induction on dimension and induction on the relative Picard number $\rho(X/Z)$, in the 
$\Q$-factorial case. We will assume that  $X\to Z$ is not an isomorphism otherwise the proposition holds by taking 
$X'=X$ and $\Sigma'=\Supp B$ as in this case $(X,B)$ would be log bounded by definition of $(d,r,\varepsilon)$-Fano type fibrations.

First we prove the proposition assuming that there is a birational map $h\colon X\bir Y/Z$ 
to a normal projective variety such that $h^{-1}$ does not contract any divisor but $h$ contracts some divisor.
Since $K_X+B\sim_\R 0/Z$, $(Y,B_Y)$ is klt where $B_Y$ is the pushdown of $B$. 
Replacing $Y$ with a $\Q$-factorialisation we can assume it is $\Q$-factorial. 
The log discrepancy of any prime divisor $D$ contracted by $h$ satisfies 
$$
a(D,Y,B_Y)=a(D,X,B)\le 1.
$$ 
Thus modifying $Y$ by extracting all such divisors except one, we can assume that $h$ contracts a 
single prime divisor $D$. Moreover, replacing $h$ with the extraction morphism determined by $D$, 
we can assume that $h$ is an extremal divisorial contraction. 

Now $(Y,B_Y)\to Z$ is a $(d,r,\varepsilon)$-Fano type fibration and $\rho(Y/Z)<\rho(X/Z)$, 
so by the induction hypothesis, there exist a birational map 
$Y\bir Y'/Z$ to a normal projective variety and 
a reduced divisor $\Sigma_{Y'}$ on $Y'$ satisfying the properties of the proposition.  
Replacing $Y$ with $Y'$ and replacing $X$ accordingly (as in the previous paragraph) 
we can assume $Y=Y'$. We change the notation $\Sigma_{Y'}$ to $\Sigma_Y$.

Since $\Supp B_Y\subseteq \Sigma_Y$ and since $(Y,\Sigma_Y)$ is log bounded, by 
\cite[Theorem 1.8]{B-BAB} (=Theorem \ref{t-bnd-lct}), 
there is a fixed rational number $t>0$ such that 
$$
(Y,\Theta_Y:=B_Y+t\Sigma_Y)
$$ 
is $\frac{\varepsilon}{2}$-lc. 
Thus since $a(D,Y,\Theta_Y)\le 1$, applying \cite[Proposition 2.5]{HX} we deduce that 
there is a birational contraction $X'\to Y$ extracting $D$ but no other divisors and such that if 
$K_{X'}+\Theta'$ is the pullback of $K_Y+\Theta_Y$, then $(X',\Theta')$ is log bounded. 
In addition, from the proof of \cite[Proposition 2.5]{HX}  we can see that 
if 
$$
\Sigma'=\Supp (D+\Theta'),
$$ 
then $(X',\Sigma')$ is log bounded.
Now since $Y$ is $\Q$-factorial, $X'=X$. For convenience we change the notation $\Theta',\Sigma'$ to $\Theta,\Sigma$. 
Since $K_X+B\sim_\Q 0/Y$ and $B_Y\le \Theta_Y$, we have $B\le \Theta$, hence 
$$
\Supp B\subseteq \Supp \Theta\subseteq \Sigma.
$$ 
By construction, $\Sigma$ generates $N^1(X/Z)$, 
so we are done in this case.

Now we prove the proposition in general. 
If $X\to Z$ is not birational, then running an MMP on $K_X$ ends with a Mori fibre space 
$\tilde{X}\to Y/Z$; applying the above we can assume that $X\bir \tilde{X}$ does not contract any divisor, 
hence replacing $X$ we can assume $X=\tilde{X}$; we can then apply Lemma \ref{l-cy-fib-bnd-Neron-Severi-Mfs}. 
Now assume that $X\to Z$ is birational. Applying the above again reduces the proposition to the case 
when $X\to Z$ is a small contraction. But then we can apply Lemma \ref{l-cy-fib-bnd-Neron-Severi-sqf}.
\end{proof}

\subsection{Boundedness of Fano type fibrations}

In this subsection, we treat Theorems \ref{t-bnd-cy-fib} and \ref{t-log-bnd-cy-fib} inductively. First we prove an auxilliary lemma. 

\begin{lem}\label{l-dominating-Sigma}
Let $d,r$ be natural numbers, $\varepsilon$ be a positive real number, and $\mathcal{P}$ be a bounded set of couples. Assume that Theorems \ref{t-log-bnd-cy-fib}, \ref{t-sing-FT-fib-totalspace}, and \ref{t-bnd-comp-lc-global-cy-fib} 
hold in dimension $d-1$.
Then there exist natural numbers $m,l$ satisfying the following. Let $(X,B)\to Z$ be a $(d,r,\varepsilon)$-Fano type fibration where $K_X$ is $\Q$-Cartier, and let $\Sigma$ be a reduced divisor on $X$ so that $(X,\Sigma)\in \mathcal{P}$ and $\Sigma-f^*A$ is pseudo-effective. Then 
$$
|m(lf^*A-K_X)-\Sigma|
$$ 
defines a birational map.   
\end{lem}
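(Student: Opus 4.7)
The plan is to combine effective birationality from Proposition \ref{l-bnd-cy-effective-bir} with the construction of a fixed effective divisor lying in $|m_1(l_0 f^*A - K_X) - \Sigma|$ for some bounded $m_1, l_0$. Adding this fixed divisor to a separating element of the birational linear system will yield separating sections of $|m(l f^*A - K_X) - \Sigma|$ for suitable bounded $m, l$.

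First I would fix bounded $m_0, l_0$ via Proposition \ref{l-bnd-cy-effective-bir} so that $|m_0(l_0 f^*A - K_X)|$ defines a birational map; by Proposition \ref{l-bnd-cy-numerical-bndness} and Lemma \ref{l-div-pef-on-FT}, after enlarging $l_0$ I may also assume $l_0 f^*A - K_X$ is nef and big with volume uniformly bounded below (as $(X,\Sigma)\in\mathcal{P}$ forces $X$ into a bounded family). Next, since $\mathcal{P}$ is bounded, there is a very ample Cartier divisor $H$ on $X$ of uniformly bounded degree with $H-\Sigma$ ample, and enlarging $H$ by a bounded multiple I may also arrange $|H-\Sigma|\neq\emptyset$; fix $H' \in |H|$ with $H' \ge \Sigma$.

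The central step is to show that $|m_1(l_0 f^*A - K_X) - H|\neq\emptyset$ for some $m_1$ bounded uniformly in the family. I would argue this via asymptotic Riemann--Roch combined with Kawamata--Viehweg vanishing: for bounded $m_1$ large enough, $m_1(l_0 f^*A - K_X) - H - K_X$ is nef and big, so vanishing on the klt pair $(X,0)$ gives $h^0 = \chi(m_1(l_0 f^*A - K_X) - H)$, a polynomial in $m_1$ of degree $d$ whose leading coefficient is $\vol(l_0 f^*A - K_X)/d!$, uniformly positive across $\mathcal{P}$, while the lower-order intersection numbers are uniformly bounded using $H^d$ bounded and $A^{\dim Z}\le r$. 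This yields a uniform $m_1$ with $h^0 > 0$. Pick then $F \in |m_1(l_0 f^*A - K_X) - H|$; the divisor $F + H' - \Sigma$ is effective and lies in $|m_1(l_0 f^*A - K_X) - \Sigma|$.

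Finally, set $m := m_0 + m_1$ and $l := l_0$ (multiplying $m_0, m_1$ by a common factor if needed). For general closed $x, x' \in X$, Proposition \ref{l-bnd-cy-effective-bir} gives $D_0 \in |m_0(l_0 f^*A - K_X)|$ with $D_0 \ni x$ but $D_0 \not\ni x'$. Since $F + H' - \Sigma$ is a fixed effective divisor, generic $x, x'$ avoid it, so $D_0 + F + H' - \Sigma \in |m(lf^*A - K_X) - \Sigma|$ separates $x, x'$; hence this linear system defines a birational map. The hypothesis $\Sigma - f^*A$ pseudo-effective enters by controlling how $\Sigma$ sits relative to $f^*A$ in these bounded estimates, in particular ensuring $H$ can be chosen with the required uniformity. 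The main obstacle is the central step: the uniformity of the Riemann--Roch and vanishing estimates across $\mathcal{P}$ requires a careful Noetherian argument on a parameter space for the family, together with uniform lower bounds on $\vol(l_0 f^*A - K_X)$.
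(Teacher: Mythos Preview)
Your central step has a genuine gap. You claim that for bounded $m_1$ large enough, $m_1(l_0 f^*A - K_X) - H - K_X$ is nef and big, and then invoke Kawamata--Viehweg vanishing. Bigness is fine, but nef-ness fails in general. Writing $N := l_0 f^*A - K_X$, the divisor in question equals $(m_1+1)N - l_0 f^*A - H$. Now $N$ is only nef and big, not ample: whenever $-K_X$ fails to be ample over $Z$ (which you have not arranged and cannot arrange without modifying $X$), there are curves $C$ with $N\cdot C = 0$, and on such a curve $((m_1+1)N - l_0 f^*A - H)\cdot C = -(l_0 f^*A + H)\cdot C \le -H\cdot C < 0$ regardless of $m_1$. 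So the vanishing does not apply, and your identification $h^0 = \chi$ breaks down. One could try to repair this via Kodaira's lemma ($N \equiv \text{ample} + \text{effective}$) and a klt perturbation, or via direct $h^0$-estimates from the restriction sequence, but then the uniformity in $m_1$ becomes the issue: the auxiliary effective divisor, or the growth constants, depend on $X$ in a way you have not controlled.

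The paper avoids this entirely by a different route. It first replaces $\Sigma$ by a very ample $H$ with $H - \Sigma$ effective (using boundedness of $\mathcal{P}$), and then passes to the ample model $X'$ of $-K_X$ over $Z$. On $X'$, the previously established machinery (Lemma~\ref{l-bnd-cy-fib-ample-case} together with Lemma~\ref{l-bnd-cy-fib-v-ampleness}) makes $m(l f'^*A - K_{X'})$ genuinely very ample for bounded $m,l$; the problem then reduces to showing $(X',\Sigma')$ is log bounded, which is done by bounding $(m(l f'^*A - K_{X'}))^{d-1}\cdot \Sigma'$ via a volume estimate (this is where the hypothesis that $\Sigma - f^*A$ is pseudo-effective is actually used). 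A negativity-lemma comparison then transfers the birationality statement back from $X'$ to $X$. The key conceptual point you are missing is that passing to the ample model turns mere nef-and-big into very ample, which is what makes the subtraction of $\Sigma'$ harmless.
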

\begin{proof}
Since $(X,\Sigma)$ belongs to a bounded family, we can find a general very ample divisor $H$ so that $H^d$ is bounded and $H-\Sigma\sim D$ for some $D\ge 0$. Replacing $\Sigma$ with $H$ and replacing $\mathcal{P}$ accordingly we can  
assume that $\Sigma$ is very ample. 

Let $X'$ be the ample model of $-K_X$ over $Z$ and let $X''$ be a $\Q$-factorialisation of $X'$. 
Since $\Sigma$ is nef, the pullback of $\Sigma''$ is bigger than or equal to the pullback of $\Sigma$ on any common resolution of $X,X''$, by the negativity lemma. On the other hand, the pullback of $lf''^*A-K_{X''}$ is smaller than or equal to the pullback of $lf^*A-K_{X}$ on any common resolution of $X,X''$ for any $l$ where $f''$ denotes $X''\to Z$.
Thus if we find $m,l$ so that 
$$
|m(lf''^*A-K_{X''})-\Sigma''|
$$ 
defines a birational map, then 
$$
|m(lf^*A-K_{X})-\Sigma|
$$ 
also defines a birational map. It is then enough to find $m,l$ so that 
$$
|m(lf'^*A-K_{X'})-\Sigma'|
$$ 
defines a birational map where $f'$ denotes $X'\to Z$. 

By Proposition \ref{l-bnd-cy-fib-ample-case} and its proof, $X'$ is bounded and  
we can find fixed $m,l$ so that $m(lf'^*A-K_{X'})$ is very ample. 
Since $\Sigma-f^*A$ is pseudo-effective and $(X,\Sigma)$ is bounded, 
$$
\vol(m(lf^*A-K_X)+\Sigma)
$$ 
is bounded, hence if $\alpha \colon V\to X$ and $\beta\colon V\to X'$ is a common resolution, then 
$$
\vol(\beta^*m(lf'^*A-K_{X'})+\alpha^* \Sigma)
$$
is bounded. Thus 
$$
(\beta^*m(lf'^*A-K_{X'}))^{d-1}\cdot \alpha^*\Sigma
$$ 
is bounded. This implies that 
$$
(m(lf'^*A-K_{X'}))^{d-1}\cdot \Sigma'
$$ 
is bounded which in turn implies that $(X',\Sigma')$ belongs to a bounded family. 
Therefore, relpacing $m$ with a bounded multiple we can assume that 
$$
|m(lf'^*A-K_{X'})-\Sigma'|
$$ 
defines a birational map as desired.
\end{proof}

\begin{lem}\label{l-log-bnd-cy-fib-induction}
Assume that Theorems \ref{t-log-bnd-cy-fib}, \ref{t-sing-FT-fib-totalspace}, and \ref{t-bnd-comp-lc-global-cy-fib} 
hold in dimension $d-1$. Then Theorems \ref{t-bnd-cy-fib} and \ref{t-log-bnd-cy-fib} hold in dimension $d$.
\end{lem}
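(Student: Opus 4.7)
The plan is to construct, on the given $X$, an auxiliary klt boundary $\Theta$ meeting the hypotheses of HMX's log boundedness theorem \cite[Theorem 1.6]{HMX2}: $(X,\Theta)$ klt with coefficients in a fixed finite set, and $K_X+\Theta$ big of bounded volume. The three main inputs are the bounded klt complements of Theorem \ref{t-bnd-comp-lc-global-cy-fib} in dimension $d$ (valid here by Lemma \ref{l-bnd-cy-bnd-klt-compl-induction}), the N\'eron--Severi boundedness of Proposition \ref{p-cy-fib-bnd-Neron-Severi}, and the effective birationality of $|q(lf^*A-K_X)-\Sigma|$ supplied by Lemma \ref{l-dominating-Sigma}.

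Take a $\Q$-factorialization of $X$. Applying Theorem \ref{t-bnd-comp-lc-global-cy-fib} in dimension $d$ with $\Delta=0$ and $\mathfrak{R}=\{0\}$ produces $\Lambda\ge 0$ with $(X,\Lambda)$ klt and $n(K_X+\Lambda)\sim mf^*A$ for bounded $n,m$; then $(X,\Lambda)$ is $\frac{1}{n}$-lc, its coefficients lie in $\{i/n:0\le i<n\}$, and $(X,\Lambda)\to Z$ is a $(d,r',1/n)$-Fano type fibration for a bounded $r'$. Next, Proposition \ref{p-cy-fib-bnd-Neron-Severi} applied to $(X,\Lambda)\to Z$ provides a $\Q$-factorial projective variety $X'$, a birational map $\rho\colon X\bir X'/Z$ that is an isomorphism in codimension one, and a reduced divisor $\Sigma'$ on $X'$ such that $(X',\Sigma')$ lies in a fixed bounded family of couples, $\Supp\Lambda'\subseteq\Sigma'$, and the components of $\Sigma'$ generate $N^1(X'/Z)$. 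Since $K_X+\Lambda\sim_\Q 0/Z$, the map $\rho$ is a flop for $K_X+\Lambda$, so $(X',\Lambda')$ is also klt with matching log discrepancies. Adding the support of a general element of $|f'^*A|$ to $\Sigma'$ if needed, we may further assume $\Sigma'-f'^*A$ is pseudo-effective.

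Apply Lemma \ref{l-dominating-Sigma} to $(X',\Lambda')$ with $\Sigma'$ to obtain bounded $l,q$ such that $|q(lf'^*A-K_{X'})-\Sigma'|$ defines a birational map on $X'$. Since $\rho$ is an isomorphism in codimension one, the global sections of the relevant reflexive sheaves are preserved, so $|q(lf^*A-K_X)-\Sigma|$ (with $\Sigma=\rho_*^{-1}\Sigma'$) defines a birational map on $X$ too. Pick a general $E$ in this system, set $N := E+\Sigma \sim q(lf^*A-K_X)$ and $\Theta := \Lambda+\tfrac{1}{q}N$. A direct computation gives
\[
K_X + \Theta \;\sim_\Q\; \bigl(\tfrac{m}{n}+l\bigr)f^*A - K_X,
\]
which is big by Lemma \ref{l-div-pef-on-FT} and of bounded volume by Proposition \ref{l-bnd-cy-bndness-volume}; the coefficients of $\Theta$ lie in a fixed finite set. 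Theorem \ref{t-bnd-lct} applied to the bounded $(X',\Lambda')$ bounds from below the lc threshold of a general $N'$ with respect to $(X',\Lambda')$, so after possibly enlarging $q$ by a bounded factor the pair $(X',\Lambda'+\tfrac{1}{q}N')$ is klt, and tracking across the flop $\rho$ transfers klt-ness to $(X,\Theta)$. Then \cite[Theorem 1.6]{HMX2} applies to $(X,\Theta)$ and gives log boundedness of $(X,\Theta)$; in particular $X$ is bounded, proving Theorem \ref{t-bnd-cy-fib}.

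To deduce Theorem \ref{t-log-bnd-cy-fib}, with $X$ now bounded fix a very ample divisor $H$ on $X$ with $H^d$ bounded; using $K_X+B\sim_\R f^*L$ and the ampleness of $A-L$, the intersection number $B\cdot H^{d-1}$ is uniformly bounded, hence so is $\Delta\cdot H^{d-1}\le B\cdot H^{d-1}$. Together with the lower bound $\delta$ on the nonzero coefficients of $\Delta$, both the number of components of $\Supp\Delta$ and their degrees with respect to $H$ are uniformly bounded, yielding log boundedness of $(X,\Delta)$. The main technical difficulty is the klt transfer across $\rho$: while $\rho$ is a flop for $K_X+\Lambda$ and so preserves discrepancies of $(X,\Lambda)$, it is not in general a flop for $K_X+\Theta$, and a careful bookkeeping of pullback coefficients on a common log resolution (using the flop identity $\pi^*(K_X+\Lambda)=\pi'^*(K_{X'}+\Lambda')$) is needed to convert the lc-threshold estimate on the bounded model $X'$ into the corresponding estimate on $X$.
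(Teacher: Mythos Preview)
Your overall strategy mirrors the paper's—replace $B$ by a bounded klt complement $\Lambda$, pass to the bounded model $(X',\Sigma')$ of Proposition \ref{p-cy-fib-bnd-Neron-Severi}, and finish via \cite[Theorem 1.6]{HMX2}—but the step where you ``transfer klt-ness across $\rho$'' is a genuine gap, and it is precisely the point where the paper takes a different route.

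Concretely, the flop identity $\pi^*(K_X+\Lambda)=\pi'^*(K_{X'}+\Lambda')$ only equates log discrepancies of $(X,\Lambda)$ and $(X',\Lambda')$. Your $N$ and $N'$ are strict transforms of each other, and since $N\sim q(lf^*A-K_X)$ while $N'\sim q(lf'^*A-K_{X'})$, on a common resolution one has $\pi^*N-\pi'^*N'\sim q(\pi'^*K_{X'}-\pi^*K_X)$, an exceptional divisor with no sign control whatsoever. Hence the bound on $\lct(X',\Lambda',N')$ coming from Theorem \ref{t-bnd-lct} does not yield any bound on $\lct(X,\Lambda,N)$; the ``careful bookkeeping'' you allude to cannot close this because the needed inequality $\mu_D\pi^*N\le \mu_D\pi'^*N'$ is simply false in general. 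There is a structural reason: to make $K_X+\Theta$ big you need $\Theta$ to absorb something like $-K_X$, which is exactly what is \emph{not} $(K_X+\Lambda)$-trivial over $Z$ and hence does not pull back compatibly across $\rho$.

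The paper avoids this entirely. After arranging $\Supp B'=\Sigma'$ (via a further convex combination with a bounded $\Pi'$, using Theorem \ref{t-bnd-lct} on the bounded $X'$), it uses that the components of $\Sigma'$ generate $N^1(X'/Z)$ to find $R'\equiv H'/Z$ with $\Supp R'\subseteq \Sigma'$, where $H$ is ample on $X$. Then the strict transform $R$ is ample on $X$ (after adding a small vertical piece), and $\Theta:=B+R$ satisfies $\Supp\Theta=\Supp B$ on $X$. Thus $(X,\Theta)$ is $\frac{\varepsilon}{2}$-lc \emph{directly on $X$} because it is a small perturbation of $(X,B)$—no transfer is needed—while $K_X+\Theta$ is \emph{ample} (not merely big), and $\Supp\Theta'=\Sigma'$ gives log birational boundedness. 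This is what makes \cite[Theorem 1.6]{HMX2} apply. Two secondary points: you invoke \cite[Theorem 1.6]{HMX2} with $K_X+\Theta$ only big and without verifying log birational boundedness of $(X,\Theta)$; in both places the paper uses it, $K_X+\Theta$ is ample and log birational boundedness is checked explicitly.
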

\begin{proof}
It is enough to treat \ref{t-log-bnd-cy-fib} as it implies \ref{t-bnd-cy-fib} by taking $\Delta=0$. 
Let $(X,B)\to Z$ be a $(d,r,\varepsilon)$-Fano type fibration and $0\le \Delta \le B$, as in \ref{t-log-bnd-cy-fib}. 
If $\dim Z=0$, then $X$ belongs to a bounded family by \cite[Corollary 1.4]{B-BAB} from which 
we can deduce that $(X,\Delta)$ is log bounded. We can then assume that $\dim Z>0$. 
Changing the coefficients of $\Delta$ we can assume that all its coefficients are equal to a fixed rational number, 
and that 
$$
\Supp (B-\Delta)=\Supp B.
$$ 
This in particular implies that 
$$
-(K_X+\Delta)\sim_\R B-\Delta/Z
$$ 
is big over $Z$. 

Now by Lemma \ref{l-bnd-cy-bnd-klt-compl-induction}, 
our assumptions imply Theorem \ref{t-bnd-comp-lc-global-cy-fib} in dimension $d$, hence applying the theorem 
we can find bounded natural numbers $n,m\ge 2$ and a boundary $\Lambda\ge \Delta$ such that $(X,\Lambda)$ is klt 
and $n(K_X+\Lambda)\sim mf^*A$. Replacing $B$ with $\Lambda$, $\varepsilon$ with $\frac{1}{n}$, and $A$ with $2mA$ 
we can assume that the coefficients of $B$ belong to some fixed finite set of rational numbers and that $K_X+B$ is nef. 

By Proposition \ref{p-cy-fib-bnd-Neron-Severi}, 
there exist a birational map $X\bir X'/Z$  and a reduced divisor $\Sigma'$ on $X'$ satisfying the properties 
listed in the proposition. Since $(X',\Sigma')$ is log bounded,  
there is a very ample divisor $G'$ on $X'$ with bounded $G'^d$ and bounded 
$G'^{d-1}\cdot \Sigma'$. Moreover, as $\Supp B'\subseteq \Sigma'$, 
$(X',B')$ is log bounded. Since $G'^{d-1}\cdot (K_{X'}+B')$ is bounded,  
$G'^{d-1}\cdot f'^*A$ is bounded, hence we can assume that $\Sigma'\ge P'$ for some general member $P'$ of $|f'^*A|$ where $f'$ denotes $X'\to Z$.

Now by Lemma \ref{l-dominating-Sigma} applied to $(X',B'),\Sigma'\to Z$, there exist bounded natural numbers $p,q$ such that 
$$
|q(pf'^*A-K_{X'})-\Sigma'|
$$ 
defines a birational map. Using a member $D'$ of this linear system we find $\Pi'=\frac{1}{q}(\Sigma'+D')$ so that   
$$
q(K_{X'}+\Pi')\sim qpf'^*A. 
$$
The pair 
$
(X',\Pi')
$ 
is log bounded because $G'^{d-1}\cdot \Pi'$ is bounded.

Replacing $G'$ with a multiple we can assume that $G'-B'$ and $G'-\Pi'$ 
are big. 
Therefore, by \cite[Theorem 1.8]{B-BAB} (=Theorem \ref{t-bnd-lct}), there is a 
fixed rational number $t\in (0,1)$ such that 
$$
(X',B'+t\Pi')
$$ 
is $\frac{\varepsilon}{2}$-lc, hence 
$$
(X',C':=(1-t)B'+t\Pi')
$$ 
is $\frac{\varepsilon}{2}$-lc. Note that 
$$
K_{X'}+C'=(1-t)(K_{X'}+B')+t(K_{X'}+\Pi')\sim_\Q (1-t)f'^*L+tpf'^*A
$$
and that $C'\ge (1-t)\Delta'$.
Replacing $B'$ with $C'$ hence replacing $B$ with the birational transform of $C'$,
replacing $\Delta$ with $(1-t)\Delta$, replacing $A$ accordingly, replacing $\Sigma'$ with $\Supp \Pi'$, 
and replacing $\varepsilon$ with $\frac{\varepsilon}{2}$, we can assume that $\Supp B'=\Sigma'$.
In addition, by the above, we can assume that $P'\le \Sigma'$ where $P$ is some member of $|f^*A|$. 

Let $H$ be an ample $\Q$-divisor on $X$ and let $H'$ be its birational 
transform on $X'$. Since the components of $\Sigma'$ generate $N^1(X'/Z)$, there exists an $\R$-divisor 
$R'\equiv H'/Z$ such that $\Supp R'\subseteq \Sigma'$. In particular, if $R$ is the birational transform of $R'$ 
on $X$, then $R$ is ample over $Z$. Replacing $R'$ with a small multiple and adding a multiple of 
$P'$ to it, we can assume that  $R$ is globally 
ample. Since $\Supp R\subseteq \Supp B$, rescaling $R$ we can in addition assume that 
$$
\Theta:=B+R\ge \frac{1}{2}\Delta,
$$ 
that 
the coefficients of $\Theta$ are $\ge \frac{\delta}{2}$, and that $(X,\Theta)$ is $\frac{\varepsilon}{2}$-lc. 

By construction, 
$$
\Supp \Theta'=\Supp B'=\Sigma'
$$ 
where $\Theta'$ is the birational transform of $\Theta$. 
Thus $(X,\Theta)$ is log birationally bounded. 
Moreover, $K_X+\Theta$ is ample as $K_X+B$ is nef and $R$ is ample. Therefore, applying \cite[Theorem 1.6]{HMX2}, 
we deduce that $(X,\Theta)$ is log bounded which in particular means that $(X,\Delta)$ is log bounded.
\end{proof}

\subsection{Lower bound on lc thresholds}

\begin{lem}\label{l-bnd-cy-bnd-lct-usual}
Assume that Theorems \ref{t-log-bnd-cy-fib}, \ref{t-sing-FT-fib-totalspace}, and \ref{t-bnd-comp-lc-global-cy-fib} 
hold in dimension $d-1$. Then Theorem \ref{t-sing-FT-fib-totalspace} holds in dimension $d$. 
\end{lem}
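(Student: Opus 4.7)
My plan is to derive Theorem \ref{t-sing-FT-fib-totalspace} in dimension $d$ from the boundedness results now available in dimension $d$ (Theorems \ref{t-bnd-cy-fib} and \ref{t-bnd-comp-lc-global-cy-fib}, supplied by Lemmas \ref{l-log-bnd-cy-fib-induction} and \ref{l-bnd-cy-bnd-klt-compl-induction}) together with the effective lower bound on lc thresholds, Theorem \ref{t-bnd-lct}.

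First I may assume $\dim Z>0$: if $\dim Z=0$, then $X$ is $\varepsilon$-lc of Fano type hence bounded by \cite[Corollary 1.4]{B-BAB}, and the statement follows directly from Theorem \ref{t-bnd-lct}. Taking a $\Q$-factorialisation, I apply Lemma \ref{l-log-bnd-cy-fib-induction} to conclude that $X$ belongs to a bounded family, and Lemma \ref{l-bnd-cy-bnd-klt-compl-induction} (with $\Delta=0$) to obtain bounded natural numbers $n,m$ depending only on $d,r,\varepsilon$ and a $\Q$-divisor $\Lambda\ge 0$ with $(X,\Lambda)$ klt and $n(K_X+\Lambda)\sim mf^*A$. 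Since $n\Lambda$ is integral with coefficients in a fixed finite set, Lemma \ref{l-bnd-Cartier-index-family} shows that $(X,\Lambda)$ is log bounded; in particular the numerical classes of $K_X$ and $f^*A$ on $X$ are bounded uniformly in the fibration.

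Next I produce a very ample divisor $D$ on $X$ of bounded degree $D^d$ (depending only on $d,r,\varepsilon$) such that both $D-B$ and $D-P$ are pseudo-effective. The key identity is
$$
f^*A-K_X-B\sim_\R f^*(A-L),
$$
whose right-hand side is the pullback of an ample divisor and hence nef. Thus $f^*A-K_X-B$ is pseudo-effective, so $B$ is pseudo-effectively dominated by $f^*A-K_X$. In Case (ii), $f^*A-K_X-P$ is pseudo-effective by hypothesis, while in Case (i), adding the pseudo-effective divisors $f^*A-K_X-B$ and $f^*A+B-P$ yields that $2f^*A-K_X-P$ is pseudo-effective. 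Since $X$ is bounded and $-K_X$, $f^*A$ have numerical classes bounded in terms of $d,r,\varepsilon$ by the previous paragraph, I can choose $D$ with bounded degree so that $D-(2f^*A-K_X)$ is ample, which forces both $D-B$ and $D-P$ to be pseudo-effective.

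Finally, applying Theorem \ref{t-bnd-lct} to $(X,B)$ with $D$ in place of $A$ and $M=P$ yields $\lct(X,B,|P|_\R)\ge t$ for some $t>0$ depending only on $d,r,\varepsilon$, so $(X,B+tP)$ is klt. The main obstacle in this plan is step three, producing $D$ of uniformly bounded degree that dominates the class of $f^*A$; it is precisely the bounded klt complement $(X,\Lambda)$ with $n(K_X+\Lambda)\sim mf^*A$, combined with Lemma \ref{l-bnd-Cartier-index-family}, that makes $f^*A$ numerically bounded on the already bounded family of $X$ and hence makes such a $D$ available.
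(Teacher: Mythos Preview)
Your overall strategy --- use the inductive hypotheses to get boundedness in dimension $d$, produce a very ample divisor of bounded degree dominating both $B$ and $P$, and then invoke Theorem~\ref{t-bnd-lct} --- is exactly the paper's. The reduction of Case~(i) to Case~(ii) via $f^*A-(K_X+B)\sim_\R f^*(A-L)$ is also the same.

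However, there is a genuine gap in the middle. You write that ``Lemma~\ref{l-bnd-Cartier-index-family} shows that $(X,\Lambda)$ is log bounded''. It does not: that lemma takes log boundedness as an \emph{input} and outputs a bounded Cartier index, not the other way round. Knowing only that $X$ is bounded and that the coefficients of $\Lambda$ lie in a fixed finite set does not bound $\Supp\Lambda$; equivalently, nothing you have written bounds $H^{d-1}\cdot f^*A$ for a bounded very ample $H$ on $X$. Since $\Lambda\sim_\Q \tfrac{m}{n}f^*A-K_X$, bounding $\Lambda$ and bounding $f^*A$ are the same problem, so the complement step does not actually advance you.

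The fix --- and this is what the paper does --- is to apply Theorem~\ref{t-log-bnd-cy-fib} (which you already have in dimension $d$ via Lemma~\ref{l-log-bnd-cy-fib-induction}) rather than just Theorem~\ref{t-bnd-cy-fib}. Take a general $D\in|f^*A|$, set $\Theta:=B+\tfrac{1}{2}D$, and observe that $(X,\Theta)\to Z$ is a $(d,2^d r,\varepsilon')$-Fano type fibration with $\varepsilon'=\min\{\varepsilon,\tfrac{1}{2}\}$; then Theorem~\ref{t-log-bnd-cy-fib} applied with $\Delta=\tfrac{1}{2}D$ gives $(X,D)$ log bounded. This directly bounds the class of $f^*A$ on $X$, and one can then choose $H$ very ample with bounded $H^d$ and $H+K_X-D$ ample, whence $H-B$ is ample and $H-P$ is big, exactly as you wanted. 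With this correction the detour through Lemma~\ref{l-bnd-cy-bnd-klt-compl-induction} becomes unnecessary.
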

\begin{proof}
Assume that $(X,B)\to Z$ is a $(d,r,\varepsilon)$-Fano type fibration and $P\ge 0$ is $\R$-Cartier 
such that either $f^*A+B-P$ or $f^*A-K_X-P$ is pseudo-effective. 
Taking a $\Q$-factorialisation we can assume $X$ is $\Q$-factorial.  
First assume that  $f^*A+B-P$ is pseudo-effective.
Since $A-L$ is nef, $f^*A-(K_X+B)$ is nef, hence $2f^*A-K_X-P$ is pseudo-effective. Thus replacing 
$A$ with $2A$, it is enough to treat the theorem 
in the case when  $f^*A-K_X-P$ is pseudo-effective. 

By Lemma \ref{l-log-bnd-cy-fib-induction}, Theorem \ref{t-log-bnd-cy-fib} holds in dimension $d$. 
Let $D\in |f^*A|$ be a general element. Let $\Theta:=B+\frac{1}{2}D$. Then 
$$
K_X+\Theta\sim_\R f^*(L+\frac{1}{2}A)
$$
and $(X,\Theta)$ is $\varepsilon'$-lc where $\varepsilon'=\min\{\varepsilon,\frac{1}{2}\}$. 
Thus $(X,\Theta)\to Z$ is a $(d,2^{d}r,\varepsilon')$-Fano type fibration. 
Applying \ref{t-log-bnd-cy-fib}, we  
deduce that  $(X,D)$ is log bounded. 
Thus there is a very ample divisor $H$ on $X$ such that 
$H^d$ is bounded and $H+K_X-D$ is ample. 

Since $f^*A-(K_X+B)$ is nef, 
$$
H-B\sim H+K_X-D+f^*A-(K_X+B)
$$ 
is ample. On the other hand, since $Q:=f^*A-K_X-P$ is pseudo-effective, 
$$
H-P=H+K_X-f^*A+Q
$$
is big, hence $|H-P|_\R\neq \emptyset$. Now by [\ref{B-BAB}, Theorem 1.8] (=Theorem \ref{t-bnd-lct}), 
there is a real number $t>0$ 
depending only on $d,H^d,\varepsilon$ such that $(X,B+tP)$ is klt. By construction, $t$ depends only on 
$d,r,\varepsilon$.
\end{proof}

\subsection{Proofs of \ref{t-bnd-cy-fib}, \ref{t-log-bnd-cy-fib}, 
\ref{t-sing-FT-fib-totalspace}, \ref{t-bnd-comp-lc-global-cy-fib}}

We are now ready to prove several of the main results of this paper. We apply induction so we assume that 
 \ref{t-log-bnd-cy-fib}, \ref{t-sing-FT-fib-totalspace}, and \ref{t-bnd-comp-lc-global-cy-fib} hold in dimension $d-1$.

\begin{proof}[Proof of Theorems \ref{t-bnd-cy-fib} and \ref{t-log-bnd-cy-fib}]
Theorems \ref{t-bnd-cy-fib} and \ref{t-log-bnd-cy-fib} follow from Theorems \ref{t-log-bnd-cy-fib}, \ref{t-sing-FT-fib-totalspace}, and 
\ref{t-bnd-comp-lc-global-cy-fib} in dimension $d-1$, and Lemma \ref{l-log-bnd-cy-fib-induction}.
\end{proof}

\begin{proof}[Proof of Theorem \ref{t-sing-FT-fib-totalspace}]
This follows from Theorems \ref{t-log-bnd-cy-fib}, \ref{t-sing-FT-fib-totalspace}, 
and \ref{t-bnd-comp-lc-global-cy-fib} in dimension $d-1$, and Lemma \ref{l-bnd-cy-bnd-lct-usual}.
\end{proof}

\begin{proof}[Proof of Theorem \ref{t-bnd-comp-lc-global-cy-fib}]
This follows from Theorems \ref{t-log-bnd-cy-fib}, \ref{t-sing-FT-fib-totalspace}, and 
\ref{t-bnd-comp-lc-global-cy-fib} in dimension $d-1$, and Lemma \ref{l-bnd-cy-bnd-klt-compl-induction}.
\end{proof}

%%%%%%%%%%%%%%%%%%%%%%
%%%%%%%%%%%%%%%%%%%%%%
\section{\bf Generalised Fano type fibrations}

In this section, we discuss singularities and boundedness of Fano type log Calabi-Yau fibrations in the 
context of generalised pairs. 

\subsection{Adjunction for generalised fibrations.}\label{fib-adj-setup}
Consider the following set-up. Assume that 
\begin{itemize}
\item $(X,B+M)$ is a generalised sub-pair with data $X'\to X$ and $M'$,

\item $f\colon X\to Z$ is a contraction with $\dim Z>0$, 

\item $(X,B+M)$ is generalised sub-lc over the generic point of $Z$, and 

\item $K_{X}+B+M\sim_\R 0/Z$.
\end{itemize}
We define the discriminant divisor $B_Z$ for the above setting, similar to the definition in the introduction. 
Let $D$ be a prime divisor on $Z$. Let $t$ be the generalised lc threshold of $f^*D$ with respect to $(X,B+M)$ 
over the generic point of $D$. This makes sense even if $D$ is not $\Q$-Cartier because we only need 
the pullback $f^*D$ over the generic point of $D$ where $Z$ is smooth. 
We then put the coefficient of  $D$ in $B_Z$ to be $1-t$. Note that since $(X,B+M)$ is generalised 
sub-lc over the generic point of $Z$,  $t$ is a real number, that is, it is not $-\infty$ or $+\infty$.
Having defined $B_Z$, we can find $M_Z$ giving 
$$
K_{X}+B+M\sim_\R f^*(K_Z+B_Z+M_Z)
$$
where $M_Z$ is determined up to $\R$-linear equivalence. 
We call $B_Z$ the \emph{discriminant divisor of adjunction} for $(X,B+M)$ over $Z$. 

Let $Z'\to Z$ be a birational contraction from a normal variety. There is 
a birational contraction $X'\to X$ from a normal variety so that the induced map $X'\bir Z'$ is a morphism.  
Let $K_{X'}+B'+M'$ be the 
pullback of $K_{X}+B+M$. We can similarly define $B_{Z'},M_{Z'}$ for $(X',B'+M')$ over $Z'$. In this way we 
get the \emph{discriminant b-divisor ${\bf{B}}_Z$ of adjunction} for $(X,B+M)$ over $Z$. 
Fixing a choice of $M_Z$ we can pick the $M_{Z'}$ consistently so that it also defines a b-divisor ${\bf{M}}_Z$ 
which we refer to as the \emph{moduli b-divisor of adjunction} for $(X,B+M)$ over $Z$. 

\begin{rem}\label{rem-base-fib-gen-pair}
\emph{
Assume that $M=0$, $B$ is a $\Q$-divisor, $(X,B)$ is projective, and that $(X,B)$ is lc over 
the generic point of $Z$. Then ${\bf{M}}_Z$ is b-nef b-$\Q$-Cartier, that is, we can pick $Z'$ so that 
$M_{Z'}$ is a nef $\Q$-divisor and for any resolution $Z''\to Z'$, $M_{Z''}$ is the pullback of $M_{Z'}$ 
[\ref{B-compl}, Theorem 3.6] (this is derived from [\ref{FG-lc-trivial}] which is in turn derived from [\ref{ambro-lc-trivial}]
and this in turn is based on [\ref{kaw-subadjuntion}]). We can then consider 
$(Z,B_Z+M_Z)$ as a generalised pair with nef part $M_{Z'}$.
When $M\neq 0$, the situation is more complicated, see [\ref{Filipazzi}] for recent 
advances in this direction which we will not use in this paper. }
\end{rem}

\smallskip

\subsection{Boundedness of generalised Fano type fibrations}

\begin{proof}[Proof of Theorem \ref{t-log-bnd-cy-gen-fib}]
Since $\Delta\le B$, 
$$
-(K_X+\Delta)\sim_\R -(K_X+B)+B-\Delta \sim_\R B-\Delta/Z 
$$
is pseudo-effective over $Z$. And since $-K_X$ is big over $Z$, 
$$
-(K_X+\frac{1}{2}\Delta)=-\frac{1}{2}(K_X+\Delta)-\frac{1}{2}K_X
$$
is big over $Z$. Thus replacing $\Delta$ with $\frac{1}{2}\Delta$ and replacing $\tau$ with $\frac{1}{2}\tau$, we can assume that $-(K_X+\Delta)$ is big over $Z$. 
Now apply Lemma \ref{l-from-gen-fib-to-usual-fib} and Theorem \ref{t-log-bnd-cy-fib}.
\end{proof}

\subsection{Lower bound for lc thresholds: proof of \ref{t-sing-gen-FT-fib-totalspace}}

\begin{proof}[Proof of Theorem \ref{t-sing-gen-FT-fib-totalspace}]
\emph{Step 1.}
\emph{In this step, we make some preparations.}
Let $(X,B+M)\to Z$ and $P$ be as in Theorem \ref{t-sing-gen-FT-fib-totalspace} in dimension $d$.
Taking a $\Q$-factorialisation we can assume $X$ is $\Q$-factorial.  
Assume that  $f^*A+B+M-P$ is pseudo-effective.
Since 
$$
f^*A-(K_X+B+M)\sim_\R f^*(A-L)
$$ 
is nef, $2f^*A-K_X-P$ is pseudo-effective. Thus replacing 
$A$ with $2A$, it is enough to treat Theorem \ref{t-sing-gen-FT-fib-totalspace} 
in the case when  $f^*A-K_X-P$ is pseudo-effective.\\

\emph{Step 2.}
\emph{In this step, we take a log resolution and introduce some notation.}
Since $B$ is effective and $M$ is pseudo-effective (as it is the pushdown of a nef divisor),
$B+M$ is pseudo-effective. Moreover, since  $-K_X$ is big over $Z$, 
$B+M$ is big over $Z$, hence  $B+M+f^*A$ is big globally by Lemma \ref{l-div-pef-on-FT}.
Let $\phi\colon X'\to X$ be a log resolution of $(X,B)$ 
on which the nef part $M'$ of $(X,B+M)$ resides. Write 
$$
K_{X'}+B'+M'=\phi^*(K_X+B+M).
$$
Since $(X,B+M)$ is generalised $\varepsilon$-lc, the coefficients of $B'$ do not exceed $1-\varepsilon$.
We can write 
$$
\phi^*(B+M+f^*A)\sim_\R G'+H'
$$ 
where $G'\ge 0$ and $H'$ is general ample. Replacing $\phi$ we can assume $\phi$ is a log resolution of 
$(X,B+P+\phi_*G')$.

Pick a small $\alpha>0$ and pick a general 
$$
0\le R'\sim_\R \alpha H'+(1-\alpha)M'.
$$
Since $M'$ is nef, $\phi^*M=M'+E'$ where $E'$ is effective and exceptional. 
Let 
$$
\Delta':=B'-\alpha \phi^*B-\alpha E'+\alpha G'+R'.
$$
We can make the above choices so that the coefficients of $\Delta'$ do not exceed $1-\frac{\varepsilon}{2}$ 
and so that $(X',\Delta')$ is log smooth.\\
 
\emph{Step 3.} 
\emph{In this step, we show that $(X,\Delta)\to Z$ is a $(d,r,\frac{\varepsilon}{2})$-Fano type fibration 
where $\Delta=\phi_*\Delta'$.}
By construction, we have 
$$
 \begin{array}{l l}
K_{X'}+\Delta' & =K_{X'}+ B'-\alpha \phi^*B-\alpha E'+\alpha G'+R'\\
& \sim_\R K_{X'}+ B'-\alpha \phi^*B-\alpha E'+\alpha G'+\alpha H'+(1-\alpha)M'\\
& \sim_\R K_{X'}+ B'-\alpha \phi^*B-\alpha E'+\alpha \phi^*(B+M+f^*A)+(1-\alpha)M'\\
& \sim_\R K_{X'}+ B'-\alpha \phi^*B-\alpha \phi^* M+\alpha \phi^*(B+M+f^*A)+M'\\
& \sim_\R K_{X'}+ B'+M'+\alpha\phi^*f^*A\sim_\R \phi^*f^*(L+\alpha A).
\end{array}
$$
Therefore,  
$$
K_X+\Delta\sim_\R f^*(L+\alpha A).
$$ 
Choosing $\alpha$ small enough we can ensure $A-(\alpha A+L)$ is 
ample. On the other hand,  since $K_{X'}+\Delta'\sim_\Q 0/X$,  we have $K_{X'}+\Delta'=\phi^*(K_X+\Delta)$, 
hence $(X,\Delta)$ is $\frac{\varepsilon}{2}$-lc because 
the coefficients of $\Delta'$ do not exceed $1-\frac{\varepsilon}{2}$.
Thus $(X,\Delta)\to Z$ is a $(d,r,\frac{\varepsilon}{2})$-Fano type fibration.\\

\emph{Step 4.}
\emph{In this step, we finish the proof.}
By Theorem \ref{t-sing-FT-fib-totalspace}, there is a real number $t>0$ 
depending only on $d,r,\varepsilon$ such that $(X,\Delta+2tP)$ is klt.
Then letting $P'=\phi^* P$ we see that the coefficients of $\Delta'+2tP'$ do not exceed $1$ as 
$$
K_{X'}+\Delta'+2tP'=\phi^*(K_X+\Delta+2tP).
$$
 Thus 
the coefficients of 
$$
B'-\alpha \phi^*B-\alpha E'+2tP'
$$ 
do not exceed $1$.
Now $t$ is independent of the choice of $\alpha$, so taking the limit as $\alpha$ approaches zero, we see that the 
coefficients of $B'+2tP'$ do not exceed $1$. Therefore, the coefficients of $B'+tP'$ are 
strictly less than $1$ because the coefficients of $B'$ do not exceed $1-\varepsilon$, hence $(X,B+tP+M)$ is generalised klt as 
\[K_{X'}+B'+tP'+M'=\phi^*(K_X+B+tP+M).\qedhere\]
\end{proof}

\subsection{Discriminant divisors of Fano type fibrations: proofs of \ref{t-sh-conj-bnd-base} and \ref{t-sh-conj-bnd-base-gen-fib}}

\begin{proof}[Proof of Theorems \ref{t-sh-conj-bnd-base} and \ref{t-sh-conj-bnd-base-gen-fib}]
Since \ref{t-sh-conj-bnd-base} is a special case of \ref{t-sh-conj-bnd-base-gen-fib} we treat the latter only.
By induction we can assume that Theorem \ref{t-sh-conj-bnd-base-gen-fib} holds in dimension $d-1$.
Let $(X,B+M)\to Z$ be a generalised $(d,r,\varepsilon)$-Fano type fibration.
Let $D$ be  a prime divisor over $Z$. First assume that the centre of $D$ on $Z$ is positive-dimensional. 
Take a resolution $Z'\to Z$ so that $D$ is a divisor on $Z'$. Take a log resolution $\phi\colon X'\to X$ of $(X,B)$ so that 
the nef part $M'$ of $(X,B+M)$  is on $X'$ and that the induced map $f'\colon X'\bir Z'$ is a morphism. 
Replacing $X'$ we can assume that $\phi$ is a log resolution of $(X,B+\phi_*f'^*D)$.

Let $K_{X'}+B'+M'$ be the pullback of $K_X+B+M$.
Let $t$ be the generalised lc threshold of $f'^*D$ with respect to $(X',B'+M')$ over the 
generic point of $D$: this coincides with the lc threshold of $f'^*D$ with respect to $(X',B')$ over the 
generic point of $D$ because $M'$ is nef. Since $(X',B'+tf'^*D)$ is log smooth and since it is sub-lc but not sub-klt 
over the generic point of $D$, 
there is a prime divisor $S$ on $X'$ mapping onto $D$ such that $\mu_SB'+t\mu_Sf'^*D=1$.

Let $H\in |A|$ be a general member  and let $H',G,G'$ be its pullback to $Z',X,X'$, 
respectively. Since the centre of $D$ on $Z$ is positive-dimensional, $H'$ intersects $D$ and 
$G'$ intersects $S$. By divisorial generalised adjunction we can write 
$$
K_G+B_G+M_G\sim_\R (K_X+B+G+M)|_G
$$
where $(G,B_G+M_G)$ is generalised $\varepsilon$-lc with nef part $M_{G'}=M'|_{G'}$. 
Moreover, $-K_G$ is big over $H$, and 
$$
K_G+B_G+M_G\sim_\R g^*(L+A)|_H
$$
where $g$ denotes $G\to H$. Thus $(G,B_G+M_G)\to H$ is a generalised $(d,2^{d-1}r,\varepsilon)$-Fano type 
fibration. We can write 
$$
K_{G'}+B_{G'}+M_{G'}\sim_\R (K_{X'}+B'+G'+M')|_{G'}
$$
where $B_{G'}=B'|_{G'}$ and $K_{G'}+B_{G'}+M_{G'}$ is the pullback of $K_G+B_G+M_G$. 

Let $C$ be a component of $D\cap H'$ 
and let $s$ be the generalised lc threshold of $g'^*C$ with respect to $(G',B_{G'}+M_{G'})$ over the 
generic point of $C$ where $g'$ denotes $G'\to H'$. Then $1-s$ is the coefficient of $C$ in the 
discriminant b-divisor of adjunction for $(G,B_G+M_G)\to H$. Thus applying 
Theorem \ref{t-sh-conj-bnd-base-gen-fib} in dimension $d-1$, 
we deduce that $1-s\le 1-\delta$ for some $\delta>0$ depending only on  $d,r,\varepsilon$. Thus $s\ge \delta$.

By definition of $s$, 
for any prime divisor $T$ on $G'$ mapping onto $C$, 
we have  the inequality $\mu_TB_{G'}+s\mu_Tg'^*C\le 1$. In particular, 
if we take $T$ to be a component of $S\cap G'$ which maps onto $C$, then we have
$$
 \begin{array}{l l}
\mu_SB'+s\mu_Sf'^*D &=\mu_TB'|_{G'}+s \mu_Tf'^*D|_{G'}\\ 
&=\mu_TB_{G'}+s\mu_Tg'^*D|_{H'}\\ 
&=\mu_TB_{G'}+s\mu_Tg'^*C\\ 
&\le 1\\
&=\mu_SB'+t\mu_Sf'^*D
\end{array}
$$
 where we use the fact that over the generic point of $C$ 
the two divisors $g'^*D|_{H'}$ and $g'^*C$ coincide. Therefore, $\delta\le s\le t$, hence 
$\mu_DB_{Z'}=1-t\le 1-\delta$  where $B_{Z'}$ 
is the discriminant divisor on $Z'$ defined for $(X,B+M)$ over $Z$. Thus we have settled the case 
when the centre of $D$ on $Z$ is positive-dimensional.
 
From now on we can assume that the centre of $D$ on $Z$ is a closed point, say $z$. 
Let $Z',X', f', B',M'$ be as before. Pick $N\in |A|$ passing through $z$. 
Then
$$
f^*A+B+M-f^*N\sim B+M
$$ 
is obviously pseudo-effective. Thus by Theorem \ref{t-sing-gen-FT-fib-totalspace} in dimension $d$, 
the generalised lc threshold $u$ of 
$f^*N$ with respect to $(X,B+M)$ is bounded from below by some $\delta>0$ depending only on $d,r,\varepsilon$.

Since $N$ passes through $z$,  we have $\psi^*N\ge D$ where $\psi$ denotes $Z'\to Z$. Thus 
the generalised lc threshold $v$ of $f'^*\psi^*N$ with respect to 
$(X',B'+M')$ over the generic point of $D$ is at most as large as the generalised lc threshold $t$ of $f'^*D$ 
with respect to $(X',B'+M')$ over the generic point of $D$. On the other hand, 
the generalised lc threshold $u$ of $f^*N$ with respect to $(X,B+M)$ globally coincides with 
the generalised lc threshold of $f'^*\psi^*N$ with respect to 
$(X',B'+M')$ globally which is at most as large as the generalised lc threshold $v$ of $f'^*\psi^*N$ 
with respect to $(X',B'+M')$ over the generic point of $D$.
Therefore, $\delta\le u\le v\le t$, hence  $\mu_DB_{Z'}=1-t\le 1-\delta$.
\end{proof}

%%%%%%%%%%%%%%%%%%%%%%
%%%%%%%%%%%%%%%%%%%%%%

%%%%%%%%%%%%%%%%%%%%%%%%%%%%%%%%%%%%%

\vspace{2cm}
%%%%%%%%%%%%%%%%%%%%%

\textsc{Yau Mathematical Sciences Center,} \endgraf
\textsc{JingZhai Building, Tsinghua University} \endgraf
\textsc{ Hai Dian District, Beijing, China 100084  } \endgraf

\medskip

\email{birkar@tsinghua.edu.cn}


\begin{thebibliography}{99}

\bibitem{ambro-lc-trivial}\label{ambro-lc-trivial} 
F. Ambro, \textit{The moduli b-divisor of an lc-trivial fibration.} 
 Compos. Math. \textbf{141} (2005), no. 2, 385--403.
 
\bibitem{ambro-adj}\label{ambro-adj} 
 F. Ambro, \textit{The Adjunction Conjecture and its applications.}
arXiv:math/9903060v3 (1999). 


\bibitem{B-bnd-gen-pairs} \label{B-bnd-gen-pairs}  
C. Birkar, {\emph{Boundedness and volume of generalised pairs.}}
arXiv:2103.14935v2 (2021).

\bibitem{B-pol-var}\label{B-pol-var}  
C. Birkar, {\emph{Geometry and moduli of polarised varieties.}}
 arXiv:2006.11238v1 (2020). 

\bibitem{B-ICM}\label{B-ICM}  
C. Birkar, {\emph{Birational geometry of algebraic varieties.}}
Proceedings of the International Congress of Mathematicians (ICM 2018), 565-590 (2019).

\bibitem{B-BAB}\label{B-BAB}  
C. Birkar, {\emph{Singularities of linear systems and boundedness of Fano varieties.}}
Ann. of Math, 193, No. 2 (2021), 347-405.

\bibitem{B-compl}\label{B-compl}
C.~Birkar, \emph{Anti-pluricanonical systems on Fano varieties},
Ann. of Math. 190, No. 2 (2019), 345-463.

\bibitem{B-sing-fano-fib}\label{B-sing-fano-fib}  
C. Birkar, \emph{Singularities on the base of a Fano type fibration.}  
{J. Reine Angew Math.}, 715 (2016), 125-142.

\bibitem{B-lc-flips}\label{B-lc-flips}
C.~Birkar,
\emph{Existence of log canonical flips and a special LMMP},
Pub. Math. IHES., 
\textbf{115} (2012), 325--368.


\bibitem{BCHM}\label{BCHM}
C.~Birkar, P.~Cascini, C.~Hacon and J.~M$^{\rm c}$Kernan,
\emph{Existence of minimal models for varieties of log general type},
J. \ Amer. \ Math. \ Soc. \textbf{23} (2010), no. 2, 405-468.

\bibitem{BDS}\label{BDS}
C. Birkar, G. Di Cerbo, R. Svaldi, 
\emph{Boundedness of elliptic Calabi-Yau varieties with a rational section},
arXiv:2010.09769v1 (2020).


\bibitem{BZh}\label{BZh}
C.~Birkar and D-Q. Zhang,
\emph{Effectivity of Iitaka fibrations and pluricanonical systems of polarized pairs},
 Pub. Math. IHES.123 (2016), 283-331.
 
 
\bibitem{CDHJS}\label{CDHJS}
W. Chen, G. Di Cerbo, J. Han, C. Jiang, R. Svaldi, 
\emph{Birational boundedness of rationally connected Calabi-Yau 3-folds.}
Adv. Math., 378 (2021), 107541, 32 pp.


\bibitem{DiCerbo-Svaldi}\label{DiCerbo-Svaldi}
G. Di Cerbo, R. Svaldi, \emph{Birational boundedness of low dimensional elliptic Calabi-Yau varieties with a section.} Compos. Math. 157 (2021), no. 8, 1766--1806.

\bibitem{Filipazzi}\label{Filipazzi}
S. Filipazzi, \emph{On a generalized canonical bundle formula and generalized adjunction},  
Annali della Scuola Normale Superiore di Pisa. Classe di scienze, vol. 21, no. extra 2 (2020),
1187--1221.

\bibitem{Fujino-rays}\label{Fujino-rays}
O. Fujino, \emph{Fundamental theorems for the log minimal model program},
 Publ. Res. Inst. Math. Sci. 47 (2011), no. 3, 727-789.

\bibitem{FG-lc-trivial}\label{FG-lc-trivial} 
O. Fujino, Y. Gongyo, \textit{On the moduli b-divisors of lc-trivial fibrations.} 
Ann. Inst. Fourier, Grenoble \textbf{64}, 4 (2014) 1721--1735.

\bibitem{HX}\label{HX}
C.~D.~Hacon and C.~Xu, \emph{Boundedness of log Calabi-Yau pairs of Fano type.}
Math. Res. Lett, \textbf{22} (2015), 1699--1716.

\bibitem{HMX-moduli}\label{HMX-moduli}
C.~D.~Hacon, J.~M$^{\rm c}$Kernan and C.~Xu,
\emph{Boundedness   of   moduli   of   varieties   of   general   type},
 J. Eur. Math. Soc. (JEMS) 20 (2018), no. 4, 865-901.

\bibitem{HMX2}\label{HMX2}
C.~D.~Hacon, J.~M$^{\rm c}$Kernan and C.~Xu,
\emph{ACC for log canonical thresholds}, Ann. of Math. (2) \textbf{180} (2014), no. 2, 523-571.

\bibitem{HMX1}\label{HMX1}
C.~D.~Hacon, J.~M$^{\rm c}$Kernan and C.~Xu,
\emph{On the birational automorphisms of varieties of general type},
Ann. \ of Math. \ (2) \textbf{177} (2013), no. 3, 1077-1111.

\bibitem{Isk-Prokh}\label{Isk-Prokh}
V. A. Iskovskikh and Yu. G. Prokhorov, \emph{Fano varieties}. Algebraic geometry. V., 
Encyclopaedia Math. Sci., vol. 47, Springer, Berlin, 1999.

\bibitem{Jiang}\label{Jiang} 
C. Jiang, \emph{On birational boundedness of Fano fibrations}, 
Amer. J. Math. 140 (2018), no. 5, 1253--1276.


\bibitem{kawakita}\label{kawakita} 
M. Kawakita, \textit{Inversion of adjunction on log canonicity}, 
Invent. Math. \textbf{167} (2007), 129--133.

\bibitem{Kaw-flops}\label{Kaw-flops}  
{Y. Kawamata, {\emph{Flops connect minimal models.}}
Publ. RIMS, Kyoto Univ. 44 (2008), 419-423.}

\bibitem{kaw-subadjuntion}\label{kaw-subadjuntion} 
Y. Kawamata, \textit{Subadjunction of log canonical divisors, II}, 
Amer. J. Math. \textbf{120} (1998), 893--899.

\bibitem{kawamata-bnd-ext-ray}\label{kawamata-bnd-ext-ray}
Y.~Kawamata,
\emph{On the length of an extremal rational curve},
Invent. \ Math. \ \textbf{105} (1991), no. 3, 609--611.


\bibitem{kollar-ebpf}\label{kollar-ebpf} 
J. Koll\'ar, \emph{Effective base point freeness.}
Math. Annalen (1993), Volume \textbf{296}, Issue 1, pp 595--605.

\bibitem{Kollar-flip-abundance}\label{Kollar-flip-abundance}
J.~Koll\'ar \'et al., \emph{Flips and abundance for algebraic threefolds,}
Ast\'erisque No. \textbf{211} (1992).

\bibitem{kollar-mori}\label{kollar-mori}
J.~Koll\'ar and S.~Mori,
Birational geometry of algebraic varieties,
Cambridge Tracts in Math. \textbf{134},
Cambridge Univ.\ Press, 1998.

\bibitem{DST}\label{DST}  
{D. Martinelli, S. Schreieder, L. Tasin, {\emph{On the number and boundedness of log minimal models of general type.}} Ann. Sci. ENS  53 (2020), 1183--1210. }

\bibitem{Mc-Pr}\label{Mc-Pr} 
 {J.~M$^{\rm c}$Kernan, Yu. Prokhorov, {\emph{Threefold thresholds.}} Manuscripta Math.
114  (2004),  no. 3, 281--304.  }

\bibitem{Mori-Prokhorov-del-pezzo}\label{Mori-Prokhorov-del-pezzo} 
S. Mori, Yu. Prokhorov, \emph{Multiple fibers of del Pezzo fibrations.}
Proc. Steklov Inst. Math., 264(1):131-145, 2009.

\bibitem{PSh-II}\label{PSh-II} 
{Yu. Prokhorov, V.V. Shokurov, {\emph{Towards the second main theorem on complements.}} 
J.  Algebraic Geometry, \textbf{18} (2009) 151--199.}

\bibitem{Shokurov-log-flips}\label{Shokurov-log-flips}  
{V.V. Shokurov, {\emph{$3$-fold log flips}},
With an appendix by Yujiro Kawamata.
Russian  Acad. \ Sci. \ Izv. \ Math.  \textbf{40}  (1993),  no. 1, 95--202.}

\end{thebibliography}
\end{document}